\documentclass[11pt]{amsart}

\makeatletter
\def\@tocline#1#2#3#4#5#6#7{\relax
    \ifnum #1>\c@tocdepth 
    \else
    \par \addpenalty\@secpenalty\addvspace{#2}%
    \begingroup \hyphenpenalty\@M
    \@ifempty{#4}{%
        \@tempdima\csname r@tocindent\number#1\endcsname\relax
    }{%
        \@tempdima#4\relax
    }%
    \parindent\z@ \leftskip#3\relax \advance\leftskip\@tempdima\relax
    \rightskip\@pnumwidth plus4em \parfillskip-\@pnumwidth
    #5\leavevmode\hskip-\@tempdima
    \ifcase #1
    \or\or \hskip 1em \or \hskip 2em \else \hskip 3em \fi%
    #6\nobreak\relax
    \dotfill\hbox to\@pnumwidth{\@tocpagenum{#7}}\par
    \nobreak
    \endgroup
    \fi}
\makeatother


\usepackage[colorlinks=true]{hyperref}
\hypersetup{urlcolor=blue, citecolor=red}


\usepackage{amssymb}
\usepackage{amsmath}
\usepackage{amscd}
\usepackage{curves}
\usepackage{epsfig}
\usepackage{bbm}
\usepackage{geometry}
\usepackage{graphicx}
\usepackage{pst-plot}
\usepackage{amsfonts}
\usepackage{enumerate}
\usepackage{mathtools}


\setlength{\oddsidemargin}{.2in}
\setlength{\evensidemargin}{.2in}
\setlength{\textwidth}{6.45in}
\setlength{\topmargin}{.25in}
\setlength{\textheight}{608pt}

\numberwithin{equation}{section}

\newtheorem{thm}{Theorem}[section]

\newtheorem{lm}{Lemma}[section]

\newtheorem{df}{Definition}[section]
\newtheorem{theorem}{Theorem}[section]
\newtheorem{lemma}[theorem]{Lemma}
\newtheorem{prop}[theorem]{Proposition}

\newtheorem{definition}[theorem]{Definition}

\newtheorem{remark}[theorem]{Remark}
\newtheorem{assumption}[theorem]{Assumption}


\newcommand{\tUmi}{{U}_{-}^{\mathrm{in}}}
\newcommand{\tUpl}{{U}_{+}^{\mathrm{in}}}


\newcommand{\vjone}{v_{j,1}}
\newcommand{\vjtwo}{v_{j,2}}


\newcommand{\mupl}{\mu^+}
\newcommand{\mumn}{\mu^-}
\newcommand{\CP}{CP}

\newcommand{\LUpl}{L(U_+)}
\newcommand{\LUmi}{L(U_-)}

\newcommand{\Vmi}{V_-}

\newcommand{\Vmione}{V_{-,1}}

\newcommand{\lN}{\mathcal{N}}

\newcommand{\lVsmooth}{\mathcal{V}^\mathrm{\infty}}


\newcommand{\Jpl}{J^+}
\newcommand{\Jmi}{J^-}
\newcommand{\vout}{v_\mathrm{out}}
\newcommand{\vin}{v_\mathrm{in}}
\newcommand{\Coutpl}{C_{\mathrm{out},+}}
\newcommand{\Cinpl}{C_{\mathrm{in},+}}
\newcommand{\Coutmi}{C_{\mathrm{out},-}}
\newcommand{\Cinmi}{C_{\mathrm{in},-}}
\newcommand{\Ppl}{P_{+}}
\newcommand{\Pmi}{P_{-}}

\newcommand{\tU}{\tilde{U}}
\newcommand{\lV}{\mathcal{V}}

\newcommand{\teal}{\color{teal}}

\newcommand{\tp}{\tilde{p}}
\newcommand{\tv}{\tilde{v}}

\newcommand{\lL}{\mathcal{L}}
\newcommand{\lM}{\mathcal{M}}

\newcommand{\lR}{\mathcal{R}}
\newcommand{\lC}{\mathcal{C}}
\newcommand{\lE}{\mathcal{E}}
\newcommand{\lB}{\mathcal{B}}

\newcommand{\tups}{\tilde{\ups}}

\newcommand{\lCear}{\mathcal{C}^{\mathrm{ear}}}
\newcommand{\lCsmooth}{\mathcal{C}^{\infty}}
\newcommand{\lCreg}{\mathcal{C}^{\mathrm{reg}}}

\newcommand{\Cear}{{C}^{\mathrm{ear}}}

\newcommand{\Creg}{{C}^{\mathrm{reg}}}

\newcommand{\Spl}{S_+}
\newcommand{\Smi}{S_-}

\newcommand{\CUjreg}{{C}^{\mathrm{reg}}_{U^{(j)}}}

\newcommand{\pmi}{p_-}
\newcommand{\ppl}{p_+}

\newcommand{\imi}{i_-}
\newcommand{\ipl}{i_+}

\newcommand{\yzeropl}{y_{+,0}}
\newcommand{\yzeromi}{y_{-,0}}

\newcommand{\M}{M}

\newcommand{\mR}{\mathbb{R}}
\newcommand{\mS}{\mathbb{S}}

\newcommand{\sq}{\square}
\newcommand{\tghat}{\hat{g}_\mathrm{e}}



\newcommand{\Omgin}{\Omega_{\mathrm{in}}}

\newcommand{\lA}{\mathcal{A}}

\newcommand{\varsm}{\varsigma}
\newcommand*\dif{\mathop{}\!\mathrm{d}}

\newcommand{\ep}{\epsilon}

\newcommand{\zi}{{\zeta^{i}}}
\newcommand{\zj}{{\zeta^{j}}}

\newcommand{\nxxi}{{\mathcal{N}(\vec{p}, \vec{w})}}

\newcommand{\capgammathree}{{\cap_{j=1}^{3}\gamma_{\ups_j}(\mathbb{R}_+)}}

\newcommand{\xxii}{\ups_i}

\newcommand{\xxiip}{\ups'_i}

\newcommand{\CUreg}{C^\mathrm{reg}_U}
\newcommand{\CUear}{C^\mathrm{ear}_U}
\newcommand{\LUreg}{L^\mathrm{reg}_U}
\newcommand{\LUear}{L^\mathrm{ear}_U}
\newcommand{\LUsmooth}{L^\infty_U}
\newcommand{\SUreg}{\mathcal{C}^\mathrm{reg}_U}
\newcommand{\SUear}{\mathcal{C}^\mathrm{ear}_U}
\newcommand{\SUsmooth}{\mathcal{L}^\infty_U}

\newcommand{\SUone}{\mathcal{C}^\mathrm{ear}_{U^{(1)}}}
\newcommand{\SUtwo}{\mathcal{C}^\mathrm{ear}_{U^{(2)}}}

\newcommand{\lU}{\mathcal{U}}

\newcommand{\tw}{{\widetilde{w}}}

\newcommand{\wfset}{{\text{\( \WF \)}}}

\newcommand{\intM}{M}
\newcommand{\Char}{\mathrm{Char}}

\newcommand{\Ical}{{I}}

\newcommand{\Ir}{\mathring{\mathcal{I}}}

\newcommand{\sigmp}{{ \sigma_{p}}}

\newcommand{\pM}{\partial M}

\newcommand{\tM}{M_{\text{e}}}

\newcommand{\tN}{\tilde{N}}

\newcommand{\yb}{y_|}
\newcommand{\etab}{\eta_|}

\newcommand{\Gammaset}{(\cap_{j=1}^{3}\gamma_{\ups_j}(\mathbb{R}_+)) \cap \gamma_{\ups_0}(\mathbb{R}_-)}

\newcommand{\RN}[1]{%
    \textup{\uppercase\expandafter{\romannumeral#1}}%
}



\DeclareMathOperator{\WF}{WF}
\DeclareMathOperator{\supp}{supp}

\newcommand{\rv}{}

\newcommand{\ba}{\[\begin{aligned}}
    \newcommand{\ea}{\end{aligned}\]}




\def \scp {S_+}
\def \scm {S_-}
\def \scmrho {S_{-,\rho}}
\def \scprho {S_{+,\rho}}
\def \bpf {\begin{proof}}
    \def \epf {\end{proof}}
\def \beq {\begin{equation}}
    \def \eeq {\end{equation}}
\def \bsp{\begin{split}}
    \def \esp{\end{split}}


\def \wt {\widetilde}

\def \mce {{\mathcal E}}

\def \mcf {{\mathcal F}}

\def \mch {{\mathcal H}}

\def \mcl {{\mathcal L}}

\def \mcv {{\mathcal V}}

\def \mr {{\mathbb R}}
\def \mn {{\mathbb N}}

\def \mfp {{\mathfrak p}}

\def \mfU {{\mathfrak U}}

\def\ha {\frac{1}{2}}

\def \ka {\kappa}



\def \loc {\operatorname{loc}}

\def \WF {\operatorname{WF}}

\def \supp {\text{supp }}


\def \mcn {\mathcal{N}}


\def \ga {{\gamma}}
\def \eps {\varepsilon}
\def \ups {\upsilon}
\def \Ups {\Upsilon}

\def \La {\Lambda}

\def \p {\partial}


\def \beqq {\begin{equation}}
    
    \def \eeqq {\end{equation}}

\def \Diff {\operatorname{Diff}}
\numberwithin{equation}{section}

\begin{document}
\title[Inverse Nonlinear Scattering] {Inverse Nonlinear Scattering by a Metric}
\author[Hintz]{Peter Hintz}
\address{Peter Hintz \newline
    \indent ETH Zurich, Department of Mathematics, Group 5 \newline
    \indent HG G 62.2 R\"amistrasse 101 8092 \newline
    \indent Zurich, Switzerland}
\email{peter.hintz@math.ethz.ch}
\author[S\'a Barreto]{Ant\^onio S\'a Barreto}
\address{Ant\^onio S\'a Barreto \newline
    \indent Department of Mathematics, Purdue University \newline
    \indent 150 North University Street, West Lafayette IN 47907}
\email{sabarre@purdue.edu}
\author[Uhlmann]{Gunther Uhlmann}
\address{Gunther Uhlmann \newline\indent  Department of Mathematics, University of Washington, Seattle, WA 98195}
\email{gunther@math.washington.edu}
\author[Zhang] {Yang Zhang}
\address{Yang Zhang \newline\indent  Department of Mathematics, University of California, Irvine, CA 92697}
\email{yangz79@uci.edu}
\keywords{Nonlinear wave equations, radiation fields, scattering, inverse scattering. AMS mathematics subject classification: 35P25 and 58J50}

\begin{abstract} We study the inverse problem of determining a time-dependent globally hyperbolic Lorentzian metric from the scattering operator for semilinear wave equations.
\end{abstract}
\maketitle

\tableofcontents


\section{Introduction}

Scattering in physics and mathematics is the comparison of the behavior of waves before and after they interact  with a medium.  There is finite time scattering in which one  compares properties of the wave for $t<-T$ and for $t>T,$ for some finite time $T>0$.  The global scattering experiment compares the asymptotic behavior of waves as time  $t\rightarrow -\infty$  and $t \rightarrow +\infty.$  The inverse scattering problem consists  obtaining information about the medium with which the waves interact from such experiments.
Both scattering and inverse scattering have been very well studied for a variety of evolution equations, including Schr\"odinger and wave equations, and we refer the reader to \cite{IsoKurLas,Uhl,Uhl1} for an account.

We are concerned with the problem of determining the time-dependent coefficients of nonlinear wave equations from scattering data.
In more precise terms, one wants to recover a globally hyperbolic Lorentzian metric from the scattering operator, modulo natural obstructions.
For such problems,
perhaps the most basic model is to determine a time-independent Riemannian metric $g$  in $\mr^n,$ with suitable rate of decay at infinity, from the global scattering operator for the wave equation, $\partial_t^2-\Delta_g,$ again modulo some natural obstructions.
Here $\Delta_g$ is the Laplace-Beltrami operator induced by the Riemannian metric.
This scattering operator can be precisely defined in several different ways,
and the most closely related to what we define here was introduced by Friedlander \cite{Fri,Fri1}, using the radiation fields,  following the work of Penrose \cite{Pen1,Pen2}, see also \cite{SaB, SaBwun}.
If the metric $g$  is a compactly supported perturbation of the Euclidean metric, this problem can be reduced to studying the Dirichlet-to-Neumann map, and it has been solved by Belishev and Kurylev \cite{BelKur} using the boundary control method, which relies on a theorem of Tataru \cite{Tat,Tat1}, see also \cite{HorUC}.  However, for arbitrary non-compactly supported perturbations, as far as we know, this problem remains unsolved, even if the metric decays very fast at infinity.

In this work, we study a more general form of this problem, but for a semilinear wave equation.
Although nonlinear equations are typically more challenging to analyze,
the interaction of waves due to the nonlinearity carry additional information about the medium.
The central question is how to measure and interpret this information and how to use it to determine properties about the medium.
Kurylev, Lassas and Uhlmann \cite{Kurylev2018} were the first to observe that this nonlinearity can be used to obtain information about the medium.   This was done for a finite time scattering experiment called the source-to-solution map.
They proved that this source-to-solution map determines a globally Lorenztian metric, modulo conformal factors and diffeomorphisms, in a Lorentzian manifold without boundary.
The main idea is to use a multi-fold linearization and the nonlinear interaction of waves.
This comes from the observation that the transversal interaction of three or four conormal waves for a nonlinear wave equation produces new singularities from the set of interaction.
Such results go back to the work  Beals \cite{Beals}, Bony \cite{Bony4},
Melrose and Ritter \cite{MelRit} and Rauch and Reed \cite{RauRee}.
Specifically, by choosing specially designed sources,
one can produce new singularities caused by the interaction of linear waves, and then detect them from the measurements.
Information about the metric and nonlinearity is encoded in these new singularities.
One can extract such information from the principal symbol of the new singularities, using the calculus of conormal distributions and paired Lagrangian distributions.

Starting with \cite{Kurylev2018, Kurylev2014a},
there are many works studying inverse problems for nonlinear hyperbolic equations, see
\cite{Barreto2021,Barreto2020, Chen2019,Chen2020,Hoop2019,Hoop2019a,Uhlig2020,Feizmohammadi2019,Hintz2020, Kurylev2014,Balehowsky2020,Lai2021,Lassas2017,Tzou2021,Uhlmann2020,Uhlmann2019}.
For an overview of the recent progress, see \cite{lassas2018inverse,Uhlmann2021}.
In particular, in \cite{sa2022inverse} the inverse scattering problem for defocusing energy critical semilinear wave equations in Minkowski space is studied.
The inverse problem of recovering the metric from a source-to-solution map with different sources and receivers is studied in \cite{feizmohammadi2021inverse}.
Inverse boundary value problems of recovering the metric or nonlinearity are considered for semilinear or quasilinear wave equations in \cite{Hoop2019,Uhlmann2019, Hintz2017, Hintz2020, hintz2022dirichlet, Uhlmann2021a, acosta2022nonlinear, UZ_acoustic,zhang2023nonlinear}.

More explicitly, we consider a smooth manifold  $\tM = (-1, 1)_T \times \mathbb{R}_X^3,$ which is equipped with a globally hyperbolic Lorentzian metric $g$ given by
\[
g = -\beta(T,X) \dif T^2 + \kappa(T,X),
\]
where $\beta>0$ is smooth and $\kappa$ is a family of Riemannian metrics on $\mR^3$ depending on $T$ smoothly.
The region of interest is an open diamond set given by
\[
M = \{(T, X) \in \tM: T + |X| < 1, \ T - |X|> -1\}.
\]
With
$\pM = \bar{S}_+ \cup \bar{S}_-$,
we shall denote
\beq\label{def_Spm}
S_\pm  = \{(T,X)\in \tM: \pm T + |X| = 1, \ 0 < |X| < 1\}
\eeq
to be respectively the future or past null infinity of $M$.
We denote by $i_\pm = (\pm 1, 0)$ respectively the future and past timelike infinity
and by $R = \{(T, X): T = 0, \ |X| = 1\}$ the spacelike infinity.
In addition, we make the following assumptions on the structure of the spacetime.
\begin{assumption}\label{assump_Mg}
    Let $(M,g)$ be a globally hyperbolic subset defined as above. We assume
    \begin{itemize}
        \item the metric $g$ is smooth up to $i_\pm$, and
        \item the future and past null infinity satisfy \[
        \Spl = \partial J^-(\ipl)\cap I^+(\imi) \setminus \{\ipl\}, \quad
        \Smi = \partial J^+(\imi)\cap I^-(\ipl) \setminus \{\imi\},
        \]
        such that $i_\pm$ have no cut points there and $S_\pm$ are simple characteristic hypersurfaces with respect to the wave operator $\square_{g}$, and
        \item $(M,g)$ is nontrapping, in the sense that the projections of all null bicharacteristics tend to $S_\pm$ as their parameters tend to $\pm \infty$.
    \end{itemize}
\end{assumption}
Here the assumption that $i_\pm$ has no cut points on $S_\pm$ implies a smooth parameterization of $S_\pm$ by null geodesics, see Section \ref{subsec_familycurves}.
The assumption that $S_\pm$ are characteristic hypersurfaces allows us to choose local coordinates on null infinity where the metric and the Laplace-Beltrami operator $\sq_g$ are given by certain forms, see Lemma \ref{norm-form}.
The nontrapping assumption can also be found in the so-called asymptotically Minkowski spaces, see \cite{baskin2015asymptotics,hintz2015semilinear}.
One example of such a globally hyperbolic subset is the conformal compactification of the Minkowski metric.

We consider the semilinear wave equation
\begin{equation}\label{eq_problem}
    \begin{aligned}
        \sq_g u  + F(T,X,u) &= 0 & \  & \mbox{in } M,\\
        u &= u_- & \ &\mbox{on } \Smi,
    \end{aligned}
\end{equation}
where $\sq_g$ is the Laplace-Beltrami operator induced by the Lorentzian metric $g$, $F \in C^\infty(M \times \mR)$,
and $u_-$ is the scattering data posed on the past null infinity.
We solve the Cauchy-Goursat problem with properly chosen data $u_-$ supported on the past null infinity $S_-$ and define the scattering operator as the value of the solution of \eqref{eq_problem} restricted to a subset of the future null infinity $S_+$.
This defines the nonlinear scattering operator, see
Section \ref{sec_forward} for more details.
More precisely, for fixed $\rho \in(0,1)$, we consider consider the set
\[
\scmrho=\{ (T,X) \in \scm: -\rho < T+|X| < 1\}.
\]
By Theorem \ref{NLGPT},
the nonlinear problem (\ref{eq_problem}) has a unique solution $u \in \mch^k(\Omega_\rho)$ for $k \geq 3$, as long as the scattering data
\[
u_- \in D(\rho) := \{ u_- \in H^k(\scmrho) : \|u_-\|_{H^k} < \epsilon(\rho)\},
\]
with $\ep(\rho)$ satisfying Theorem \ref{NLGPT}.
The nonlinear scattering map is thus a map (or a collection of maps ) given by
\beq\label{def_N}
\begin{split}
    \mcn: u_- \mapsto u|_{\scprho}, \quad \text{where } \scprho =\{(T,X) \in \scp:  T-|X| < \rho\},
\end{split}
\eeq
for any $u_- \in D(\rho)$.
We consider the inverse problem of recovering $g$ from this scattering map.
Moreover, we assume the nonlinearity $F$ is analytic in $u$ and thus can be written as a power series
\beq\label{def_F}
F(T,X, u) = \sum_{m=2}^{+\infty}\beta_m(T,X) u^m, \quad \beta_m \in C^\infty(M),
\eeq
and additionally for each $q \in M$, there exists $m \geq 2$ such that $\beta_m(q) \neq 0$.
Our main theorem is the following:
\begin{theorem}\label{mainthm}
    Let $(M^{(j)}, g^{(j)}), j = 1,2$ be globally hyperbolic Lorentzian subsets satisfying Assumption \ref{assump_Mg}.
    Consider the semilinear wave equation (\ref{eq_problem}) with nonlinearity $F^{(j)}(T, X, u)$ satisfying (\ref{def_F}) and the assumption below, for $j = 1,2$.
    If the nonlinear scattering operators defined by (\ref{def_N}) satisfy
    \[
    \lN^{(1)}(u_-) = \lN^{(2)}(u_-)
    \] 
    for each $u_- \in D(\rho)$,
    then there exists a smooth diffeomorphism
    $\Psi: M \rightarrow M$ and a function $\gamma\in C^\infty(M)$ such that
    for any $q \in M$, we have
    \[
    \Psi^*\bigl(g^{(1)})=e^{2\gamma}g^{(2)}.
    \]
\end{theorem}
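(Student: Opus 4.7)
The plan is to apply the higher-order linearization method of Kurylev--Lassas--Uhlmann \cite{Kurylev2018} in this null-infinity scattering setup, recovering the conformal class of $g$ from the nonlinear interaction of conormal waves propagating between $\Smi$ and $\Spl$. Concretely, I would substitute $u_- = \sum_{j=1}^{N} \ep_j u_{-,j}$ into (\ref{eq_problem}) and differentiate $\mcn(u_-)$ in $\ep_1,\ldots,\ep_N$ at $\vec{\ep}=0$. The first-order term is the linear radiation-field map sending $u_{-,j}$ to the outgoing radiation of the linear Goursat solution $v_j$ with incoming data $u_{-,j}$, while the $N$-fold cross-derivative returns an integral involving $\beta_N$, the product $v_1\cdots v_N$, and the causal fundamental solution of $\sq_g$, all restricted to $\Spl$. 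These derivatives are determined by $\mcn$, hence by hypothesis are common to the two spacetimes.

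Next, using the parameterization of $\Spl$ by null geodesics through $\ipl$ provided by the no-cut-point assumption and the normal form of Lemma \ref{norm-form}, I would choose $u_{-,j}$ so that the linear solutions $v_j$ are conormal distributions microsupported along prescribed null geodesics $\gamma_j \subset M$, arranged so that three or four of them meet transversally at a chosen interior point $q$. By the calculus of paired Lagrangian distributions going back to Beals \cite{Beals}, Bony \cite{Bony4}, and Melrose--Ritter \cite{MelRit}, this $N$-fold interaction generates a new singularity at $q$ propagating along the null cone $\Jpl(q)$, whose principal symbol is proportional to $\beta_N(q)$ times the product of the principal symbols of the $v_j$ at $q$. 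The nontrapping assumption then ensures this new singularity reaches $\Spl$, and the pointwise hypothesis that some $\beta_m(q) \neq 0$ provides the nonvanishing needed to detect it for an appropriate choice of $N$.

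From $\mcn^{(1)} = \mcn^{(2)}$ one concludes that the locations on $\Spl$ at which such new singularities appear agree on the two spacetimes, for every admissible choice of $q$ and of transversal null directions at $q$. This identifies, for each $q \in M$, its earliest light observation set $\EUq \subset \Spl$, and running $q$ over $M$ reconstructs the full family of observation sets on both spacetimes. A geometric reconstruction as in \cite{Kurylev2018} then produces a diffeomorphism $\Psi\colon M \to M$ matching the two families. Since the family of light observation sets determines the null-cone structure of a globally hyperbolic Lorentzian metric, and since two such metrics with the same null cones agree up to a positive conformal factor, one obtains $\Psi^* g^{(1)} = e^{2\gamma} g^{(2)}$ for some $\gamma \in \CI(M)$.

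The main obstacle I expect is the microlocal analysis underlying the second step: carrying out the paired-Lagrangian calculus and the principal-symbol computation all the way up to the asymptotic boundary $\Spl$, where $g$ is smooth only in a rescaled sense and where the $v_j$ themselves already carry strong conormal singularities inherited from their data on $\Smi$. Concretely, one must show that the new singularity produced by the $N$-fold interaction can be separated microlocally on $\Spl$ from those inherited singularities, so that it is faithfully encoded in $\mcn$; this separation should follow from the simple-characteristic structure of $\Spl$ granted by Assumption \ref{assump_Mg}, a careful choice of distinct transversal directions, and the normal form of Lemma \ref{norm-form}, but the book-keeping is delicate. Once this microlocal step is in place, the reconstruction of the observation sets and of the diffeomorphism is largely geometric and parallels the established literature.
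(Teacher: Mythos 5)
Your outline captures the broad strategy correctly — higher-order linearization, transversal interaction of conormal waves producing new singularities on $\Spl$, and reconstruction of the conformal class from the resulting light observation sets — but it misses two ideas that the paper's proof actually requires.

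First, your linearization scheme will not work as stated for general $F$. You propose to substitute $u_-=\sum_{j=1}^{N}\ep_j u_{-,j}$ and read off the $N$-fold cross-derivative, arranging $N$ null geodesics to meet transversally at $q$. In a four-dimensional spacetime at most four conormal covectors at $q$ can be linearly independent, so there is no ``$N$-transversal'' intersection configuration once $N\geq 5$; yet the hypothesis only guarantees $\beta_m(q)\neq 0$ for \emph{some} $m\geq 2$, which may be large. The paper avoids this by using only \emph{three} sources and taking the derivative $\partial_{\ep_1}^{m-2}\partial_{\ep_2}\partial_{\ep_3}\mcn$, which isolates the term $Q_s(\beta_m\, v_1^{m-2}v_2 v_3)$. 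This is a three-wave interaction in which $v_1$ enters with a power, and the relevant microlocal input is Piriou's calculus for powers of a single conormal distribution (Lemma \ref{cl_Piriou} and Lemmas \ref{lm_psmodel_v}--\ref{lm_psmodel}), not a calculus for $N$ pairwise-transversal waves. Your proposal would also need the case distinction of Section 7: when $\beta_m\equiv 0$ near $q$ for all $m\geq 3$ but $\beta_2(q)\neq 0$, the leading contribution at order $\ep_1\ep_2\ep_3$ comes from iterated quadratic interactions $Q_s(\beta_2 v_i A_2^{jk})$ (Proposition \ref{pp_A3ijk}), not from a single-step cubic interaction.

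Second, you treat the reconstruction as if it could be done globally in one step, from sources on all of $\Smi$ to observations on all of $\Spl$, invoking the ``earliest light observation set'' to handle non-uniqueness. But caustics and conjugate points occur generically, and near them the linear waves $v_j$ fail to be conormal and the product calculus breaks down; moreover the ``earliest'' observation set alone is not enough — the paper distinguishes it from the \emph{regular} set $\CUreg(q)$ and shows how to extract the latter (Proposition \ref{pp_LUreg}), which is what is actually used in the smooth/conformal reconstruction of Section \ref{sec_SLOS}. The paper's answer to all of this is the layer-stripping procedure of Section \ref{sec_layer}: reconstruct the conformal metric in small caustic-free diamonds $I(U_-,U_+)$ near spacelike infinity $R$, then use the already-reconstructed region to propagate sources into deeper layers (Steps 2–4, Schemes 2–4), with a compactness argument guaranteeing termination. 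Without this iteration, your argument only reconstructs the metric in a neighborhood of $R$ where no conjugate points exist, not all of $M$. The ``delicate book-keeping'' you flag about the asymptotic boundary is indeed handled by the receding-wave construction and Proposition \ref{pp_RFFIO}, but it is the layer stripping and the three-to-one scattering relation machinery (Section \ref{sec_reconstruction}) that your outline omits and that carry most of the weight.
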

To better illustrate the ideas used in proving Theorem \ref{mainthm},
we first consider the reconstruction for a cubic wave equation.
\begin{theorem}\label{thm_cubic}
    Let $(M^{(j)}, g^{(j)}), j = 1,2$ be globally hyperbolic Lorentzian subsets satisfying Assumption \ref{assump_Mg}.
    Consider the cubic wave equation
    \[
    \sq_g u^{(j)}  + \beta^{(j)}(T,X) (u^{(j)})^3 = 0, \quad  j=1,2,
    \]
    where $\beta^{(j)} \in C^\infty(M^{(j)})$ are nonvanishing on  $M$.
    If the nonlinear scattering operators satisfy
    \[
    \lN^{(1)}(u_-) = \lN^{(2)}(u_-)
    \] 
    for each $u_- \in D(\rho)$,
    then there exists a smooth diffeomorphism
    $\Psi: M \rightarrow M$ and a function $\gamma\in C^\infty(M)$ such that
    for any $q \in M$, we have
    \[
    \Psi^*\bigl(g^{(1)})=e^{2\gamma}g^{(2)}.
    \]
\end{theorem}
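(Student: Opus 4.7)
The plan is to follow the higher-order linearization strategy of Kurylev-Lassas-Uhlmann, adapted to scattering data on null infinity, using the cubic nonlinearity to produce a triple interaction directly without an intermediate quadratic step. As a preliminary step, applying $\partial_{\ep_1}|_{\ep_1=0}$ to $\lN^{(j)}(\ep_1 f_1)$ recovers the linear scattering map $u_- \mapsto u|_{\scprho}$ for the linear Goursat problem $\sq_{g^{(j)}} u = 0$, $u|_{\Smi} = u_-$. The equality $\lN^{(1)} = \lN^{(2)}$ on $D(\rho)$ then forces the linear scattering maps to agree. Feeding this map conormal data on $\Smi$ adapted to a null geodesic yields a linear wave that is conormal to the characteristic hypersurface generated by that geodesic, with principal symbol determined by transport along the bicharacteristic.

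Given an interior point $q \in M$ and three linearly independent future-directed null covectors $\xi_1, \xi_2, \xi_3 \in T_q^* M$ in sufficiently generic position, I trace the null bicharacteristics through $(q, \xi_j)$ backwards to $\Smi$; the nontrapping and no-cut-point assumptions, together with Lemma \ref{norm-form}, guarantee that each ray reaches $\Smi$ transversally. I then select conormal scattering data $u_{-,j}$ supported near the resulting points, producing linear waves $u_j$ conormal to null hypersurfaces $K_j$ through $q$ with $K_1\cap K_2\cap K_3=\{q\}$ locally. Applying $\partial_{\ep_1}\partial_{\ep_2}\partial_{\ep_3}|_{\ep_1=\ep_2=\ep_3=0}$ to $\lN^{(j)}(\ep_1 u_{-,1}+\ep_2 u_{-,2}+\ep_3 u_{-,3})$ and keeping the third-order term, one extracts the restriction to $\scprho$ of the solution $U^{(3)}$ of
\[
\sq_g U^{(3)} = -6\beta\, u_1 u_2 u_3, \qquad U^{(3)}|_{\Smi} = 0.
\]
Away from the flowouts of the individual $K_j$, the source is conormal to the single point $q$ with principal symbol a nonzero multiple of $\beta(q)\prod_j \sigma(u_j)(q,\xi_j)$. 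Solving the wave equation with such a point-conormal source produces new singularities conormal to the forward light cone $\Lqplus$, with principal symbol nonvanishing precisely because $\beta(q)\neq 0$.

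By nontrapping, these new singularities propagate out to $\Spl$, and the equality $\lN^{(1)} = \lN^{(2)}$ implies that the corresponding singular loci on $\scprho$ agree for the two geometries. Varying $q$ and the admissible triples $(\xi_1,\xi_2,\xi_3)$ yields equality of the associated earliest light-observation sets on $\Spl$, parametrised by triples of incoming null rays on $\Smi$. From this collection the conformal class of $g$ on $M$ and then the diffeomorphism $\Psi$ are reconstructed by arguments in the spirit of Kurylev-Lassas-Uhlmann adapted to null-infinity data; because the interaction calculus invariantly sees only the conformal structure of light cones, one obtains $\Psi^* g^{(1)} = e^{2\gamma} g^{(2)}$ rather than an honest isometry.

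The principal obstacle is that the data are prescribed on the characteristic hypersurface $\Smi$ rather than on a spacelike Cauchy surface, which forces the propagation-of-singularities and symbol-transport machinery to be set up in a Goursat/radiation-field framework that reaches all the way to $\Spl$. In particular one must show that (i) conormal data on $\Smi$ realize any prescribed interior conormal linear wave with controlled nonvanishing symbol, (ii) the Lagrangian $\Lqplus$ survives intact up to its intersection with $\Spl$ without being absorbed or degenerated by the asymptotic behavior of $g$, and (iii) the new singularities on $\Spl$ can be separated microlocally from the already-present singularities of each individual $u_j$. These are exactly the points where the simple-characteristic hypothesis on $S_\pm$, the absence of cut points at $i_\pm$, and the normal form of $\sq_g$ near null infinity (Lemma \ref{norm-form}) enter in an essential way.
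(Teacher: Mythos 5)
Your high-level plan -- first-order linearization to get the linear scattering map, third-order linearization to generate a point source conormal to $\{q\}$, propagation of the resulting conormal singularities along the forward light cone to $\Spl$, and then a reconstruction of the conformal class via (earliest) light observation sets in the spirit of Kurylev--Lassas--Uhlmann -- is indeed the right skeleton, and coincides with the paper's Sections 3, 4, 6, 7. However, there is a genuine gap at the center of the argument: you invoke a "no-cut-point assumption" globally on $M$, but Assumption \ref{assump_Mg} only rules out cut points of $i_\pm$ along $S_\pm$; it says nothing about cut or conjugate points in the interior. In a generic globally hyperbolic spacetime of this type, caustics do occur, and when they do, (a) the individual linear waves $u_j$ cease to be conormal (they become Lagrangian distributions with degenerate projections), (b) the forward-light-cone flowout $\Lqplus$ from the interaction point acquires caustics of its own before reaching $\Spl$, and (c) the microlocal separation of "new" singularities from the ones carried by each $u_j$ breaks down. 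Proposition \ref{pp_U3} in the paper is explicitly restricted to points \emph{before the first cut point of $q$} and away from $\Lambda^{(1)}$, so the principal-symbol nonvanishing you rely on is only available locally.

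This is precisely why the paper does not carry out your program directly on all of $M$: it introduces the layer stripping method of Section \ref{sec_layer}, which reconstructs the spacetime in small diamond-shaped pieces $I(U_-, U_+)$ contained in geodesically convex balls $\lB(p_0,\delta)$ where no cut points occur, and then propagates outward from $R$. In the later layers (Steps 2--4) the incoming null rays from $U_-\subset\Smi$ \emph{may} have passed through a previously reconstructed region where they developed conjugate points; the paper handles this by using the already-known metric to re-initialize the waves as conormal distributions on the achronal boundary $\tilde U_-^{\mathrm{in}}$ (Section \ref{sec_scheme2}), and by introducing a "three-to-one scattering relation" (Definition \ref{def_3to1}, Theorem \ref{thm_3to1}) whose sufficiency hypothesis encodes exactly which triples of rays can be used in a given layer. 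To fix your proposal you would need either to add the hypothesis that $(M,g)$ has no null cut points anywhere (in which case your argument becomes essentially the paper's Step 1, applied to all of $M$ at once), or to import some variant of the layer-stripping scheme and the $\Vmi$-restricted three-to-one relation. As written, the proposal proves a strictly weaker result under a stronger assumption, and the reduction from the actual Assumption \ref{assump_Mg} is missing.
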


In this work, following the approach in \cite{Kurylev2018},
we start with the multi-fold linearization
and derive the asymptotic expansion of the measurements, using solutions to linear wave equations.
Inspired by Friedlander \cite{Fri,Fri1}, we construct receding waves as special solutions to linear wave equations.
The interaction of these waves produces new singularities carrying information to the future null infinity.
Rather than using an observation set outside $M$ to recover the metric, we reconstruct ${g}$ within $M$, up to conformal diffeomorphisms, using the so-called earliest (or regular) scattering light observation sets, see Section \ref{sec_SLOS}.
Essentially, in the absence of caustics (such as conjugate points or cut points), this earliest (or regular) scattering light observation set is simply the intersection of the future light cone surface from a point in $M$ restricted to the future null infinity.
This is an analog of the reconstruction from the earliest light observation set in \cite{Kurylev2018} or the boundary light observation sets in \cite{Hintz2017}.
As caustics naturally exists in Lorentzian geometry, to deal with them, we use a layer stripping procedure for the reconstruction.
This approach allows us to recover the metric by sending sources and detecting new singularities in small pieces of $S_-$ and $S_+$ respectively.

\section{The forward problem and the nonlinear scattering operator}\label{sec_forward}
We define the scattering operator for the nonlinear equation
\[
(\square_g+L) u= F(T,X,u),
\]
where $L$ is a first order differential operator with smooth coefficients
and $F\in C^\infty(M \times \mR)$ satisfying $F(T,X,0)= \p_u F(T,X,0)=0$.
Recall the definition of $S_\pm$ in (\ref{def_Spm}).
%
We begin by analyzing the linear Goursat problem
\beq\label{LGP}
\begin{split}
    (\square_g+ L)u & = f(T,X), \\
    u\bigr|_{\scm} &= u_- \in H^1(\scm),
\end{split}
\eeq
which has been studied by several people including Baez, Segal and Zhou \cite{BaeSegZho}, H\"ormander \cite{HorCP} and followed by \cite{BarWaf,Nic} and several others.  They prove the following global result
\begin{theorem}\label{GP1} Let $f \in L_{\loc}^1(\mr, L^2(\mr^3))$ and let $u_-\in H^1(\scm)$ then there exists $u\in \mce $ which satisfies \eqref{LGP} on $M,$ where  $\mce$ is the space of functions with finite energy
    \[
    E(u,T)=\ha \int_{\mr^3} \bigl[ (\p_Tu)^2+ \sum g^{jk} \p_j u \p_k u+ u^2\bigr] d\nu.
    \]
    where $\nu$ is a density on $\mr^3,$ for $|T| \leq R,$ and
    \beq\label{GPE}
    \sup_{|T|\leq R} E(u,T) \leq C(R) \bigl[ ||u_0||_{H^1(\scm)}+ ||f||_{L^1(-R,R); L^2(\mr^3)}\bigr].
    \eeq
    Moreover, $u$ is unique on $J^+(\scm).$
\end{theorem}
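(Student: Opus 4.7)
The plan is to carry out the three standard steps for the characteristic Cauchy (Goursat) problem on a globally hyperbolic Lorentzian manifold: an a priori energy estimate, uniqueness as an immediate consequence, and existence by approximating the null initial surface $\scm$ by spacelike hypersurfaces. For the energy estimate I would apply the divergence theorem to the stress-energy tensor $T_{\alpha\beta}[u]$ associated to $\square_g + L$, contracted with the future-timelike multiplier $V = \partial_T$ and integrated over the slab $\Omega_{T_0,T_1} := M \cap \{T_0 < T < T_1\}$. The bulk term equals $(\square_g u)\,Vu$ plus commutators with $L$, and via Cauchy--Schwarz and absorption contributes $\|f\|_{L^1((-R,R);L^2)}\,\sup_T E(u,T)^{1/2}$ together with zeroth- and first-order terms controlled by the energy itself. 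The crucial geometric fact is that the flux of $T[V]$ through any null hypersurface is a non-negative quadratic form in the null-tangential derivatives of $u$, so the flux through $\scm$ is equivalent to $\|u_-\|_{H^1(\scm)}^2$. A Gr\"onwall argument then yields \eqref{GPE}.

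Uniqueness is then immediate: applied to $v = u_1 - u_2$, which has vanishing source and vanishing trace on $\scm$, the estimate forces $E(v,T) \equiv 0$ throughout $J^+(\scm)$. For existence I would first smooth $u_- \in H^1(\scm)$ by data $u_-^{(k)} \in C_c^\infty(\scm \setminus \{\imi\})$, and then deform $\scm$ by a family of spacelike hypersurfaces $\Sigma_\epsilon$ (for instance $\{T - |X| = -1 + \epsilon\}$, suitably rounded near $\imi$) converging to $\scm$ as $\epsilon \to 0$. Transferring $u_-^{(k)}$ to genuine Cauchy data on $\Sigma_\epsilon$ by flowing along the null geodesic generators of $\scm$, classical global hyperbolic theory for $\square_g + L$ yields solutions $u^{(k,\epsilon)} \in \mce$. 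The energy estimate from the first step, applied to differences $u^{(k,\epsilon)} - u^{(k',\epsilon')}$, shows this family is Cauchy in $\mce$; the limit as $\epsilon \to 0$ and $k \to \infty$ is the desired solution.

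The main obstacle is the precise identification of the null-flux quadratic form on $\scm$ with the $H^1(\scm)$ norm used in the statement, since the trace on a characteristic hypersurface is \emph{not} the usual restriction trace and ordinarily controls only tangential derivatives. Under Assumption \ref{assump_Mg}, however, $\scm$ is a simple characteristic hypersurface generated by null geodesics issuing from $\imi$ without cut points, so by the normal form established in Lemma \ref{norm-form} one can introduce coordinates $(\tau,\omega)$ in which $\scm = \{\tau = \mathrm{const}\}$ and the flux reduces to $\int |\partial_\tau u|^2 + |\nabla_\omega u|^2$, an expression genuinely equivalent to the $H^1(\scm)$ norm. With this identification the approximation argument closes; alternatively one may directly invoke the global Goursat results of H\"ormander \cite{HorCP}, Baez--Segal--Zhou \cite{BaeSegZho}, Bar--Wafo \cite{BarWaf}, and Nicolas \cite{Nic}, which are formulated in closely related geometries and require only this trace-matching check to apply verbatim here.
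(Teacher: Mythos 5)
The paper does not actually prove Theorem~\ref{GP1}: it is stated as a known result and attributed to H\"ormander~\cite{HorCP}, Baez--Segal--Zhou~\cite{BaeSegZho}, B\"ar--Wafo~\cite{BarWaf}, and Nicolas~\cite{Nic}, together with two remarks --- first, that H\"ormander's setting is $\mr\times N$ with $N$ compact, but finite speed of propagation reduces the present problem to that case (and B\"ar--Wafo treat non-compact $N$ directly); second, and crucially, that uniqueness requires $J^+(\scm)$ to be \emph{past compact} (i.e.\ $J^-((T,X))\cap J^+(\scm)$ compact for all $(T,X)$ in $J^+(\scm)$), since B\"ar--Wafo exhibit a counterexample otherwise. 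Your sketch --- energy estimate via a timelike multiplier and the stress-energy tensor, Gr\"onwall, uniqueness from the estimate, existence by approximating $\scm$ by nearby spacelike slices and passing to the limit --- is essentially a summary of the argument carried out in those cited sources, so you have reconstructed the proof that the paper declines to reproduce; you also correctly identify the only geometry-dependent checkpoint, namely that the null energy flux through $\scm$ controls the intrinsic $H^1(\scm)$ norm, which your appeal to the normal form of Lemma~\ref{norm-form} handles.

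One point you should make explicit rather than leave implicit: in your uniqueness step (``zero data $\Rightarrow$ $E\equiv 0$ on $J^+(\scm)$''), the propagation of vanishing energy from the slab $|T|\le R$ to all of $J^+(\scm)$ is exactly where past compactness enters. Without it one cannot close the Gr\"onwall/domain-of-dependence argument, and the conclusion genuinely fails (this is the B\"ar--Wafo counterexample the paper flags). In the present setting $\scm$ is past null infinity of a Penrose diamond, so $J^+(\scm)$ is indeed past compact, but the hypothesis should be stated rather than absorbed silently into ``classical global hyperbolic theory.'' With that caveat added, your sketch is a sound direct proof; the paper's choice to cite rather than reprove trades self-containment for brevity, while your version makes visible precisely which geometric feature of Assumption~\ref{assump_Mg} is being used.
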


H\"ormander works on Lorentzian manifolds $\mr\times N,$ where $N$ is compact, but   by finite speed of propagation we can use his result to prove a local theorem, which is what we need. One can also refer to Theorem 23 or \cite{BarWaf}, which does not assume $N$ to be compact.  This is not an issue for the existence of solutions to \eqref{LGP}, however to guarantee  uniqueness one needs the fact that $J^+(\scm)$ is past compact, which means that $J^-((T,X)) \cap J^+(\scm)$ is compact for all $(T,X)$ in $J^+(\scm),$  to guarantee the uniqueness of the solution on $J^+(\scm).$ They give a counterexample which shows that uniqueness is not guaranteed otherwise.

We may assume that the initial data is smoother. Suppose that  $\mcv(\scm)$ denotes the Lie algebra of $C^\infty$ vector fields that are tangent to $\scm,$ and assume that
\[
\bigl[\mcv(\scm)\bigr|_{\scm}\bigr]^k u_0 \in L^2 (\scm), \;\ 1\leq k \leq K \in \mn.
\]
In the case $k>1$ we also obtain more regularity for the solution.  We know from Proposition 4.10 of \cite{MelRit} that  the commutator of $\square_g$ and elements of $\mcv(\scm)$ satisfy
\[
[\square_g, \mcv(\scm)] \in h \square_g +  \Diff^1\cdot \mcv(\scm)+ \Diff^1,
\]
where $h\in C^\infty,$ and $\Diff^1$ denotes the $C^\infty$ first order differential operators.  We also know from  Proposition 2.4 of \cite{MelRit} that $\mcv(\scm)$ is finitely generated over $C^\infty(M)$ by $C^\infty$ vector fields. By differentiating the equation \eqref{LGP} we obtain
a system
\[
\bigl( \square_g I+ \mcl) U=  \wt F,
\]
where $U^T=(u, V_1 u, \ldots, V_k u),$  $F^T=(f, V_1 f, \ldots, V_k f),$ and $V_k$ are the generators of $\mcv(\scm),$ $I$ is the $k\times k$ identity matrix and $\mcl$ is a matrix of differential operators of order one.  Theorem \ref{GP1} applied to this system shows that $\mcv(\scm)u$ has finite energy and it satisfies estimate \eqref{GPE}.  Repeating this process for any integer $k\geq 1,$ we obtain
\beq\label{GPEk}
\begin{split}
    \sup_{|T|\leq R} & E(\mcv(\scm)^k u,T) \leq  C(R) \bigl[ ||\mcv(\scm)^k u_0||_{H^1(\scm)}+ || \mcv(\scm)^k f||_{L^1(-R,R); L^2(\mr^3)}\bigr] \leq \\
    & C_1(R) \bigl[ ||\mcv(\scm)^k u_0||_{H^1(\scm)}+ || \mcv(\scm)^k f||_{L^2([-R,R]\times \mr^3)}^2 \bigr].
\end{split}
\eeq

Since away from $\scm,$ $\mcv(\scm)$  includes all vector fields, this shows that $u \in H^{k+1}(M\setminus \scm).$ In particular $u\in L_{\loc}^\infty(M\setminus \scm),$ provided $k\geq 2.$  However we need to control the $L^\infty$ norm of $u$  in terms of the norm of the initial data up to $\scm.$    For $\rho\in (-1,1),$ let
\[
S_{-,\rho}=\{ (T,X) \in \scm: -1<-\rho < T+|X| < 1\}.
\]
One can find coordinates $(x,y)$ such that
\[
\Smi=\{x=0\},
\]
and the fact that a solution $u$ of \eqref{LGP} with $u_0$ supported on $S_{-,\rho}$  we can say that
$u=0$ for $y_j< y_{0j},$ provided $0<x<\eps.$
The $C^\infty$ vector fields tangent to $\scm$ are spanned by $\{\p_{y_j},\ j=1,2,3.\}$

This is done in the following

\begin{lemma}\label{LEM1}  Let $u(x,y)\in C^k\bigl((0,2)\times [0,1]^3\bigr),$ $k\geq 4,$ be such that
    \[
    u(x,y_1,y_2,0)= u(x,y_1,0,y_3)=u(x,0,y_2,y_3)=0,
    \]
    then for $(t,x)\in (0,1)\times (0,1)$
    \[
    |u(x,y_1,y_2,y_3)| \leq |u(1, y_1,y_2,y_3)|+ C ||\p_x\p_{y_1}\p_{y_2}\p_{y_3} u||_{L^2((0,1)^4)}.
    \]
\end{lemma}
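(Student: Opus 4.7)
The plan is to reduce the pointwise bound to an $L^2$ bound of a mixed fourth derivative by iteratively applying the fundamental theorem of calculus, using the three vanishing boundary conditions on the faces $\{y_j = 0\}$, $j=1,2,3$, together with one integration in $x$ from the reference slice $x=1$.

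First I would write
\[
u(x, y_1, y_2, y_3) = u(1, y_1, y_2, y_3) - \int_x^1 \partial_x u(s, y_1, y_2, y_3)\, ds,
\]
so it suffices to control the integral on the right. Next, I would use the vanishing of $u$ on $\{y_3 = 0\}$, which gives $\partial_x u(s, y_1, y_2, 0) = 0$, so
\[
\partial_x u(s, y_1, y_2, y_3) = \int_0^{y_3} \partial_x \partial_{y_3} u(s, y_1, y_2, t_3)\, dt_3.
\]
Applying the same idea with $u \equiv 0$ on $\{y_2 = 0\}$ yields $\partial_x \partial_{y_3} u(s, y_1, 0, t_3) = 0$, and another integration produces a $\int_0^{y_2} \partial_x \partial_{y_2} \partial_{y_3} u\, dt_2$. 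Finally the condition $u \equiv 0$ on $\{y_1 = 0\}$ gives
\[
\partial_x \partial_{y_2} \partial_{y_3} u(s, 0, t_2, t_3) = 0,
\]
so one more application of the fundamental theorem of calculus gives the representation
\[
u(x, y_1, y_2, y_3) - u(1, y_1, y_2, y_3) = -\int_x^1\!\!\int_0^{y_1}\!\!\int_0^{y_2}\!\!\int_0^{y_3} \partial_x \partial_{y_1} \partial_{y_2} \partial_{y_3} u(s, t_1, t_2, t_3)\, dt_3\, dt_2\, dt_1\, ds.
\]

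With this integral representation in hand, the Cauchy--Schwarz inequality on the 4-dimensional domain $(x,1) \times (0, y_1) \times (0, y_2) \times (0, y_3) \subset (0,1)^4$ bounds the right-hand side by
\[
\sqrt{(1-x)\, y_1\, y_2\, y_3}\, \|\partial_x \partial_{y_1} \partial_{y_2} \partial_{y_3} u\|_{L^2((0,1)^4)} \leq \|\partial_x \partial_{y_1} \partial_{y_2} \partial_{y_3} u\|_{L^2((0,1)^4)},
\]
since every factor $1-x, y_j$ is at most $1$. Combining with the identity from the first step and the triangle inequality yields the claimed pointwise estimate with $C = 1$ (or any $C \geq 1$). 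The regularity assumption $k \geq 4$ is exactly what is needed so that the mixed derivative $\partial_x \partial_{y_1} \partial_{y_2} \partial_{y_3} u$ is continuous, and hence the iterated applications of the fundamental theorem of calculus are justified. There is no real obstacle; the only subtle point is to make sure that each successive derivative $\partial_x \partial_{y_j} \cdots \partial_{y_3} u$ really does vanish on the appropriate face $\{y_{j-1} = 0\}$, which follows by differentiating the hypothesis $u \equiv 0$ on that face tangentially.
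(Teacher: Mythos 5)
Your proof is correct and follows the same approach as the paper: both rest on the iterated fundamental-theorem-of-calculus identity expressing $u(1,\cdot)-u(x,\cdot)$ as a fourfold integral of $\partial_x\partial_{y_1}\partial_{y_2}\partial_{y_3}u$, followed by Cauchy--Schwarz. The paper simply asserts the identity; you spell out the derivation (which boundary condition justifies each integration), which is a useful clarification but not a different argument.
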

\begin{proof}  This just follows from the identity
    \[
    u(1,y_1,y_2,y_3)-u(x,y_1,y_3,y_3)= \int_x^1 \int_0^{y_3}\int_0^{y_2} \int_0^{y_1} \p_s \p_{\mu_1}\p_{\mu_2}\p_{\mu_3} u(s,\mu_1,\mu_2,\mu_3) \ ds d\mu.
    \]
\end{proof}

\begin{theorem}\label{NLGPT}
    Let $F \in C^\infty(M \times \mR)$.
    Then for any $\rho\in (-1,1),$  there exists $\eps=\eps(\rho,G,L)$ such that if $u_- \in H^k(S_{-,\rho}),$ $k\geq 3,$  and $||u_-||_{H^k(\scm)}<\eps,$ there exists a unique $u$ which satisfies
    \beq\label{NLGPE}
    \begin{split}
        (\square_g+ L)u &=  F(T,X,u) \text{ in }  \Omega_\rho,\\
        u\bigr|_{\scm} &= u_-,
    \end{split}
    \eeq
    where we define $\Omega_\rho = \{ (T,X): T-|X| < \rho, \;\ T+|X|>-\rho \}$.
\end{theorem}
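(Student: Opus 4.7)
The plan is to apply a Banach fixed-point argument to the nonlinear map $\Phi: w \mapsto v$, where $v$ is the unique finite-energy solution of the linear Goursat problem
\[
(\sq_g + L) v = F(T, X, w), \qquad v|_{\scm} = u_-,
\]
produced by Theorem \ref{GP1}. A fixed point of $\Phi$ is the desired solution of (\ref{NLGPE}).

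Fix $k \geq 3$ and let $V_1, V_2, V_3$ be a $C^\infty(M)$-generating set for $\mcv(\scm)$. I work in the Banach space
\[
X_\rho := \Bigl\{ u \;:\; \|u\|_{X_\rho}^2 := \sum_{|\alpha| \leq k} \sup_{|T| \leq \rho} E\bigl(V^\alpha u, T\bigr) < +\infty \Bigr\},
\]
whose norm is precisely what the iterated energy estimate (\ref{GPEk}) controls. Applying (\ref{GPEk}) to the system obtained by commuting $V^\alpha$ past $\sq_g + L$ (legitimate thanks to the commutator identity $[\sq_g, \mcv(\scm)] \subset h \sq_g + \Diff^1 \cdot \mcv(\scm) + \Diff^1$ of Proposition~4.10 of \cite{MelRit}) yields the linear bound
\[
\|\Phi(w)\|_{X_\rho} \leq C(\rho)\bigl(\|u_-\|_{H^k(\scmrho)} + \|F(\cdot, w)\|_{Y_\rho}\bigr),
\]
where $Y_\rho$ denotes the source-term norm appearing on the right of (\ref{GPEk}).

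To handle the nonlinear source, I need $L^\infty$ control on $w$ up to $\scm$. Choose coordinates $(x, y_1, y_2, y_3)$ near $\scm$ in which $\scm = \{x = 0\}$ and $V_1, V_2, V_3$ reduce to $\p_{y_j}$. The hypothesis that $u_-$ is supported in $\scmrho$, hence away from $\p \scm$, ensures that $w$ vanishes on the faces $\{y_j = 0\}$; Lemma \ref{LEM1}, combined with a trace-type bound for the boundary term at $x=1$, then gives
\[
\|w\|_{L^\infty(\Omega_\rho)} \leq C_1(\rho) \|w\|_{X_\rho},
\]
since $|\alpha| = 3 \leq k$ handles the three tangential derivatives and one further transverse derivative of a finite-energy function lies in $L^2$. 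Because $F(T,X,0) = \p_u F(T,X,0) = 0$, write $F(T,X,u) = u^2 \tilde F(T,X,u)$ with $\tilde F \in C^\infty$; Moser-type chain-rule estimates together with the $L^\infty$ bound produce
\[
\|F(\cdot, w)\|_{Y_\rho} \leq C_2(\rho) \Psi_0\bigl(\|w\|_{L^\infty}\bigr) \|w\|_{X_\rho}^2,
\]
for some continuous increasing $\Psi_0$, and the analogous identity $F(\cdot,w_1) - F(\cdot,w_2) = (w_1 - w_2) G(T,X,w_1,w_2)$ with $G \in C^\infty$ yields
\[
\|\Phi(w_1) - \Phi(w_2)\|_{X_\rho} \leq C_3(\rho)\bigl(\|w_1\|_{X_\rho} + \|w_2\|_{X_\rho}\bigr)\|w_1 - w_2\|_{X_\rho}.
\]

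For $\delta > 0$ chosen so that $2 C_3(\rho)\delta < 1$, and then $\eps = \eps(\rho, g, L) > 0$ so small that $C(\rho)\eps + C(\rho)C_2(\rho)\Psi_0(\delta)\delta^2 \leq \delta$, the map $\Phi$ is a self-map and a contraction on the closed ball $B_\delta \subset X_\rho$. Banach's fixed point theorem then produces a unique $u \in B_\delta$ solving (\ref{NLGPE}); uniqueness among all sufficiently small solutions in $\Omega_\rho$ follows by inserting two candidates into the contraction inequality. The main obstacle is the propagation of $L^\infty$ control up to the Goursat hypersurface $\scm$: Lemma \ref{LEM1} demands mixed derivatives of total order four (three tangential plus one transverse) in $L^2$, which is exactly why the threshold $k \geq 3$ appears, and is the sharp regularity that makes the contraction close. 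The commutator calculations with $\sq_g$ required when differentiating the equation are already absorbed into (\ref{GPEk}) via \cite{MelRit}, so no further technical difficulty arises from that direction.
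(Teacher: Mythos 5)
Your proposal is essentially the same argument as the paper's: a contraction mapping in a tangent-energy Banach space built from (\ref{GPEk}), with Lemma \ref{LEM1} providing the $L^\infty$ control up to $\scm$ and Moser/Gagliardo--Nirenberg estimates closing the nonlinear source term. The only structural difference is cosmetic: the paper first solves the homogeneous linear Goursat problem (\ref{GP0}) to produce $u_0$, writes the unknown as $u = u_0 + w$, and iterates the map $w \mapsto v$ defined by (\ref{MAP}) with \emph{zero} Goursat data, while you iterate directly the affine map $\Phi(w)$ with data $u_-$. Both work: in your version $\Phi(w_1) - \Phi(w_2)$ automatically has vanishing data on $\scm$, so the contraction estimate needs only the source-term part of (\ref{GPEk}), exactly as in the paper; and the self-map estimate uses the smallness of $u_-$ in the same way the paper uses (\ref{norm0}) and (\ref{bdu0}). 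The split $u = u_0 + w$ buys nothing mathematically here, it merely isolates the linear part so that the iteration takes place in a fixed subspace with zero boundary values; your formulation is equivalent and, if anything, slightly more streamlined. Your explanation of where the threshold $k \geq 3$ enters (three tangential derivatives consumed by Lemma \ref{LEM1} plus one transverse derivative coming from the $H^1$-level energy) is correct and is the same mechanism used in the paper via (\ref{bdu0}).
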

\begin{proof}  Let $u_0$ satisfy
    \beq\label{GP0}
    \begin{split}
        & (\square_g + L)u_0=0, \\
        & u_0\bigr|_{\scm}= u_- \in H^k(S_{-,\rho}),
    \end{split}
    \eeq
    It follows from \eqref{GPEk} that $\mcv(\scm)^k u_0\in \mce$ and that for fixed $\rho,$
    \beq\label{norm0}
    ||\mcv(\scm)^k u_0||_{H_1(\Omega_\rho)} \leq C_1(\rho)  ||\mcv(\scm)^k u_0||_{H^1(\scm)}\leq C_1(\rho) \eps.
    \eeq
    Also, by finite speed of propagation
    \[
    u_0=0 \text{ outside } J^+(S_{-,\rho}).
    \]

    This means that away from $\scm,$ $u_0\in H^{k+1},$ and hence bounded if $k$ is large. But according to Lemma \ref{LEM1},
    \beq\label{bdu0}
    \|u_0\|_{L^\infty} \leq C \| u_0\|_{\mch^{k+1}(\Omega_\rho)}.
    \eeq
    By proposition 2.1.2 of \cite{SaBSWE}, if $\Omega_\rho$ is an open set, the space
    \beq\label{GN}
    \begin{split}
        & u \in \mch^k(\Omega_\rho)  := L_{\loc}^\infty(\Omega_\rho)\cap \{ w: \mcv(\scm)^k w \in L^2\} \text{ is a } C^\infty \text{ algebra and moreover} \\
        & || \mcv(\scm)^k F(T,X,u)||_{L^2(\Omega_\rho)} \leq  K(||u||_{L^\infty}) ||u||_{\mch^k(\Omega)},  \text{ where } K \text{ is continuous. }
    \end{split}
    \eeq
    Let
    \[ B(0, C_1(\rho) \eps)=  \bigl\{ w\in \mch^k(\Omega_\rho), \; w=0 \text{ outside } J^+(\scmrho)  : \|w\|_{\mch^k} \leq C_1(\rho)\eps \bigr\},
    \]
    Then, according to \eqref{GPEk}, for $w\in B(0, C_1(\rho) \eps),$  the solution $v$ of
    \beq\label{MAP}
    \begin{split}
        & \bigl(\square+L\bigr) v= F(T,X,u_0+w), \\
        & v\bigr|_{\scm}= 0,
    \end{split}
    \eeq
    satisfies
    \[
    \begin{split}
        & ||v||_{\mch^{k}(\Omega_\rho)} \leq C_1(\rho) ||F(T,X,u_0+w)||_{\mch^k(\Omega_\rho)}^2 \leq \\
        & C_1(\rho) \bigl(K(||u_0+w||_{L^\infty}) \bigr) ||u_0+ w||_{\mch^k(\Omega_\rho} \bigr)^2,
    \end{split}
    \]
    but according to Lemma \ref{LEM1}, $||w||_{L^\infty} \leq C ||w||_{\mch^{k}},$ and so we find that
    \[
    ||v||_{\mch^{k}(\Omega_\rho)} \leq  C_1(\rho) \bigl(K(C||u_0+w||_{\mch^k} ||u_0+ w||_{\mch^k(\Omega_\rho} \bigr)^2\leq 4 C_1(\rho)^3 K(2C\eps)^2 \eps^2.
    \]
    If we pick $\eps$ such that $4C_1(\rho)^2K(2C\eps)^2 \eps<1,$ this gives a bounded nonlinear map
    \[
    \begin{split}
        B(0, C_1(\rho) & \eps) \longrightarrow B(0, C_1(\rho) \eps), \\
        & w \longmapsto v
    \end{split}
    \]

    Next we show that by further restricting $\eps$ this map is a contraction. For $w_1, w_2\in B(0, C_1(\rho) \eps),$  let $\mcf(T,X,s,t)\in C^\infty$ such that
    \[
    F(T,X,u_0+w_1)- F(T,X,u_0+w_2)= \mcf(T,X,u_0,w_1,w_2)(w_1-w_2)^2.
    \]
    We appeal to the Gagliardo-Nirenberg estimates proved in Lemma 5.2 of \cite{MelRit} which states that if $V_1,V_2, \ldots V_p\in \mcv(\scm),$ then for $0<|\alpha|< k,$
    \beq\label{GNE}
    \bigl|\bigl| \bigl(V_1, V_2, \ldots V_p\bigr)^\alpha u\bigr|\bigr|_{L^{\frac{2k}{|\alpha|}}} \leq C\bigl( \|u\|_{L^\infty} + \sum_{|\beta|\leq k} \bigl\|(V_1, \ldots V_p\bigr)^\beta \bigr\|_{L^2}\bigr),
    \eeq
    where $C$ only depends on the vector fields and the domain.
    A combination of the product rule and  H\"older's inequality gives
    \[
    \begin{split}
        & \bigl\| \bigl( V_1 \ldots V_p\bigr)^\alpha \bigl(\mcf(T,X,u_0,w_1,w_2) (w_1-w_2)\bigr)\bigr\|_{L^2} \leq  \\
        & \sum_{\ga\leq \alpha } \bigl\|(V_1\ldots V_p\bigr)^\ga \mcf(T,X,u_0,w_1,w_2)\bigr\|_{L^{\frac{2|\ga|}{|\alpha|}}} \bigl\|(V_1\ldots V_p\bigr)^{\alpha-\ga} (w_1-w_2)^2\bigr\|_{L^{\frac{2|\alpha-\ga|}{|\alpha|}}},
    \end{split}
    \]
    and  \eqref{GNE} gives that

    \[
    \begin{split}
        & \bigl\| \bigl( V_1 \ldots V_p\bigr)^\alpha \bigl(\mcf(T,X,u_0,w_1,w_2) (w_1-w_2)^2\bigr)\bigr\|_{L^2} \leq \\
        & C\biggl( \|\mcf\|_{L^\infty} + \sum_{|\beta|\leq |\alpha|} \bigl\|(V_1, \ldots V_p\bigr)^\beta \mcf \bigr\|_{L^2}\biggr)
        \biggl( \|w_1-w_2\|_{L^\infty}^2 + \sum_{|\beta|\leq |\alpha|} \bigl\|(V_1, \ldots V_p\bigr)^\beta (w_1-w_2)^2 \bigr\|_{L^2}\biggr)
    \end{split}
    \]
    In view of Lemma \ref{LEM1}, since $u_0=w_1=w_2=0$ outside $J^+(\scmrho),$ we obtain
    \[
    ||w_j||_{L^\infty} \leq C ||w_j||_{\mch^k}, j=1,2, \text{ and } ||u_0||_{L^\infty} \leq C ||u_0||_{\mch^k},  \text{ provided } k\geq 2.
    \]
    We also know from \eqref{GN} that
    \[
    \begin{split}
        & \bigl\|(V_1, \ldots V_p\bigr)^\beta \mcf(T,X,u_0,w_1,w_2) \bigr\|_{L^2}\leq \\
        & K\bigl(|\|u_0\|_{L^{\infty}},|\|w_1\|_{L^{\infty}},|\|w_2\|_{L^{\infty}}\bigr)\bigl(|\|u_0\|_{\mch^k}+|\|w_1\|_{\mch^k}+|\|w_2\|_{\mch^k}).
    \end{split}
    \]

    This implies that
    \beq\label{aux3}
    \|f(T,X,u_0+w_1)-f(T,X,u_0+w_2)\|_{\mch^k} \leq K(\rho,\eps) \eps^2,
    \eeq
    and therefore if $v_1$ and $v_2$ are the solutions of \eqref{MAP} corresponding to $w_1$ and $w_2,$ it follows from \eqref{GPEk} and \eqref{aux3} that
    \[
    ||v_1-v_2||_{\mch^k} \leq C_1(\rho) (K(\rho,\eps) \eps^2)^2.
    \]
    We now just pick $\eps$ such that $C_1(\rho) (K(\rho,\eps) \eps^2)^2<1.$ This gives a contraction mapping and therefore we obtain a function $w\in \mch^k(\Omega_\rho)$ such that $u=w+u_0$ satisfies \eqref{NLGPE}.
\end{proof}
Then the scattering map is defined on the set
\[
D = \bigcup_{\rho \in (-1,1)} D(\rho),  \quad \text{ with }  D(\rho) := \{ u_- \in H^k(\scmrho) : \|u_-\|_{H^k} < \epsilon(\rho) \} \text{ for } k\geq 3,
\]
where $\epsilon(\rho)>0$ is chosen so that  Theorem \ref{NLGPT} holds and the nonlinear wave equation, with data $u_-$, admits a unique solution which is defined in the region $t-|X| < \rho$. The scattering map is thus a map (or collection of maps)
\beq\label{def-scat}
\begin{split}
    D(\rho) \ni u_- \mapsto u\bigr|_{S_{+,\rho}},
\end{split}
\eeq
where $u$  is the solution of  (\eqref{NLGPE}) and  $S_{+,\rho} =\{(T,X) \in \scp:  T-|X| < \rho\}$.
Observe that $u\in C^k$ and there is no problem defining the restriction of $u$ to $\scp.$

\section{Preliminaries}\label{sec_prelim}
\subsection{Lorentzian manifolds}

We recall some notations and preliminaries in \cite{Kurylev2018}.
Let $(\tM, g)$ be a globally hyperbolic Lorentzian manifold.
For $\eta \in T_p^*\tM$, the corresponding vector of $\eta$  is denoted by $ \eta^\# \in T_p \tM$.
The corresponding covector of a vector $\xi \in T_p \tM$ is denoted by $ \xi^\flat \in T^*_p \tM$.
We denote by
\[
L_p \tM = \{\zeta \in T_p \tM \setminus 0: \  g(\zeta, \zeta) = 0\}
\]
the set of light-like vectors at $p \in \tM$ and similarly by $L^*_p \tM$ the set of light-like covectors.
The sets of future (or past) light-like vectors are denoted by $L^+_p \tM$ (or $L^-_p \tM$), and those of future (or past) light-like covectors are denoted by $L^{*,+}_p \tM$ (or $L^{*,-}_p \tM$).

The characteristic set $\Char(\sq_g)$ is the set $b^{-1}(0) \subset T^*\tM$, where
$
b(x, \zeta) =g^{ij}\zeta_i \zeta_j
$
is the principal symbol.
It is also the set of light-like covectors with respect to $g$.
We denote by $\Theta_{x, \zeta}$ the null bicharacteristic of $\sq_g$ that contains $(x, \zeta) \in L^*\tM$,
which is defined as the integral curve of the Hamiltonian vector field $H_b$. 
Then a covector $(y, \eta) \in \Theta_{x, \zeta}$ if and only if there is a light-like geodesic $\gamma_{x, \zeta^\#}$ such that
\[
(y, \eta) = (\gamma_{x, \zeta^\#}(s), (\dot{\gamma}_{x, \zeta^\#}(s))^\flat), \ \text{ for } s \in \mathbb{R}.
\]
Here we denote by $\gamma_{x, \zeta^\#}$ the unique null geodesic starting from $x$ in the direction $\zeta^\#$.

The time separation function $\tau(x,y) \in [0, \infty)$ between two points $x < y$ in  $\tM$
is the supremum of the lengths \[
L(\alpha) =  \int_0^1 \sqrt{-g(\dot{\alpha}(s), \dot{\alpha}(s))} ds
\] of
the piecewise smooth causal paths $\alpha: [0,1] \rightarrow \tM$ from $x$ to $y$.
If $x<y$ is not true, we define $\tau(x,y) = 0$.
Note that $\tau(x,y)$ satisfies the reverse triangle inequality
\[
\tau(x,y) +\tau(y,z) \leq \tau(x,z), \text{ where } x \leq y \leq z.
\]
For $(x,v) \in L^+\tM$, recall the cut locus function
\[
\rho(x,v) = \sup \{ s\in [0, \mathcal{T}(x,v)]:\ \tau(x, \gamma_{x,v}(s)) = 0 \},
\]
where $\mathcal{T}(x,v)$ is the maximal time such that $\gamma_{x,v}(s)$ is defined.
The cut locus function for past lightlike vector $(x,w) \in L^-\tM$ is defined dually with opposite time orientation, i.e.,
\[
\rho(x,w) = \inf \{ s\in [\mathcal{T}(x,w),0]:\ \tau( \gamma_{x,w}(s), x) = 0 \}.
\]
For convenience, we abuse the notation $\rho(x, \zeta)$ to denote $\rho(x, \zeta^\#)$ if $\zeta \in L^{*,\pm}\tM$.
By \cite[Theorem 9.15]{Beem2017}, the first cut point $\gamma_{x,v}(\rho(x,v))$ is either the first conjugate point or the first point on $\gamma_{x,v}$ where there is another different geodesic segment connecting $x$ and  $\gamma_{x,v}(\rho(x,v))$.

\subsection{Lagrangian distributions}
Suppose $\Lambda$ is a conic Lagrangian submanifold in $T^*\tM$ away from the zero section.
We denote by $\Ical^\mu(\Lambda)$ the set of Lagrangian distributions in $\tM$ associated with $\Lambda$ of order $\mu$.
In local coordinates, a Lagrangian distribution can be written as an oscillatory integral and we regard its principal symbol,
which is invariantly defined on $\Lambda$ with values in the half density bundle tensored with the Maslov bundle, as a function in the cotangent bundle.
If $\Lambda$ is a conormal bundle of a submanifold $K$ of $\tM$, i.e. $\Lambda = N^*K$, then such distributions are also called conormal distributions.
The space of distributions in $\tM$ associated with two cleanly intersecting conic Lagrangian manifolds $\Lambda_0, \Lambda_1 \subset T^*\tM \setminus 0$ is denoted by $\Ical^{p,l}(\Lambda_0, \Lambda_1)$.
If $u \in \Ical^{p,l}(\Lambda_0, \Lambda_1)$, then one has $\wfset{(u)} \subset \Lambda_0 \cup \Lambda_1$ and
\[
u \in \Ical^{p+l}(\Lambda_0 \setminus \Lambda_1), \quad  u \in \Ical^{p}(\Lambda_1 \setminus \Lambda_0)
\]
away from their intersection $\Lambda_0 \cap \Lambda_1$. The principal symbol of $u$ on $\Lambda_0$  and  $\Lambda_1$ can be defined accordingly and they satisfy some compatible conditions on the intersection.

For more detailed introduction to Lagrangian distributions and paired Lagrangian distributions, see \cite[Section 3.2]{Kurylev2018} and \cite[Section 2.2]{Lassas2018}.
The main reference are \cite{MR2304165, Hoermander2009} for conormal and Lagrangian distributions and
{\rv
    \cite{Hoop2015,Greenleaf1990,Greenleaf1993,Melrose1979,Guillemin1981}
    for paired Lagrangian distributions.
}

{
    \subsection{Inverses of linear wave equations}\label{subsec_causalinverse}
    On a globally hyperbolic Lorentzian manifold $(\tM,g)$, the wave operator $\sq_g$ with the principal symbol $b(x, \zeta) =g^{ij}\zeta_i \zeta_j$ is normally hyperbolic, see \cite[Section 1.5]{Baer2007}.
    It has a unique casual inverse $\sq_g^{-1}$ according to \cite[Theorem 3.3.1]{Baer2007}.
    By \cite[Proposition 6.6]{Melrose1979}, one can symbolically construct a parametrix $Q_g$, which is the solution operator to the wave equation
    \[
    \begin{split}
        \sq_g v &= f, \quad \text{ on } \tM,\\
        v & = 0, \quad \text{ on } \tM \setminus J^+(\supp(f)),
    \end{split}
    \]
    in the microlocal sense.
    It follows that $Q_g \equiv \sq^{-1}_g$ up to a smoothing operator.
    We denote the kernel of $Q_g$ by $q(x, \tilde{x})$ and
    it is a paired Lagrangian distribution
    in $\Ical^{-\frac{3}{2}, -\frac{1}{2}} (N^*\text{Diag}, \Lambda_g)$,
    where $\text{Diag}$ denotes the diagonal in $M \times M$, $N^*\text{Diag}$ is its conormal bundle, and $\Lambda_g$ is the flow-out of
    $N^*\text{Diag} \cap \Char(\sq_g)$ under the Hamiltonian vector field $H_b$.
    Here we construct the microlocal solution to the equation
    \[
    \sq_g q(x, \tilde{x}) = \delta(x, \tilde{x}) \mod C^\infty(M \times M),
    \]
    using the proof of \cite[Proposition 6.6]{Melrose1979}.
    The symbol of $Q_g$ can be found during the construction there.
    In particular, the principal symbol of $Q_g$ along $N^*\text{Diag}$  satisfying
    $
    \sigma_p(\delta) = \sigma_p(\sq) \sigma_p(Q_g)
    $
    is nonvanishing.
    The one along $\Lambda_g \setminus N^*\text{Diag}$ solves the transport equation
    \[
    \mathcal{L}_{H_b}\sigma_p(Q_g) + i c\sigma_p(Q_g) = 0,
    \]
    where $\mathcal{L}_{H_b}$ is the Lie action of the Hamiltonian vector field $H_b$ and $c$ is the subprincipal symbol of $\sq$.  
    The initial condition 
    is given by restricting $\sigma_p(Q_g)|_{N^*\text{Diag}}$ to $\partial \Lambda_g$,
    see (6.7)  and Section 4 in \cite{Melrose1979}.
    Then one can solve the transport equation by integrating along the bicharacteristics. 
    This implies 
    the solution to the transport equation is nonzero and therefore 
    $\sigma_p(Q_g)|_{\Lambda_g}$ is nonvanishing.
    See also \cite{Hoop2015, Baer2007, Greenleaf1993} for more references.

    We have the following proposition according to \cite[Proposition 2.1]{Greenleaf1993}, see also \cite[Proposition 2.1]{Lassas2018}.
    \begin{prop}
        Let $\Lambda$ be a conic Lagrangian submanifold in $T^*M \setminus 0$.
        Suppose $\Lambda$ intersects $\Char(\sq_g)$ transversally, such that its intersection with each bicharacteristics has finite many times.
        Then
        \[
        Q_g: \Ical^\mu(\Lambda) \rightarrow \Ical^{\mu- \frac{3}{2},-\frac{1}{2}}(\Lambda, \Lambda^g),
        \]
        where $ \Lambda^g$ is the flow-out of $\Lambda \cap \Char(\sq_g)$ under the Hamilton flow.
        Moreover, for $(x, \xi) \in \Lambda^g \setminus \Lambda$, we have
        \[
        \sigma_p(Q_g u)(x, \xi) = \sum \sigma_p(Q_g)(x, \xi, y_j, \eta_j)\sigma_p(u)(y_j, \eta_j),
        \]
        where the summation is over the points $(y_j, \eta_j) \in \Lambda$ that lie on the bicharacteristics from $(x, \xi)$.
    \end{prop}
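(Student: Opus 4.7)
The plan is to deduce the proposition from the composition calculus for paired Lagrangian distributions, using the structure of the kernel of $Q_g$ already described above. Write $Q_g u(x) = \int q(x,\tilde{x}) u(\tilde{x}) \, d\tilde{x}$, where $q \in \Ical^{-3/2,-1/2}(N^*\mathrm{Diag},\Lambda_g)$ is the Schwartz kernel of the microlocal parametrix. Viewing $Q_g$ as an operator with paired Lagrangian kernel acting on the Lagrangian distribution $u \in \Ical^\mu(\Lambda)$, the first step is to identify the two output Lagrangians via the canonical relation calculus: composition with $N^*\mathrm{Diag}$ (the identity relation) returns $\Lambda$ itself, while composition with the flow-out $\Lambda_g$ sends $\Lambda \cap \Char(\sq_g)$ along the Hamilton flow of $b$, producing exactly $\Lambda^g$.

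Next I would verify that the geometric hypotheses needed for the composition theorem of Greenleaf--Uhlmann (the analogue of Proposition 2.1 of \cite{Greenleaf1993}) are satisfied. The transversality assumption $\Lambda \pitchfork \Char(\sq_g)$ ensures that the intersection of the two canonical relations is clean, and the assumption that each bicharacteristic meets $\Lambda$ at most finitely many times guarantees that the sum defining the principal symbol on $\Lambda^g$ is finite and well-defined. With these checks in place, the composition theorem yields $Q_g u \in \Ical^{\mu-3/2,-1/2}(\Lambda,\Lambda^g)$, the order bookkeeping coming from the fact that $q$ has order $-3/2$ on $N^*\mathrm{Diag}$ and order $-1/2-3/2 = -2$ on $\Lambda_g$ in the conventions used, combined with the usual shift in the composition of a Fourier integral operator with a Lagrangian distribution.

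For the symbol formula, on $\Lambda^g \setminus \Lambda$ the output is a genuine Lagrangian distribution and its principal symbol is computed purely from the $\Lambda_g$ part of the kernel. Each point $(x,\xi) \in \Lambda^g \setminus \Lambda$ lies on the flow-out of finitely many points $(y_j,\eta_j) \in \Lambda \cap \Char(\sq_g)$, and at each such $(y_j,\eta_j)$ the stationary phase / symbolic composition gives a contribution $\sigma_p(Q_g)(x,\xi,y_j,\eta_j)\sigma_p(u)(y_j,\eta_j)$; summing these contributions produces the stated formula. The nonvanishing of $\sigma_p(Q_g)|_{\Lambda_g}$ noted in the previous subsection (coming from the transport equation for the parametrix along bicharacteristics) justifies treating each summand as the genuine principal contribution rather than a lower order term.

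The main obstacle, and the only step that requires real care, is the clean intersection check for the composition of the \emph{paired} Lagrangian kernel with $\Lambda$: one has to rule out that bicharacteristics of $\sq_g$ starting from $\Lambda \cap \Char(\sq_g)$ return to meet $\Lambda$ tangentially or accumulate, both of which are excluded by the transversality and finiteness hypotheses in the statement. Once this is settled, the order count and the symbol formula follow by direct application of the paired Lagrangian composition calculus, so I would not carry out the oscillatory integral manipulations in detail but rather cite \cite{Greenleaf1993,Hoop2015} for the precise composition theorem.
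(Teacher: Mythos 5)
Your proposal takes the same route as the paper, which does not prove this proposition but simply cites \cite[Proposition 2.1]{Greenleaf1993} and \cite[Proposition 2.1]{Lassas2018} for the composition of the paired Lagrangian kernel of $Q_g$ with a Lagrangian distribution; your sketch of why the hypotheses of that composition theorem hold is a reasonable elaboration. One small slip: in the convention recalled earlier in the paper, $q \in \Ical^{-3/2,-1/2}(N^*\mathrm{Diag},\Lambda_g)$ has order $p+l=-2$ on $N^*\mathrm{Diag}\setminus\Lambda_g$ and order $p=-3/2$ on $\Lambda_g\setminus N^*\mathrm{Diag}$, which is the reverse of what you wrote, though this does not affect the conclusion you draw.
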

}
Later we would like to consider the solution to
\beq\label{def_Qs}
\sq_g v = f \text{ in } M, \quad \text{ with }\lR_-[v] = 0,
\eeq
where $\lR_-[v]$ is the restriction of $v$ to the smooth null hypersurface $S_-$.
For convenience, we define the solution operator $Q_s$ in the sense that $v = Q_s(f)$ solves the linear problem above.
In particular, for $f \in \lE'(M)$, this solution operator coincides with the causal inverse $Q_g$ up to smoothing operators.
\subsection{Receding waves with conormal singularities}\label{sec_receding}
Our purpose is to generate solutions of the Goursat problem \eqref{GP0} with scattering data $u_0$ supported in $S_{-,\rho}$.
In addition, these solutions are supposed to have conormal singularities along a $C^\infty$ hypersurface transversal to $S_-$, in a small neighborhood of a given point $p\in S_{-,\rho}$.
We name these receding waves, following Friedlander \cite{Fri,Fri1}. This is a local result and we  work in suitable local coordinates given by the following lemma.
\begin{lemma}\label{norm-form} One can choose local coordinates $(s,x)$,
with $x=( x_1, x_2,x_3)$, valid near $p\in S_-$ such that, modulo lower order terms, we have $\Smi = \{x_1 = 0\}$ and
\beq\label{NF1}
    \begin{split}
        \square_{g}\equiv a(x,s) x_1 \p_{x_1}^2+  b_0(x,s)\p_{x_1} \p_s+&  \sum_{j=2,3} (\alpha_{j} \p_s + x_1 \beta_j\p_{x_1})\p_{x_j}+
        \sum_{j,k=2,3}^3 \alpha_{jk} \p_{x_j} \p_{x_k},
    \end{split}
    \eeq
    with $b_0(x,s)\not=0$.

\end{lemma}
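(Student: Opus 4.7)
The plan is to construct adapted coordinates $(s,x_1,x_2,x_3)$ near $p$ in which $s$ is a null function (so $g^{ij}\partial_is\partial_js=0$) and $x_1$ is a defining function for $S_-$ whose $g$-gradient on $S_-$ is parallel to $\partial_s$. Since the principal part of $\sq_g$ in any chart is $\sum_{\mu,\nu}g^{\mu\nu}\partial_\mu\partial_\nu$, the claimed normal form is equivalent, modulo lower order terms, to the four conditions
\[
g^{ss}=0,\qquad g^{11}|_{x_1=0}=0,\qquad g^{1j}|_{x_1=0}=0\ (j=2,3),\qquad g^{1s}|_{S_-}\neq 0,
\]
after which the coefficients are read off as $b_0=2g^{1s}$, $a\,x_1=g^{11}$, $x_1\beta_j=2g^{1j}$, $\alpha_j=2g^{sj}$, and $\alpha_{jk}$ from $g^{jk}$.

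First I would build coordinates on $S_-$ itself. Since $S_-$ is a smooth null hypersurface it is ruled by its null generators: pick a spacelike $2$-surface $\Sigma\subset S_-$ through $p$ transverse to the generators with local coordinates $(y_2,y_3)$, and extend to coordinates $(u,y_2,y_3)$ on $S_-$ near $p$ by taking $u$ as the affine parameter along the generators emanating from $\Sigma$, so that $\partial_u|_{S_-}$ points along a null generator. Next extend into $M$: let $x_1$ be a defining function for $S_-$ (for instance the affine parameter along a transverse null congruence shot from $S_-$), and propagate $(y_2,y_3)$ off $S_-$ to be constant along this congruence, obtaining preliminary coordinates $(u,x_1,y_2,y_3)$. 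The assumption that $S_-$ is characteristic gives $g^{11}|_{S_-}=0$, and the key observation is that $(dx_1)^\sharp|_{S_-}=g^{1u}\partial_u+g^{1j}\partial_{y_j}$ is a null vector tangent to $S_-$; since any null vector in $TS_-$ must lie in the $1$-dimensional null generator direction spanned by $\partial_u$, this forces $g^{1j}|_{S_-}=0$ for $j=2,3$ and $g^{1u}|_{S_-}\neq 0$.

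Finally I would replace $u$ by a genuine null function $s$. Define $s$ near $p$ as the solution of the eikonal equation $g^{ij}\partial_is\partial_js=0$ with Cauchy data $s|_{S_-}=u$; on $S_-$, using $g^{11}|_{S_-}=g^{1j}|_{S_-}=0$ and $\partial_{y_j}s|_{S_-}=0$, the constraint collapses to the linear equation $2g^{u1}|_{S_-}\partial_{x_1}s+g^{uu}|_{S_-}=0$, which is uniquely solvable because $g^{u1}|_{S_-}\neq 0$. Moreover the bicharacteristic of the eikonal starting at $(p,ds|_p)$ has velocity $(ds|_p)^\sharp$ whose $\partial_{x_1}$ component equals $g^{u1}|_{S_-}\neq 0$, so $S_-$ is non-characteristic for the eikonal and the method of characteristics yields a smooth null $s$ in a neighborhood of $p$. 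The change $(u,x_1,y_2,y_3)\mapsto(s,x_1,y_2,y_3)$ is a local diffeomorphism since $\partial_us|_{S_-}=1$, and setting $(x_2,x_3):=(y_2,y_3)$ completes the chart; a direct tensor computation confirms the four displayed conditions in the new coordinates and yields the claimed form. The main obstacle is the simultaneous realization of $g^{ss}=0$, $g^{1j}|_{S_-}=0$, and $g^{1s}|_{S_-}\neq 0$: parameterizing $S_-$ first by its null generators produces the latter two from pure geometry, and the very same non-degeneracy $g^{u1}|_{S_-}\neq 0$ is what permits the subsequent passage to a null coordinate $s$ without spoiling the earlier properties.
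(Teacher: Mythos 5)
Your construction is correct and follows the same underlying strategy as the paper: build adapted coordinates near $p$ with $S_-=\{x_1=0\}$, choose a coordinate direction $\partial_s$ (your $\partial_u$) tangent to the null generators of $S_-$, read off $g^{11}|_{S_-}=0$ from the characteristic condition, $g^{1j}|_{S_-}=0$ for $j=2,3$ from the fact that the only null direction in $T_pS_-$ is the generator, and $g^{1s}|_{S_-}\neq 0$ from nondegeneracy. The paper's version reaches the same three facts by writing out the coordinate expression of $\sigma_2(\square_g)$, taking $\partial_s=\sum_{j\geq 2}a_{1j}(0,y')\partial_{y_j}$, and invoking simple characteristicity (which, for a wave operator with nondegenerate Lorentzian metric, reduces to exactly the nondegeneracy you use). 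Your geometric formulation via $(dx_1)^\sharp|_{S_-}$ lying in the one-dimensional radical of $g|_{TS_-}$ is equivalent and arguably cleaner.

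The genuine difference is your final eikonal step. You replace $u$ by a null function $s$ solving $g^{ij}\partial_i s\,\partial_j s=0$ with $s|_{S_-}=u$, which forces $g^{ss}\equiv 0$ near $p$; you check (correctly) that this substitution does not spoil the three vanishing/nonvanishing conditions already secured, because $x_1,x_2,x_3$ are unchanged and $\partial_u s|_{S_-}=1$. The paper's proof, as written, does not impose any such condition, and its change-of-variables formula visibly produces a $\partial_s^2$ coefficient $\sum_{j,k\geq 2}a_{jk}\,\partial_{y_j}s\,\partial_{y_k}s$ which is not shown to vanish. Since \eqref{NF1} has no $\partial_s^2$ term, your extra step is in fact needed to make the statement literally true; in that sense your argument fills a gap. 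That said, the downstream use of the lemma in Section \ref{sec_receding} only requires the form of the symbol at $\{x_1=0\}$, namely $p=b_0\sigma\xi_1+h(s,x_2,x_3,\sigma,\xi_2,\xi_3)$ with $h$ an arbitrary quadratic in $(\sigma,\xi_2,\xi_3)$, so a nonzero $g^{ss}$ would cause no harm there.

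Two small points worth tightening in a final write-up: when you extend off $S_-$ along the transverse null congruence you should also propagate $u$, not only $(y_2,y_3)$, to have a full preliminary chart; and you should note that a smooth choice of the transverse null direction along $S_-$ exists (e.g., the unique outward future null direction in the $g$-orthogonal complement of the screen space determined by $\Sigma$), so the congruence is well-defined. Neither affects the correctness of the argument.
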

\begin{proof} We may choose local coordinates $y=(y_1,y_2,y_3,y_4)$ near $p \in S_-$ such that
    \[
    S_-=\{y_1=0\} \text{ and } p=(0,0,0,0).
    \]
    Since $S_-$ is characteristic for $\square_g$, the principal  symbol $p(y,\eta)=\sigma_2(\square_g)$ satisfies
    \[
    p(y,\eta)= a_{11}(y) y_1 \eta_1^2 + \sum_{j=2}^4 a_{1j}(y) \eta_1 \eta_j+ \sum_{j,k=2}^4 a_{jk}(y) \eta_j\eta_k.
    \]
    Moreover, since  $S_-$ is simply characteristic, $dp\not=0$ on $N^*S_-\setminus 0,$ and hence
    \[
    a_{1j}(0)\not=0,  \;\ j=2,3,4.
    \]

    Now we  pick local coordinates $(s,x_2,x_3)$ defined in a neighborhood of $p$, but inside the surface $S_-,$ such that
    \[
    \begin{split}
        \p_s &= a_{12}(0,y')\p_{y_2}+ a_{13}(0,y')\p_{y_3}+ a_{14}(0,y')\p_{y_4}, \\
         \text{ and } \p_s x_j &= 0, \text{ for } j=2,3, \qquad  \p_{x_2}x_3=\p_{x_3}x_2=0.
    \end{split}
    \]
    Now we extend these coordinates to a neighborhood of $q$ in $\tM$ such that
    \[
    \p_{y_1} s=0, \quad \p_{y_1} x_2=0, \quad \p_{y_1} x_3=0.
    \]
    It follows that in coordinates  $(s,x_1,x_2,x_3)$, where $x_1=y_1$, the principal part of $\square_{g}$ satisfies
    \[
    \begin{split}
        \square{g}=  a_{11} x_1\p_{x_1}^2+ & \bigl(\sum_{j=1}^3 a_{1j} \frac{\p s}{\p y_j}\bigr) \p_s \p_{x_1} + \sum_{j=2}^4 \biggl( a_{12} \frac{\p x_j}{\p y_2}+ a_{13} \frac{\p x_j}{\p y_3}+ a_{14} \frac{\p x_j}{\p y_4} \biggr)\p_{x_j} \p_{x_1}+\\
        & \quad \quad \quad \quad \quad \quad \quad\quad \quad \quad\sum_{j,k=2}^4 a_{jk}\bigl( \frac{\p s}{\p y_j}\p_s+  \frac{\p x}{\p y_j}\cdot \p_x\bigr) \bigl( \frac{\p s}{\p y_k}\p_s+  \frac{\p x}{\p y_k}\cdot \p_x\bigr),
    \end{split}
    \]
    where
    \[
    \frac{\p x}{\p y_k}\cdot \p_x= \frac{\p x_1}{\p y_k} \p_{x_1}+ \frac{\p x_2}{\p y_k} \p_{x_2}+\frac{\p x_3}{\p y_k} \p_{x_3}.
    \]
    Since $\p_s x_j=0$ on $\{x_1=0\}$, this proves \eqref{NF1}.  The fact $b_0\not=0$ is due to the fact that $S_-$ is simply characteristic.
\end{proof}

Given a point $p \in \Smi$ and let $\tU \subseteq \tM$ be a neighborhood of $p$ in which coordinates \eqref{NF1} are valid.
Let $\phi(x_2,x_3)\in C^\infty(\{x_1=0\})$  and let
\[
\Sigma=\{(s,x_2,x_3)\in U: s-\phi(x_2,x_3)=0\}
\]
be a $C^\infty$ surface in
$U = \tU \cap \{x_1 = 0\}$.
Its conormal bundle is parameterized by the phase function $\Phi(s,x_2,x_2,\alpha)=\alpha (s-\phi(x_2,x_3))$ with $\alpha \in \mr \setminus 0,$  in the sense that
\[
\begin{split}
N^*\Sigma =\{ (s,x_2, x_3, \sigma, \xi_2, \xi_3): &\ \p_\alpha \Phi=s-\phi(x_2,x_3)=0, \;\ \sigma=\p_s \Phi=\alpha,\\
&\quad \quad \quad
\xi_j=\p_{x_j}\Phi=-\alpha \p_{x_j} \phi(x_2,x_3), \text{ for } j=2,3\}.
\end{split}
\]
Since here $N=1$, let
\[
\begin{split}
    u(s,x_2,x_3)= \frac{1}{(2\pi)^n} \int_{\mr} & e^{i \alpha(s-\phi(x_2,x_3)} a(s,x_2,x_3,\alpha) \ d\alpha, \quad \text{where  $a\in S^{\frac{m+2n-1}{4}} (U \times (\mr_{\alpha}\setminus 0))$},
\end{split}
\]
with
\[
a(s,x_2,x_3,\alpha)\sim \sum_{j=j_0}^\infty a_j(s,x_2,x_3) \alpha^{j_0-j}.
\]
According to \cite[Chapter 25]{Hor4}, this is a conormal distribution on $\Sigma.$
Now we extend $N^*\Sigma$ to a submanifold of $\mcv \subseteq T_{\{x=0\}}^* \tM,$ which is characteristic for the operator $\square_{g},$ or in other words, that $ \mcv \subseteq p^{-1}(0),$ where $p= \sigma_2(\square_{g}).$ But it follows from \eqref{NF1} that at $\{x_1=0\},$ we have
\[
p= b_0\sigma\xi_1+ h(s,x_2,x_3,\sigma,\xi_2,\xi_3),
\]
where $h$ is a homogeneous polynomial of degree two in $(\sigma,\xi_2,\xi_3)$, with smooth coefficients depending on $(s,x_2,x_3).$
Thus, we set
\[
\begin{split}
     \mcv =\bigl\{ (s,0, x_2,x_3, &\sigma,\xi_1, \xi_2,\xi_3): \ \sigma= \alpha, \;\ \xi_j= -\p_{x_j}\phi(x_2,x_3)\alpha, \text{ for }j=2,3, \\
    & \xi_1= - \frac{1}{b_0(0,x_2,x_3,s) \sigma}h(s, x_2,x_3,\sigma,\xi_2,\xi_3)= -\frac{\alpha}{b_0(0,x_2,x_3,s)} h(s,x_2,x_3,1, \p_{x_2} \phi, \p_{x_3} \phi)\}.
\end{split}
\]
Since $H_p$ is transversal to $\{x_1=0\}$ when $\sigma\not=0,$ we define
\[
\mfU= \exp(-\mu H_{\mfp}) \mcv,
\]
to be the manifold obtained by flowing-out of $\mcv$ to $M,$ where the negative sign indicates that the flow goes towards the region $\{x_1>0\}=M$  is a $C^\infty$ conic Lagrangian submanifold on the entire $T^*\tM\setminus 0.$

Since by construction, the projection
\[
\begin{split}
    & \Pi: \mfU \longrightarrow \tM, \\
    & (s,x, \sigma,\xi) \longmapsto (s,x)
\end{split}
\]
has maximal rank at $p$, then it must have maximal rank near $a$ and so $\mfU$ is the conormal bundle of a $C^\infty$ surface near
$\{x_1=0\}.$  Therefore there exists a $C^\infty$ surface
\[
\wt  \Sigma\subset \tM, \text{ such that } \wt \Sigma\cap \{x_1=0\}= \Sigma= \{\phi(s,x_2,x_3)=0\}
\]
and so there exists a $C^\infty$ function $\psi(s,x_1,x_3,x_3)$ in a neighborhood $\wt U\subset \tM$ of $p$ such that
$\psi(s,0,x_2,x_3)= \phi(s,x_2,x_3)$ and $\p_s \psi\not=0  \text{ in } \wt U,$  and therefore
\[
\mfU=N^*\{(s,x) \in \wt U:  \psi(s,x)=0, \;\ \p_s \psi\not=0 \text{ in } \wt U\}.
\]
The phase function $\psi(s,x)\alpha$ with $\alpha\in \mr\setminus 0$ parameterizes  $N^*\wt \Sigma.$
Next we want to find $v(s,x)$ such that $Pv=0$ and
\[
\begin{split}
    v(s,0,x_2,x_3)= u_0(s,x_2,x_3)=  \frac{1}{(2\pi)^n} \int_{\mr} e^{i\alpha(s- \phi(x_2,x_3))} a(s,x_2,x_3,\alpha) \ d\alpha,
\end{split}
\]
where we write $P=\square_{g}+L$ and $a\in S^{\frac{m+2n-1}{4}} (U \times \mr_{\alpha})$.
We shall denote $p=\sigma_2(P).$
We take
\[
\begin{split}
    v_1(s,x)= \frac{1}{(2\pi)^n} \int_{\mr} & e^{i\alpha \psi(s,x) } \beta(s,x,\alpha) \ d\alpha,
\end{split}
\]
where $\beta \in S^{\frac{m+2n-1}{4}} (\wt U \times( \mr_{\alpha}\setminus 0))$ such that $\beta(s,x,\theta,\alpha)\sim \sum_{j=j_0}^\infty \beta_j(s,x,\theta) \alpha^{j_0-j}.$
A standard computation gives
\[
P v_1=  \frac{1}{(2\pi)^n} \int_{\mr} e^{i\alpha\psi(s, x) } \bigl( - p(\nabla \psi)\alpha^2 \beta +  P\beta + \alpha ((P\psi) \beta+ H_{p}\bigr|_{\mfU} \beta) \bigr) \ d\alpha,
\]
where $H_{p}\bigr|_{\mfU}$ is the restriction of the Hamilton vector field $H_p$ to $\mfU.$   We know that $\mfU$ is characteristic, so $ p(\nabla \psi)=0.$
We obtain the following transport equations for $\beta_j$ in terms of $\beta_k$, for $k<j$.
Since $\p_s\psi(s,0,x_2,x_3)=\p_s \phi(s,x_2,x_3)\not=0$ near $p$, then $\p_s\psi(s,x)\not=0$ near $\{x_1=0\}$.
In particular, the vector filed $H_{p}\bigr|_{\mfU}$ is transversal to $\{x_1=0\}$ and these equations can be solved in a neighborhood of $\{x_1=0\}.$  This construction gives a function $v_1(s,x)$ such that
\[
\begin{split}
    Pv_1 =f \in C^\infty(M), \quad \text{ with }
    v(s,0,x_2,x_3) = u(s,x_2,x_3).
\end{split}
\]
We know from \eqref{GPEk} that the solution $w$ of
\[
\begin{split}
    P w = f \quad \text{in } M,\quad \text{ with } w =0 \text{ on } \Smi,
\end{split}
\]
is a $C^\infty$ function and therefore so $v= v_1-w$ is the desired solution.

Moreover, the proof of Lemma \ref{norm-form} and the analysis above indicates the following proposition.
\begin{prop}\label{pp_RFFIO}
Let $\Lambda$ be a conic Lagrangian submanifold in $T^*M \setminus 0$.
Suppose $\Lambda$ intersects $\Char(\sq_g)^\pm$ transversally finite many times.
Let $\Lambda^{g,\pm}$ be the flow-out of $\Lambda \cap \Char(\sq_g)^\pm$ under the Hamiltonian flow.
Then for $\eps>0$ small enough, $\La^{g,\pm}$ extend to $C^\infty$ Lagrangian submanifolds to $T^*\tM$.
Moreover, it intersects $S_\pm$ cleanly with
\[
 \Lambda_\infty^\pm= \La^{g,\pm} \cap S_\pm
\]
as $C^\infty$ Lagrangian submanifolds of $T^*S_\pm \setminus 0$.
In addition, if $u \in I^\mu(\tM, \Lambda^{g,\bullet})$ is a Lagrangian distribution, then its restriction
\[
    u|_{S_\bullet} \in I^{\mu+ 1/4}(S_\bullet, \Lambda_\infty^\bullet)
    \] and the symbol is given by
    $\sigma(u)$ restricted to $\Lambda_\infty^\bullet$, where we write $\bullet = \pm$.
    %
\end{prop}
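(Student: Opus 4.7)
The plan is to combine the normal form from Lemma \ref{norm-form} with the transversality of the Hamilton flow to $S_\pm$ and the standard pull-back calculus for Lagrangian distributions. First, working near a point of $S_\pm$ in the coordinates of Lemma \ref{norm-form}, the principal symbol of $\sq_g$ restricted to $\{x_1=0\}$ has the form $\sigma_2(\sq_g)|_{x_1=0} = b_0 \sigma \xi_1 + h(s, x_2, x_3, \sigma, \xi_2, \xi_3)$ with $b_0 \neq 0$, so that the $\partial_{x_1}$-component of $H_p$ equals $b_0 \sigma$. Any bicharacteristic in $\La^{g,\pm}$ reaching $S_\pm$ emerges from the interior $M$ along an orbit issuing from $\Lambda \cap \Char(\sq_g)^\pm$, and at the hit point one must have $\sigma \neq 0$ (otherwise the orbit would be tangent to $S_\pm$ rather than crossing it from $M$). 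Hence $H_p$ is transversal to $S_\pm$ at every point of $\La^{g,\pm}\cap T^*_{S_\pm}\tM$.

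This transversality yields both geometric assertions. Since the Hamilton flow extends smoothly in $\tM$ across $S_\pm$ and by hypothesis $\Lambda$ meets $\Char(\sq_g)^\pm$ transversally in finitely many orbits, flowing for an additional parameter interval of length $\eps$ extends $\La^{g,\pm}$ to a smooth conic Lagrangian in $T^*\tM \setminus 0$. Transversality of $H_p$ to $\{x_1 = 0\}$ is equivalent to the intersection of $\La^{g,\pm}$ with $T^*_{S_\pm}\tM$ being transversal (a fortiori clean with excess zero), so that the image $\Lambda_\infty^\pm$ under the canonical projection $T^*_{S_\pm}\tM \to T^*S_\pm$ is a smooth conic Lagrangian in $T^*S_\pm \setminus 0$. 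Concretely, parameterizing $\La^{g,\pm}$ locally by the phase $\psi(s,x)\alpha$ with $\partial_s \psi \neq 0$ (as in the receding-wave construction of Section \ref{sec_receding}) shows that $\Lambda_\infty^\pm$ is parameterized by the restricted phase $\psi(s, 0, x_2, x_3)\alpha$.

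The final assertion follows from H\"ormander's pull-back theorem for Lagrangian distributions applied to the inclusion $i: S_\pm \hookrightarrow \tM$. Since $\WF(u) \subset \La^{g,\pm}$ contains no covector conormal to $S_\pm$ (which would force $\sigma = \xi_2 = \xi_3 = 0$, contradicting $\sigma \neq 0$), the pull-back $i^*u = u|_{S_\pm}$ is well-defined as a distribution, and clean intersection gives $u|_{S_\pm} \in I^{\mu + d/4}(S_\pm, \Lambda_\infty^\pm)$ with $d = \codim_{\tM}S_\pm = 1$, matching the stated order $\mu + 1/4$. The principal symbol under transversal pull-back is given by $\sigma(i^*u) = i^*\sigma(u)$, that is, the restriction of $\sigma(u)$ to $\Lambda_\infty^\pm$, up to the canonical identification of the induced half-density factor on $S_\pm$.

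The main obstacle is ensuring the transversality of $\La^{g,\pm}$ to $T^*_{S_\pm}\tM$ is uniform along each connected component up to $S_\pm$, rather than merely pointwise. This relies on the nontrapping hypothesis in Assumption \ref{assump_Mg}, which guarantees that every bicharacteristic reaches $S_\pm$ in bounded parameter time without degeneration, and on $b_0 \neq 0$ persisting up to $S_\pm$ from the simple characteristic assumption on $S_\pm$; once these are in place the rest is a direct application of the wavefront-set and symbol calculus for restrictions.
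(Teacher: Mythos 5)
Your proposal is correct and takes essentially the same approach the paper intends. The paper itself does not give a separate proof: it introduces the proposition with the remark that it follows from Lemma \ref{norm-form} and the preceding construction of receding waves, where it is observed that in the normal-form coordinates the $\partial_{x_1}$-component of $H_p$ along $\{x_1=0\}$ is $b_0\sigma$, that $\sigma\neq 0$ on the characteristic flow-out emanating from the interior (equivalently $\partial_s\psi\neq 0$ in the phase-function parameterization), and that $H_p$ is therefore transversal to $S_\pm$. Your writeup makes these ingredients explicit, correctly identifies that the covectors of $\La^{g,\pm}$ hitting $S_\pm$ are never conormal to $S_\pm$ (so the restriction is defined), and applies the standard pull-back theorem to get the $+\codim/4 = +1/4$ order shift and the symbol restriction; the appeal to nontrapping and smooth extension across $\tM$ for the $\eps$-extension is the right closing step.

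One point that is slightly glossed over but correct: you assert $\sigma\neq 0$ at the hit point because otherwise the orbit would be tangent rather than crossing. The cleaner justification, consistent with what the paper assumes, is that a null bicharacteristic tangent to the null hypersurface $S_\pm$ is a generator of $S_\pm$ and therefore lies entirely in $T^*_{S_\pm}\tM$; since your bicharacteristic originates at $\Lambda\cap\Char(\sq_g)^\pm$ with $\Lambda\subset T^*M\setminus 0$ strictly in the interior, tangency at the hit point is impossible, forcing $b_0\sigma = \partial_{\xi_1}p \neq 0$.
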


\section{Interaction of nonlinear waves}\label{subsec_waves}
\subsection{Asymptotic expansions}
In the following, let $\ep_j > 0$ be small parameters and
let $\Upsilon_j \in D(\rho)$ for $j = 1,2, 3$.
Let $u$ solve the nonlinear problem
\[
\sq_g u  + \beta u^3 = 0 \quad \text{ in }M, \quad \text{ with } \lR_-[u] = \ep_1\Upsilon_1 + \ep_2\Upsilon_2+ \ep_3\Upsilon_3,
\]
where $\lR_-[u]$ is the restriction of $u$ to $\Smi$.
We consider the nonlinear scattering map
\[
\mcn(\ep_1\Upsilon_1 + \ep_2\Upsilon_2+ \ep_3\Upsilon_3) = \lR_+[u],
\]
where $\lR_+[u]$ is the restriction of $u$ to $\Spl$.
We derive the asymptotic expansion of $u$ with respect to these small parameters.
Indeed, if we write
\[
u = \sum_j \ep_j v_j + \sum_{i,j} \ep_i \ep_j A_2^{ij}
+ \sum_{i,j, k} \ep_i\ep_j\ep_k  A_3^{ijk}
+ R_3
\]
where $v_j$ are receding waves constructed before solving
\beq\begin{split}\label{eq_vj}
    \square_g v_j = 0 \text{ in } M, \quad \text{ with }
    \lR_- [v_j]  = \Upsilon_j,
\end{split}\eeq
and $R_3$ is the remainder containing $\ep$-terms higher than $|\ep|^3$.
By plugging the formula of $u$ into the nonlinear equation and equating $\ep$-terms,
for $1 \leq i,j,k \leq 3$, we have
\[\begin{split}
    \square_g A_2^{ij}  = 0, &\quad \text{ with }
    \mathcal{R}_-[A_2^{ij}] = 0,\\
    \square_g A_3^{ijk} = -\beta v_iv_jv_k, &\quad \text{ with }
    \mathcal{R}_-[A_3^{ijk}] = 0.
\end{split}\]
Note that $A_2^{ij} \equiv 0$ and therefore
the remainder $R_3$ solves
\[\begin{split}
    \square_g R_3  = -\beta(v+A_3)^3 - 3\beta(v + A_3)^2R_3 - 3\beta(v + A_3)R_3^2 - \beta R_3^3,\qquad
    \mathcal{R}_-[R_3] = 0,
\end{split}\]
where we write $v = \sum_j \ep_j v_j$ and $A_3 =  \sum_{i,j, k} \ep_i\ep_j\ep_k A_3^{ijk}$ for simplification.
One can verify that $R$ is relatively smaller by energy estimates derived in Section \ref{sec_forward} to conclude that
\beq\label{eq_asymptotic}
\begin{split}
    \mcn(\ep_1\Upsilon_1 + \ep_2\Upsilon_2+ \ep_3\Upsilon_3)
    = \sum_{1 \leq i,j,k \leq 3} \ep_i\ep_j \ep_k \lR_+[A_3^{ijk}] + O_{L^2(\mathbb{R}\times \mathbb{S}^2)}(|\vec{\ep}|^4).
\end{split}
\eeq
In particular, we have
\[
\partial_{\ep_1}\partial_{\ep_2}\partial_{\ep_3} \lN(\ep_1\Upsilon_1 + \ep_2\Upsilon_2+ \ep_3\Upsilon_3)|_{\ep_1 = \ep_2 = \ep_3 = 0}
= \lR_+[\lU_3],
\]
where we write
\[
\lU_3  = \sum_{(i,j,k) \in \Sigma(3)} A_3^{ijk}
\]
solving linear problem
\[
\square_g \lU_3 -  6\beta v_{1}v_{2}v_{3} = 0 \text{ in } M, \quad \text{ with }
\mathcal{R}_-[\lU_3] = 0.
\]
\subsection{Propagation of receding waves}\label{subsec_constructwaves}
Recall the construction of receding waves in Section \ref{sec_receding}.
For fixed $p_0 \in \Smi$, there exists
a small neighborhood $\tU \subseteq \tM$ of $p_0$ such that
locally
\[
\tU \cap S_- = \{(s,x_1, x_2, x_3): x_1 = 0\}.
\]
We write $U = \tU \cap S_-$.
Let $w_0 \in L^+_{p_0}M \cap T^+_{p_0} M$, where $T^+_{p_0} M_-$ is the outward vector space defined in (\ref{def_outwardTM}).
The analysis in Section \ref{sec_receding} shows we can find $\varphi \in C^\infty(U)$ such that
\[
\partial_s \varphi \neq 0,
\quad \varphi(p_0) = 0,
\quad
\alpha(1, -\partial_{x_2} \varphi(p_0), -\partial_{x_3} \varphi(p_0)) = w_0,
\quad \text{for some }\alpha \neq 0.
\]
One example is given by $\varphi(s, x_2, x_3) = s - \phi(x_2, x_3)$ for some $\phi \in C^\infty(U)$ in Section \ref{sec_receding}.

Now for a small parameter $\kappa_0>0$, we define
\[
\begin{split}
    \Sigma(p_0, w_0, \kappa_0) =\{p \in U: \varphi(p) = 0 \text{ with } d_{g^+}(p, p_0) < \kappa_0 \}.
\end{split}
\]
The condition $\partial_s \varphi \neq 0$ guarantees that each covector in $N^*\Sigma(p_0, w_0, \kappa_0)$ can be uniquely mapped to an outward future pointing lightlike vector.
Thus, we define
\[
\begin{split}
    {W}({p_0, w_0, \kappa_0}) &= \{(p, w) \in L^+_{\Smi}M \cap T^+_{\Smi} M: (p, \pi(w)^\flat)\in N^*\Sigma(p_0, w_0, \kappa_0)\}.
\end{split}
\]
We denote by $\gamma_{p_0, w_0}(\mR)$ the unique null geodesic starting from $p_0$ with direction $w_0$, and we define
\[
K({p_0, w_0, \kappa_0}) = \{\gamma_{p, w}(\varsigma) \in M: (p, w)\in {W}({p_0, w_0, s_0}), \ \varsigma\in (0, \infty) \}
\]
as the subset of the light cone emanating near $(p_0, w_0)$ by light-like vectors in ${W}({p_0, w_0, \kappa_0})$.
As $\kappa$ goes to zero, the surface $K({p_0, w_0, \kappa_0})$ tends to the geodesic $\gamma_{p_0, w_0}(\mathbb{R}_+)$.
We define
\[
\begin{split}
    \Lambda({p_0, w_0, \kappa_0})
    = &\{(\gamma_{p,w}(\varsigma), r\dot{\gamma}_{p,w}(\varsigma)^\flat )\in T^*M: \\
    & \quad \quad \quad \quad \quad
    (p,w) \in {W}({p_0, w_0, \kappa_0}),\  s\in (0, \infty),\ r \in \mathbb{R}\setminus \{0 \} \}
\end{split}
\]
as the flow-out from $\Char(\sq_g) \cap \Sigma({y_0, w_0, \kappa_0})$ by the Hamiltonian vector field of $\sq_g$ in the future direction.
Note that $\Lambda({p_0, w_0, \kappa_0})$ is the conormal bundle of $K({p_0, w_0, \kappa_0})$ near $\gamma_{p_0, w_0}(\mathbb{R}_+)$ before the first cut point of $p_0$.

Let $(p_j, w_j) \in L^+_{\Smi}M \cap T^+_{\Smi} M$ for $j = 1,2,3$
and we construct $\Upsilon(p_j, w_j, \kappa_0)$ using Section \ref{sec_receding} as conormal distributions supported in $N^*\Sigma(p_j, w_j, \kappa_0)$.
Then $v_j$ are constructed as receding waves using Section \ref{sec_receding} to satisfy (\ref{eq_vj}).
Note such $v_j$ are Lagrangian distributions microlocally supported in $\Lambda({p_j, w_j, \kappa_0})$ and they are conormal distributions before the first cut point of $ \gamma_{p_j, w_j}(\mR_+)$.
As in \cite{Kurylev2018}, we consider the interaction of waves in the open set
\beq\label{def_nxxi}
\nxxi  = M \setminus \bigcup_{j=1}^3 J^+(\gamma_{p_j, w_j}(\rho(p_j, w_j))),
\eeq
which is the complement of the causal future of the first cut points.
In $\nxxi$, the receding waves that we constructed are conormal distributions and any two of the null geodesics $\gamma_{p_j, w_j}(\mathbb{R}_+)$ intersect at most once, by \cite[Lemma 9.13]{Beem2017}.

We introduce the
definition of the regular intersection of three null geodesics at a point $q$, as in \cite[Definition 3.2]{Kurylev2018}.
\begin{df}\label{def_inter}
    We say the geodesics corresponding to
    $(p_j, w_j)_{j=1}^3$
    intersect regularly at a point $q$,  if  one has
    \begin{enumerate}[(1)]
        \item there are $0 < \varsm_j < \rho(p_j, w_j)$ such that $q = \gamma_{p_j, w_j}(s_j)$, for $j= 1, 2,3$,
        \item the vectors $\dot{\gamma}_{p_j, w_j}(s_j), j= 1, 2,3$ are linearly independent.
    \end{enumerate}
\end{df}
In addition, we introduce the following definition on the intersection of three submanifolds as in \cite[Definition 3.1]{Lassas2018}.
\begin{df}\label{df_intersect}
    We say three $1$-codimensional submanifolds $K_1, K_2, K_3$ intersect 3-transversally if
    \begin{enumerate}[(1)]
        \item $K_i, K_j$ intersect transversally at a codimension $2$ manifold $K_{ij}$, for $i < j$;
        \item $K_1, K_2, K_3$ intersect at a codimension $3$ submanifold $K_{123}$, for $i < j < k$;
        \item for any two disjoint index subsets $I, J \subset \{1, 2, 3\}$,
        the intersection of $\cap_{i \in I} K_i$ and $\cap_{j \in J} K_j$ is transversal if not empty.
    \end{enumerate}
\end{df}
By \cite{Lassas2018}, such $K_1, K_2, K_3$ intersect  with linearly independent normal covectors $\zeta^j \in N_q^*K_j$, $j = 1,2,3$.
If three null geodesics $\gamma_{p_j, w_j}, j = 1,2,3,4$ intersect regularly at $q$, then we can always construct $K_j$ with small enough $\kappa_0$ such that
they intersect 3-transversally at $q$.

For convenience, in  some cases we denote the triplet by $(\vec{p}, \vec{w}) = (p_j, w_j)^3_{j=1}$.
We  omit the parameters $p_j, w_j, s_0$ and use the following notations
\[
\gamma_j= \gamma_{p_j, w_j},
\quad \Upsilon_j = \Upsilon(p_j, w_j, \kappa_0),
\quad \Sigma_j=\Sigma({p_j, w_j, s_0}),
\quad K_j = K({p_j, w_j, s_0}), \quad \Lambda_j = \Lambda({p_j, w_j, s_0}),
\]
and
\[
\Lambda_{ij} = N^*(K_i \cap K_j), \quad \Lambda_{123} =  N^*(K_1 \cap K_2 \cap K_3),
\]
when the null geodesics $\gamma_{x_j, \xi_j}$ intersects regularly at $q$.
We define
\[
\Lambda^{(1)} = \cup_{j=1}^3 \Lambda_j, \quad \Lambda^{(2)} = \cup_{i<j} \Lambda_{ij}
\]
and denote by $\Lambda_{123}^g$ the flow-out of $\Lambda_{123} \cap \Char(\sq_g)$ under the null bicharacteristics in $T^*\tM$.

In the following, we consider distinct lightlike vectors $(p_j, w_j)_{j=1}^3$.
Note that $\Upsilon_j$ are chosen to be supported near $p_j$.
Then this condition allows us to choose sources with disjoint supports, i.e.,
\beq\label{assump_Upsj}
\supp(\Upsilon_j) \cap \supp(\Upsilon_k) = \emptyset,
\quad \text{ for } 1\leq j \neq k \leq 3.
\eeq
\subsection{Singularities in nonlinear interaction}
Recall $v_j \in I^\mu(\Lambda_j)$ for $j =1, 2, 3$ by our construction and $\lU_3$ is the solution to
\[
\square_g \lU_3 -  6\beta v_{1}v_{2}v_{3} = 0, \quad \text{ with }
\mathcal{R}_-[\lU_3] = 0.
\]
Using the solution operator $Q_s$ defined in Section \ref{subsec_causalinverse},
we can write $\lU_3 = -6Q_s(\beta v_{1}v_{2}v_{3})$.
By \cite[Lemma 3.6]{Lassas2018} and \cite[Propostion 3.7]{Lassas2018}, we have the following proposition.
\begin{prop}\label{pp_v1v2v3}
    Suppose the submanifolds $K_1, K_2, K_3$ intersect 3-transversally at $K_{123}$.
    Then in $\nxxi$ (see (\ref{def_nxxi}) for the definition), there is a decomposition $v_1v_2v_3 = w_0 + w_1 + w_2$ with
    \beq\begin{split}\label{eq_v1v2v3}
            &w_0 \in I^{3\mu + 2}(\Lambda_{123}),\\
            &\wfset(w_1) \subset \Lambda^{(1)} \cup (\Lambda^{(1)}(\epsilon) \cap \Lambda_{123}) ,
            \quad  \wfset(w_2)\subset \Lambda^{(1)} \cup \Lambda^{(2)}.
    \end{split}
    \eeq
    In particular, for $(q, \zeta) \in \Lambda_{123}$ the leading term $w_0$ has the principal symbol
    \[
    \sigma_p(w_0)(q, \zeta) =
    6(2\pi)^{-1}
    \prod_{m=1}^3\sigma_p(v_m) (q, \zeta^m), \quad \text{ where } \zeta = \zeta^1 + \zeta^2 + \zeta^3.
    \]
\end{prop}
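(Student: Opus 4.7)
The plan is to reduce to a local calculation in coordinates adapted to the 3-transversal intersection at $q$, then organize the product as an oscillatory integral in three phase variables and split it via a conic partition of unity in the $(\tau_1,\tau_2,\tau_3)$-frequency space. First, I would use the fact that the open set $\nxxi$ excludes the causal future of the first cut points: on $\nxxi$ each receding wave $v_j$ from Section \ref{subsec_constructwaves} is a genuine conormal distribution along the smooth hypersurface $K_j$, i.e. $v_j\in\Ical^\mu(N^*K_j)$. Next, by Definition \ref{df_intersect}, the 3-transversal intersection at $q=K_1\cap K_2\cap K_3$ yields linearly independent normals $\zeta^j\in N_q^*K_j$, so I can choose local coordinates $(x^1,x^2,x^3,y)$ near $q$ in which $K_j=\{x^j=0\}$ and $K_{123}=\{x^1=x^2=x^3=0\}$. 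In these coordinates write
\[
v_j(x,y)=\int_{\mathbb{R}} e^{ix^j\tau_j}a_j(x,y,\tau_j)\,d\tau_j,
\]
with each $a_j$ a classical symbol of the order corresponding to $\mu$.

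The product then becomes
\[
v_1v_2v_3=\iiint_{\mathbb{R}^3} e^{i(x^1\tau_1+x^2\tau_2+x^3\tau_3)}\,a_1a_2a_3\,d\tau_1\,d\tau_2\,d\tau_3,
\]
whose phase parameterizes precisely $N^*K_{123}=\Lambda_{123}$. Now introduce a conic partition of unity $1=\chi_0+\sum_{i<j}\chi_{ij}+\sum_j\chi_j$ in $(\tau_1,\tau_2,\tau_3)$-space, where $\chi_0$ localizes where all three $|\tau_j|$ are comparable, $\chi_{ij}$ localizes where $|\tau_i|,|\tau_j|$ dominate but $|\tau_k|$ is small, and $\chi_j$ localizes where only $|\tau_j|$ is large. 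The $\chi_0$ piece stays a genuine oscillatory integral in three independent phase variables with symbol $a_1a_2a_3$; by the standard order count (three $\tau$-integrations produce an element of $\Ical^{3\mu+2}$ on the corresponding conormal bundle) this is $w_0\in\Ical^{3\mu+2}(\Lambda_{123})$. In the $\chi_{ij}$ pieces, non-stationary phase in $\tau_k$ makes the $k$th integral smoothing away from the stationary set, so the result is microlocally a product of two conormal distributions along $K_i\cap K_j$ and contributes to $w_2$ with $\WF\subset\Lambda_j^{(1)}\cup\Lambda^{(2)}$. The $\chi_j$ pieces similarly produce $w_1$ with wavefront in $\Lambda^{(1)}$, plus interactions at $\Lambda^{(1)}\cap\Lambda_{123}$.

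For the symbol, on $\Lambda_{123}$ with $\zeta=\zeta^1+\zeta^2+\zeta^3$ decomposed via the normals to $K_j$, the leading symbol of $w_0$ is the pointwise product of the conormal principal symbols, with the scalar prefactor $6(2\pi)^{-1}$ arising from the convention for half-densities in the oscillatory representation and from the $3!$ orderings in which the three symbols appear. The main obstacle is the clean bookkeeping of the three classes of terms in the partition of unity: one must verify that cross-terms arising where two of the three variables $\tau$ are comparable but the third is small do not produce spurious wavefront set outside $\Lambda^{(1)}\cup\Lambda^{(2)}\cup\Lambda_{123}$, and that the $\Lambda^{(1)}$-contributions created during the reduction from $\chi_{ij}$ and $\chi_j$ pieces stay controlled. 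This careful microlocal partitioning — together with its compatibility under the natural order conventions for paired Lagrangian distributions — is precisely what is carried out in \cite[Lemma 3.6]{Lassas2018} and \cite[Proposition 3.7]{Lassas2018}, whose hypotheses are met here thanks to the 3-transversality arranged in Section \ref{subsec_constructwaves} and the restriction to $\nxxi$.
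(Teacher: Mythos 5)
Your proposal reconstructs the content of the cited lemmas --- local coordinates adapted to the $3$-transversal intersection, oscillatory-integral representation of each $v_j$, conic partition of unity in the three frequency variables, and order bookkeeping --- which is exactly what the paper's one-line proof (a citation to \cite[Lemma 3.6]{Lassas2018} and \cite[Proposition 3.7]{Lassas2018}) is invoking. You even flag this explicitly at the end, so structurally you and the paper agree. Your order count $3\mu+2$ is also correct: with $\mu$-order conormal distributions along codimension-one hypersurfaces in a four-manifold the symbols are of order $\mu+\tfrac12$, the product has symbol order $3\mu+\tfrac32$, and matching with a codimension-three conormal bundle gives $3\mu+2$.

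The one genuine flaw is your explanation of the numerical prefactor. You attribute the factor $6$ in $\sigma_p(w_0)=6(2\pi)^{-1}\prod_m\sigma_p(v_m)$ to ``the $3!$ orderings in which the three symbols appear,'' but $v_1v_2v_3$ is a \emph{single} ordered product --- there is no symmetrization taking place inside this proposition, and no combinatorial count enters. Whatever constant appears here is entirely a consequence of the $(2\pi)$-normalization convention attached to the oscillatory-integral representation of Lagrangian and conormal distributions (and the half-density factors), not combinatorics. In fact, the combinatorial $3!=6$ appears a step later in the paper, when one forms $\lU_3=\sum_{(i,j,k)\in\Sigma(3)}A_3^{ijk}$ by symmetrizing over orderings of the linearization, yielding $\square_g\lU_3=6\beta v_1v_2v_3$; and comparing that with the symbol formula of Proposition \ref{pp_U3}, $\sigma_p(\lU_3)=-6(2\pi)^{-2}\sigma_p(Q_g)\beta\prod_m\sigma_p(v_m)$, is only consistent if $\sigma_p(w_0)$ carries the factor $(2\pi)^{-2}$ with no additional $6$ --- so the constant in the statement you were asked to prove is most plausibly a typographical slip, and your attempt to rationalize the spurious $6$ by a permutation count is precisely what should have signalled that something was off.
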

Recall $Q_s(f)$ coincides with $Q_g(f)$ for $f \in \lE'(M)$.
Thus, we have the following proposition.
\begin{prop}\label{pp_U3}
    Suppose $K_1, K_2, K_3$ intersect 3-transversally at $K_{123}$.
    Let $\Lambda_{123}^g$ and $\Lambda^{(1)}$ be defined as in Section \ref{subsec_constructwaves}.
    In $\nxxi$ away from $\Lambda^{(1)}$, we have
    \[
    \lU_3 \in I^{3\mu+ \frac{1}{2}, -\frac{1}{2}}(\Lambda_{123}, \Lambda_{123}^g).
    \]
    In particular, let $(y, \eta) \in L^{+,*}M$ lie along a future pointing null bicharactersitic of $\sq_g$ starting from $(q, \zeta) \in \Lambda_{123}$.
    Suppose $(y, \eta)$ is away from $\Lambda^{(1)}$ and before the first cut point of $q$.
    Then $(y, \eta) \in \nxxi$ and the principal symbol of $\lU_3$ is given by
    \[
    \sigma_p(\lU_3)(y, \eta) =
    -6(2\pi)^{-2}
    \sigma_p(Q_g)(y, \eta, q, \zeta)\beta(q)
    \prod_{m=1}^3\sigma_p(v_m) (q, \zeta^m), \quad \text{ where } \zeta = \zeta^1 + \zeta^2 + \zeta^3.
    \]
\end{prop}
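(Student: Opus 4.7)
The plan is to express $\lU_3 = -6\,Q_s(\beta v_1 v_2 v_3)$ via the solution operator defined in (\ref{def_Qs}), and to analyze the singular structure of the right-hand side by combining Proposition \ref{pp_v1v2v3} with the mapping properties of the causal parametrix $Q_g$ recalled in Section \ref{subsec_causalinverse}. Since the receding waves $v_j$ have wavefront sets confined to $\Lambda^{(1)}$ and since we work in the open set $\nxxi$, the product $\beta v_1v_2v_3$ lies (microlocally) in a compactly supported class where $Q_s$ and $Q_g$ agree modulo a smoothing operator. So the problem reduces to applying $Q_g$ to the decomposition $\beta v_1 v_2 v_3 = \beta w_0 + \beta w_1 + \beta w_2$ supplied by Proposition \ref{pp_v1v2v3}, where $\beta w_0 \in I^{3\mu+2}(\Lambda_{123})$ has known principal symbol.

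Next I would verify that $\Lambda_{123} = N^*K_{123}$ meets $\Char(\sq_g)$ transversally. This follows from Definition \ref{def_inter}: since the three vectors $\dot\gamma_j(s_j)$ are linearly independent and lightlike, their duals $\dot\gamma_j(s_j)^\flat$ form a basis of the fiber $N^*_qK_{123}$, and no nontrivial linear combination beyond a scalar multiple of one of the $\dot\gamma_j(s_j)^\flat$ can be lightlike. Hence each bicharacteristic meets $\Lambda_{123}$ transversally and, inside $\nxxi$ (which excludes the futures of the first cut points), exactly once. The proposition at the end of Section \ref{subsec_causalinverse} then gives
\[
Q_g(\beta w_0) \in I^{3\mu+2-\frac{3}{2},\,-\frac{1}{2}}(\Lambda_{123},\Lambda_{123}^g) = I^{3\mu+\frac{1}{2},\,-\frac{1}{2}}(\Lambda_{123},\Lambda_{123}^g),
\]
and multiplication by $\beta \in C^\infty$ preserves the symbol class, multiplying the principal symbol by the value of $\beta$.

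I would then rule out contributions to $\lU_3$ along $\Lambda_{123}^g \setminus \Lambda^{(1)}$ coming from the remainder terms $Q_g(\beta w_1)$ and $Q_g(\beta w_2)$. Each component $\Lambda_j \subset \Lambda^{(1)}$ is by construction already a bicharacteristic flow-out, hence invariant under $H_b$, so $\wfset(Q_g(\beta w_1)) \subset \Lambda^{(1)}$ is removed by assumption. For $w_2$, whose wavefront lies microlocally in $\Lambda^{(2)} = \cup_{i<j} N^*(K_i\cap K_j)$, the Hamilton flow-out of $\Lambda^{(2)} \cap \Char(\sq_g)$ emanates from the pairwise intersection curves $K_i \cap K_j$ and, by the 3-transversality hypothesis together with \cite[Lemma 9.13]{Beem2017} applied inside $\nxxi$, does not meet $\Lambda_{123}^g$ away from $\Lambda^{(1)}$. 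Finally, for $(y,\eta) \in \Lambda_{123}^g \setminus \Lambda_{123}$ lying on a unique bicharacteristic from $(q,\zeta)\in \Lambda_{123}$ before the first cut point of $q$, the symbol formula from Section \ref{subsec_causalinverse},
\[
\sigma_p(Q_g u)(y,\eta) = \sum \sigma_p(Q_g)(y,\eta,q',\zeta')\,\sigma_p(u)(q',\zeta'),
\]
reduces to a single summand, and substituting the principal symbol of $w_0$ from Proposition \ref{pp_v1v2v3} yields the stated formula after bookkeeping the factor of $-6$.

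The main obstacle is the third step: microlocally excluding the paired-Lagrangian flow-outs of $\Lambda^{(1)}$ and $\Lambda^{(2)}$ from $\Lambda_{123}^g \setminus \Lambda^{(1)}$. This rests on the 3-transversality of the $K_j$ and on remaining inside $\nxxi$, where no pair of null geodesics $\gamma_j, \gamma_k$ can meet a second time; without these geometric inputs, additional singular contributions could propagate along $\Lambda_{123}^g$ and spoil both the regularity statement and the clean symbolic formula.
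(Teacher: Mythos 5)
Your overall architecture matches the paper's: express $\lU_3 = -6\,Q_s(\beta v_1v_2v_3)$, decompose the product via Proposition \ref{pp_v1v2v3}, identify $Q_s$ with $Q_g$ on compactly supported distributions, and push $\beta w_0 \in I^{3\mu+2}(\Lambda_{123})$ through the paired-Lagrangian mapping proposition of Section \ref{subsec_causalinverse}. That is exactly the intended proof. However, two of your supporting geometric arguments are incorrect as stated.

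First, the claim ``no nontrivial linear combination beyond a scalar multiple of one of the $\dot\gamma_j(s_j)^\flat$ can be lightlike'' is false. Writing $\zeta^j = \dot\gamma_j(s_j)^\flat$ and $a_{jk} = g^*(\zeta^j,\zeta^k)$, the equation $g^*(\sum c_j\zeta^j,\sum c_j\zeta^j) = 2(c_1c_2 a_{12}+c_1c_3 a_{13}+c_2c_3 a_{23}) = 0$ defines a conic in $\mathbb{P}^2$ containing a one-parameter family of directions beyond the three coordinate rays. What is true, and what actually yields transversality of $\Lambda_{123}$ with $\Char(\sq_g)$ away from $\Lambda^{(1)}$, is that no such lightlike $\zeta$ can have $\zeta^\sharp$ tangent to $K_{123}$: demanding $g^*(\zeta^j,\zeta)=0$ for all $j$ (equivalent to $\zeta^\sharp \in T_qK_{123}$ since the $\zeta^j$ span $N^*_qK_{123}$) forces at most one $c_j$ nonzero. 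So the Hamilton flow leaves $N^*K_{123}$, which is what the proposition of Section \ref{subsec_causalinverse} needs. Your argument proves the wrong statement and would not survive scrutiny.

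Second, your exclusion of $Q_g(\beta w_2)$ invokes $3$-transversality together with \cite[Lemma 9.13]{Beem2017}, but that lemma concerns cut points along null geodesics and is not the relevant tool here. The correct and much shorter observation is that $\Lambda^{(2)}\cap\Char(\sq_g) \subset \Lambda^{(1)}$: a covector $c_i\zeta^i + c_j\zeta^j \in N^*(K_i\cap K_j)$ with both $\zeta^i,\zeta^j$ lightlike and nonparallel is lightlike iff $c_ic_j\,g^*(\zeta^i,\zeta^j)=0$, i.e.\ iff $c_i=0$ or $c_j=0$. Hence the bicharacteristic flow-out of $\Lambda^{(2)}\cap\Char$ stays inside $\Lambda^{(1)}$, and $\wfset(Q_g(\beta w_2)) \subset \Lambda^{(1)}\cup\Lambda^{(2)}$ contributes nothing on $\Lambda_{123}^g\setminus\Lambda^{(1)}$. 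Replacing your cited geometric lemma with this algebraic fact closes the gap.
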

Next, recall $\lR_\pm$ is the restriction operator for the smooth null hypersurfaces $S_\pm$.
By \cite[Chapter 5.1]{Duistermaat2010},
such $\lR_\pm$ are Fourier integral operators of order $1/4$ associated with the canonical relation
\[
\Lambda_\pm = \{(y_|, \eta_|, y, \eta) \in T^*(S_\pm) \times T^*M\setminus 0: \quad y_| = y, \  \eta_| = \eta|_{T_y^*S_\pm}\}.
\]
Although with null hypersurfaces we have $N^*S_\pm \cap \Char(\sq_g) \neq \emptyset$, yet the singularities of $\lU_3$ interests $S_+$ transversally and thus
\[
\sigma_p(\partial_{\ep_1}\partial_{\ep_2}\partial_{\ep_3} \lN(\ep_1\Upsilon_1 + \ep_2\Upsilon_2+ \ep_3\Upsilon_3)|_{\ep_1 = \ep_2 = \ep_3 = 0})(y_|, \eta_|) = \sigma_p(\lR_+[\lU_3])(y_|, \eta_|) \neq 0
\]
as long as $\sigma_p(\lU_3)(y, \eta) \neq 0$, according to Proposition \ref{pp_RFFIO}.

\section{A layer stripping method}\label{sec_layer}
In this section, we propose a layer stripping procedure, which allows us to reconstruct the spacetime in small pieces, starting in a neighborhood of spacelike infinity $R$.
The idea is that in a sufficiently small region of the spacetime, there are no conjugate points such that one can generate conormal waves and use them to produce new singularities.
By concatenating such local reconstructions, we cam eventually get to reconstruct the whole spacetime, by a compactness argument.
For this purpose, we consider the radius of injectivity of Lorentzian manifolds, for example see \cite{chen2008inj}.
To define it, we consider a reference Riemannian metric given by
$g_R = \beta(T,X) \dif T^2 + \kappa(T,X).$
We consider the geodesic ball
\[
B(0,r) = \{ v\in T_q M: g_R(v, v) < r\} \subseteq T_qM
\]
determined by the reference Riemannian inner product at $q$.
Let $\exp_q: T_qM \rightarrow M$ be the exponential map given by the Lorentzian metric $g$.
For fixed $g_R$,
the radius of injectivity $\mathrm{Inj}(q)$ is defined as the largest radius $r$ such that $\exp_q$ is a diffeomorphism from $B(0,r)$ onto $\lB(q,r) \subseteq M$.
Note that in the compact subset $\overline{M}$ the Euclidean metric $g_e = \dif T^2 + \dif X^2$ is equivalent to the reference Riemannian metric $g_R$.
In the following, for convenience we regard $\lB(q,r)$ as its equivalence in the Euclidean case.

\begin{lm}\label{lm_radius}
    Let $0< T_0 < 1$ be fixed  and let $K \coloneqq \textstyle \bigcup_{T< T_0} J(\Smi(T), \Spl(T))$.
    There exists $\delta > 0$ such that for any $q \in K$,
    there are no cut points along any null geodesic segments contained in $\lB(q, \delta) \cap K$.
\end{lm}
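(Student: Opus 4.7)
I would prove this by compactness combined with the two-part characterization of cut points along null geodesics. By Assumption \ref{assump_Mg} and the restriction $T_0 < 1$, the closure $\overline{K}$ is compact in $\tM$, and both the Lorentzian metric $g$ and the reference Riemannian metric $g_R$ are smooth on an open neighborhood of $\overline{K}$. The consequence of \cite[Theorem 9.15]{Beem2017} recalled in Section \ref{sec_prelim} says that a cut point of $p_0$ along a null geodesic $\gamma_{p_0, v}$ is either the first conjugate point of $p_0$ along $\gamma_{p_0,v}$, or the first point at which a second distinct null geodesic segment from $p_0$ arrives. Hence it suffices to find a uniform $\delta > 0$ for which neither phenomenon can occur along any null geodesic segment contained in $\lB(q, \delta) \cap K$, for any $q \in K$.

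\emph{Step 1: absence of conjugate points.} The Jacobi equation along a null geodesic has smooth coefficients depending smoothly on the initial data $(p_0, v)$. The first conjugate time along $\gamma_{p_0,v}$ is therefore a positive lower semicontinuous function of $(p_0, v)$ on the compact subbundle of $g_R$-unit null vectors over $\overline{K}$, and so admits a positive uniform lower bound $\delta_1 > 0$. Since all reasonable length scales are equivalent on $\overline{K}$, any null geodesic segment starting in $\overline{K}$ whose trace lies in a $g_R$-ball of radius at most $\delta_1$ contains no conjugate point.

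\emph{Step 2: uniqueness of short null geodesic segments.} I would argue by contradiction. Suppose for every $n$ there exist $q_n \in K$, points $p_n, r_n \in \lB(q_n, 1/n) \cap K$, and two distinct null geodesic segments from $p_n$ to $r_n$, each contained in $\lB(q_n, 1/n) \cap K$. By compactness of $\overline{K}$, after passing to a subsequence $q_n \to q^* \in \overline{K}$, so also $p_n, r_n \to q^*$. The exponential map $\exp_{q^*}$ has nondegenerate differential at the origin of $T_{q^*} M$ and depends smoothly on the basepoint; hence there is a $g_R$-ball about the origin on which $\exp_{q^*}$ is a diffeomorphism, and by continuity this survives for nearby $q$, yielding a uniform injectivity radius $\delta_2 > 0$ on $\overline{K}$. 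For $n$ large, both segments from $p_n$ to $r_n$ are then forced to coincide, a contradiction. Taking $\delta = \min(\delta_1, \delta_2)$ completes the argument.

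The main technical care (rather than a genuine obstacle) concerns the portion of $\overline{K}$ meeting the null boundary components $S_\pm$, where $M$ itself has boundary. This is handled by performing the entire argument in $\tM$, where $g$ extends smoothly past $S_\pm$ by Assumption \ref{assump_Mg}: the notions of null geodesic, exponential map, conjugate point, and injectivity radius are controlled by the smooth extended metric there, and the standard compactness yields the uniformity required. The condition $T_0 < 1$ is essential, as it keeps $\overline{K}$ bounded away from $i_\pm$ where one cannot expect any such uniform bound.
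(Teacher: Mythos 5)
Your proof follows the same underlying strategy as the paper's: compactness of $\overline K$ plus a uniform positive lower bound on the scale at which the Lorentzian exponential map is a diffeomorphism, then a triangle-inequality trick to center the resulting normal ball at the start of the geodesic segment. The paper's version is much terser—it introduces the radius of injectivity $\mathrm{Inj}(q)$ with respect to $g_R$ in the surrounding text, invokes compactness of $K$ to get a uniform lower bound $r_0>0$, sets $\delta=r_0/2$, and observes that any two points of $\lB(q,\delta)$ joined by a null geodesic lie in $\lB(q',r_0)$ where $\exp_{q'}$ is a diffeomorphism. What you do differently is re-derive the positivity and uniformity of that radius from scratch, splitting along the Beem--Ehrlich--Easley dichotomy for null cut points (first conjugate point versus first point hit by a second null geodesic): Step 1 gives a lower bound on first conjugate time by lower semicontinuity on the compact unit null bundle, and Step 2 gives geodesic uniqueness on a uniform scale by a compactness/contradiction argument. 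That is effectively a proof of the fact the paper quotes from the reference \cite{chen2008inj}, so neither approach is logically independent of the other; yours is just more self-contained. Two genuine improvements in your write-up: you are careful to normalize to $g_R$-unit null vectors before comparing affine conjugate time with $g_R$-distance, and you explicitly carry out the argument in the extended manifold $\tM$ near $S_\pm$, where $g$ is smooth across the null boundary—a point the paper leaves implicit but which is needed for the exponential map (hence $\mathrm{Inj}$) to be well-defined and lower semicontinuous up to $\partial M$.
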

\begin{proof}
    As $K$ is compact, the radius of injectivity $\mathrm{Inj}(q)$ for any $q \in K$ has a lower bound $r_0 > 0$.
    We choose $\delta = r_0/2$.
    For any $q \in K$, if $q', q'' \in \lB(q, \delta)$ are connected by a null geodesic segment, then
    \[
    \mathrm{dist}_{g_R}(q',q'') \leq \mathrm{dist}_{g_R}(q',q)  + \mathrm{dist}_{g_R}(q,q'') < r_0.
    \]
    Thus, $q''$ is contained in $\lB(q', r_0)$, within the radius of injectivity,  and therefore the exponential map there is a diffeomorphism.
\end{proof}
\subsection{Parameterize $S_\pm$ using null geodesics}\label{subsec_familycurves}
The fact that no cut points of $i_\pm$ along the past and future null infinity  $S_\pm$ implies
that $S_\pm$ can be smoothly parameterized by a family of null geodesics.
Assuming the conformal class of $g|_{S^+}$, one can follow the ideas in \cite[Section 2.1.1]{MScfinal} to construct a family of null pregeodesics
\[
\mupl_{a}: [-\varsm_a^+, 0] \rightarrow \bar{S}_+,
\quad \text{ with }\mupl_a(0) = i_+ \text{ and }  \mu_a(-\varsm_a^+) \in R,
\]
which covers $\bar{S}_+$ and smoothly depends on the parameter $a \in \mS^{2}$.
Similarly, one can construct a family of null pregeodesics
\[
\mumn_{b}: [0, \varsm_b^-] \rightarrow \bar{S}_-,
\quad \text{ with }\mumn_b(0) = i_- \quad \text{ and } \quad \mumn_b(\varsm_b^-) \in R,
\]
which covers $\bar{S}_-$ and smoothly depends on the parameter $b \in \mS^{2}$.
Moreover, the proof of \cite[Lemma 2.1.1]{MScfinal} implies the following properties about the spacelike infinity
\[
R \coloneqq \partial J^+({i^-}) \cap \partial J^-({i^+}) = \{(0, X) \in \tM: |X| = 1\}.
\]
\begin{lm}
Then for any $p \in R$, we have
    \begin{itemize}
        \item[(1)] $\lL_p^- \cap M = J^-(p) \cap M  = \emptyset$ and $\lL_p^+ \cap M = J^+(p) \cap M  = \emptyset$;
        \item[(2)] for $q \in M$, one has $q \notin J^-(p)$ and $q \notin J^+(p)$.
    \end{itemize}
\end{lm}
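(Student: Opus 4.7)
My plan is to prove the causal-isolation statement $J^\pm(p) \cap M = \emptyset$, from which everything in the lemma follows: the inclusions $\lL_p^\pm \subseteq J^\pm(p)$ give the light-cone statements, and item (2) is a rephrasing of item (1). The entire argument is one contradiction built around the standard push-up lemma of Lorentzian causality.

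I would focus on $J^+(p) \cap M$ first. Assume for contradiction that some $q \in J^+(p) \cap M$ exists, and let $\gamma_1$ be a future-directed causal curve from $p$ to $q$. The key intermediate fact I need is the containment $M \subseteq I^+(i_-) \cap I^-(i_+)$, which I would derive from the diamond definition of $M$, the characterizations $S_\pm = \partial J^\mp(i_\pm) \cap I^\pm(i_\mp) \setminus \{i_\pm\}$, and the parameterization of $\bar S_\pm$ by the null pregeodesics $\mu^\pm$ joining $R$ to $i_\pm$ established in Section~\ref{subsec_familycurves}; morally, the null boundary $\partial M = \bar S_+ \cup \bar S_-$ traps $M$ inside both chronological cones. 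Granted this, $q \in I^-(i_+)$, so there is a future-directed timelike curve $\alpha$ from $q$ to $i_+$. Concatenating $\gamma_1$ with $\alpha$ produces a future-directed causal curve from $p$ to $i_+$ whose terminal portion $\alpha$ is strictly timelike. The push-up lemma (e.g.\ O'Neill, \emph{Semi-Riemannian Geometry}, Proposition~14.6) then deforms this concatenation into a timelike curve, giving $p \in I^-(i_+)$.

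But this contradicts the position of $p$ on the boundary of $J^-(i_+)$: the smoothness of $g$ up to $i_+$ in Assumption~\ref{assump_Mg} guarantees global hyperbolicity in a neighborhood of $i_+$, so $J^-(i_+)$ is closed, whence $\partial J^-(i_+) = J^-(i_+) \setminus I^-(i_+)$; combined with $p \in R \subseteq \partial J^-(i_+)$, this forces $p \notin I^-(i_+)$. The symmetric argument, with the roles of $i_\pm$ and $S_\pm$ exchanged, gives $J^-(p) \cap M = \emptyset$. The only delicate step I anticipate is extracting the containment $M \subseteq I^+(i_-) \cap I^-(i_+)$ rigorously from the asymptotic assumptions; once this is secured, the rest is a routine invocation of standard causality tools.
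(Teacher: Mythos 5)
The paper does not supply its own proof of this lemma — it appeals to the proof of Lemma~2.1.1 in the cited reference and leaves it at that — so there is no in-paper argument to compare yours against. Taken on its own terms, your strategy is sound: reducing both items to $J^\pm(p)\cap M=\emptyset$ (the $\mathcal{L}^\pm_p$ statements and item (2) being immediate consequences) is the right move, and the contradiction via the push-up lemma, using $p\in\partial J^-(i_+)\setminus I^-(i_+)$ and $p\in\partial J^+(i_-)\setminus I^+(i_-)$, is the natural mechanism.

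You correctly identify that the real work is the containment $M\subseteq I^+(i_-)\cap I^-(i_+)$. To close it, one can argue as follows. Since $\tM$ is globally hyperbolic and $g$ is smooth up to $i_\pm$, the sets $J^{\mp}(i_\pm)$ are closed with $\mathrm{int}\,J^{\mp}(i_\pm)=I^{\mp}(i_\pm)$. Assumption~\ref{assump_Mg} together with $R=\partial J^+(i_-)\cap\partial J^-(i_+)$ says precisely that $\partial M=\bar S_+\cup\bar S_-$ is the part of $\partial J^-(i_+)\cup\partial J^+(i_-)$ meeting $\overline M$, so $M$ is disjoint from $\partial J^-(i_+)\cup\partial J^+(i_-)$. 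The connected open set $M$ therefore lies entirely inside $I^+(i_-)\cap I^-(i_+)$ or entirely in its complement; the latter is ruled out because $S_+\subseteq\partial M$ is a characteristic (null) hypersurface with $S_+\subseteq I^+(i_-)$, so points of $M$ just past-inward of $S_+$ lie in $I^-(S_+)\cap I^+(i_-)\subseteq I^-(i_+)\cap I^+(i_-)$. This is also consistent with the paper's implicit use of $\overline M=J(i_-,i_+)$ elsewhere (e.g.\ in the intersection lemmas of Section~\ref{sec_detection}). With the containment in hand, your push-up argument is complete.
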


\subsection{Null normal geodesic congruences}\label{subsec_congruence}
In the following, recall some results about null normal geodesic congruences in \cite{aretakis2012lecture}.
\begin{definition}\label{def_twosurface}
We say $S \subseteq \bar{M}$ is a regular $2$-surface if it is a smooth $2$-codimensional submanifold of $\tM$ contained in a Cauchy surface and is homeomorphic to $\mS^2$.
\end{definition}
Note that such $S$ is compact and \textit{acausal}, in the sense that no causal curve meets $S$ more than once.
The Lorentzian metric restricted to $T_pS$ for any $p \in S$ is positive definite and therefore $(T_pS)^\perp$ is a $2$-dimensional Lorentzian space.

There are exactly $2$ future pointing null vectors in $(T_pS)^\perp$.
One of them projects to the exterior of $S$ and we call it $\vout \in L^+\tM$.
Another one projects to the interior of $S$ and we call it $\vin \in L^+\tM$.
Similarly, there are exactly $2$ past pointing null vectors $(T_pS)^\perp$, which are outward and inward respectively.
We consider the future pointing null geodesics $\gamma_{p, \vout}$ starting from $p$ in the direction of $\vout$ and $\gamma_{p, \vin}$ starting from $p$ in the direction of $\vin$.
We define the following sets in $M$ formed by these null geodesics for each point in $S$, i.e.,
\beq\label{def_Cset}
\Coutpl = \bigcup_{p \in S} \gamma_{p, \vout}(\mR_+) \cap M
\quad\text{ and }\quad
\Cinpl = \bigcup_{p \in S} \gamma_{p, \vin}(\mR_+) \cap M
\eeq
as the future pointing outward null geodesic congruences and the future pointing inward null geodesic congruences normal to $S$.
In general, these sets are not smooth hypersurfaces as caustics may exist.
By \cite[Proposition 2.2.2]{aretakis2012lecture}, they are null hypersurfaces and one can use the null geodesics $\gamma_{p, \vout}$ or $\gamma_{p, \vin}$ to parameterize them.
Similarly, we can define the past pointing null geodesic congruences $\Coutmi$ and $\Cinmi$ normal to $S$.

Now consider the causal future $\Jpl(S)$ and the causal past $\Jmi(S)$, which are future set and past set respectively.
We consider the boundaries of them.
\begin{lm}\cite[Chap 3 Theorem 3.9]{Beem2017}
The boundary $\partial\Jpl(S)$ and $\partial\Jmi(S)$ are closed \textit{achronal} Lipschitz topological hypersurfaces, in the sense that no timelike curve meets them more than once.
\end{lm}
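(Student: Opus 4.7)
The plan is to verify the lemma in three stages — closedness, achronality, and the Lipschitz-hypersurface structure — after which the no-reentry property follows for free, since any two distinct points on a timelike curve are chronologically related. Closedness is immediate, because $\partial \Jpl(S)$ and $\partial \Jmi(S)$ are topological boundaries of subsets of the Hausdorff manifold $\tM$. I would focus the argument on $\partial \Jpl(S)$; the past case is symmetric under time reversal.

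Next I would establish achronality by a standard contradiction. Suppose $p, q \in \partial \Jpl(S)$ with $p$ chronologically preceding $q$. Since $I^-(q)$ is open and contains $p$, and every neighborhood of $p$ meets $\Jpl(S)$, I can pick $p' \in I^-(q) \cap \Jpl(S)$. Concatenating a future-directed causal curve from $S$ to $p'$ with the future-directed timelike curve from $p'$ to $q$, and applying the standard composition rule $I^+ \circ J^+ \subseteq I^+$, places $q \in I^+(S)$. But $I^+(S) \subseteq \Jpl(S)$ is open, so $q$ lies in the interior of $\Jpl(S)$, contradicting $q \in \partial \Jpl(S)$.

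For the Lipschitz structure I would argue locally. Around each $p_0 \in \partial \Jpl(S)$ I choose a convex normal coordinate chart $(T, X)$ adapted to the globally hyperbolic splitting of $(\tM, g)$, in which $\partial_T$ is timelike and the null cones of $g$ at every point of the chart have aperture uniformly bounded by some constant $C$ relative to the coordinate Euclidean metric. Achronality guarantees that each integral curve of $\partial_T$ meets $\partial \Jpl(S)$ in at most one point, so after shrinking the chart, $\partial \Jpl(S)$ is the graph $T = f(X)$ of a single-valued function $f$. A uniform Lipschitz bound on $f$ follows from the uniform cone-aperture bound: if $|f(X_1) - f(X_2)| > C|X_1 - X_2|$ held for some pair, the two graph points could be joined by a coordinate segment lying strictly inside the null cone at each of its points, hence by a timelike curve, violating achronality.

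The subtlest step is this uniform Lipschitz estimate, which rests on shrinking the chart enough that the null cones of $g$ remain uniformly close to a fixed Minkowski model so the cone-aperture constant $C$ is genuinely uniform; this is a purely local computation in a convex normal neighborhood. The main obstacle is therefore not a hard estimate but the coordinate bookkeeping needed to convert a pointwise cone bound into a uniform geometric statement. Once the Lipschitz graph representation is in hand, the no-reentry clause is immediate: a timelike curve meeting $\partial \Jpl(S)$ at two distinct points would produce two chronologically related points on it, contradicting achronality established above.
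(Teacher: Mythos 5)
The paper gives no proof here; it cites this directly from Beem--Ehrlich--Easley (and cf.\ Penrose, Hawking--Ellis), so there is no paper argument to compare against. Your proposal reproduces the standard textbook proof and the overall structure is sound: the push-up argument $I^+\circ J^+\subseteq I^+$ for achronality, the graph-over-a-spacelike-slice picture for the topological/Lipschitz hypersurface property, and the observation that a timelike curve cannot meet an achronal set twice.

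One step is stated too quickly. You pass from ``achronality guarantees each integral curve of $\partial_T$ meets $\partial J^+(S)$ at most once'' to ``after shrinking the chart, $\partial J^+(S)$ is the graph $T=f(X)$,'' but achronality only gives \emph{at most} one intersection; you still need \emph{at least} one for nearby integral curves, or else $f$ is not defined on a full neighborhood of $X_0$. The standard way to close this is to note that since $p_0\in\partial J^+(S)$, the local chronological future $I^+(p_0,V)$ lies in $\mathrm{int}\,J^+(S)$ (via $I^+(p_0)\subseteq I^+(\overline{J^+(S)})=I^+(S)$, which is open in $J^+(S)$) while $I^-(p_0,V)$ is disjoint from $J^+(S)$; so each $\partial_T$-integral curve through a point near $p_0$ passes from the complement into the interior of $J^+(S)$ and must cross the boundary by connectedness. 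With that supplement the Lipschitz estimate and the rest of the argument go through as you describe.
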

By \cite[Proposition 2.4.2]{aretakis2012lecture},
these boundaries are contained in the null normal geodesic congruences, i.e.,
\[
\partial \Jpl(S) \subseteq \Coutpl(S) \cup \Cinpl(S) \quad \partial \Jmi(S) \subseteq \Coutmi(S) \cup \Cinmi(S).
\]
In addition, we have the following results.
\begin{lm}{\cite[Theorem 1]{akers2018boundary}}\label{lm_achronalboundary}
A point $y$ is on $\partial \Jpl(S)$ if and only if
\begin{itemize}
    \item[(1)] $y$ lies on a future pointing null geodesic $\gamma$ normal to $S$, i.e., $y \in \Coutpl(S) \cap \Cinpl(S)$, before conjugate points; and
    \item[(2)] $\gamma$ does not intersect any other null geodesic orthogonal to $S$ strictly between $S$ and $y$.
\end{itemize}
\end{lm}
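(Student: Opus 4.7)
The plan is to prove the characterization in both directions by combining the inclusion $\partial \Jpl(S) \subseteq \Coutpl(S) \cup \Cinpl(S)$ recalled before the lemma with a standard Lorentzian variational argument (à la Penrose) showing that null geodesics normal to $S$ remain on the achronal boundary precisely up to the first ``focal event'' --- either a conjugate point along the geodesic, or an intersection with another null geodesic normal to $S$.

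For the forward direction, suppose $y \in \partial\Jpl(S)$. By the cited inclusion there is a future pointing null geodesic $\gamma$ orthogonal to $S$ at some $p \in S$ with $y = \gamma(t_0)$ for some $t_0 > 0$; without loss of generality $\gamma$ starts in the direction $\vout$ (the $\vin$ case is identical). To establish (1), assume for contradiction that there is a conjugate point $q = \gamma(t_1)$ with $0 < t_1 < t_0$. By Jacobi field analysis, there is a nontrivial normal variation of $\gamma|_{[0,t_1]}$ through $S$-orthogonal null geodesics which vanishes to first order at $q$; extending it by $\gamma|_{[t_1,t_0]}$ and rounding off the resulting corner at $q$ produces a piecewise smooth causal curve from $S$ to $y$ that is not a null geodesic. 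A classical result (e.g.\ Proposition 10.46 of O'Neill or \cite{Beem2017}) then upgrades this to a strictly timelike curve from $S$ to $y$, placing $y$ in the chronological future $I^+(S)$ and contradicting achronality of $\partial \Jpl(S)$. For (2), if $\gamma$ met a second null geodesic $\tilde\gamma$ normal to $S$ at some $z = \gamma(s) = \tilde\gamma(\tilde s)$ with $0 < s < t_0$, concatenating $\tilde\gamma|_{[0,\tilde s]}$ with $\gamma|_{[s,t_0]}$ and rounding the corner at $z$ again yields a timelike curve from $S$ to $y$, contradiction.

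For the reverse direction, suppose $y = \gamma(t_0)$ satisfies (1) and (2). Clearly $y \in \Jpl(S)$, so it suffices to show $y \notin I^+(S)$. Consider the normal null exponential map $\mathrm{Exp}^\perp : \mcn^+ S \to M$ defined on the future pointing null normal bundle of $S$ (parameterized via $\vout$ and $\vin$). Condition (1) says that $d\mathrm{Exp}^\perp$ is injective at $(p, t_0 \vout)$, so $\mathrm{Exp}^\perp$ is a local diffeomorphism there, and condition (2) upgrades this local injectivity to a neighborhood of the arc $\{(p, t \vout): 0 \le t \le t_0\}$ in $\mcn^+ S$. Thus a neighborhood of $\gamma|_{[0,t_0]}$ in $M$ is foliated by $S$-orthogonal null geodesics with $\gamma$ itself as a leaf. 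If a timelike curve from $S$ to $y$ existed, one could perturb its endpoint slightly along the congruence to obtain a timelike curve from $S$ to some $\gamma(t_0')$ with $t_0' < t_0$; but then the null segment $\gamma|_{[t_0',t_0]}$, concatenated with this timelike curve, produces a broken causal curve from $S$ to $y$ which, after rounding off the corner, is strictly timelike --- so $y \in I^+(S)$ --- and the same argument applied at any interior point would force a portion of $\gamma$ itself into $I^+(S)$, contradicting the fact that $\gamma$ is a null geodesic.

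The main technical obstacle is the rounding-off / deformation step used in both directions: one needs to replace a broken causal curve by a genuinely timelike one without leaving the manifold. I would handle this by the standard Lorentzian ``corner smoothing'' lemma (O'Neill, Proposition 10.46; cf.\ \cite{Beem2017}), which requires only local Lorentzian geometry and is available in our globally hyperbolic setting. Once this is in hand, the remaining steps --- linking conjugate points and intersections to the existence of variations, and using the local diffeomorphism property of $\mathrm{Exp}^\perp$ before focal events --- proceed by textbook calculations, and the statement follows as in \cite{akers2018boundary}.
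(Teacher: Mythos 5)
The paper does not actually prove this lemma; it imports it verbatim from the cited reference, so there is no in-paper argument to compare against. Your \emph{forward} direction is a correct sketch: both the conjugate-point case and the intersection case are handled by the standard shortcut argument (upgrading a causal curve that is not a null pregeodesic to a timelike one via O'Neill Proposition 10.46, which is Lemma \ref{lm_shortcut} in this paper), contradicting achronality of $\partial J^+(S)$.

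Your \emph{reverse} direction, however, contains a genuine gap. First a dimensional slip: for a spacelike $2$-surface $S$ in a $4$-manifold, the future null normal bundle has dimension $3$, so the image of $\mathrm{Exp}^\perp$ sweeps out a null hypersurface of codimension one, not a full neighborhood of $\gamma|_{[0,t_0]}$; your sentence ``a neighborhood of $\gamma|_{[0,t_0]}$ in $M$ is foliated by $S$-orthogonal null geodesics'' is false as stated. More seriously, the closing chain of implications is circular. You suppose a timelike curve from $S$ to $y$ exists (i.e.\ $y \in I^+(S)$), perturb, concatenate with a null segment and smooth the corner, and conclude that $y \in I^+(S)$ --- but that was your hypothesis, not a contradiction. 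The final clause, that this ``would force a portion of $\gamma$ itself into $I^+(S)$, contradicting the fact that $\gamma$ is a null geodesic,'' is not a contradiction at all: a null geodesic can perfectly well enter $I^+(S)$, and indeed a generator of $\partial J^+(S)$ does exactly that once it passes a focal point or meets another generator. The statement you need is that $\gamma(t) \in I^+(S)$ for $t < t_0$ would itself violate conditions (1) and (2), and deriving this requires the index-form/focal-point theory: absence of focal points to $S$ on $[0,t_0)$ forces the relevant null index form to be positive definite, precluding any \emph{nearby} timelike curve from $S$ to $y$; condition (2) is then what rules out non-local shortcuts through a second $S$-orthogonal generator. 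As written, your reverse direction never invokes either of these mechanisms and therefore does not establish $y \notin I^+(S)$.
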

Thus, for any regular $2$-surface, we can construct its null normal conjugate congruences.
Before caustics and focal points, these congruences are smooth null hypersurfaces and they form the boundary of $\partial \Jpl(S)$ or $\partial \Jmi(S)$.
In the following, we consider the regular $2$-surfaces given by
\[
\Spl(T_0) \coloneqq \{p\in \Spl: T(p) = T_0\}, \quad \Smi(T_0) \coloneqq \{p\in \Smi: T(p) = -T_0\},
\]
for some $0<T_0< 1$.


\subsection{Layer stripping steps}\label{subsec_layersteps}
We state a layer stripping method in the following.
For fixed $0< T_0 < 1$, we use the following steps to reconstruct the metric in the open set
\[
I(T_0) \coloneqq
I(\Smi(T_0), \Spl(T_0)).
\]
\subsubsection{Step 1}
Recall  $R = \{(0, X) \in \tM: |X| = 1\}$ is the spacelike infinity.
In this step, we would like to reconstruct $g$ up to conformal diffeomorphisms in a small neighborhood of $R$ in $M$, i.e., we consider the reconstruction in $I(T_1)$ for small $T_1 > 0$.

For this purpose, suppose $T_1$ is small and to be specified later.
First, we pick an arbitrary $p_0 \in R$.
By Section \ref{subsec_familycurves}, there exist unique null geodesics
\[
\mupl_a: [-\varsm_a^+, 0] \rightarrow \bar{S}_+ \quad \text{ such that }
\mu_a(-\varsm_a^+) = p_0 \text{ and } \mupl_a(0) = i_+
\]
and
\[
\mumn_b: [0, \varsm_b^-] \rightarrow \bar{S}_- \quad \text{ such that }
\mumn_b(0) = i_- \text{ and } \mu_b(\varsm_b^-) = p_0.
\]
for some $a,b \in \mS^2$.
\begin{figure}[h]
    \centering
    \includegraphics[width=0.25\linewidth]{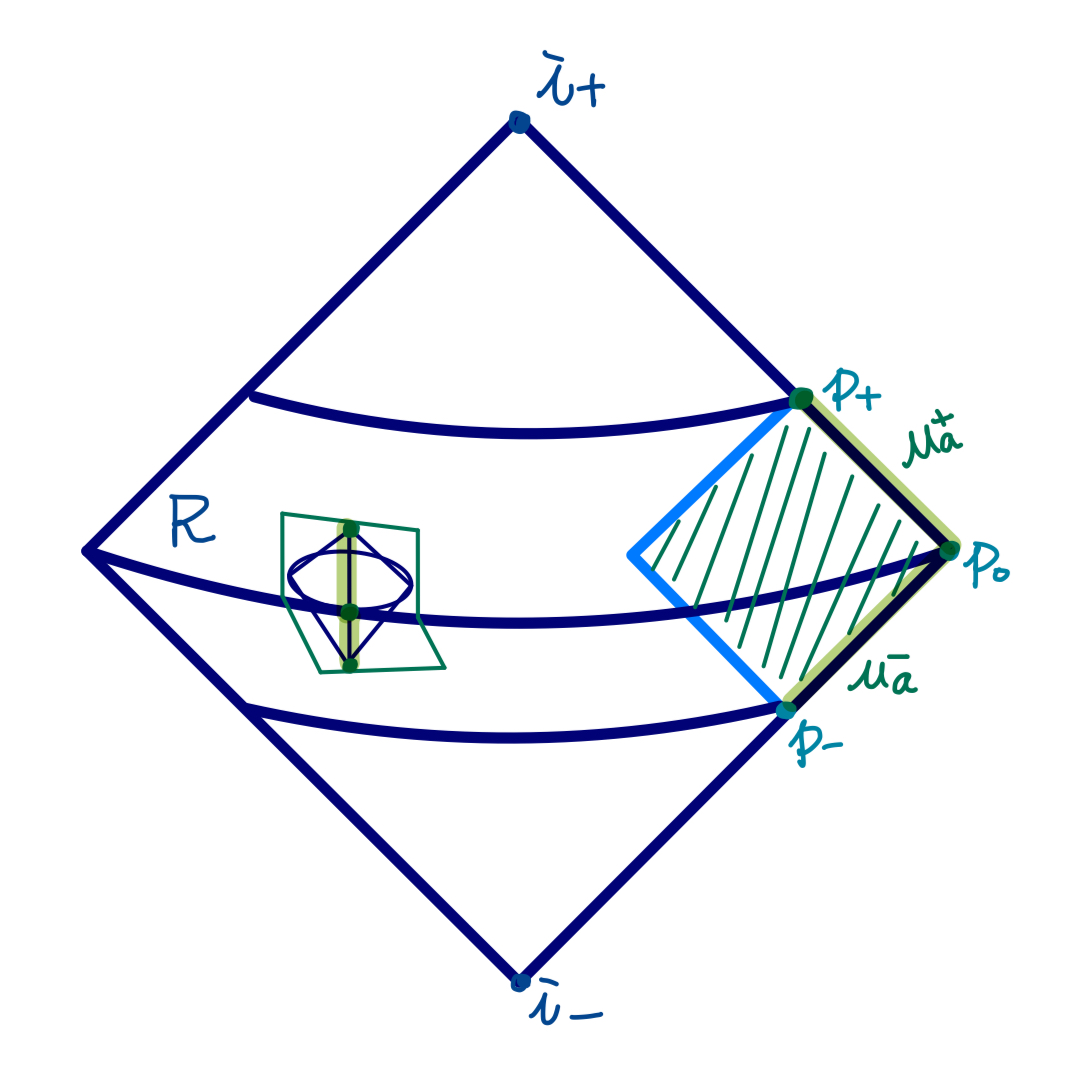}
    \caption{Step 1.}
\end{figure}
Let $\delta > 0$ satisfy Lemma \ref{lm_radius} such that $\lB(p_0, \delta)$ has no cut points along any null geodesic segments.
With $T_1 > 0$ given, we can find $\varsm_1, \varsm_2 > 0$ such that
\[
p_+  = \mupl_a(-\varsm_1) \text{ with }T(p_+) = T_1,
\quad  p_- = \mumn_b(\varsm_2)\text{ with } T(p_-) = -T_1.
\]
We observe we can choose $T_1$ small enough such that $p_-$ and $p_+$ are arbitrarily close to $p_0$.
Thus, there exists $T_1> 0$ such that
\[
J(p_-, p_+) \subseteq \lB(p_0, \delta).
\]
Further, let $U_+ \subseteq \Spl(0, T_1)$ be a small open neighborhood of the geodesic segment $\mupl_a([-\varsm_a^+, -\varsm_1])$.
Let $U_- \subseteq \Smi(0, T_1)$ be a small open neighborhood of the geodesic segment $\mupl_b([\varsm_2, \varsm_b^-])$.
Here we use the notation
\[
{S}_\pm(T_1, T_2) \coloneqq {S}_\pm \cap \{T_1 < \pm T < T_2\}.
\]
As $J(p_-, p_+)$ is closed and contained in an open set $\lB(p_0, \delta)$,
we can find $U_+, U_-$ small enough such that
\[
I(U_-, U_+) \subseteq \lB(p_0, \delta),
\]
where there are no cut points along any null geodesic segment.
Indeed, for sufficiently small $U_\pm$,
the set $I(U_-, U_+)$ is an open small neighborhood of $I(p_-, p_+)$.

For each $p_0 \in R$,
we find such $p_\pm$ and $U_\pm$.
and set $W = I(U_-, U_+)$.
We use Scheme 1 in Section \ref{sec_scheme1} to reconstruct the metric in $W$, by sending receding waves on $U_-$ and detecting new singularities on $U_+$.
Thus, we choose
\[
\begin{split}
    T_{1}  = \sup\{T \in (0,1): &\text{ for each $p_0 \in R$, we have $J(p_-, p_+) \subseteq \lB(p_0, \delta)$},\\
    &\quad \quad \quad \quad \text{ where $p_\pm \in S_\pm(T)$ are constructed for $p_0$ as above}\}.
\end{split}
\]

This enables us to we reconstruct a connected region given by the union of these diamond sets.
Note this region does not necessarily fully cover $I(T_1)$, but part of its boundary is given by $\mathrm{cl}(S_\pm(0, T_1))$.
For this purpose, we consider the set
\[
Y_0 \coloneqq \partial J^-(S_+(T_1')) \cap \partial J^+(S_-(T_1'))
\]
for some small $T_1'> 0$ to be specified later.
If $Y_0 \neq \emptyset$, we pick arbitrary $y_0 \in Y_0$ and consider the following construction.
With $y_0 \in \partial J^-(S_+(T_1'))$, by Lemma \ref{lm_achronalboundary},
there exists a unique future pointing null geodesic $\gamma_+(\mR_+)$ starting from $y_0$ and hits $\Spl(T_1')$ at point $y_+$, on or before the first conjugate point and the first focal point.
With $y_0 \in \partial J^+(S_-(T_1'))$,
there exists a unique past pointing null geodesic $\gamma_-(\mR_-)$ starting from $y_0$ and hits $\Smi(T_1')$ at point $y_-$, on or before the first conjugate point and the first focal point.
\begin{figure}[h]
    \centering
    \includegraphics[width=0.27\linewidth]{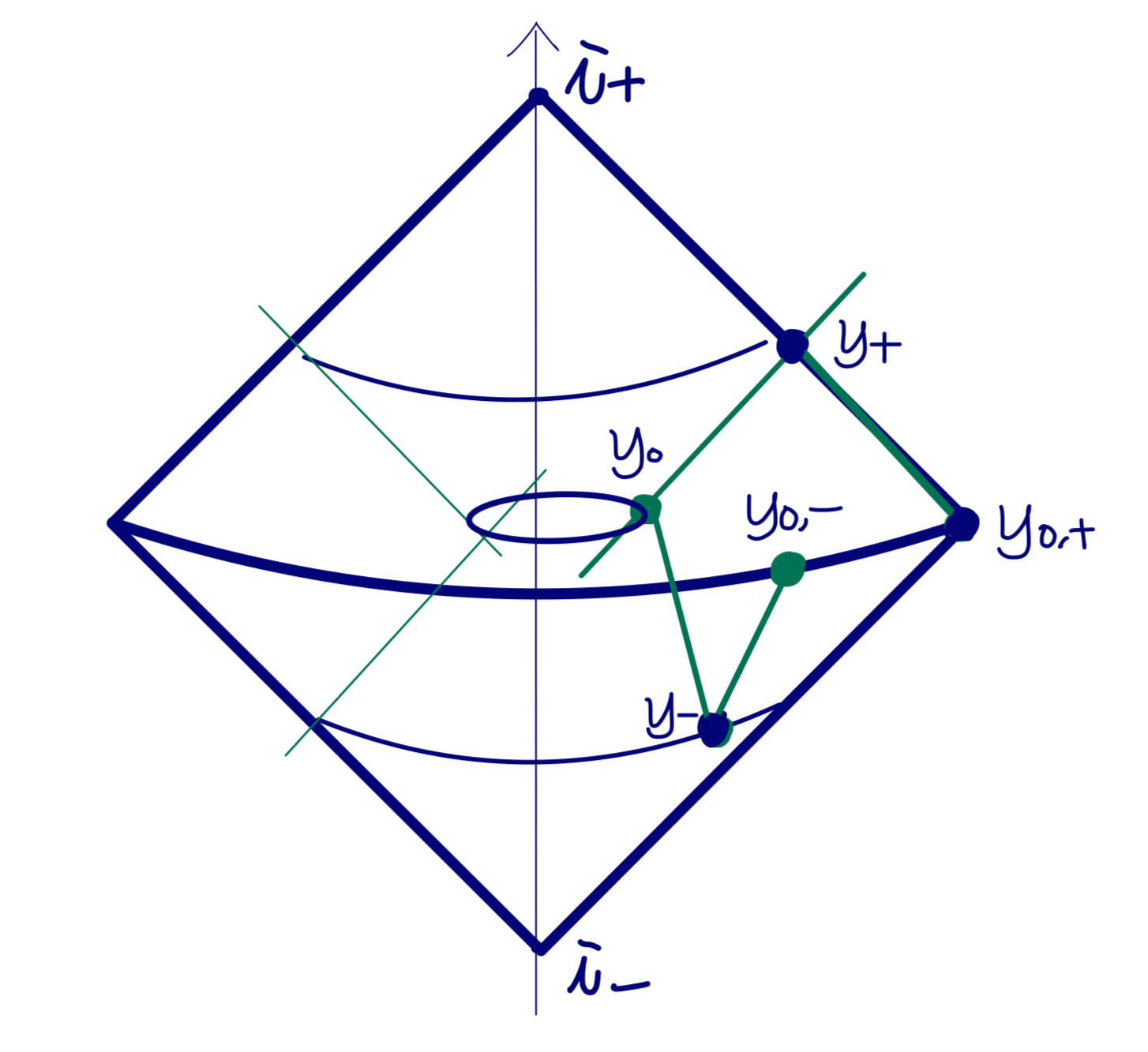}
    \caption{Step 1, part 2.}
\end{figure}
As before,
by Section \ref{subsec_familycurves} there exist unique null geodesics
\[
\mupl_a: [-\varsm_a^+, 0] \rightarrow \bar{S}_+ \quad \text{ such that }
\mupl_a(-\varsm_1) = y_+ \text{ and }
\mu_a(-\varsm_a^+) = \yzeropl,
\]
and
\[
\mumn_b: [0, \varsm_b^-] \rightarrow \bar{S}_- \quad \text{ such that }
\mu_b(\varsm_b^-) = \yzeromi \text{ and }  \mumn_b(\varsm_2) = y_-.
\]
for some $a,b \in \mS^2$ and $\varsm_1, \varsm_2 > 0$.
We choose small $T_1'$ such that
\[
J(y_-, y_+) \subseteq \lB(y_0, \delta).
\]
As before, we set $U_+ \subseteq \Spl(0,T_1')$ be a small open neighborhood of the null geodesic segment $\mupl_a([-\varsm_a^+, -\varsm_1])$ and $U_- \subseteq \Smi(0, T_1')$ be that of the null geodesic segment $\mupl_b([\varsm_2, \varsm_b^-])$.
As $J(y_-, y_+)$ is closed and contained in an open set $\lB(y_0, \delta)$,
we can find $U_+, U_-$ small enough such that
\[
I(U_-, U_+) \subseteq \lB(y_0, \delta),
\]
where there are no cut points along any null geodesic segment.
For $W = I(U_-, U_+)$, we reconstruct the metric there using Scheme 1 in Section \ref{sec_scheme1}.
In this case, we choose
\[
\begin{split}
    T_{1}'  = \sup\{T \in (0,1): &\text{ for each $y_0 \in Y_0$, we have $J(y_-, y_+) \subseteq \lB(y_0, \delta)$},\\
    &\quad \quad \quad \quad \text{ where $y_\pm \in S_\pm(T)$ are constructed for $y_0$ as above}\}.
\end{split}
\]
This enables us to reconstruct a region given by the union of such $I(y_-, y_+)$.
Note this region has the same boundary as $I(T_1')$ within $M$, i.e., the part
\[
\big(\partial J^-(S_+(T_1')) \cap J^+(S_-(T_1'))\big) \cup  \big(J^-(S_+(T_1')) \cap \partial J^+(S_-(T_1')) \big).
\]
Indeed,
the set $\Spl(T_1')$ is a regular $2$-surface by Definition \ref{def_twosurface}.
Its past set has a boundary $\partial \Jmi(\Spl(T_1))$, which is an achronal Lipschitz topological hypersurface contained in the null normal geodesic congruences by \cite[Proposition 2.4.2]{aretakis2012lecture} \ref{lm_achronalboundary}.
In particular, we have
\[
\Pmi(T_1) \coloneqq \partial \Jmi(\Spl(T_1')) \cap M  \subseteq  \Cinmi(\Spl(T_1')),
\]
and similarly we have
\[
\Ppl(T_1) \coloneqq  \partial \Jpl(\Smi(T_1')) \cap M  = \Cinpl(\Smi(T_1')),
\]
where $\Cinmi$ and $\Cinpl$ are defined in (\ref{def_Cset}).
Thus, the union of such $I(y_-, y_+)$ gives us the same boundary as that of $I(T_1)$ within $M$.

Now we set $T_1 = T_1'$, if we have $T_1'< T_1$.
Combining these two parts, we reconstructed the open region $I(T_1)$.

%

%

\subsubsection{Step 2}
In the following, assume we have reconstructed $I(T_1)$, for some $T_1> 0$.
Now the goal is to use the reconstructed region to recover $g$ in a small neighborhood of $\bar{S}_+$, i.e., in the future set $I^+(\Smi(T_2))$ for some $0< T_2 \leq T_1$.

{
We emphasize the metric $g$ in $I(T_1)$ is reconstructed up to conformal diffeomorphisms.
Thus we can pick an arbitrary representative in the equivalent class, say $\hat{g} = \phi^*(\rho^2 g)$, where $\rho \in C^\infty(M)$ is a conformal factor.
Then by the identity, one has
\beq\label{eq_conormalfactor}
\rho^{-(n-2)/2}  \square_g (\rho^{(n-2)/2} u)  = \rho^2 \square_{\rho^2 g} u  -\rho^2  (\rho^{(n-2)/2}\square_{\rho^2 g} \rho^{(-n-2)/2}) u,
\eeq
where $\gamma \coloneqq \rho^{(n-2)/2}\square_{\rho^2 g} \rho^{(-n-2)/2}$ is smooth on $M$.
As is stated in \cite{hintz2015semilinear}, a conformal factor only reparameterizes bicharacteristics, and thus the interaction of conormal waves for $\hat{g}$ and $g$ essentially have the same structure of singularities.
In particular, $\hat{g}$ and $g$ determines the same lens relation.
}

Now suppose $(I(T_1), \hat{g}|_{I(T_1)})$ is known.
First, we pick a new $p_0 \in \Spl(T_1)$.
Recall in Step 1 we reconstruct a slightly larger region $I(U_-, U_+)$, with $U_-, U_+$ defined there.
Thus, we may assume we reconstructed a small neighborhood of $I(T_1)$.
Recall we denote by $\Pmi(T_1)$  the boundary $\partial \Jmi(\Spl(T_1))$ within $M$.
It is an achronal Lipschitz topological hypersurfaces contained in the null normal geodesic congruences of $\Spl(T_1)$.
Thus, we may assume $\Pmi(T_1)$ is  contained in the reconstructed region.


\begin{figure}[h]
    \centering
    \includegraphics[width=0.5\linewidth]{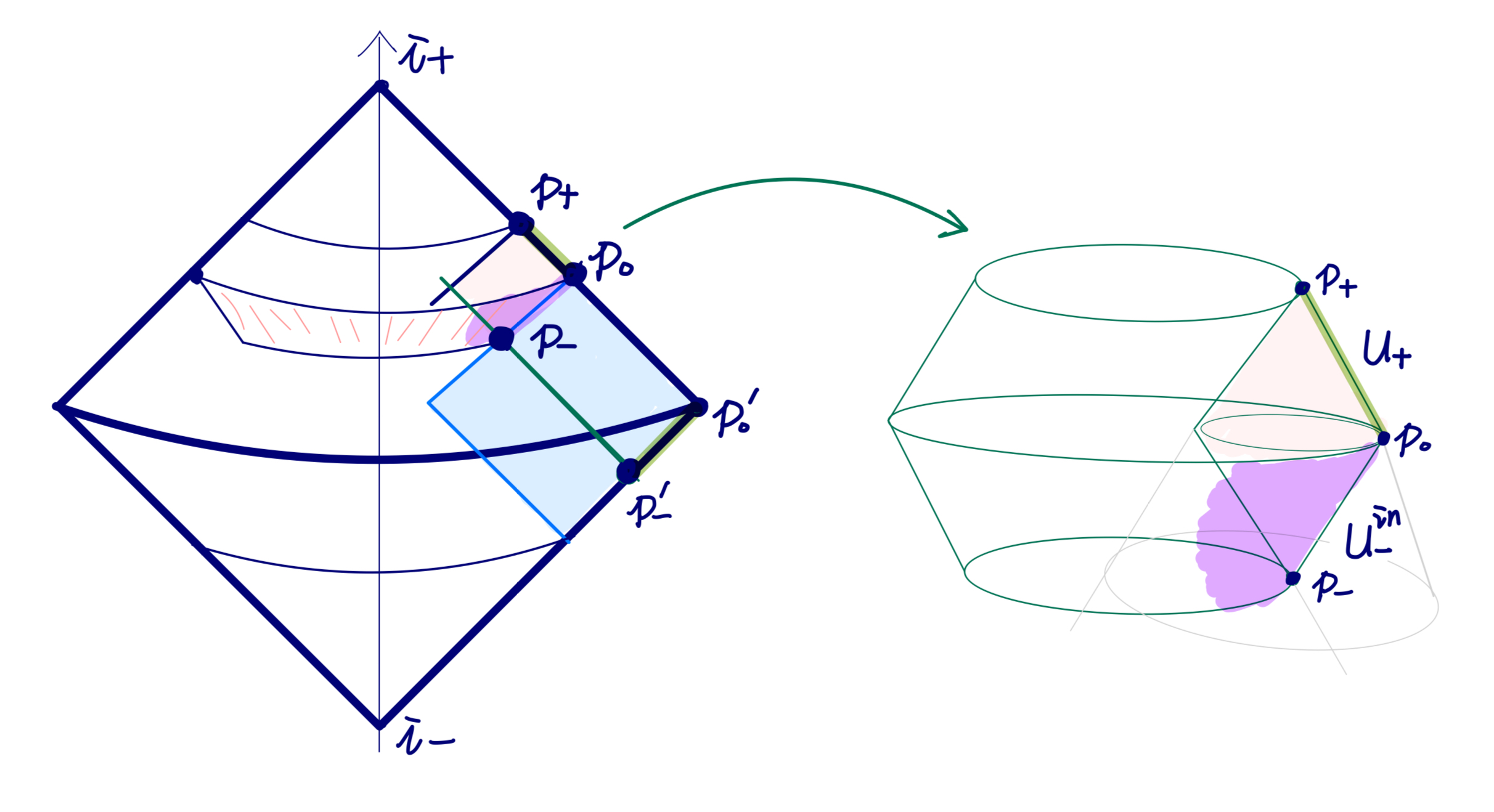}
    \caption{Step 2.}
\end{figure}

Let $\delta>0$ be given by Lemma \ref{lm_radius} and we focus on the reconstruction in $\lB(p_0, \delta)$, for $p_0 \in \Spl(T_1)$.
Let $T_2 > 0$ be small and to be specified in the following.
Again, there exists a unique null pregeodesic $\mupl_a:[-\varsm_a^+, 0] \rightarrow \bar{S}_+$ passing through $p_0$ and we write
\[
p_0 = \mupl_a(-\varsm_0) \text{ for some } 0 < \varsm_0 < \varsm_a^+
\quad \text{ and }\quad
p_0''\coloneqq \mupl_a(-\varsm_a^+) \in R.
\]
With $T_2> 0$ given, we can find some $0 < \varsm_1< \varsm_0 < \varsm_a^+$ such that
\[
p_+  = \mupl_a(-\varsm_1)
\quad \text{ and } \quad
T(p_+)  = T_1  + T_2.
\]
Then we consider the unique past pointing null geodesic starting from $p_0$ normal to $\Spl(T_1)$ and we denote it by $\gamma_-(\mR_+)$.
We consider the future set $\Jpl(\Smi(T_2))$ and the achronal boundary
\[
\Ppl(T_2) \coloneqq \partial \Jpl(\Smi(T_2)) \cap M.
\]
Note that  $\gamma_-(\mR_+)$ intersects $\Ppl(T_2)$ exactly at one point
\[
\pmi \in \gamma_-(\mR_+) \cap  \Ppl(T_2).
\]
{\teal Write a lemma about this.}
By Lemma \ref{lm_achronalboundary}, such $\pmi$ lies in a normal null geodesic starting from $\Smi(T_2)$ and thus we can find a $\pmi' \in \Smi(T_2)$ satisfying $\pmi'< \pmi$.
Further, by considering the null geodesics on $\Smi$,
we can find a unique
$\mumn_b:[0, \varsm_b^-] \rightarrow \bar{S}_-$ passing through $\pmi'$
and some  $0< \varsm_3 < \varsm_b^-$ such that
\[
p_0' = \mumn_b(\varsm_b^-) \in R \quad \text{ and } \quad  p_-' = \mumn_b(\varsm_3) \in \Smi(T_2).
\]
We emphasize that the point $p_0'' \in R$ that we found before may not be exactly $p_0'$ and we do not necessarily have $p_0' < \ppl$.
But these do not affect the reconstruction.


Moreover, we observe one can choose $T_2$ such that $p_+$ is arbitrarily close to $p_0$ and $p_-'$ arbitrarily close to $p_0'$, which enables $p_-$ to be arbitrarily close to $p_0$ as well.
Thus, there exists $T_2 > 0$ such that $p_+$ and $p_-$ are contained in a given small neighborhood of $p_0$.
Then we can find  $T_2 > 0$ such that
\[
J(p_-, p_+) \subseteq \lB(p_0, \delta)
\]
has no cut points along any null geodesic segments.
Then let $U_+ \subseteq \Spl(T_1, T_1 + T_2)$ be a small open neighborhood of the null geodesic segment $\mupl_a([-\varsm_0, -\varsm_1])$ from $p_0$ to $\ppl$.
Let $U_- \subseteq \Spl(0, T_2)$ be that of the null geodesic segment $\mumn_b([\varsm_3, \varsm_b^-])$ from $\pmi'$ to $p_0'$.

In addition,
with the achronal boundary $\Pmi(T_1) = \partial \Jmi(\Spl(T_1)) \cap M$,
the diamond set $I(U_-, U_+)$ can be partitioned as
\[
I(U_-, U_+) = W \cup \tUmi \cup W_0,
\]
where we denote by
\[
W = I(U_-, U_+) \setminus \Jmi(\Spl(T_1)),
\quad \tUmi = I(U_-, U_+) \cap \Pmi(T_1),
\quad W_0 = I(U_-, U_+) \cap I^-(\Spl(T_1)).
\]
Note such $W$ is a precompact set near $p_0$ and it can be contained in a small neighborhood of $p_0$ when $T_2$ is sufficient small.
By choosing small $T_2 > 0$, we can expect
\[
\overline{W}  \subseteq \lB(p_0, \delta)
\]
has no cut points along any null geodesic segments.

We emphasize in general $\tUmi$ is an achronal Lipschitz hypersurface.
Observe null geodesics starting from future pointing lightlike vectors transversal to $U_-$ will stay in $\Jpl(\Smi(T_2))$.
Such null geodesics might enter the region $W$ or might not.
If it enters $W$, then it will intersect $\tUmi$ exactly once.
This guarantees such $\tUmi$ is enough for our reconstruction.

In Section \ref{subsec_waves}, we would like to construct conormal distributions propagating along the null geodesic $\gamma_{\tp, \tw}(\mR_+)$ for $(\tp, \tw) \in L^+M$ with $\tp \in \tUmi$, by sending proper Lagrangian distributions singular near some $(p, w) \in \LUmi$.
Using these waves, we would like to reconstruct the metric in $W$ by detecting new singularities on $U_+$, using the scattering light observation sets in Section \ref{sec_SLOS}.
A more detailed reconstruction can be found in Section \ref{sec_scheme2}.

Note for each $p_0 \in \Spl(T_1)$, we can find $p_\pm$, $U_\pm$, and $\tUmi$ as above.
We perform the same reconstruction for each $p_0$ and this recovers a connected new region given by the union of diamond sets $I(p_-, p_+)$.
Then we choose
\[
\begin{split}
    T_{2}  = \sup\{T \in (0,1): &\text{ for each $p_0 \in \Spl(T_1)$, we have $\overline{W} \subseteq \lB(p_0, \delta)$},\\
    &\quad \quad \quad \quad \text{ where $\overline{W}$ are constructed for $p_0$ as above}\}.
\end{split}
\]
%
Note the boundary of this new region includes
\[
\mathrm{cl}(S_+(T_1, T_1 + T_2)) \cup (\Pmi(T_1) \cap \Jpl(\Smi(T_2)),
\]
as they are contained in the normal null geodesics congruences starting from $S_+(T_1)$.
Although this new region might not fully cover
\[
I(\Smi(T_2), \Spl(T_1 + T_2)) \setminus I^-(\Spl(T_1)),
\]
we can follow a similar argument as Step 1.
More explicitly, for $T_2'$ to be specified later, one can consider the compact set
\[
Y_0 \coloneqq \partial J^-(S_+(T_1 + T_2')) \cap \partial J^+(S_-(T_2')).
\]
If $Y_0 \neq \emptyset$, we pick arbitrary $y_0 \in Y_0$ and consider the same construction as in Step 1.
This enables us to reconstruct a new region, whose boundary includes
\[
\big(\partial J^-(S_+(T_1 + T_2')) \cap J^+(S_-(T_2'))\big) \cup  \big(J^-(S_+(T_1 + T_2')) \cap \partial J^+(S_-(T_2')) \big).
\]
By choosing the smaller one of $T_2, T_2'$ as $T_2$, we reconstruct $I(\Smi(T_2), \Spl(T_1 + T_2))$.

Next, we repeat this procedure to reconstruct
\[
I(T_1) \cup I(\Smi(T_2), \Spl(T_1 + T_2 + T_3)),
\]
for some $0 < T_3 \leq T_2 \leq T_1$, {\teal if the radius of $\Spl(T_2)$ is not too small.}
Otherwise, we can cover the rest region using a $\delta$-neighborhood of one point.
With our assumptions, such $T_1, T_2, \ldots$ cannot be too small so we have finite steps to reconstruct a small neighborhood of $S_+$ given by $I^+(\Smi(T_0))$, for some $T_0 > 0$.

\subsubsection{Step 3}
In the following, assume we have reconstructed $I^+(\Smi(T_1))$, for some $T_1> 0$.
Now we flip the positive and negative sign to consider a region below $I(T_1)$.
The goal is to use the reconstructed region to recover $g$ in this new region.
More explicitly, suppose $(I^+(\Smi(T_1)), \hat{g}|_{I^+(\Smi(T_1))})$ is known.
First, we pick a new $p_0 \in \Smi(T_1)$.
In Step 2, we reconstruct a slightly larger region $I(U_-, U_+)$ and therefore we may assume we have reconstructed a small neighborhood of $I^+(\Smi(T_1))$.

\begin{figure}[h]
    \centering
    \includegraphics[width=0.5\linewidth]{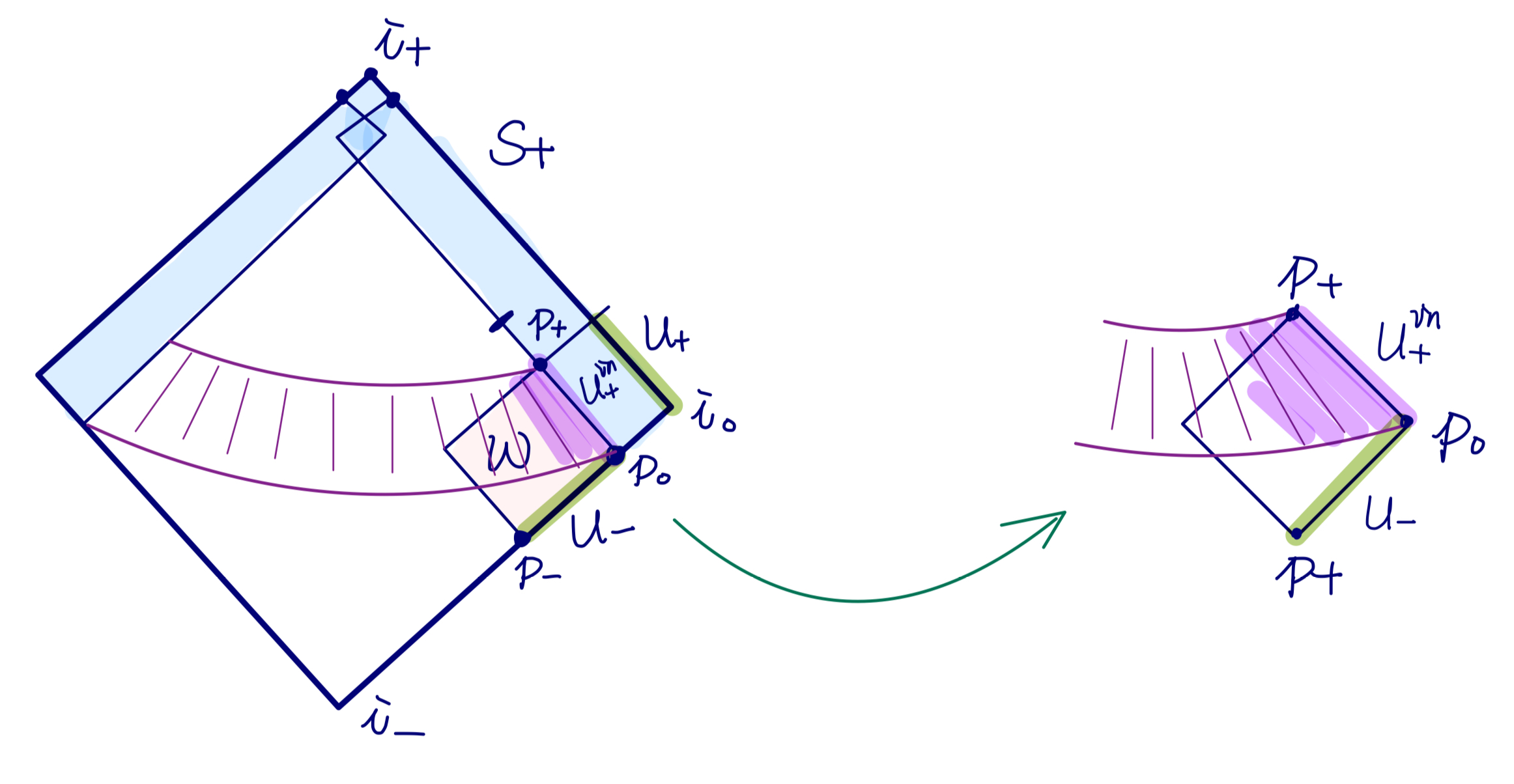}
    \caption{Step 3.}
\end{figure}

Recall we denote by $\Ppl(T_1)$ the boundary  $\partial \Jpl(\Smi(T_1))$ within $M$.
It is an achronal Lipschitz topological hypersurface contained in the null normal geodesic congruences of $\Smi(T_1)$.
Thus, we may assume $\Ppl(T_1)$ is contained in the reconstructed region.

Let $\delta>0$ be given by Lemma \ref{lm_radius} and we focus on the reconstruction in $\lB(p_0, \delta)$, for $p_0 \in \Smi(T_1)$.
Let $T_2 > 0$ be small and to be specified in the following.
Again, there exists a unique null geodesic $\mumn_b:[0, \varsm_b^-] \rightarrow \bar{S}_-$ passing through $p_0$ and we write
\[
p_0 = \mumn_b(\varsm_0) \text{ for some } 0 < \varsm_0 < \varsm_b^-
\quad \text{ and }\quad
p_0''\coloneqq \mumn_b(\varsm_b^-) \in R.
\]
With $T_2> 0$ given, we can find some $0 < \varsm_1< \varsm_0 < \varsm_b^-$ such that
\[
p_-  = \mumn_b(\varsm_1)
\quad \text{ and } \quad
T(p_-)  = -(T_1  + T_2).
\]
Then we consider the unique future pointing null geodesic starting from $p_0$ normal to $\Smi(T_1)$ and we denote it by $\gamma_+(\mR_+)$.
We consider the future set $\Jmi(\Spl(T_2))$ and the achronal boundary
\[
\Pmi(T_2) \coloneqq \partial \Jmi(\Spl(T_2)) \cap M.
\]
Note that  $\gamma_+(\mR_+)$ intersects $\Pmi(T_2)$ exactly at one point
\[
\ppl \in \gamma_+(\mR_+) \cap  \Pmi(T_2).
\]
By Lemma \ref{lm_achronalboundary}, such $\ppl$ lies in a normal null geodesic starting from $\Spl(T_2)$ and thus we can find a $\ppl' \in \Spl(T_2)$ satisfying $\ppl' > \ppl$.
Further, by considering the null geodesics on $\Spl$,
we can find a unique
$\mupl_a:[-\varsm_a^+, 0] \rightarrow \bar{S}_+$ passing through $\ppl'$
and some  $0< \varsm_3 < \varsm_a^+$ such that
\[
p_0' = \mupl_a(-\varsm_a^+) \in R \quad \text{ and } \quad  \ppl' = \mupl_a(-\varsm_3) \in \Spl(T_2).
\]
We emphasize that the point $p_0'' \in R$ that we found before may not be exactly $p_0'$ and we do not necessarily have $p_0' < \ppl$.
But these do not affect the reconstruction.


Moreover, we observe one can choose $T_2$ such that $\pmi$ is arbitrarily close to $p_0$ and $\ppl'$ arbitrarily close to $p_0'$, which enables $\ppl$ to be arbitrarily close to $p_0$ as well.
Thus, there exists $T_2 > 0$ such that $\pmi$ and $\ppl$ are contained in a given small neighborhood of $p_0$.
Then we can find  $T_2 > 0$ such that
\[
J(p_-, p_+) \subseteq \lB(p_0, \delta)
\]
has no cut points along any null geodesic segments.
Then let $U_+ \subseteq \Spl(0, T_2)$ be a small open neighborhood of the null geodesic segment $\mupl_a([-\varsm_a^+, -\varsm_3])$ from $p_0'$ to $\ppl'$.
Let $U_- \subseteq \Smi(T_1, T_1 + T_2)$ be that of the null geodesic segment $\mumn_b([\varsm_1, \varsm_0])$ from $\pmi$ to $p_0$.

In addition,
with the achronal boundary $\Ppl(T_1) = \partial \Jpl(\Smi(T_1)) \cap M$,
the diamond set $I(U_-, U_+)$ can be partitioned as
\[
I(U_-, U_+) = W \cup \tUpl \cup W_0,
\]
where we denote by
\[
W = I(U_-, U_+) \setminus \Jpl(\Smi(T_1)),
\quad \tUpl = I(U_-, U_+) \cap \Ppl(T_1),
\quad W_0 = I(U_-, U_+) \cap I^+(\Smi(T_1)).
\]
Note such $W$ is a precompact set near $p_0$ and it can be contained in a small neighborhood of $p_0$ when $T_2$ is sufficient small.
By choosing small $T_2 > 0$, we can expect
\[
\overline{W}  \subseteq \lB(p_0, \delta)
\]
has no cut points along any null geodesic segments.

We emphasize in general $\tUpl$ is an achronal Lipschitz hypersurface.
Observe null geodesics starting from future pointing lightlike vectors transversal to $U_-$ will enter the region $W$.
Such null geodesics might enter the reconstructed region $W_0$ or might not.
If it enters $W_0$, then it will intersect $\tUpl$ exactly once and then leave $M$ from $U_+$ .
This guarantees such $\tUpl$ is enough for our reconstruction.


In Section \ref{subsec_waves}, we would like to construct conormal waves propagating along the null geodesic $\gamma_{p, w}(\mR_+)$ for $(p, w) \in \LUmi$ and detect new singularities on $U_+$.
To avoid new singularities produced in the reconstructed region $W_0$, we use $\tUpl$ to observe when the conormal waves enter $W_0$.
A more detailed reconstruction can be found in Section \ref{sec_scheme3}.

Note for each $p_0 \in \Spl(T_1)$, we can find $p_\pm$, $U_\pm$, and $\tUpl$ as above.
We perform the same reconstruction for each $p_0$ and this recovers a connected new region given by the union of diamond sets $I(p_-, p_+)$.
Then we choose
\[
\begin{split}
    T_{2}  = \sup\{T \in (0,1): &\text{ for each $p_0 \in \Smi(T_1)$, we have $\overline{W} \subseteq \lB(p_0, \delta)$},\\
    &\quad \quad \quad \quad \text{ where $\overline{W}$ are constructed for $p_0$ as above}\}.
\end{split}
\]
Note the boundary of this new region includes
\[
\mathrm{cl}(\Smi(T_1, T_1 + T_2)) \cup (\Ppl(T_1) \cap \Jmi(\Spl(T_2)),
\]
as they are contained in the normal null geodesics congruences starting from $\Smi(T_1)$.
Although this new region might not fully cover
\[
I(\Smi(T_1 + T_2), \Spl(T_2)) \setminus I^+(\Smi(T_1)),
\]
we can follow a similar argument as Step 1.
More explicitly, for $T_2'$ to be specified later, one can consider the compact set
\[
Y_0 \coloneqq \partial \Jpl(\Smi(T_1 + T_2')) \cap \partial \Jmi(\Spl(T_2')).
\]
If $Y_0 \neq \emptyset$, we pick arbitrary $y_0 \in Y_0$ and consider the same construction as in Step 1.
This enables us to reconstruct a new region, whose boundary includes
\[
\big(\partial \Jpl(\Smi(T_1 + T_2')) \cap \Jmi(\Spl(T_2'))\big) \cup  \big(\Jpl(\Smi(T_1 + T_2')) \cap \partial \Jmi(\Spl(T_2')) \big).
\]
By choosing the smaller one of $T_2, T_2'$ as $T_2$, we reconstruct $I(\Smi(T_1 + T_2), \Spl(T_2)) \setminus I^+(\Smi(T_1))$.

\subsubsection{Step 4.}
In the following, assume we have reconstructed
the region given by
\[
I(\Smi(T_1 + T_2), \Spl(T_2)) \cup I^+(\Smi(T_1))
\]
for some $T_1, T_2 > 0$.
The goal to find $T_3 > 0$ such that we can reconstruct
\[
I(S_-(T_1 + T_3), S_+(T_2+T_3)) \cup I^+(\Smi(T_1))
\]
for some $0 < T_3 \leq T_2$, which includes the pink region given by
\[
\big(I^-(S_+(T_2+T_3)) \setminus I^-(S_+(T_2))\big) \cap
\big(I^+(S_1(T_1+T_3)) \setminus I^+(S_1(T_1))\big).
\]
For this purpose, we want to cover the pink region by a sequence of small diamond sets as before, such that each diamond set is contained in a small neighborhood without cut points.

First, consider the intersection of two achronal Lipschitz boundaries
\[
R_- \coloneqq \partial \Jmi(\Spl(T_2)) \cap \partial \Jpl(\Smi(T_1 + T_3)).
\]
We pick an arbitrary $\pmi \in R_-$.
With $\pmi \in \partial \Jpl(\Smi(T_1 + T_3))$,
there exists a future pointing normal null geodesic $\gamma_-(\mR_+)$ connecting $\pmi$ with a point $\pmi' \in \Smi(T_1 + T_2)$ on or before the first conjugate point and the first focal point, by Lemma \ref{lm_achronalboundary}.
Similarly, with $\pmi \in \partial \Jmi(\Spl(T_2))$,
there exists a future pointing normal null geodesic $\gamma_+(\mR_+)$ connecting $\pmi$ with a point $\tp_- \in \Spl(T_2)$ on or before the first conjugate point and the first focal point.
\begin{figure}[h]
    \centering
    \includegraphics[width=0.5\linewidth]{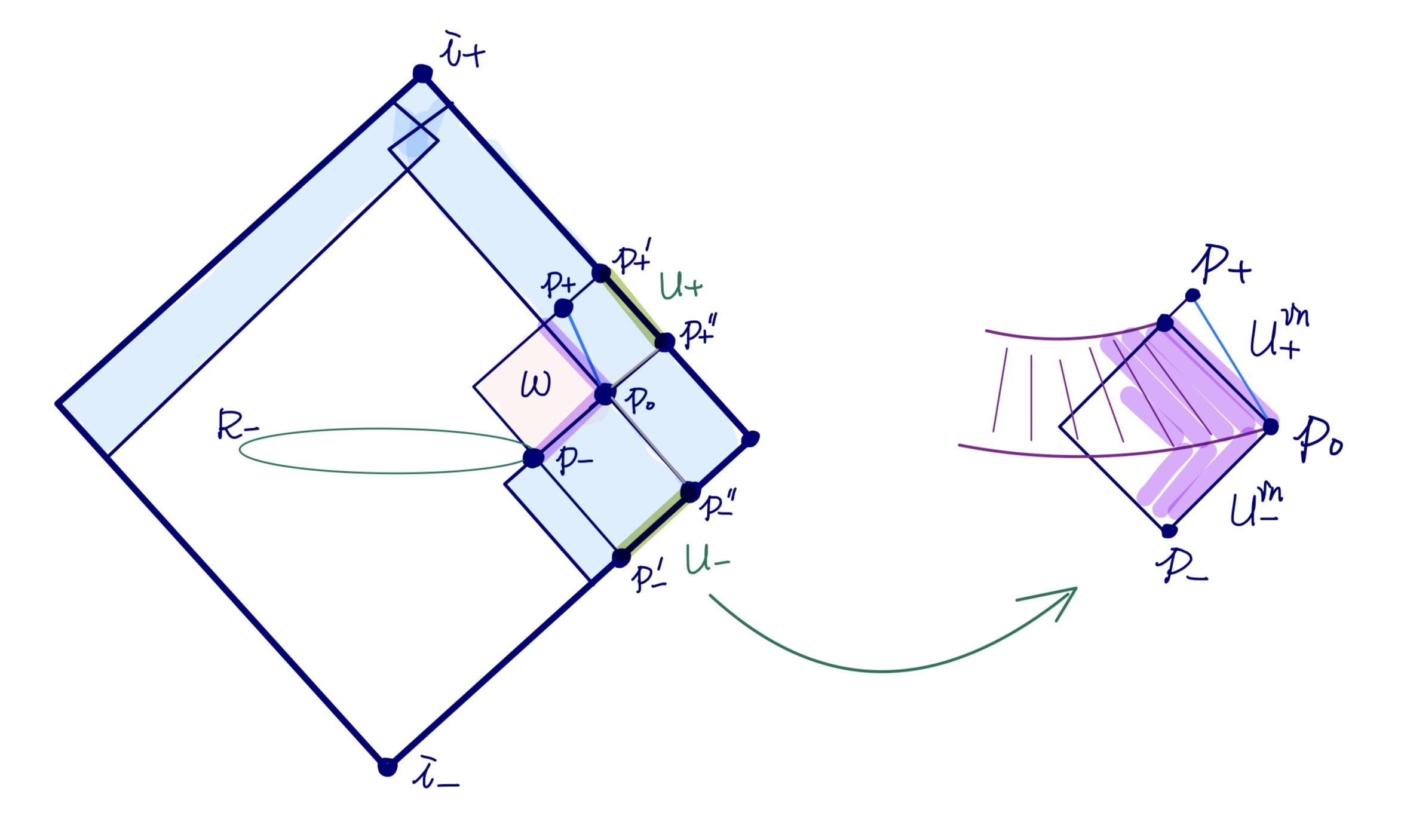}
    \caption{Step 4.}
\end{figure}
Moreover,
the null geodesic $\gamma_+(\mR_+)$ intersects
$
\Ppl(T_1) = \partial \Jpl(\Smi(T_1)) \cap M
$ exactly at one point
\[
p_0 \in \gamma_+(\mR_+) \cap  \Ppl(T_1).
\]
Note $p_0$ is connected with $\pmi'' \in \Spl(T_2)$ by the same $\gamma_+(\varsm)$ and is before the first conjugate point and the first focal point.
Then by Lemma \ref{lm_achronalboundary} again, we have
$
p_0 \in \Pmi(T_2) \coloneqq \partial \Jmi(\Spl(T_2)) \cap M
$ as well.
Recall $p_0 \in \Ppl(T_1)$.
There exists a future pointing normal null geodesic $\gamma_0(\mR_+)$ connecting $p_0$ with a point $p_0'' \in \Smi(T_1)$ on or before the first conjugate point and the first focal point.
We consider $\gamma_0(\mR_+)$ and its unique intersection with the achronal boundary
\[
\Pmi(T_2 + T_3) \coloneqq \partial \Jmi(\Spl(T_2 + T_3)) \cap M
\]
at the point $p_+$.
We emphasize such $p_+$ may not locate on the boundary of $\Ppl(T_1)$
but it is always contained in $\Jpl(\Smi(T_1))$.
Later we can repeat the same procedure by starting from an arbitrary $p_+ \in R_+$, see (\ref{def_Rpl}).
Now for $p_+$, similarly, we can find $\ppl' \in \Spl(T_2 + T_3)$ such that $\ppl' < \ppl$ along a future pointing normal null geodesic on or before the first conjugate point or focal point.

Further, we consider the unique
$\mupl_a:[-\varsm_a^+, 0] \rightarrow \bar{S}_+$ passing through $\ppl'$.
There exists $0< \varsm_1 < \varsm_2 < \varsm_a^+$ such that
\[
\ppl' = \mupl_a(-\varsm_1) \Spl(T_2 + T_3) \quad \text{ and } \quad
\ppl'' = \mupl_a(-\varsm_2) \in \Spl(T_2).
\]
There exists a unique null geodesic $\mumn_b:[0, \varsm_b^-] \rightarrow \bar{S}_-$ passing through $\pmi'$ with
$0< \varsm_3 < \varsm_4 < \varsm_b^-$ such that
\[
\pmi' = \mumn_b(\varsm_3)\in \Smi(T_1 + T_3) \quad \text{ and }\quad
\pmi'' = \mumn_b(\varsm_4)\in \Smi(T_1).
\]
As before, we might have  $\ppl'' \neq \tp_-$ and $\pmi'' \neq p_0''$.
Moreover, it is not necessarily true that $p_0 < \ppl''$ or  $p_0 > \pmi''$.
However, with $p_- < p_0 < p_+$ and $p_+ \in \Jpl(\Smi(T_1))$, our construction works.

Moreover, we observe one can choose $T_3$ such that $\pmi$ and $\ppl$ are arbitrarily close to $p_0$.
Thus, there exists $T_3 > 0$ such that $\pmi$ and $\ppl$ are contained in a given small neighborhood of $p_0$.
Then we can find  $T_3 > 0$ such that
\[
J(p_-, p_+) \subseteq \lB(p_0, \delta)
\]
has no cut points along any null geodesic segments as before.
Then let $U_+ \subseteq \Spl(T_2, T_2+ T_3)$ be a small open neighborhood of the null geodesic segment $\mupl_a([-\varsm_2, -\varsm_1])$ from $\ppl''$ to $\ppl'$.
Let $U_- \subseteq \Smi(T_1, T_1 + T_3)$ be that of the null geodesic segment $\mumn_b([\varsm_3, \varsm_4])$ from $\pmi'$ to $\pmi''$.

In addition,
with the achronal boundary $\Ppl(T_1)$ and $\Pmi(T_2)$,
the diamond set $I(U_-, U_+)$ can be partitioned as
\[
I(U_-, U_+) = W_0 \cup \tUmi \cup W \cup \tUpl \cup W_1,
\]
where we denote by
\[
\begin{split}
W &= I(U_-, U_+) \setminus (\Jpl(\Smi(T_1)) \cup \Jmi(\Spl(T_2))),\\
\tUmi &= I(U_-, U_+) \cap \Pmi(T_2), \quad \tUpl = I(U_-, U_+) \cap \Ppl(T_1),\\
W_0 &= I(U_-, U_+) \cap I^-(\Spl(T_2)), \quad W_1 = I(U_-, U_+) \cap I^+(\Smi(T_1)).
\end{split}
\]
Note such $W$ is a precompact set near $p_0$ and it can be contained in a small neighborhood of $p_0$ when $T_3$ is sufficient small.
By choosing small $T_3 > 0$, we can expect
\[
\overline{W}  \subseteq \lB(p_0, \delta)
\]
has no cut points along any null geodesic segments.

%

In this case, we would like to combine the ideas for Step 2 and 3.
On the one hand, we would like to construct conormal distributions propagating along the null geodesic $\gamma_{\tp, \tw}(\mR_+)$ for $(\tp, \tw) \in L^+M$ with $\tp \in \tUmi$, by sending proper Lagrangian distributions singular near some $(p, w) \in \LUmi$.
On the other hand, we would like to detect new singularities on $U_+$ and to avoid new singularities produced in $W_1$ by using the information on $\tUpl$.
A more detailed reconstruction can be found in Section \ref{sec_scheme4}.

Note for each $p_- \in R_-$, we can find $p_\pm$, $U_\pm$, $\tUmi$, and $\tUpl$ as above.
We perform the same reconstruction for each $p_-$ and this recovers a connected new region given by the union of diamond sets $I(p_-, p_+)$.
We choose
\[
\begin{split}
    T_{3}  = \sup\{T \in (0,1): &\text{ for each $\pmi \in R_-$, we have $\overline{W} \subseteq \lB(p_0, \delta)$},\\
    &\quad \quad \quad \quad \text{ where $p_0$ and $\overline{W}$ are constructed for $p_-$ as above}\}.
\end{split}
\]
Note the boundary of this new region includes
\[
\Ppl(T_1) \cap \Jmi(\Spl(T_2)),
\]
as they are contained in the normal null geodesics congruences starting from $R_-$.
Although this new region might not fully cover
\[
I(S_-(T_1 + T_3), S_+(T_2+T_3)) \cup I^+(\Smi(T_1)),
\]
we can follow a similar argument as Step 1.
More explicitly, for $T_3'$ to be specified later,
we consider
\beq\label{def_Rpl}
R_+ \coloneqq \partial \Jmi(\Spl(T_2 + T_3)) \cap \partial \Jpl(\Smi(T_1)).
\eeq
and perform a similar construction for each $\ppl \in R_+$.
Next, for $T_3''$, we consider the compact set
\[
Y_0 \coloneqq \partial \Jpl(\Smi(T_1 + T_3'')) \cap \partial \Jmi(\Spl(T_2 + T_3'')).
\]
This enables us to reconstruct a new region, whose boundary includes the boundary of the desired region.
By choosing the smaller one of $T_3, T_3', T_3''$ as $T_3$, we reconstruct $I(S_-(T_1 + T_3), S_+(T_2+T_3)) \cup I^+(\Smi(T_1))$.

Then we repeat these steps until we reconstruct the region $I(T_0)$ for some fixed $T_0 \in (0,1)$.
We emphasize that in each steps, the parameters, such as $T_1, T_2$, that we choose above do not get too small, due to a compactness argument.
This allows us to reconstruct the desired region in finitely many steps.
Moreover, the choice of such $T_1, T_2$ may not be clear during the reconstruction but one can use the following strategy.
First, we consider some $\eps_0 > 0$ such that by choosing $T_1 = \eps_0$ in Step 1, we have all scattering light observation sets observed in $U_+$ are smooth.
Although $\eps_0$ might be much bigger than the actually step size, yet we could use it to recover the spacetime in this $\eps_0$-size domain, which might be the wrong one.
Next, our task is to determine if this reconstruction does produce the actual spacetime.
To this end, consider the value $\eps_N=\eps_0 / N$, for $N=1,2,3,\ldots$.
One can try to reconstruct the original $\eps_0$-size domain using a layer-stripping procedure proposed above by reconstructing spacetime region of size $\eps_N$.
Note that this may fail for small $N$, if $\eps_N$ is too big. Indeed, some scattering observation sets in some $\eps_N$-diamond regions may end up being singular because of conjugate points. In this case, we know $\eps_0$ was too big.
On the other hand, if this layer-stripping succeeds for all $N$ large enough (we denote the $\eps_0$-size spacetime region reconstructed using $\eps_N$ by $D_N$, then we know that if $N$ is big enough so that $\eps_N<\eps'$, then $D_N$ correctly reconstructs the $\eps_0$-piece of our spacetime.
Thus, the idea is to pick the smallest $N_0$ so that $D_{N_0} = D_N$ for all $N\geq N_0$. Such an $N_0$ does exist.
Finally, take the largest $\eps_0$ so that $N_0=1$ works.
We can then reconstruct the $\eps_0$-size region, and be certain that we have reconstructed the correct smooth and conformal structure.

\section{The scattering light observation Sets}\label{sec_SLOS}
In each step of Section \ref{sec_layer},
we need to reconstruct $g$ from measurements in an open set $U_+ \subseteq \Spl$ under the assumption that there are no cut points in the region of interest.
For this purpose, we consider the following model and prove the reconstruction in a slightly more general setting.
Let $M$ be a globally hyperbolic subset equipped with a Lorentzian metric $g$.
Now let $U_+ \subseteq \Spl$
and $U_- \subseteq \Smi$ be open subsets.
We consider
\[
W = I(U_-, U_+),
\]
which is an precompact open set, where we would like to reconstruct the topological, differential, and conformal structure using $U_+$.
In the following, for convenience we denote $U_+$ by $U$.


For $q \in M$ and $v \in L_q M$,
we denote by $\gamma_{q, v}: (\varsigma_0, \varsigma_1) \rightarrow M$ the inextendible null geodesic that starts from $q$ in the lightlike direction $v$.
For an open subset $U \subseteq \Spl$,
we define
\beq\label{def_CU}
\begin{split}
    C_U(q) \coloneqq \{(p, w) \in L^+U: &\text{ $p = \gamma_{q, v}(1)$, $w = \dot{\gamma}_{q, v}(1)$ for some $v \in L_q^+M$} \},
\end{split}
\eeq
as the union of null geodesic flow from a point $q \in M$ restricted to $U$.
Note that $C_U(q)$ is always a smooth submanifold in $T_UM$, even with cut points or conjugate points.
We consider the following subsets
\beq
\begin{split}\label{def_CUear}
    \Cear_U(q) \coloneqq \{(p, w) \in L^+U: &\text{ $p = \gamma_{q, v}(1)$, $w = \dot{\gamma}_{q, v}(1)$},\\
    &\text{for some $v \in L_q^+ M$ with $1 \leq \rho(q, v)$}\},
\end{split}
\eeq
and its regular part
\beq\label{def_CUreg}
\begin{split}
    \CUreg(q) \coloneqq \{(p, w) \in L^+U: &\text{ $p = \gamma_{q, v}(1)$, $w = \dot{\gamma}_{q, v}(1)$},\\
    &\text{for some $v \in L_q^+ M$ with $1 < \rho(q, v)$}\}.
\end{split}
\eeq
We call $\CUear(q)$ the \textit{earliest scattering direction set}
and $\CUreg(q)$ the \textit{regular scattering direction set}
of $q$ within $U$.
Let $\pi: TU \rightarrow U$ be the canonical projection and
we define 
\beq
\begin{split}\label{def_LU}
\LUear(q) \coloneqq \pi(\CUear(q)), \quad
\LUreg(q) \coloneqq \pi(\CUreg(q))
\end{split}
\eeq
as the \textit{earliest} or \textit{regular scattering light observation set}
of $q$ within $U$.
We prove in Proposition \ref{pp_LUreg} that we can reconstruct $\CUreg(q)$ from knowing $\CUear(q)$ for all $q \in W$.
We would like to follow the idea in \cite{Hintz2017} to prove the collection of such earliest scattering direction sets
\beq
\begin{split}
    \SUear(W) = \{\CUear(q): q \in W\}
\end{split}
\eeq
determines the topological, differential, and conformal structure of $W$.
This is a simplified but localized version of the result proved in \cite{MScfinal}.

\begin{thm}\label{thm_SLOS}
    For $j = 1, 2$, let $(M^{(j)}, g^{(j)})$ be two globally hyperbolic subsets satisfying Assumption \ref{assump_Mg}. Let $U^{(j)}$, $W^{(j)}$ be defined as above.
    Assume for each $q \in W^{(j)}$, we have
        \[
        \CUjreg(q) \neq \emptyset, \quad j = 1,2.
        \]
    Suppose there exists a conformal diffeomorphism $\Phi: U^{(1)} \rightarrow U^{(2)}$ such that
    \[
    \SUone(W_2) = \{\Phi_*(C): C \in \SUtwo(W_1)\}.
    \]
    Then there is a conformal diffeomorphism $\Psi: (W^{(1)}, g^{(1)}|_{W^{(1)}}) \rightarrow (W^{(2)}, g^{(2)}|_{W^{(2)}})$.
\end{thm}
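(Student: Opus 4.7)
The plan is to define the diffeomorphism $\Psi$ pointwise from the regular scattering direction sets, then upgrade to smoothness and conformality. First, using Proposition \ref{pp_LUreg} referenced in the excerpt, we may replace the hypothesis on earliest direction sets by the analogous statement for regular sets, so for a candidate map $\Psi: W^{(1)} \to W^{(2)}$ we require $\Phi_*(\CUjreg(q)) = \CUjreg(\Psi(q))$ (with $j = 1$ on the left and $j = 2$ on the right). Well-definedness reduces to an injectivity claim: if $\CUreg(q_1) = \CUreg(q_2)$ as subsets of $L^+U$, then $q_1 = q_2$. Picking two distinct $(p_1, w_1), (p_2, w_2)$ from the common set, each $q_i$ must lie on both past-directed null geodesics $s \mapsto \gamma_{p_k, -w_k}(s)$ for $k = 1,2$. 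Regularity $1 < \rho(q_i, v)$ rules out cut points on these segments within $W$, so two distinct null geodesics meet at most once, forcing $q_1 = q_2$. Surjectivity onto $W^{(2)}$ follows by running the same argument with $\Phi^{-1}$.

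To show $\Psi$ is a smooth diffeomorphism, I would use the backward null geodesic flow as a local parametrization near each point. Fix $q_0 \in W^{(1)}$ and $(p_0, w_0) \in \CUreg(q_0)$; because $1 < \rho(q_0, v_0)$ for the corresponding lightlike direction, the map $\Theta^{(1)}: (p, w, s) \mapsto \gamma^{(1)}_{p, -w}(s)$ is a local diffeomorphism from a neighborhood of $(p_0, w_0, 1)$ onto a neighborhood of $q_0$, and similarly for $\Theta^{(2)}$ near $\Psi(q_0)$. Because $\Phi$ is conformal, it sends null covectors to null covectors and carries unparametrized null geodesics of $g^{(1)}|_{U^{(1)}}$ to those of $g^{(2)}|_{U^{(2)}}$. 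Hence locally $\Psi = \Theta^{(2)} \circ \widetilde{\Phi} \circ (\Theta^{(1)})^{-1}$, where $\widetilde{\Phi}$ encodes the action of $\Phi$ on $L^+U$ together with a positive rescaling of the geodesic parameter; this is a composition of smooth local diffeomorphisms.

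Finally, by construction $\Psi^*g^{(2)}$ and $g^{(1)}$ share the same unparametrized null geodesics in $W^{(1)}$. Using that $\CUreg(q) \neq \emptyset$ together with the openness of the regularity condition $\rho(q, v) > 1$ in $(q, v)$, the null directions at each $q \in W^{(1)}$ whose geodesics reach $U^{(1)}$ regularly span an open subset of $L^+_qM^{(1)}$ and hence determine the full null cone at $q$. The classical fact that two Lorentzian metrics with coinciding null cones at every point are conformally related then gives $\Psi^*g^{(2)} = e^{2\gamma} g^{(1)}$ for some $\gamma \in C^\infty(W^{(1)})$. The main technical obstacle is verifying that the regular scattering directions genuinely span a full-dimensional open cone at every point of $W^{(1)}$, rather than merely forming a nonempty set; this requires combining the nontrapping hypothesis, the structure of $S_\pm$ from Assumption \ref{assump_Mg}, and continuous dependence of the cut locus function to upgrade the pointwise nonemptiness hypothesis to the openness needed for the null-cone argument.
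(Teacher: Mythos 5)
Your overall strategy—defining $\Psi$ pointwise by matching regular scattering direction sets and then upgrading to a smooth conformal map—is the right idea, and your well-definedness argument via the shortcut inequality matches the paper's Lemma~\ref{lm_uniqueLreg}. However, your route for smoothness differs from the paper's and contains a genuine gap.

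The claim that $\Theta^{(1)}:(p,w,s)\mapsto\gamma^{(1)}_{p,-w}(s)$ is ``a local diffeomorphism from a neighborhood of $(p_0,w_0,1)$ onto a neighborhood of $q_0$'' is dimensionally impossible. The domain is a $7$-dimensional manifold ($U$ is $3$-dimensional, each $L^+_pU\cap T^+_pM$ is $3$-dimensional, plus one parameter $s$), whereas the target $M$ is $4$-dimensional. The map is at best a submersion with $3$-dimensional fibers: the scaling redundancy $\gamma_{p,-cw}(s)=\gamma_{p,-w}(cs)$ kills one direction, and the full set $\CUreg(q)$ (a $2$-dimensional set, since $\LUreg(q)$ is a codimension-$1$ submanifold of the $3$-dimensional $U$, with a unique direction at each of its points by Lemma~\ref{lm_screen}) kills two more. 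To salvage your approach you would need to choose a $4$-dimensional section of the domain transversal to these fibers, and then verify that the induced $\widetilde{\Phi}$ respects that choice of section on both sides; none of this is in the proposal. The paper avoids inverting a geodesic flow altogether: it instead builds coordinates from \emph{earliest observation time} functions $x^\mu:R(\mu)\to[-1,1]$ along transversal curves $\mu$ in $U$, and proves via Lemma~\ref{lm_screen}, Lemma~\ref{lm_explocal} and the argument in Section~\ref{subsection_smooth_structure} that $\bigcap_{\mu\in\lM}\ker(\dif x^\mu|_q)=\{0\}$, which is exactly the injectivity statement your $\Theta^{(1)}$ parametrization would need but does not supply.

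You also correctly flag, but do not close, the second gap: the null-cone argument for conformality requires that the set of regular directions at $q$ be an \emph{open} subset of $L_q^+M$, not merely nonempty. In the paper this follows from Lemma~\ref{lm_cutneighborhood} (lower semicontinuity of the cut locus function gives an open neighborhood in $L^+M$ of any $(q,v)$ hitting the boundary before the cut point) together with Lemma~\ref{lm_finite}; without explicitly invoking that openness result your final step is incomplete. Finally, the paper separately establishes time orientation (Section~\ref{subsection_smooth_structure} onward), which your proposal omits; whether that is strictly necessary for ``conformal diffeomorphism'' as stated is debatable, but it is part of what the paper proves.
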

\begin{remark}
If there are no cut points along any null geodesic segments in $\overline{W}$,
then we have
\[
C_U(q) = \CUear(q) = \CUreg(q),
\]
which implies the result that we need for Step 1 and 2 in Section \ref{sec_layer}.
The other steps of Section \ref{sec_layer} uses the reconstruction from $\Creg_U(W)$.
\end{remark}
\begin{remark}
In this theorem, we assume there exists a conformal diffeomorphism $\Phi: U^{(1)} \rightarrow U^{(2)}$ such that
the two collections of the earliest light observation sets for $W^{(j)}$ are related by $\Phi$.
For the inverse problems we consider, this assumption is satisfied if we have
$
\lN^{(1)}(u_-) = \lN^{(2)}(u_-)
$ 
for each $u_- \in D(\rho)$.
Indeed, by considering the first order linearization of $\lN^{(j)}$ for $j = 1,2$, one obtain the linear scattering operator.
Recall in Section \ref{sec_receding}, we find local coordinates such that the past null infinity is locally given by $x_1 = 0$ and the metric is given in certain forms.
Then one can use a similar idea as in \cite{Stefanov2018} to prove this linear scattering operator determines the lens relation and moreover the jets of the metric on the boundary up to conformal diffeomorphisms.
\end{remark}
\subsection{Null hypersurfaces and cut points}
\subsubsection{The structure of null hypersurfaces}\label{subsec_pM}
In the following, we focus on the smooth part of $\pM$, that is, the future or past null infinity $S_\pm$.
For $p \in \partial M$, we use
$T_p \partial M$ to denote the set of tangent vectors in $\partial M$ at $p$.
We define the normal vector space
\[
T_p^\perp \partial M = \{v \in T_pM: {g}(v, w) = 0 \text{ for any } w \in T_p \partial M\}.
\]
By our assumption, $\pM = \bar{S}_+ \cup \bar{S}_-$, where $S_+$ and $S_-$ are null hypersurfaces, in the sense that the restriction of $g$ to $S_\pm$ is degenerate.
Then $T_p \partial M$ and $T_p^\perp \partial M$ have a nontrivial intersection and their direct sum does not equal to $T_pM$, which is not the case for timelike or spacelike submanifolds.
Instead, one can introduce the so called \textit{radical space} of $T_p \partial M$ given by
\[
\mathrm{Rad}(T_p \partial M) = \{v \in T_p \partial M: {g}(v, w) = 0 \text{ for any } w \in T_p \partial M\} = T_p \partial M \cap T_p^\perp \partial M.
\]
for more details see \cite{duggal2011differential}.
In our case $\mathrm{Rad}\ T_p \partial M = T_p^\perp \partial M$ is a one-dimensional subspace.
Furthermore, one can define the radical distribution $\mathrm{Rad}\ T \partial M$ such that
\[
\mathrm{Rad}(T \pM): p \mapsto \mathrm{Rad}(T_p \partial M).
\]
Moreover, we can define the complementary vector space $\mathrm{S}(T_p\pM)$ of $\mathrm{Rad}(T_p \pM)$ in the sense that
\beq
\begin{split}\label{eq_TpM}
    \mathrm{S}(T_p\pM) + \mathrm{Rad}(T_p \pM) = T_p \pM, \quad
    \mathrm{S}(T_p\pM) \cap \mathrm{Rad}(T_p \pM) = \{0\}.
\end{split}
\eeq
In this work, we call such $\mathrm{S}(T_p\pM)$ a \textit{screen space} of $T_p\pM$.
Note that with the definition of $\mathrm{Rad}(T_p \pM)$, the direct sum (\ref{eq_TpM}) is actually an orthogonal one.
This implies $\mathrm{S}(T_p\pM)$ is a nondegenerate subspace, in the sense that $g$ restricted to $\mathrm{S}(T_p\pM)$ is nondegenerate.
Actually in our case the restriction of $g$ is a Riemannian metric and therefore $\mathrm{S}(T_p\pM)$ is spacelike.
In addition, the choice of a screen space is not unique.
Based on these definitions, we have the following lemma.
\begin{lm}\label{lm_screenandrad}
    Suppose $\dim(\mathrm{Rad}(T_p \pM)) = 1$.
    Let $V \subseteq T_p \pM$ be a vector space with codimension one.
    If $V \cap \mathrm{Rad}(T_p \pM) = \{0\}$, then $V$ is a screen space of $T_p \pM$.
\end{lm}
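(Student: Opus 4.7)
The plan is to verify the two defining properties of a screen space from \eqref{eq_TpM}: that the sum $V + \mathrm{Rad}(T_p\pM)$ equals $T_p\pM$ and that the intersection $V \cap \mathrm{Rad}(T_p\pM)$ is trivial. The second condition is immediate from the hypothesis, so the only substantive task is to check the first, and this is a purely linear-algebraic dimension count.

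First, I would note that $V$ has codimension one in $T_p\pM$, so $\dim V = \dim T_p\pM - 1$, and $\mathrm{Rad}(T_p\pM)$ is one-dimensional by assumption. Applying the standard formula
\[
\dim(V + \mathrm{Rad}(T_p\pM)) = \dim V + \dim \mathrm{Rad}(T_p\pM) - \dim(V \cap \mathrm{Rad}(T_p\pM)),
\]
and using $V \cap \mathrm{Rad}(T_p\pM) = \{0\}$, I obtain $\dim(V + \mathrm{Rad}(T_p\pM)) = \dim T_p\pM$. Since $V + \mathrm{Rad}(T_p\pM) \subseteq T_p\pM$ and the dimensions agree, the sum is in fact all of $T_p\pM$. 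Combined with the trivial-intersection hypothesis, this matches the defining conditions \eqref{eq_TpM} for $V$ to be a screen space.

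There is no real obstacle here; the content of the lemma is the elementary observation that in a finite-dimensional vector space, a codimension-one subspace together with any complementary line spans the whole space. I would keep the proof to just a couple of lines, essentially the dimension computation above together with a citation to \eqref{eq_TpM} as the definition being verified.
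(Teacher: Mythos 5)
Your proof is correct and is the natural argument; the paper states this lemma without giving an explicit proof, and the dimension count you provide is exactly what the statement amounts to once one unpacks the definition of a screen space in \eqref{eq_TpM} as a vector-space complement of $\mathrm{Rad}(T_p\pM)$ in $T_p\pM$. Nothing is missing.
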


In the following, we show there is a one-to-one correspondence between a screen space and a future pointing lightlike vector that is outwardly transversal to $\pM$.
For this purpose, we consider the set $L_pM$ of all lightlike vectors in $T_pM$ for $p \in \pM$.
{Let $\varpi$ be a future-pointing timelike vector field}, for example, $\varphi = \dif T$.
We define the set of future or past pointing lightlike vectors as
\[\begin{split}
    L^\pm_pM = & \{v \in T_pM: \pm g(v, \varpi) < 0\}. 
\end{split}\]
For $p \in \pM$, one can also introduce the following subspaces
\beq
\begin{split}\label{def_outwardTM}
    T^\pm_{p} M = \{v \in T_{p} M: \pm g(v, \nu) < 0\}
\end{split}
\eeq
of outward or inward vectors that are transversal to the boundary, where $\nu$ is a future pointing normal vector to $\pM$.
Indeed, a light vector $v \in L_p M$ is outward to $\pM$ if and only if $v$ is future pointing and not tangential to $\pM$, i.e., $g(v, \nu) < 0$.

The following lemma for lightlike boundary $\partial M$ is an analog to \cite[Lemma 2.5]{Hintz2017} for timelike boundaries.
It is proved in \cite[Lemma 2.1.9]{MScfinal}.
\begin{lm}\label{lm_screen}
    Let $(M, {g})$ be as above. For $p\in \pM$,
    we denote by
    \[\begin{split}
        \mathcal{S} &\coloneqq \{\text{$\mathrm{S}(T_p \pM) \subseteq T_p\pM$ is a screen space}\}, \\
        \mathcal{V} &\coloneqq \{\mathbb{R}_+ v \subseteq T_pM: v \in L^+_p \M \cap T^+_{p} M\}.
    \end{split}\]
    the set of all screen spaces of $T_p\pM$ and the set of all future pointing outward lightlike rays in $T_pM$.
    Then there exists an isomorphism $\phi: \mathcal{S} \rightarrow \mathcal{V}$, given by mapping $\mathrm{S}(T_p \pM)$ to the unique future pointing outward lightlike ray
    $\mathbb{R}_+ v$ contained in $\mathrm{S}(T_p \pM)^\perp$.
\end{lm}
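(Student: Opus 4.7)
The plan is to build the bijection $\phi$ and its inverse $\psi$ by purely linear-algebraic means, exploiting the degenerate structure of $g|_{T_p\pM}$. Since $\pM$ is a null hypersurface of the $4$-dimensional Lorentzian $M$, we have $\dim T_p\pM = 3$ and $(T_p\pM)^\perp = \mathrm{Rad}(T_p\pM)$ is a $1$-dimensional lightlike line; hence a screen space $\mathrm{S}$ has dimension $2$ and (as already noted in the excerpt) is spacelike. In particular, the orthogonal decomposition $T_pM = \mathrm{S} \oplus \mathrm{S}^\perp$ is valid and $\mathrm{S}^\perp$ is $2$-dimensional Lorentzian.

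To define $\phi$, given $\mathrm{S} \in \mathcal{S}$, I would observe that $\mathrm{Rad}(T_p\pM) \subseteq \mathrm{S}^\perp$, since the radical is orthogonal to all of $T_p\pM \supseteq \mathrm{S}$. A $2$-dimensional Lorentzian subspace has exactly two null lines, so one of them is the radical and the other is a transversal null line $\ell \subseteq \mathrm{S}^\perp$. Exactly one ray of $\ell$ is future pointing, and it is moreover outward: for a future pointing null normal $\nu$ chosen inside $\mathrm{Rad}(T_p\pM)$ and a future pointing null $v \in \ell$, the two vectors are not parallel, so the reverse Cauchy--Schwarz inequality for future null vectors gives $g(v,\nu) < 0$, i.e.\ $v \in T_p^+ M$. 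I set $\phi(\mathrm{S})$ to be this unique ray.

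For the inverse, I would set $\psi(\mathbb{R}_+ v) := v^\perp \cap T_p\pM$. Two dimension counts are needed. First, $v^\perp$ and $T_p\pM$ are both $3$-dimensional in the $4$-dimensional $T_pM$, so their intersection has dimension at least $2$; if the dimension were $3$, then $v^\perp \supseteq T_p\pM$ would force $v \in (T_p\pM)^\perp = \mathrm{Rad}(T_p\pM)$, contradicting the transversality of $v$. Second, using the identity $\mathrm{Rad}(T_p\pM)^\perp = T_p\pM$, the condition $g(v,r) = 0$ for a nonzero $r \in \mathrm{Rad}(T_p\pM)$ would again force $v \in T_p\pM$, which is false; hence the $2$-dimensional $\psi(\mathbb{R}_+ v)$ meets the radical trivially, and Lemma \ref{lm_screenandrad} identifies it as a screen space.

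Finally, the verification of inversion is a few lines. For $\psi \circ \phi = \mathrm{id}$: if $\phi(\mathrm{S}) = \mathbb{R}_+ v$ then $v \in \mathrm{S}^\perp$, whence $\mathrm{S} \subseteq v^\perp \cap T_p\pM = \psi(\phi(\mathrm{S}))$, and both spaces are $2$-dimensional. For $\phi \circ \psi = \mathrm{id}$: setting $\mathrm{S}' := \psi(\mathbb{R}_+ v)$, the containment $\mathrm{S}' \subseteq v^\perp$ shows that $v \in (\mathrm{S}')^\perp$, and since $v$ is future pointing, null, and transversal to $\pM$, the uniqueness in the construction of $\phi$ gives $\phi(\mathrm{S}') = \mathbb{R}_+ v$. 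The only place where care is needed is the outwardness step, which relies on $\nu$ lying in the radical together with the reverse Cauchy--Schwarz inequality; the rest is a clean dimension count.
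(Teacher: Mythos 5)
Your proof is correct and complete. The paper itself defers the argument to \cite[Lemma 2.1.9]{MScfinal} and gives no proof in the text, so there is no in-paper argument to compare against; but the linear-algebraic construction you give is the natural one. The two dimension counts, combined with Lemma \ref{lm_screenandrad}, properly handle the degeneracy of $g|_{T_p\pM}$ (which is what distinguishes this null-boundary situation from the timelike-boundary analogue in \cite[Lemma~2.5]{Hintz2017}), and the reverse Cauchy--Schwarz step is exactly what is needed to see that the second null ray in $\mathrm{S}^\perp$ is not merely future pointing but outward, i.e.\ lies in $T_p^+M$; this in turn also gives transversality, so you never need to argue separately that $\ell\not\subseteq T_p\pM$. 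The only cosmetic issue is that you describe $\ell$ as ``transversal'' one sentence before you actually establish that fact via $g(v,\nu)<0$; that is an ordering matter, not a gap.
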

\subsubsection{Null geodesics and cut points}\label{subsec_cut}
Recall for $(q, v) \in L\intM$, we denote by $\gamma_{q, v}$ the unique null geodesic starting from $q$ in the direction $v$.
\begin{lm}[{\cite[Lemma 2.1.1]{MScfinal}}]\label{lm_nulltransversal}
    If $\gamma_{q,v}: (-\varsigma_o, 0] \rightarrow M$ is a future pointing null geodesic segment with $p = \gamma_{q,v}(1) \in \partial M$ and $\gamma_{q,v}(\varsigma) \in \intM$ for $\varsigma < 1$,
    then $\dot{\gamma}_{q,v}(1) \in L^+_{p}M \cap T^+_{p} M$.
\end{lm}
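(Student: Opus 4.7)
The claim has two parts: (i) $\dot\gamma_{q,v}(1)$ is future pointing lightlike, and (ii) $\dot\gamma_{q,v}(1)$ is outwardly transversal to $\partial M$, i.e., belongs to $T^+_p M$. Part (i) is essentially automatic: geodesics preserve the causal character of their tangent vector (it is null because it is null initially by assumption), and the time-orientation is preserved under continuation along a continuous timelike curve of vectors (equivalently, $g(\dot\gamma,\varpi)$ has constant sign along the geodesic, where $\varpi$ is the chosen future pointing timelike vector field). So the real content is (ii), which decomposes into a transversality statement and a sign statement.

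\textbf{Transversality.} First I will show $\dot\gamma_{q,v}(1)\notin T_p\partial M$. Suppose for contradiction that $\dot\gamma_{q,v}(1)\in T_p\partial M$. Since $S_\pm$ is assumed in Assumption \ref{assump_Mg} to be a simple characteristic hypersurface for $\square_g$, it is a null hypersurface, and by the discussion in Section \ref{subsec_pM} the radical $\mathrm{Rad}(T_p\partial M)=T_p\partial M\cap T_p^\perp\partial M$ is one-dimensional and contains every null vector in $T_p\partial M$ (the metric restricted to the complementary screen space is positive definite and hence carries no null vectors). Thus $\dot\gamma_{q,v}(1)\in\mathrm{Rad}(T_p\partial M)$. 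The null generators of a null hypersurface (the integral curves of the radical line field) are null pregeodesics of the ambient metric, a standard fact in the geometry of degenerate hypersurfaces. Reparametrizing to an affine parameter yields a null geodesic through $p$ with initial velocity proportional to $\dot\gamma_{q,v}(1)$ that stays in $\partial M$. By uniqueness of affinely parametrized geodesics with prescribed initial data, $\gamma_{q,v}$ must coincide, up to reparametrization, with this generator in a neighborhood of $1$. This contradicts the hypothesis that $\gamma_{q,v}(\varsigma)\in\mathring M$ for $\varsigma<1$.

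\textbf{Outward sign.} Let $f\in C^\infty(M)$ be a boundary defining function near $p$, i.e., $f>0$ on $\mathring M$, $f=0$ on $\partial M$, and $df_p\neq 0$. Then
\[
\frac{d}{d\varsigma}\Big|_{\varsigma=1}f(\gamma_{q,v}(\varsigma))=df_p(\dot\gamma_{q,v}(1)),
\]
and since $f\circ\gamma_{q,v}$ is strictly positive for $\varsigma<1$ and vanishes at $\varsigma=1$, this derivative is $\leq 0$. By the transversality just established, $df_p(\dot\gamma_{q,v}(1))\neq 0$, so it is strictly negative. Now $df_p$ annihilates $T_p\partial M$, hence $(df_p)^\sharp$ lies in $T_p^\perp\partial M=\mathrm{Rad}(T_p\partial M)$; since $df$ points from the boundary into $\mathring M$ (where $f$ increases), $(df_p)^\sharp$ is a nonzero null vector in the radical direction, and after adjusting the sign of $f$ if needed we may arrange $(df_p)^\sharp$ to be future pointing, i.e., a positive multiple of the normal $\nu$ appearing in the definition \eqref{def_outwardTM} of $T^+_p M$. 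Then $g(\dot\gamma_{q,v}(1),\nu)$ has the same sign as $df_p(\dot\gamma_{q,v}(1))<0$, proving $\dot\gamma_{q,v}(1)\in T^+_p M$.

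The only non-routine step is the invocation of the fact that null generators of a null hypersurface are ambient pregeodesics; everything else is manipulation of boundary defining functions and the uniqueness of geodesics. If one preferred to avoid that black-box citation, one could instead argue directly in the normal form coordinates of Lemma \ref{norm-form}, where $\partial M=\{x_1=0\}$ and the principal part of $\square_g$ has the explicit shape \eqref{NF1}; from that shape the null condition on $T_p\partial M$ forces the tangent vector to point along $\partial_s$, and a short ODE argument for the geodesic equations shows that the solution remains in $\{x_1=0\}$, giving the same contradiction with $\gamma_{q,v}(\varsigma)\in\mathring M$.
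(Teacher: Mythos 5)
The paper provides no proof here; it cites \cite[Lemma 2.1.1]{MScfinal}, so your argument stands on its own.

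Your transversality step is the real content of the lemma, and it is correct: a null vector tangent to the null hypersurface $\partial M$ lies in $\mathrm{Rad}(T_p\partial M)$, the integral curves of that line field are null pregeodesics contained in $\partial M$, and geodesic uniqueness then forces $\gamma_{q,v}$ into $\partial M$ near $\varsigma = 1$, contradicting the hypothesis that $\gamma_{q,v}(\varsigma)$ lies in the interior for $\varsigma < 1$.

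The outward-sign step, as phrased, is circular. You write that ``after adjusting the sign of $f$ if needed we may arrange $(df_p)^\sharp$ to be future pointing,'' but the sign of $f$ is already pinned by the requirement $f > 0$ on the interior---which is precisely what you used to conclude $df_p(\dot\gamma_{q,v}(1)) < 0$. Flipping $f$ reverses that inequality, and the final line would then deliver $g(\dot\gamma_{q,v}(1),\nu) > 0$, i.e., the wrong conclusion. You cannot choose the orientation of $f$ independently. Two ways to repair this. (a) Note that no adjustment is needed: $(df_p)^\sharp$ is null, $\dot\gamma_{q,v}(1)$ is future-pointing null, $g((df_p)^\sharp,\dot\gamma_{q,v}(1)) = df_p(\dot\gamma_{q,v}(1)) < 0$, and a null vector pairing strictly negatively with a future-pointing null vector must itself be future-pointing. (b) Simpler still, drop the defining function entirely. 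Once you have that $\dot\gamma_{q,v}(1)$ is future-pointing, null, and not tangent to $\partial M$, it is a non-parallel companion of the future-pointing null normal $\nu\in\mathrm{Rad}(T_p\partial M)$, and two non-parallel future-pointing null vectors always have strictly negative inner product; hence $g(\dot\gamma_{q,v}(1),\nu)<0$. This is exactly the observation the paper itself records immediately after \eqref{def_outwardTM}. Route (b) reduces the whole lemma to the transversality claim alone, which is where you should concentrate your effort.
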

The following lemma is a variant of \cite[Proposition 2.10]{Hintz2017}.
\begin{lm}\label{lm_smoothexp}
    Let $(q, v) \in L^+ \intM$ such that $\gamma_{q,v}(\varsigma)$ hits $\pM$ at $\varsigma = {\varsigma_{q,v}} > 0$.
    Then ${\varsigma_{q,v}}$ depends on $(q, v)$ smoothly and so does the point $p = \gamma_{q,v}({\varsigma_{q,v}})$.

    More explicitly, for fixed $(q_0, v_0) \in L^+ \intM$, there exists an open neighborhood $N_0$ of $(q_0, v_0)$ and a smooth function $d_0: N_0 \rightarrow \mR$ such that ${\varsigma_{q,v}} = d_0(q, v)$.
\end{lm}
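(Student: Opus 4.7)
The plan is to apply the implicit function theorem, using Lemma \ref{lm_nulltransversal} to establish the necessary transversality of the null geodesic to $\partial M$ at the hitting point. Let $(q_0, v_0) \in L^+ \intM$ be fixed and set $\varsigma_0 = \varsigma_{q_0, v_0}$ and $p_0 = \gamma_{q_0, v_0}(\varsigma_0) \in \partial M$. Since $\partial M = \overline{S}_+ \cup \overline{S}_-$ and each $S_\pm$ is a smooth hypersurface in $\tM$, we may choose a neighborhood $V \subseteq \tM$ of $p_0$ and a smooth defining function $\rho \in C^\infty(V)$ with $\partial M \cap V = \{\rho = 0\}$, $\intM \cap V = \{\rho > 0\}$, and $d\rho(p_0) \neq 0$.

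Next I would set $F(q, v, \varsigma) := \rho(\gamma_{q,v}(\varsigma))$ on a neighborhood of $(q_0, v_0, \varsigma_0)$ in $L^+\intM \times \mathbb{R}$. Smoothness of the geodesic flow gives $F \in C^\infty$, and by construction $F(q_0, v_0, \varsigma_0) = 0$. Moreover
\[
\partial_\varsigma F\bigr|_{(q_0, v_0, \varsigma_0)} = d\rho(p_0)\bigl(\dot{\gamma}_{q_0, v_0}(\varsigma_0)\bigr).
\]
By Lemma \ref{lm_nulltransversal}, we have $\dot{\gamma}_{q_0, v_0}(\varsigma_0) \in L^+_{p_0} M \cap T^+_{p_0} M$, which in particular means this vector is outwardly transversal to $\partial M$ at $p_0$; hence the pairing with $d\rho(p_0)$ is nonzero. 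The implicit function theorem then yields an open neighborhood $N_0$ of $(q_0, v_0)$ and a smooth function $d_0: N_0 \to \mathbb{R}$ with $d_0(q_0, v_0) = \varsigma_0$ and $F(q, v, d_0(q, v)) = 0$ for all $(q, v) \in N_0$. Shrinking $N_0$ if necessary, uniqueness in the IFT ensures that $d_0(q, v) = \varsigma_{q, v}$, so $\varsigma_{q,v}$ depends smoothly on $(q, v)$. The smoothness of $p(q, v) = \gamma_{q, v}(d_0(q, v))$ is then immediate as a composition of smooth maps.

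The main (minor) obstacle is just confirming the transversality input needed to apply the IFT; but this is exactly the content of Lemma \ref{lm_nulltransversal}, which was tailored to address precisely the subtlety that $S_\pm$ are null hypersurfaces (so a priori one might worry that an incoming null geodesic could be tangent to $\partial M$ along one of its null generators). Once transversality is in hand, the argument reduces to the standard IFT and no further analysis is required.
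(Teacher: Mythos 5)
Your proof is correct and its core is identical to the paper's: both reduce Lemma~\ref{lm_smoothexp} to the transversality statement of Lemma~\ref{lm_nulltransversal} and then close via the implicit function theorem applied to a boundary defining function composed with the geodesic flow. The only real difference is how the IFT is parameterized. Following Hintz, the paper freezes the time at $\varsigma_0$, defines $Z_0$ as the zero level set of $(q,v) \mapsto x\bigl(\gamma_{q,v}(\varsigma_0)\bigr)$ inside $L\intM$, and uses the scaling identity $\gamma_{q,cv}(\varsigma_0)=\gamma_{q,v}(c\varsigma_0)$ to show that the fiber--dilation orbits $c\mapsto R_c(q,v)$ cross $Z_0$ transversally, reading off $\varsigma_{q,v}$ from the crossing parameter. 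You instead introduce $\varsigma$ as an independent variable and apply the IFT directly to $(q,v,\varsigma)\mapsto\rho\bigl(\gamma_{q,v}(\varsigma)\bigr)$; the nonvanishing of $\partial_\varsigma F$ is the same transversality fact, up to the factor $\varsigma_0$ relating the time derivative to the dilation derivative. Your formulation is a bit more elementary and direct; the paper's keeps everything intrinsic to $L\intM$, which is convenient for the companion Lemma~\ref{lm_cutconverge}, where one manipulates sequences there. One point worth making explicit rather than leaving to ``uniqueness in the IFT'': the IFT only gives the unique zero of $F$ near $\varsigma_0$, and one must also observe that nearby geodesics remain in $\intM$ for $\varsigma<d_0(q,v)$ so that $d_0(q,v)$ really is the hitting time $\varsigma_{q,v}$ rather than some later crossing. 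This follows from the same transversality after shrinking $N_0$, and is implicit in the paper's requirement that $\overline{N}_0\cap Z_0$ be transversal to all dilation orbits meeting $N_0$.
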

\begin{proof}
    We follow the proof in \cite[Proposition 2.10]{Hintz2017}.
    Let $R_c: L\intM \rightarrow L\intM$ be the map acting by dilation in the fibers such that $R_c(q, v) = (q, c v)$, for $c \in \mR$.
    Let $x \in C^\infty(M)$ be the boundary defining function locally near $p \in \pM$.
    We consider the exponential map $\exp: L\intM \rightarrow \intM$ written as
    $\exp(q, v)= \gamma_{q,v}(1)$.
    This map can be regarded as the null geodesic flow on $LM$ projected to the manifold.
    For fixed $(q_0, v_0) \in L^+ \intM$, for simplification we write $\varsigma_0 = \varsigma(q_0,v_0)$.
    Then we define the set
    \[
    Z_0 = F^{-1}(0), \quad \text{where } F  = x \circ \exp \circ R_{\varsigma_0}.
    \]
    Certainly we have $(q_0, v_0) \in Z_0$. Moreover, we claim, in a neighborhood of $(q_0, v_0)$, this set is a smooth submanifold of $L\intM$ of codimension one, which is transversal to $\mR_+ v_0$.
    Indeed, we compute the differential of $F$ at $(q_0, v_0)$ to have
    \[
    \dif F(q_0, v_0) = (\frac{\partial x_{q, v}}{\partial q}(\varsigma_0), \varsigma_0\frac{\partial x_{q, v}}{\partial v}(\varsigma_0)) , \quad \text{where  we write } x_{q, v}(\varsigma) = x\circ \gamma_{q,v}(\varsigma).
    \]
    By Lemma \ref{lm_nulltransversal}, we have
    $\frac{\partial x_{q, v}}{\partial \varsigma}(\varsigma_0) \neq 0$, as $\dot{\gamma}_{q, v}(\varsm_0) \notin T_p \pM$.
    Note that $\frac{\partial x_{q, v}}{\partial \varsigma}(\varsigma_0)$ is in the span of $\frac{\partial x_{q, v}}{\partial v}(\varsigma_0)$ and therefore the smooth map $F$ has a nonzero differential at $(q_0, v_0)$.
    Then the claim comes from the implicit function theorem and $Z_0$ is transversal to $\mR_+v_0$ at $q_0$, as the null geodesic $\gamma_{q_0,v_0}(\mR_+)$ hits $\partial M$ transversally by Lemma \ref{lm_nulltransversal}.

    Now we choose a small neighborhood $N_0 \subseteq L^+\intM$ of $(q_0, v_0)$ such that
    \[
    N_0 \subseteq \textstyle \bigcup_{c \in (1-\ep, 1+\ep)} R_c (Z_0)
    \]
    for some $\ep > 0$.
    Additionally, we choose $N_0$ sufficiently small such that
    $\bar{N}_0 \cap Z_0$ is a smooth connected submanifold traversal to all dilation orbits intersecting $N_0$.
    Then for $(q, v) \in N_0$,
    we define $d_0 \in C^\infty(N_0)$ by
    \[
    d_0(q, v) = r\varsigma_0,
    \]
    where $r$ is the unique real number such that $R_r(q, v) \in Z_0$.
    Note that $d_0(q_0, v_0) = \varsigma_0$.
    The uniqueness of $r$ and its smooth dependence on $(q,v)$ comes from the transversal intersection of $Z_0$ and all dilation orbits.
    Thus, for any $(q, v) \in N_0$, we have $x \circ \exp \circ R_{d_0(q,v)}(q, v) = 0$, which implies $\gamma_{q,v}(d_0(q,v)) \in \pM$ and thus ${\varsigma_{q,v}} = d_0(q,v)$ is smooth.

    Next, to show the boundary point $p  = \gamma_{q,v}({\varsigma_{q,v}})$ depends on $(q, v)$ smoothly, we consider {its extension across $\partial M$}.
    Indeed, as $\gamma_{q,v}(\varsigma)$ hits $\partial M$ transversally, we can extend it smoothly to $\varsigma \in [0, {\varsigma_{q,v}} + \ep)$ in $(\tM, \tghat)$, for some $\ep> 0$.
    Then $\gamma_{q,v}(\varsigma)$ is the solution to the geodesic equation in $(\tM, \tghat)$, for $\varsigma \in [0, {\varsigma_{q,v}} + \ep)$, which smoothly depends on the initial conditions $\gamma_{q,v}(0) = q$ and $\dot{\gamma}_{q,v}(0) = v$.
    With ${\varsigma_{q,v}}$ smooth, we have desired result.
\end{proof}

\subsubsection{The Cut Locus Function}
We recall the definition of cut points  and the cut locus function in Section \ref{sec_prelim}.
Recall the following lemmas about cut points.
%
\begin{lm}[{\cite[Proposition 10.46]{Neill1983}}]\label{lm_shortcut}
    If there is a future pointing causal curve from $q$ to $y$ that is not a null pregeodesic, then there is a timelike curve from $q$ to $y$ arbitrarily close to this curve and therefore $q \ll y$ (equivalently $\tau(q,y)> 0$).
\end{lm}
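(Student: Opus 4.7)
The plan is to localize the problem via convex normal neighborhoods and then exploit the structure of the causal future in such a neighborhood. The key local fact, which is a direct consequence of the Gauss lemma for $\exp_p$ and the description of the light cone, is the following: in a convex normal neighborhood $U$ of any $p \in M$, a point $p' \in U$ with $p' \in J^+(p) \cap U$ either lies on the (unique) $U$-null geodesic from $p$ through $p'$, or else $p' \in I^+(p)$ and the unique $U$-geodesic from $p$ to $p'$ is strictly timelike. Thus, within a convex normal neighborhood, a causal curve between two points that is not the local null geodesic forces the points to be timelike related.

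Given the causal curve $\alpha \colon [0,1] \to M$ from $q$ to $y$ that is not a null pregeodesic, I would first cover $\alpha([0,1])$ by finitely many convex normal neighborhoods and pick a subdivision $0 = s_0 < s_1 < \cdots < s_N = 1$ such that each triple $\alpha([s_{i-1}, s_{i+1}])$ is contained in a single such neighborhood $U_i$. Writing $p_i = \alpha(s_i)$, if for every $i$ the subsegment $\alpha|_{[s_{i-1}, s_{i+1}]}$ were a reparametrization of the unique $U_i$-null geodesic joining $p_{i-1}$ and $p_{i+1}$, then the tangent directions would match across every breakpoint and $\alpha$ would itself be a null pregeodesic, contradicting the hypothesis. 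Hence at some index $i$ the point $p_{i+1}$ fails to lie on the $U_i$-null geodesic emanating from $p_{i-1}$ through $p_i$; applying the key local fact to $p_{i-1}$ and $p_{i+1}$ in $U_i$, the $U_i$-geodesic between them must be strictly timelike. Replacing that subsegment of $\alpha$ by this timelike geodesic produces a causal curve $\tilde\alpha$ from $q$ to $y$ that has an open timelike arc.

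To upgrade $\tilde\alpha$ to a fully timelike curve that is $C^0$-close to the original $\alpha$, I would propagate the timelike character outward: on each adjacent convex normal piece, slightly push the intermediate breakpoints into the open chronological future of the previous point, using that $I^+(\cdot)$ is open and that timelikeness of a geodesic segment is preserved under small perturbations of its endpoints. After finitely many such local perturbations, one obtains a piecewise smooth timelike curve from $q$ to $y$; refining the initial subdivision makes this curve arbitrarily close to $\alpha$ in the $C^0$ topology. The conclusion $\tau(q,y) > 0$ is then immediate from the definition of the time-separation function as a supremum over lengths of causal paths.

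The main obstacle is the bookkeeping in the last step, namely verifying that the successive perturbations of breakpoints into open future cones preserve causality of every consecutive segment while maintaining $C^0$-closeness to $\alpha$ globally. This is exactly the delicate part of O'Neill's argument and the reason why the result is typically quoted as a citation rather than reproved in applied papers such as this one.
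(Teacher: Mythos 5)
The paper does not prove this lemma at all; it quotes it as \cite[Proposition 10.46]{Neill1983} and uses only the consequence $\tau(q,y)>0$. Your proposal is a faithful reconstruction of O'Neill's argument: the ``key local fact'' you state is exactly O'Neill's Lemma 10.45 (in a convex set, a causal curve that is not a null pregeodesic forces the radial geodesic between its endpoints to be timelike), and the cover-and-subdivide strategy to locate an offending subsegment is the standard one.

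Two remarks on the final step, which you correctly flag as the delicate part. First, your proposed mechanism of ``pushing intermediate breakpoints into open chronological future cones'' is not quite how O'Neill closes the argument and is harder to make endpoint-preserving than it looks; O'Neill instead propagates the timelike character by iterating Lemma 10.45 on \emph{pairs of adjacent geodesic segments} sharing a convex set: a timelike segment followed by a null one is not a null pregeodesic, so the single radial geodesic replacing the pair is again timelike. This replacement is automatically endpoint-fixing and avoids the bookkeeping you worried about; refining the subdivision gives $C^0$-closeness. Second, for the conclusion the paper actually invokes, no approximation is needed: once you have replaced one subsegment by a timelike geodesic you already have $q \le p_{i-1} \ll p_{i+1} \le y$, and the push-up property of the chronological relation immediately gives $q \ll y$, hence $\tau(q,y)>0$. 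The ``arbitrarily close timelike curve'' clause is part of O'Neill's theorem but is never used in this paper.
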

\begin{lm}[{\cite[Lemma 9.13]{Beem2017}}]\label{lm_cutunique}
    In a causal spacetime, if there are two future directed null geodesic segments from $q$ to $y$, then $y$ comes on or after the null cut point of $q$ on each of the two segments.
\end{lm}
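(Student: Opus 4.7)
The plan is to reduce the claim to Lemma \ref{lm_shortcut} by concatenating the two null geodesics at $y$ and arguing that the resulting causal curve cannot be a null pregeodesic. Let $\gamma_1, \gamma_2$ be the two distinct future-directed null geodesic segments, parametrized so that $\gamma_i(0)=q$ and $\gamma_i(s_i)=y$ for $i=1,2$.

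First I would verify that $\dot\gamma_1(s_1)$ is not positively proportional to $\dot\gamma_2(s_2)$. If they were, then the time-reversed geodesics starting at $y$ with tangents $-\dot\gamma_i(s_i)$ would be past-directed geodesics from $y$ with proportional initial velocities, hence would agree as pregeodesics by uniqueness for the geodesic ODE. This would force $\gamma_1$ and $\gamma_2$ to be reparametrizations of one another, contradicting distinctness.

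Next, for $\epsilon>0$ small enough that $\gamma_2$ extends to $[0, s_2+\epsilon]$, set $z=\gamma_2(s_2+\epsilon)$ and let $\alpha$ be the concatenation obtained by following $\gamma_1$ from $q$ to $y$ and then $\gamma_2|_{[s_2, s_2+\epsilon]}$ from $y$ to $z$. Then $\alpha$ is a future-directed causal curve from $q$ to $z$, but the tangent mismatch at $y$ from the previous step prevents $\alpha$ from being a reparametrization of any null geodesic. Lemma \ref{lm_shortcut} then yields $q\ll z$, so $\tau(q,z)>0$. By the definition of the cut locus function $\rho$, this forces $\rho(q,\dot\gamma_2(0))\leq s_2+\epsilon$; letting $\epsilon\to 0^+$ gives $\rho(q,\dot\gamma_2(0))\leq s_2$, which says exactly that $y=\gamma_2(s_2)$ lies on or beyond the null cut point of $q$ along $\gamma_2$. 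The symmetric argument, swapping $\gamma_1$ and $\gamma_2$, handles the other segment.

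The main obstacle I anticipate is making rigorous the claim that the concatenation $\alpha$ genuinely fails to be a null pregeodesic even after reparametrization: the point is that a null pregeodesic carries a well-defined tangent direction (modulo positive rescaling) at each point along its image, so a $C^0$ corner with non-proportional one-sided tangents cannot be removed by any reparametrization. A minor additional subtlety is justifying the extension of $\gamma_2$ slightly past $y$, which holds whenever $y$ is not the endpoint of the maximal domain of $\gamma_2$; this is guaranteed in the settings of interest, and in any case one may choose the extension parameter $\epsilon$ to be as small as desired.
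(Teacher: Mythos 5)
The paper does not prove this lemma; it is stated with the citation $\cite[Lemma~9.13]{Beem2017}$ and used as a black box, so there is no internal argument of the paper to compare yours against. Your proof is correct and is essentially the textbook argument behind the cited result. The chain of reasoning is sound: uniqueness of geodesics forces the one-sided tangents $\dot\gamma_1(s_1)$ and $\dot\gamma_2(s_2)$ at $y$ to be non-proportional, so the concatenation through $y$ has a genuine corner and cannot be reparametrized into a null geodesic; Lemma~\ref{lm_shortcut} then gives $\tau(q,\gamma_2(s_2+\epsilon))>0$; the reverse triangle inequality along the causal curve $\gamma_2$ shows $\tau(q,\gamma_2(\cdot))$ cannot return to zero once positive, so $\rho(q,\dot\gamma_2(0))\leq s_2+\epsilon$ for all small $\epsilon>0$, hence $\rho(q,\dot\gamma_2(0))\leq s_2$. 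One point you flagged as a subtlety is actually automatic and needs no extra justification: if $\gamma_2$ cannot be extended past $y$, then $s_2=\mathcal{T}(q,\dot\gamma_2(0))$, and since by definition $\rho(q,v)\leq\mathcal{T}(q,v)$, the inequality $\rho(q,\dot\gamma_2(0))\leq s_2$ holds trivially in that degenerate case, so the extension step is only ever needed when it is available.
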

In other words, if $y$ is before the first cut point along a null geodesic segment from $y$ to $q$, then this null geodesic segment is the only pregeodesic from $q$ to $y$.
We have the following lemma.
\begin{lm}\label{lm_LUreg}
    Let $q \in \intM$ and $y \in \pM$.
    Then $y \in \LUreg(q)$ if and only if there is a future pointing null geodesic segment from $q$ to $y$ and $y$ is before the first cut point.
\end{lm}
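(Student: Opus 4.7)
The plan is to unpack the definitions of $\CUreg(q)$ and $\LUreg(q)$ and check that both directions follow almost immediately, with the only nontrivial ingredient being the scaling behavior of the cut locus function $\rho(q, v)$ under reparameterization of the initial velocity.

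First, I would recall from the definition that $y \in \LUreg(q)$ means there exists $v \in L_q^+ M$ with $y = \gamma_{q,v}(1)$ and $1 < \rho(q,v)$. So the forward direction is essentially immediate: given such a $v$, the null geodesic segment $\gamma_{q,v}|_{[0,1]}$ connects $q$ to $y$, and by the definition of the cut locus function, the first cut point along $\gamma_{q,v}$ occurs at parameter $\rho(q,v) > 1$, so $y$ lies strictly before it.

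For the backward direction, suppose we are given a future pointing null geodesic segment $\tilde\gamma \colon [0, T] \to M$ from $q$ to $y$, with $y = \tilde\gamma(T)$ lying before the first cut point. Set $\tilde v = \dot{\tilde\gamma}(0) \in L^+_q M$, so that $\tilde\gamma = \gamma_{q, \tilde v}$ and $T < \rho(q, \tilde v)$. The key observation is the scaling identity
\[
\gamma_{q, c\tilde v}(s) = \gamma_{q, \tilde v}(cs), \qquad \rho(q, c\tilde v) = \rho(q, \tilde v)/c, \qquad c > 0,
\]
which follows at once from the homogeneity of the geodesic equation and the definition of $\rho$ via the time separation function. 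Setting $v = T \tilde v$ yields $\gamma_{q, v}(1) = y$ and $\rho(q, v) = \rho(q, \tilde v)/T > 1$, so $(y, \dot\gamma_{q,v}(1)) \in \CUreg(q)$ and hence $y \in \LUreg(q)$ (using that $y \in U$, which is implicit since the statement is meaningful only in that case).

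There is no substantive obstacle: the whole proof is essentially bookkeeping between the parameter-$1$ convention in the definition of $\CUreg(q)$ and an intrinsic statement about the geodesic segment. The one place to be careful is to note that the cut locus function depends on the scaling of $v$, so one must reparameterize the affine velocity to land exactly at $s=1$ before invoking the defining inequality $1 < \rho(q,v)$.
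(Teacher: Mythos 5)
Your proof is correct. The paper states Lemma \ref{lm_LUreg} without giving a proof, treating it as a near-definitional consequence of \eqref{def_CUreg} and \eqref{def_LU}; your argument supplies exactly the missing bookkeeping. The forward implication is indeed immediate from the definitions. For the converse, the one genuinely substantive ingredient is the homogeneity relation $\gamma_{q,c\tilde v}(s)=\gamma_{q,\tilde v}(cs)$ together with the induced scaling $\rho(q,c\tilde v)=\rho(q,\tilde v)/c$, which you state and justify correctly (it follows from the definition of $\rho$ as a supremum over $s$ with $\tau(q,\gamma_{q,v}(s))=0$, and the corresponding rescaling of $\mathcal T(q,v)$). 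This lets you normalize so that $y=\gamma_{q,v}(1)$ with $1<\rho(q,v)$, hence $(y,\dot\gamma_{q,v}(1))\in\CUreg(q)$. You are also right to flag the implicit restriction $y\in U$: since $\LUreg(q)\subseteq U$ by construction, the backward implication as literally worded would fail for $y\in\pM\setminus U$, so the lemma should be read with $y\in U$ (or one should understand the right-hand condition as additionally requiring the segment to terminate in $U$). One could add that the required membership $(y,\dot\gamma_{q,v}(1))\in L^+U$ (i.e., that the arrival covector is future-pointing lightlike, and in fact transversal to $\pM$) is automatic by Lemma \ref{lm_nulltransversal}, but this does not change the substance of your argument.
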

The following lemma is inspired by \cite[Lemma 6.7]{feizmohammadi2021inverse}.
\begin{lm}\label{lm_cutlocus}
    Let $(q_j,v_j) \rightarrow (q,v)$ in $L^+\intM$.
    Suppose $\gamma_{q_j, v_j}(1) \in \pM$ is on or before the first cut point.
    Then $\gamma_{q, v}(\varsigma)$ hits $\pM$ at $\varsigma = 1$ on or before the first cut point.
\end{lm}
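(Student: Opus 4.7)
The plan is to reformulate the cut-point condition as vanishing of the time separation function $\tau$, and then pass to the limit using lower semicontinuity of $\tau$ in the globally hyperbolic spacetime.

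First, I would verify that the limit geodesic actually meets $\pM$ at $\varsigma = 1$. Since $g$ extends smoothly across $\pM$ inside the ambient manifold $\tM$, continuous dependence of ODE solutions on initial conditions yields $\gamma_{q_j, v_j}(1) \to \gamma_{q, v}(1)$; as $\pM$ is closed in $\tM$, the limit lies in $\pM$. Combined with Lemma \ref{lm_smoothexp}, which rules out the limit geodesic hitting $\pM$ strictly before parameter $1$ (otherwise smoothness of the hitting time near that prior contact point would force $\varsigma_{q_j, v_j} < 1$ for large $j$), this shows $\gamma_{q, v}(1) \in \pM$. Set $y_j := \gamma_{q_j, v_j}(1)$ and $y := \gamma_{q, v}(1)$.

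Next, I would recast the cut-point condition via $\tau$. Along a future-directed null geodesic $\gamma_{x, v}$, the reverse triangle inequality implies that $s \mapsto \tau(x, \gamma_{x, v}(s))$ is non-decreasing, and together with the sup definition of $\rho$ in Section \ref{sec_prelim} one obtains the equivalence
$$
\tau(x, \gamma_{x, v}(s)) = 0 \iff s \leq \rho(x, v).
$$
Hence $y_j$ lies on or before the first cut point of $q_j$ along $\gamma_{q_j, v_j}$ iff $\tau(q_j, y_j) = 0$, and the desired conclusion $\rho(q, v) \geq 1$ is equivalent to showing $\tau(q, y) = 0$.

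Finally, assume for contradiction that $\tau(q, y) > 0$. Since $(M, g)$ is globally hyperbolic, $\tau$ is lower semicontinuous on $M \times M$, and since $g$ extends smoothly across $\pM$ into $\tM$, this lower semicontinuity extends to the boundary endpoint $y \in \pM$ by working inside a globally hyperbolic open neighborhood of $(q, y)$ in $\tM$. Then
$$
0 < \tau(q, y) \leq \liminf_{j \to \infty} \tau(q_j, y_j) = 0,
$$
a contradiction, so $\tau(q, y) = 0$ and $\rho(q, v) \geq 1$. The main delicate point is justifying lower semicontinuity of $\tau$ at the boundary endpoint $y \in \pM$; this is the only step where the smooth extension of $g$ to $\tM$ is essential, and it is what I would have to treat with most care.
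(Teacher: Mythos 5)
Your proposal is correct and follows essentially the same route as the paper's proof: both recast the cut-point condition as the equivalence $s\le\rho(x,v)\iff\tau(x,\gamma_{x,v}(s))=0$, pass to the limit using continuity of $\tau$, and derive the contradiction $0<\tau(q,\gamma_{q,v}(1))=\lim_j \tau(q_j,\gamma_{q_j,v_j}(1))=0$. The only differences are cosmetic: the paper invokes full continuity of $\tau$ on $M\times M$ (available because $\tM$ is globally hyperbolic and $\tau$ extends continuously there), whereas you use only lower semicontinuity, which is the weaker property actually needed for the inequality to go the right way; and you devote a paragraph to verifying that $\gamma_{q,v}$ doesn't exit $M$ before $\varsigma=1$, which the paper dispatches in one line via smooth dependence of geodesics and closedness of $\pM$. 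Your extra care on the boundary point $y\in\pM$ is reasonable but not strictly necessary here since $\tau$ is being taken with respect to the ambient globally hyperbolic $(\tM,g)$, on which continuity holds up to and across $\pM$.
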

\begin{proof}
    Since $\gamma_{q, v}(\varsigma)$ smoothly depends on $(q,v)$, with $\pM$ closed, we have $\gamma_{q, v}(1) \in \pM$.
    It remains to prove $1 \leq \rho(q,v)$.
    We assume for contradiction that $1> \rho(q,v)$,
    which implies $\tau(q, \gamma_{q, v}(1)) > 0$.
    As $\gamma_{q_j, v_j}(1)$ is on or before the first cut point, we have $1 \leq \rho(q_j, v_j)$, which implies $\tau(q_j, \gamma_{q_j, v_j}(1)) = 0$.
    Since $\tau$ is continuous on $M \times M$, we must have
    \[
    0< \tau(q, \gamma_{q, v}(1)) = \lim_{j \rightarrow +\infty}\tau(q_j, \gamma_{q_j, v_j}(1)) = 0,
    \]
    which leads to contradiction.
\end{proof}
We need the following auxiliary lemma.
\begin{lm}\label{lm_cutconverge}
    Let $q_j \rightarrow q$ in $W$.
    Let $p_j \rightarrow p$ in $U$, where
    $p_j = \gamma_{q_j, v_j}(1)$ with $v_j \in L_{q_j}^+\intM$.
    Then we can find a subsequence of $(q_j, v_j)$ and $v \in L^+_q \intM$ such that
$(q_{k_j}, v_{k_j}) \rightarrow (q, v)$ in $L^+ \intM$ with
$p  = \gamma_{q,v}(1)$.
\end{lm}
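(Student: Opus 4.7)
The plan is to reduce to a compactness argument by splitting $v_j$ into magnitude and direction. Fix an auxiliary complete Riemannian metric $g_R$ on $\tM$ and write $v_j = s_j \hat v_j$ with $s_j = |v_j|_{g_R} > 0$ and $|\hat v_j|_{g_R} = 1$. Since the $g_R$-unit sphere bundle over a compact neighborhood of $q$ is compact, after passing to a subsequence $\hat v_j \to \hat v \in T_q\tM$ with $|\hat v|_{g_R}=1$; the future null cone being closed in $TM$ forces $\hat v \in L^+_q M$. The problem thus reduces to showing that $\{s_j\}$ is bounded from above and bounded away from zero.

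The lower bound is immediate: if a subsequence had $s_j \to 0$, smoothness of the geodesic flow would give $p_j = \gamma_{q_j, \hat v_j}(s_j) \to \gamma_{q,\hat v}(0) = q$, contradicting $p \in U \subseteq \pM$ and $q \in W \subseteq \intM$.

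The upper bound is the main obstacle. By the nontrapping clause of Assumption~\ref{assump_Mg}, the null geodesic $\gamma_{q,\hat v}$ starting from $q \in \intM$ reaches $\pM$ at some finite time $\varsigma^* > 0$ at a point $p^*$. I would next argue that this hit is necessarily transversal: otherwise $\dot\gamma_{q,\hat v}(\varsigma^*)$ would be both null and tangent to $\pM$ at $p^*$, hence a nonzero vector in the radical $\mathrm{Rad}(T_{p^*}\pM)$ discussed in Section~\ref{subsec_pM}; by uniqueness of geodesics with prescribed initial conditions $\gamma_{q,\hat v}$ would then coincide with the null generator of $\pM$ through $p^*$ and lie entirely on $\pM$, contradicting $q \in \intM$. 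Lemma~\ref{lm_smoothexp} now furnishes a smooth first-exit function $\varsigma$ near $(q,\hat v)$, so for $j$ large $\gamma_{q_j, \hat v_j}$ reaches $\pM$ transversally at times $\varsigma_j^* \to \varsigma^*$. Since $\gamma_{q_j, v_j}([0,1]) \subseteq \overline{M}$, equivalently $\gamma_{q_j, \hat v_j}([0,s_j]) \subseteq \overline{M}$, and a future-pointing null geodesic exiting the diamond region $M$ transversally through $\bar S_+$ cannot re-enter, we conclude $s_j \leq \varsigma_j^*$, so $\{s_j\}$ is uniformly bounded.

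Passing to a further subsequence gives $s_j \to s \in (0,\infty)$, whence $v_j \to v := s\hat v \in L^+_q\intM$. Continuity of the geodesic flow then yields $\gamma_{q,v}(1) = \lim_j \gamma_{q_j, v_j}(1) = \lim_j p_j = p$, completing the proof. The delicate points are the transversality of the limiting hit (where the null character of $\pM$ together with the radical-space argument is essential) and the non-return property of null geodesics that have reached the future null infinity of $M$.
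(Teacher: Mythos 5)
Your proof is correct and follows essentially the same strategy as the paper: fix an auxiliary Riemannian metric, decompose $v_j$ into magnitude $s_j$ and direction $\hat v_j$, extract a convergent subsequence of directions by compactness, argue that $s_j$ stays in a compact subset of $(0,\infty)$, and pass to the limit using smooth dependence of the geodesic flow and Lemma~\ref{lm_smoothexp}. The difference is mostly in how the boundedness of $s_j$ is handled. The paper applies Lemma~\ref{lm_smoothexp} at the limiting direction $(q,w)$ to obtain the smooth exit-time function $d_0$, and then silently identifies $\|v_j\|_{g^+}$ with $d_0(q_j,w_j)$; you instead prove the two-sided bound explicitly, the lower bound by contradiction ($p\in\pM$ vs.\ $q\in\intM$) and the upper bound by the nontrapping assumption plus the non-return property of null geodesics that have exited transversally through $\bar S_+$. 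Both arguments ultimately rest on that non-return property, yours states it while the paper uses it implicitly, so yours is at least as careful; you assert it without proof, but the same is true of the paper. The radical-space argument you give for transversality of the limiting hit is essentially a re-derivation of Lemma~\ref{lm_nulltransversal}, which Lemma~\ref{lm_smoothexp} already invokes internally, so it is not needed but is harmless. Overall your proof is a slightly more elementary and self-contained version of the same argument, trading the uniqueness-of-$d_0$ bookkeeping for a direct two-sided bound on $s_j$ and then concluding by continuity of the exponential map alone.
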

\begin{proof}
Consider a smooth Riemannian metric $g^+$ on $M$.
Let $w_{j}  = v_{j}/\|v_{j}\|_{g^+}$, which correspond to the direction of each $v_j$.
For large $j$, we can assume $(q_{j}, w_{j})$ is contained in a compact set of $L^+\intM$.
Then by passing through a subsequence, we can assume $(q_j, w_j) \rightarrow (q, w)$ in $L\intM$.

By Lemma \ref{lm_smoothexp}, there exists a small neighborhood $N_0 \subseteq L^+\intM$ of $(q,w)$ and a smooth function $d_0: N_0 \rightarrow \mR_+$ such that for any $(q', w') \in N_0$, one has  $\gamma_{q', w'}(d_0(q', w')) \in \pM$.
%
Note that $\{d_0(q_{j}, w_{j})\}$ is contained in a compact set of $\mR_+$ for large $j$, as $d_0$ is smooth and we can assume $(q_{j}, w_{j})$ is contained in a compact subset of $N_0$.
Thus, we set $v = d_0(q, w) w$ and use Lemma \ref{lm_smoothexp} to have
\[
p = \lim_{j \rightarrow \infty} p_{j} = \lim_{j \rightarrow \infty} \gamma_{q_j, w_j}(d_0(q_{j}, w_{j})) =  \gamma_{q, w}(d_0(q, w)) = \gamma_{q, v}(1).
\]
With $v_j = d_0(q_j, w_j)w_j$, we have $(q_j, v_j) \rightarrow (q, v)$ in $N_0$.
\end{proof}
\begin{lm}\label{lm_cutneighborhood}
Let $(q_0, v_0) \in L^+ \intM$ such that $\gamma_{q_0,v_0}(\varsigma)$ hits $\pM$ at $\varsigma  = \varsigma_0$ before the first cut point.
Then there exists an open neighborhood $N_0 \subseteq L^+ \intM$ of $(q_0, v_0)$ such that for any $(q, v) \in N_0$,
one has $\gamma_{q,v}(\varsigma)$ hits $\pM$ at $\varsigma = \varsigma_{q,v}$ before the first cut point.
\end{lm}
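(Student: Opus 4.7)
\medskip
\noindent\textbf{Proof plan for Lemma \ref{lm_cutneighborhood}.}

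The plan is to argue by contradiction. Suppose no such neighborhood exists. Then one can extract a sequence $(q_j, v_j) \to (q_0, v_0)$ in $L^+ \intM$ such that $\gamma_{q_j, v_j}$ hits $\pM$ at the parameter $\varsigma_j := \varsigma_{q_j, v_j}$, with $\varsigma_j \geq \rho(q_j, v_j)$, i.e.\ at or past the first cut point. By Lemma \ref{lm_smoothexp} the hitting parameter depends smoothly on $(q,v)$, so $\varsigma_j \to \varsigma_0$. By hypothesis $\varsigma_0 < \rho(q_0, v_0)$, and the goal is to derive a contradiction by showing, in the limit, that $\rho(q_0, v_0) \leq \varsigma_0$.

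By the characterization of the null cut locus (as recalled just before Lemma \ref{lm_LUreg} from \cite[Theorem 9.15]{Beem2017}), for each $j$ at least one of the following occurs:
\begin{enumerate}[(a)]
\item there is a conjugate point of $q_j$ along $\gamma_{q_j, v_j}$ at some parameter $s_j \in (0, \rho(q_j, v_j)] \subseteq (0, \varsigma_j]$; or
\item there exists a future-directed null pregeodesic $\eta_j$ from $q_j$ to $\gamma_{q_j, v_j}(\rho(q_j, v_j))$, distinct from $\gamma_{q_j, v_j}|_{[0,\rho(q_j,v_j)]}$.
\end{enumerate}
Passing to a subsequence, one of (a), (b) holds for all $j$.

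In case (a), $s_j$ lies in the compact interval $[0, \varsigma_j] \subset [0, \varsigma_0 + 1]$ for $j$ large, so $s_j \to s_* \in [0,\varsigma_0]$ along a further subsequence. Since conjugate points are zeros of Jacobi fields along $\gamma_{q_j,v_j}$ and Jacobi fields depend smoothly on the initial data $(q_j,v_j)$, the limiting Jacobi field along $\gamma_{q_0,v_0}$ vanishes at $s_*$. Hence $\gamma_{q_0,v_0}(s_*)$ is a conjugate point of $q_0$, giving $\rho(q_0,v_0) \leq s_* \leq \varsigma_0$, contradicting $\varsigma_0 < \rho(q_0,v_0)$.

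In case (b), normalize $\eta_j$ by a fixed background Riemannian metric $g^+$ so that $\dot\eta_j(0)$ has unit $g^+$-length, and let $\ell_j$ be the $g^+$-length of $\eta_j$. The endpoints $q_j$ and $\gamma_{q_j,v_j}(\rho(q_j,v_j))$ lie in a compact set, the $\ell_j$ are bounded, and the null direction lies in the compact set of unit lightlike vectors. Passing to a subsequence, $(q_j, \dot\eta_j(0)) \to (q_0, u)$ with $u \in L^+_{q_0}M$, $\ell_j \to \ell_*$, and $\rho(q_j, v_j) \to s_* \in [0, \varsigma_0]$. The limit pregeodesic $\eta$ is then a future-directed null pregeodesic from $q_0$ to $\gamma_{q_0,v_0}(s_*)$. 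Since $s_* \leq \varsigma_0 < \rho(q_0, v_0)$, Lemma \ref{lm_cutunique} implies $\eta$ is a reparametrization of $\gamma_{q_0,v_0}|_{[0,s_*]}$, so $u = c v_0$ for some $c > 0$. Thus the two \emph{distinct} sequences of initial directions (for $\eta_j$ and for $\gamma_{q_j,v_j}$, suitably renormalized) converge to the same direction while their associated geodesics reach the same endpoint; by the implicit function theorem applied to the exponential map, this forces the differential of $\exp_{q_0}$ to be singular at the preimage of $\gamma_{q_0,v_0}(s_*)$, making it a conjugate point of $q_0$. Again $\rho(q_0,v_0) \leq s_* \leq \varsigma_0$, a contradiction.

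The main obstacle is case (b): one must rule out the possibility that the auxiliary geodesics $\eta_j$ collapse onto $\gamma_{q_j, v_j}$ without providing useful limiting information. This is handled by the standard argument that a ``splitting'' of geodesics in the limit produces a degenerate differential of the exponential map, hence a conjugate point. Once both cases yield $\rho(q_0, v_0) \leq \varsigma_0$, the initial assumption is refuted and a neighborhood $N_0$ as claimed exists.
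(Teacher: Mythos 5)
Your proposal is correct, but it takes a longer and more self-contained route than the paper. The paper's proof is three lines: it cites the standard fact that the null cut locus function $\rho$ is lower semi-continuous (this is classical, e.g.\ in Beem--Ehrlich--Easley), picks $\ep>0$ with $\varsigma_0+\ep<\rho(q_0,v_0)$, and then uses Lemma~\ref{lm_smoothexp} to shrink $N_0$ so that both $\rho(q,v)>\varsigma_0+\ep$ and $\varsigma_{q,v}<\varsigma_0+\ep$ hold on $N_0$, giving $\varsigma_{q,v}<\rho(q,v)$ immediately. What you do instead is essentially \emph{reprove} the relevant instance of lower semi-continuity of $\rho$ by contradiction, splitting into the ``conjugate point'' and ``distinct geodesics'' cases via the characterization of the first cut point, extracting convergent subsequences, and using that the conjugate locus is closed (Jacobi fields and $d\exp$ depend continuously on initial data). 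Both arguments are sound; your version has the advantage of being self-contained and making explicit why the cut locus function behaves well, at the cost of several hand-wavy compactness and normalization steps (e.g.\ you must normalize the Jacobi fields $J_j$ so that the limit $J$ is nontrivial, and rule out $s_*=0$ by noting $d\exp_{q_0}$ is nondegenerate at the origin), whereas the paper simply delegates that bookkeeping to a cited fact. If you keep your version, you should be a little more careful in case (a): fix a background Riemannian metric, normalize $J_j'(0)$ to unit length, observe that conjugate parameters are bounded away from $0$ near $(q_0,v_0)$ because $d\exp_q$ is close to the identity there, and then pass to a limit to produce a nontrivial Jacobi field vanishing at $0$ and $s_*>0$.
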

\begin{proof}
As $\gamma_{q,v}(\varsigma_{0})$ is before the first cut point, we have $\varsigma_0 < \rho(q_0, v_0)$.
Then there exists $\ep > 0$ such that $\varsigma_0 + \ep< \rho(q_0, v_0)$.
Since $\rho$ is lower semi-continuous, there exists a small open neighborhood $N_0$ of $(q_0, v_0)$ such that
\[
\varsigma_0 + \ep< \rho(q, v), \quad \text{ for any }(q, v) \in N_0.
\]
By choosing sufficiently small $N_0$, using Lemma \ref{lm_smoothexp}, we can assume $\varsigma_{q,v}$ depends on $(q,v)$ smoothly such that
$\varsigma_{q,v}< \varsigma_0 + \ep$.
Thus, we have $\varsigma_{q,v} < \rho(q, v)$, which implies $\gamma_{q,v}(\varsigma_{q,v}) \in \pM$ is before the first cut point.
\end{proof}
By \cite[Theorem 9.15]{Beem2017}, the first null cut point comes at or before the first future conjugate point in a globally hyperbolic spacetime.
Then the following lemma applies if we assume $p$ is before the first cut point along the null geodesic $\gamma_{q,v}$.
\begin{lm}\label{lm_explocal}
Let $(q,v) \in L\intM$ and $p = \exp_q(v) \in \pM$.
If $(q, v)$ and $p$ are not conjugate, then
\begin{itemize}
    \item[(1)] There exists an open neighborhood $N_v \subseteq L_q M$ of $v$ such that the restriction of $\exp_q$ in $N_v$ is a diffeomorphism.
    Thus, $L \coloneqq \exp_q(N_v) \cap \pM$ is a smooth $1$-codimensional submanifold of $\partial M$.
    \item[(2)]
    Denoting $\gamma_{q,v}(t) = \exp_q(tv)$, one has $\dot{\gamma}_{q,v}(1) \in (T_p L)^\perp$. Moreover, this conclusion holds for any point in $L$ that is sufficiently close to $p$.
\end{itemize}
\end{lm}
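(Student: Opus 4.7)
My plan is to prove (1) by the inverse function theorem together with a transversality argument, and (2) by the Lorentzian Gauss lemma.

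For part (1), the non-conjugacy hypothesis means exactly that the differential $(d\exp_q)_v\colon T_v(T_qM)\cong T_qM\to T_pM$ is a linear isomorphism. The null cone $L_qM$ is a smooth $3$-dimensional submanifold of $T_qM$ near $v$, with $T_v L_qM = \{w \in T_qM : g_q(v,w) = 0\}$, so the restriction $(d\exp_q)_v|_{T_v L_qM}$ is still injective. The inverse function theorem therefore produces an open neighborhood $N_v \subset L_qM$ of $v$ on which $\exp_q$ is a diffeomorphism onto its image $\Sigma \coloneqq \exp_q(N_v)$, a smooth $3$-dimensional submanifold of $M$. To deduce that $L = \Sigma \cap \pM$ is a smooth codimension-$1$ submanifold of $\pM$, I will check transversality of $\Sigma$ and $\pM$ at $p$. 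Since $v$ is null, one has $v \in T_v L_qM$, so $\dot\gamma_{q,v}(1) = (d\exp_q)_v(v) \in T_p\Sigma$; by Lemma \ref{lm_nulltransversal} this vector is future-pointing and lies in $T^+_pM$, so it is not tangent to $\pM$. Hence $T_p\Sigma + T_p\pM = T_pM$, and the intersection is a smooth submanifold of dimension $3+3-4 = 2$, which is codimension $1$ in the $3$-dimensional boundary $\pM$.

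For part (2), I would invoke the Lorentzian Gauss lemma in the form
\[
g_p\bigl((d\exp_q)_v(v),\ (d\exp_q)_v(w)\bigr) = g_q(v,w) \quad \text{for all } w \in T_v(T_qM).
\]
If $X \in T_pL \subseteq T_p\Sigma = (d\exp_q)_v(T_v L_qM)$, write $X = (d\exp_q)_v(w)$ with $w \in T_v L_qM$, so that $g_q(v,w)=0$. The Gauss lemma then gives $g_p(\dot\gamma_{q,v}(1), X) = g_q(v,w) = 0$, establishing $\dot\gamma_{q,v}(1) \in (T_pL)^\perp$. Since non-conjugacy, transversality, and the Gauss identity are all open conditions in $v$, after shrinking $N_v$ the same argument applied at any $v' \in N_v$ yields $\dot\gamma_{q,v'}(1) \in (T_{p'}L)^\perp$ at the corresponding nearby point $p' = \exp_q(v') \in L$.

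The routine parts are the inverse function theorem and the transversality dimension count; the only point requiring genuine care is verifying the Gauss-lemma computation in the null case, where the tangent space $T_vL_qM$ contains the radial direction $v$ itself (because $g_q(v,v)=0$), so that the would-be distinction between the radial and tangential subspaces collapses. Once one is comfortable with this, both parts follow directly.
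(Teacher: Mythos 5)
Your proof is correct and takes essentially the same route as the paper. The paper's variational computation (showing $\partial_t f(r,t)=0$ by symmetry of the connection and nullity of $\gamma(r,\cdot)$, together with $f(r,0)=0$ from the fixed base point) is a direct re-derivation of the Lorentzian Gauss lemma in this null setting, which you instead invoke as a known result; part (1) in both cases amounts to the implicit/inverse function theorem combined with the transversality supplied by Lemma~\ref{lm_nulltransversal}.
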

\begin{proof}
The first statement comes from the implicit function theorem. We prove the second statement in the following.

Note that $p = \gamma_{q,v}(1) \in L$ and we would like to
parameterize $L$ near $p$.
For this purpose, let $v(r) \in N_v$ be a smooth family of lightlike vectors, for $r \in (-\ep, \ep)$, with $v(0) = v$ and $\dot{v}(0) \neq 0$ such that $\exp_q(v(r)) \in L \subseteq \pM$.
This can be done according to Lemma \ref{lm_smoothexp} by shrinking $N_v$ if necessary.
Further, we define
\[
p(r) \coloneqq \exp_q(v(r)) = \gamma_{q,v(r)}(1)\in L,
\]
which is a smooth function depending on $r$.
With $T_p L$ spanned by several such $\{\dot{p}(0)\}$, our goal is to prove
\[
g(\dot{p}(r), \dot{\gamma}_{q,v(r)}(1)) = 0 \quad \text{ for any } r \in (-\ep, \ep).
\]
For this purpose, we consider the family of curves given by $\gamma(r, t) \coloneqq \gamma_{q, v(r)}(t)$.
We observe that
\[
g(\dot{p}(r), \dot{\gamma}_{q,v(r)}(1))
= g(
\partial_r \gamma(r, 1) , \dot{\gamma}_{q,v(r)}(1))
= g(\partial_r \gamma(r, 1), \partial_t \gamma(r, 1))
\eqqcolon f(r, 1),
\]
where we define a function \[
f(r, t) \coloneqq g(\partial_r \gamma(r, t), \partial_t \gamma(r, t)), \quad r \in (-\ep, \ep), \ t \in [0, 1].
\]
Note that {$f$ is smooth in $(-\ep, \ep)\times (0, 1)$ and continuous up to $t_0$ and $t=1$}.
Then we compute
\[\begin{split}
    \partial_t f(r, t) & = g(D_t\partial_r \gamma(r, t), \partial_t \gamma(r, t)) + g(\partial_r \gamma(r, t), D_t\partial_t \gamma(r, t))\\
    & = g(D_r\partial_t \gamma(r, t), \partial_t \gamma(r, t))\\
    & = (1/2)\partial_r g(\partial_t \gamma(r, t), \partial_t \gamma(r, t)) = 0,
\end{split}\]
where for the second line we use the symmetric property and the fact that $\gamma(r, \cdot)$ are null geodesics.
Then it suffices to prove $g(\dot{p}(r), \dot{\gamma}_{q,v{r}}(1)) = f(r, 1) = f(r, 0) = 0$.
Indeed, we observe that $\gamma(r, 0) \equiv q$ and therefore
$\partial_r \gamma(r, 0) = 0.$
This proves the desired result.
\end{proof}
Even with $q$ and $p$ that are not conjugate, there might be two or more null geodesics connecting them, as the exponential map is only a local diffeomorphism there.
However, with Lemma \ref{lm_cutunique}, we have the following lemma.

\begin{lm}\label{lm_finitevectors}
Let $(q,v) \in L\intM$ and $p = \gamma_{q,v}(1) \in \pM$.
If $p$ is before the first cut point, then $v \in L_q \intM$ for which $\exp_q(v) = p$ is unique.
\end{lm}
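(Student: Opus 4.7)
The plan is to argue by contradiction: suppose there exist two distinct vectors $v, v' \in L_q\intM$ both satisfying $\exp_q(v) = \exp_q(v') = p$, where $v$ is the given vector with $1 < \rho(q, v)$ (so that $p$ lies strictly before its first cut point along $\gamma_{q,v}$). Since $p \in \partial M$ is reached in the future of $q$ and $p \neq q$ by global hyperbolicity, both $v$ and $v'$ must be future-pointing---otherwise $p$ would lie in $J^+(q) \cap J^-(q) \setminus \{q\}$, producing a closed causal curve through $q$ and $p$ and violating causality.

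I would then split the argument into two cases according to whether $v'$ is a positive multiple of $v$. If $v'$ is \emph{not} positively proportional to $v$, then $\gamma_{q,v}|_{[0,1]}$ and $\gamma_{q,v'}|_{[0,1]}$ are two genuinely distinct future-directed null geodesic segments from $q$ to $p$, and Lemma \ref{lm_cutunique} forces $p$ to lie at or after the first null cut point along each of them. In particular $\rho(q,v) \leq 1$, directly contradicting the hypothesis $1 < \rho(q,v)$.

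If instead $v' = c v$ for some $c > 0$ with $c \neq 1$, then $\gamma_{q,v}(c) = \gamma_{q,v'}(1) = p = \gamma_{q,v}(1)$, so the null pregeodesic $\gamma_{q,v}$ returns to $p$ at two distinct parameter values. The restriction of $\gamma_{q,v}$ to the interval bounded by $1$ and $c$ is then a non-degenerate closed causal curve based at $p$, which contradicts the absence of closed causal curves in the globally hyperbolic spacetime $(M,g)$.

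The only step that requires any real care is the proportional case: a direct appeal to Lemma \ref{lm_cutunique} does not rule it out, and I expect it to be the main (albeit minor) obstacle. It is handled cleanly by invoking the causality inherent in global hyperbolicity; the non-proportional case is essentially immediate from Lemma \ref{lm_cutunique}.
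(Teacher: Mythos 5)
Your proof is correct and follows the same core approach the paper implicitly takes: the paper states this lemma without an explicit proof, presenting it as a direct consequence of Lemma~\ref{lm_cutunique}. Your elaboration is a bit more careful than the paper's one-line justification, since you separately dispose of the cases $v' = cv$ with $c \neq 1$ (and the past-pointing case), which Lemma~\ref{lm_cutunique} does not directly address; you correctly observe these are ruled out by the absence of closed causal curves in a globally hyperbolic spacetime.
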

The following lemma is a direct result of Lemma \ref{lm_smoothexp} and \ref{lm_finitevectors}.
\begin{lm}\label{lm_finite}
Let $p \in \LUreg(q)$, where $q \in \intM$.
Then there exists an open neighborhood $O \subseteq \pM$ of $p$ such that
$\mathcal{L}_q^+ \cap O$ is a smooth $1$-codimensional submanifold and $\LUreg(q)\cap O = \mathcal{L}_q^+ \cap O$.
\end{lm}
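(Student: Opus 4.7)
The plan is to combine the local inverse of $\exp_q$ with the stability/uniqueness results for the cut locus that have been set up in the preceding lemmas. Writing $p = \gamma_{q,v}(1)$ with $v \in L_q^+ M$ and $\rho(q,v) > 1$, I first note that $p$ lies strictly before the first cut point along $\gamma_{q,v}$, hence strictly before the first conjugate point by \cite[Theorem 9.15]{Beem2017}. Therefore Lemma \ref{lm_explocal} applies and yields an open neighborhood $N_v \subseteq L_q M$ of $v$ on which $\exp_q$ is a diffeomorphism, together with the smooth $1$-codimensional submanifold $L \coloneqq \exp_q(N_v) \cap \pM$ of $\pM$, which represents the part of the forward light cone $\mathcal{L}_q^+$ near $p$.

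Next I would apply Lemma \ref{lm_cutneighborhood} to $(q,v) \in L^+\intM$: after shrinking $N_v$ if necessary, every $v' \in N_v$ with $\exp_q(v') \in \pM$ satisfies that the first boundary-hitting parameter along $\gamma_{q,v'}$ equals $1$ and is still strictly before the first cut point. This immediately gives $\exp_q(v') \in \LUreg(q)$, and hence $L \subseteq \LUreg(q)$. Choosing an open neighborhood $O \subseteq \pM$ of $p$ small enough so that $L \cap O$ is realized by exponentiating vectors in $N_v$, the inclusion $L \cap O \subseteq \LUreg(q) \cap O$ and the claim that $L \cap O$ is smooth of codimension $1$ follow directly.

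The reverse inclusion $\LUreg(q) \cap O \subseteq L$ is where I expect the only real obstacle. A priori, a regular boundary point $p' \in O$ very close to $p$ could be reached by $\gamma_{q,v'}$ for some $v'$ far from $v$, so the exponential description supplied by $N_v$ might fail to capture $p'$. To handle this I would argue by contradiction: assume there is a sequence $p_j \to p$ in $\LUreg(q) \cap O$ with representatives $v_j \in L_q^+ M$ satisfying $\gamma_{q,v_j}(1) = p_j$, $\rho(q,v_j) > 1$, and $v_j \notin N_v$. By Lemma \ref{lm_cutconverge} a subsequence satisfies $v_j \to \tilde v \in L_q^+ \intM$ with $\gamma_{q,\tilde v}(1) = p$, and by Lemma \ref{lm_cutlocus} one then has $\rho(q,\tilde v) \geq 1$. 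This would exhibit two distinct future null geodesic segments from $q$ to $p$, namely $\gamma_{q,v}$ and $\gamma_{q,\tilde v}$; but Lemma \ref{lm_cutunique} forces $p$ to be at or after the cut point along each, which contradicts $\rho(q,v) > 1$. Hence $\tilde v = v$ (which is essentially the uniqueness statement of Lemma \ref{lm_finitevectors}), so $v_j \in N_v$ for all large $j$, a contradiction.

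Shrinking $O$ one final time if needed to ensure that every $p' \in \LUreg(q) \cap O$ is obtained from some $v' \in N_v$, I conclude $\LUreg(q) \cap O = L \cap O = \mathcal{L}_q^+ \cap O$, completing the proof. The crucial step is the contradiction argument in the third paragraph, as it is the only place where one must rule out the existence of ``distant'' directions $v'$ producing nearby boundary points; the rest is a packaging of Lemmas \ref{lm_explocal}, \ref{lm_cutneighborhood}, \ref{lm_cutconverge}, \ref{lm_cutlocus}, \ref{lm_cutunique}, and \ref{lm_finitevectors}.
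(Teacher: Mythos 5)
Your proof follows the same strategy as the paper: use Lemma \ref{lm_explocal} to produce the smooth piece $L = \exp_q(N_v)\cap\pM$, Lemma \ref{lm_cutneighborhood} to get $L\subseteq\LUreg(q)$ near $p$, and a compactness/contradiction argument via Lemmas \ref{lm_cutconverge}, \ref{lm_cutunique}, and \ref{lm_finitevectors} to rule out nearby boundary hits from directions outside $N_v$. The one thing to tighten is the final assertion $L\cap O = \mathcal{L}_q^+\cap O$: your contradiction argument is stated only for sequences $p_j\in\LUreg(q)\cap O$ (and in that form you invoke Lemma \ref{lm_cutlocus}, which requires $\rho(q,v_j)\geq 1$), whereas the inclusion $\mathcal{L}_q^+\cap O\subseteq L$ requires ruling out \emph{all} nearby boundary points in the forward light cone, including those arising from $v'$ with $\rho(q,v')\leq 1$. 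This is easily repaired: for an arbitrary sequence $p_j\to p$ in $\mathcal{L}_q^+\cap O$ with $v_j\notin N_v$, Lemma \ref{lm_cutconverge} still yields a convergent subsequence $v_{k_j}\to\tilde v$ with $\gamma_{q,\tilde v}(1)=p$; uniqueness of $v$ (Lemma \ref{lm_finitevectors}, using only $\rho(q,v)>1$) gives $\tilde v=v$, so $v_{k_j}\in N_v$ eventually, a contradiction --- and this does not need Lemma \ref{lm_cutlocus} at all. With that adjustment the proof matches the paper's argument, which handles the full light cone directly ("for vectors in $L_q^+\setminus N_v$, their images under the exponential map are away from $p$").
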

\begin{proof}
Suppose $(q,v) \in L\intM$ such that $p = \gamma_{q,v}(1) \in \pM$.
Since $p$ is before the first cut point, using Lemma \ref{lm_finitevectors} the choice of $v$ for which $p = \exp_q(v)$ is unique.
By Lemma \ref{lm_explocal}, there exists a small neighborhood $N_v \subseteq L_q^+ \intM$ of $v$ such that $L \coloneqq \exp_q(N_{v}) \cap \pM$ is a smooth submanifold of $\pM$ of codimension one.
For vectors in $L_q^+ \setminus N_{v}$, their images under the exponential map are away from $p$, by Lemma \ref{lm_cutconverge} and the fact that the choice of $v$ is unique.
Thus, we can find a small open neighborhood $O$ of $p$ such that $\mathcal{L}_q^+ \cap O = L$.
It remains to show any $p \in L$ is before the first cut point of $q$.
Indeed, by Lemma \ref{lm_cutneighborhood}, for $(q,v')$ sufficiently close to $(q,v)$, the null geodesic $\gamma_{q,v'}(\varsigma)$ hits $\pM$ before the first cut point.
By shrinking $O$ if necessary,
\end{proof}
Next, we prove these results are stable under small perturbation on $q$ and then $p$.
\begin{lm}\label{lm_conjugate_perturb}
Let $p \in \LUreg(q)$, where $q \in \intM$.
Then there exist open small neighborhoods $N \subseteq W$ of $q$ and $O \subseteq \pM$ of $p$, such that for any $q' \in N$, if one has $p' \in \lL_{q'}^+ \cap O$, then $p'$ is before the first cut point, i.e., $p' \in \LUreg(q')$.
\end{lm}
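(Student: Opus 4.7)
The plan is to argue by contradiction, combining three ingredients already established: Lemma \ref{lm_cutconverge} (subsequential convergence of initial data given convergence of endpoints), Lemma \ref{lm_finitevectors} (uniqueness of the initial velocity before the first cut point), and Lemma \ref{lm_cutneighborhood} (openness of the ``before first cut point'' condition in the initial data).

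Since $p \in \LUreg(q)$, there exists $v \in L_q^+ \intM$ with $\gamma_{q,v}(1) = p$ and $1 < \rho(q,v)$, and by Lemma \ref{lm_finitevectors} this $v$ is the unique such velocity. Applying Lemma \ref{lm_cutneighborhood} at $(q,v)$ yields an open neighborhood $N_0 \subseteq L^+\intM$ of $(q,v)$ such that every $(q',v') \in N_0$ satisfies $\varsigma_{q',v'} < \rho(q',v')$, i.e.\ $\gamma_{q',v'}$ hits $\pM$ strictly before its first cut point; moreover, by continuity (and Lemma \ref{lm_smoothexp}) we may shrink $N_0$ so that $\varsigma_{q',v'}$ stays close to $1$.

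Now suppose for contradiction that the asserted neighborhoods $N$ and $O$ do not exist. Then there are sequences $q_j \to q$ in $W$ and $p_j \to p$ in $\pM$ with $p_j \in \lL_{q_j}^+$ but $p_j \notin \LUreg(q_j)$. Choose, for each $j$, some $v_j \in L_{q_j}^+ \intM$ with $\gamma_{q_j,v_j}(1) = p_j$; then the failure $p_j \notin \LUreg(q_j)$ forces $1 \ge \rho(q_j,v_j)$ for every such choice. By Lemma \ref{lm_cutconverge}, after passing to a subsequence, $(q_j,v_j) \to (q,v^\ast)$ in $L^+\intM$ with $p = \gamma_{q,v^\ast}(1)$. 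Since $p$ lies before the first cut point of $(q,v)$, Lemma \ref{lm_finitevectors} applied at $(q,v)$ yields $v^\ast = v$. Hence $(q_j,v_j) \in N_0$ for all $j$ large, whence Lemma \ref{lm_cutneighborhood} gives $1 < \rho(q_j,v_j)$, contradicting $1 \ge \rho(q_j,v_j)$. This proves the lemma.

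The main delicacy I anticipate is the potential nonuniqueness of a witness $v_j$ for $p_j \in \lL_{q_j}^+$, since past some cut point the point $p_j$ may be reached by several future null geodesics from $q_j$. This is handled cleanly by fixing for each $j$ \emph{some} velocity $v_j$ witnessing $\gamma_{q_j,v_j}(1) = p_j$ and $1 \ge \rho(q_j,v_j)$, and only invoking uniqueness at the limit point $(q,v)$, where Lemma \ref{lm_finitevectors} applies thanks to the standing assumption $p \in \LUreg(q)$.
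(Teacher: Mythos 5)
Your proof is correct and follows essentially the same route as the paper's: assume the asserted neighborhoods fail to exist, extract sequences $q_j \to q$ and witnesses $v_j$ with $1 \geq \rho(q_j,v_j)$, use Lemma \ref{lm_cutconverge} to pass to a convergent subsequence $(q_j,v_j) \to (q,v)$, and contradict via the lower semicontinuity of $\rho$ at $(q,v)$ where $1 < \rho(q,v)$. The paper applies lower semicontinuity of $\rho$ directly rather than routing through Lemma \ref{lm_cutneighborhood}, and leaves the identification of the limit velocity with the original $v$ implicit (via abuse of notation) where you correctly invoke Lemma \ref{lm_finitevectors}; note that if you insist on using Lemma \ref{lm_cutneighborhood} as a black box, you should spell out that $\varsigma_{q_j,v_j} = 1$ for $j$ large (which does follow from the local uniqueness in Lemma \ref{lm_smoothexp} together with transversality), since its stated conclusion only gives $\varsigma_{q_j,v_j} < \rho(q_j,v_j)$.
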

\begin{proof}
Suppose $(q,v) \in L\intM$ such that $p = \gamma_{q,v}(1) \in \pM$.
Assume for contradiction that such open neighborhoods $N$ of $q$ and $O$ of $p$ do not exist.
Then we can find a sequence $q_j \rightarrow q$ in $W$ and $v_j \in L_{q_j}^+\intM$, such that $p_j = \gamma_{q_j,v_j}(1) \in \pM$ converges to $p$ with $1 \geq \rho(q_j, v_j)$.
By Lemma \ref{lm_cutconverge}, by passing through a subsequence, we can assume $(q_j, v_j) \rightarrow (q,v)$ with $p = \gamma_{q,v}(1)$.
As $p$ is before the first cut point, we have $1< \rho(q, v) $.
Since $\rho$ is lower semi-continuous, for $(q', v')$ sufficiently close to $(q,v)$, one has $1< \rho(q', v')$.
This contradiction with $1 \geq \rho(q_j, v_j)$.
\end{proof}
Lemma \ref{lm_conjugate_perturb} implies the following result.
\begin{lm}\label{lm_smooth_perturb}
Let $p \in \LUreg(q)$, where $q \in \intM$.
Then there exists open neighborhoods $N \subseteq W$ of $q$ and $O \subseteq \pM$ of $p$, such that
$\mathcal{L}_{q'}^+ \cap O$ is a smooth $1$-codimensional submanifold for any $q' \in N$ and
$\LUreg(q') \cap O = \mathcal{L}_{q'}^+ \cap O$.
\end{lm}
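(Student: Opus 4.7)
The plan is to combine Lemma~\ref{lm_finite} (which gives the desired conclusion at $q' = q$) with Lemma~\ref{lm_conjugate_perturb} (which upgrades the ``before the first cut point'' property to a neighborhood in both $q$ and $p$), together with a parameter-dependent version of the implicit function theorem argument already used in Lemma~\ref{lm_explocal}.

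First I would fix the unique $v \in L_q^+\intM$ with $\exp_q(v) = p$ provided by Lemma~\ref{lm_finitevectors}; since $p$ is before the first cut point of $q$, the pair $(q,v)$ is non-conjugate to $p$. I would then consider the joint exponential map
\[
E \colon L^+\intM \longrightarrow \tM, \qquad E(q', v') = \gamma_{q',v'}(1),
\]
which is smooth after extending geodesics slightly beyond $\pM$ into the ambient $\tM$, exactly as in the proof of Lemma~\ref{lm_smoothexp}. Non-conjugacy of $(q,v)$ means that $E(q,\cdot)$ is a local diffeomorphism from a neighborhood of $v$ in $L_q^+\intM$ onto its image, and since non-degeneracy of the differential is an open condition in $(q',v')$, the map $v' \mapsto E(q',v')$ remains a local diffeomorphism for every $q'$ in a small open neighborhood $N_0 \subseteq W$ of $q$.

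Next, Lemma~\ref{lm_nulltransversal} together with Lemma~\ref{lm_smoothexp} shows that $\gamma_{q,v}$ meets $\pM$ transversally at $p$, and this transversality persists under perturbation of $(q',v')$. Combining the local diffeomorphism property with transversality, and shrinking $N_0$ and choosing a small open neighborhood $O_0 \subseteq \pM$ of $p$, I obtain that $\mathcal{L}_{q'}^+ \cap O_0$ is a smooth $1$-codimensional submanifold of $\pM$ for every $q' \in N_0$; concretely this is the preimage of $\pM$ under the local diffeomorphism $v' \mapsto E(q',v')$, intersected with $O_0$.

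Finally, I would invoke Lemma~\ref{lm_conjugate_perturb} to obtain further open neighborhoods $N_1 \subseteq W$ of $q$ and $O_1 \subseteq \pM$ of $p$ such that every $p' \in \mathcal{L}_{q'}^+ \cap O_1$ with $q' \in N_1$ lies before the first cut point of $q'$, hence $p' \in \LUreg(q')$. Setting $N = N_0 \cap N_1$ and $O = O_0 \cap O_1$ then yields both claims: $\mathcal{L}_{q'}^+ \cap O$ is a smooth codimension-one submanifold for each $q' \in N$, and since each of its points lies in $\LUreg(q')$ while $\LUreg(q') \subseteq \mathcal{L}_{q'}^+$ by definition, we obtain the equality $\LUreg(q') \cap O = \mathcal{L}_{q'}^+ \cap O$. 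The main technical point is the uniform persistence of non-conjugacy as $q'$ varies; this reduces to the openness of the non-degeneracy condition for the differential of the joint exponential map $E$, and it is really what lets the single-$q$ statements of Lemmas~\ref{lm_explocal} and \ref{lm_finite} promote into a family-wise statement.
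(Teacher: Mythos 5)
Your overall plan — combine Lemma \ref{lm_conjugate_perturb} (which gives the ``before the first cut point'' property in a product neighborhood of $(q,p)$) with a smoothness argument near $p$ — is the right one, and it matches the route the paper signals (the paper simply states that Lemma \ref{lm_conjugate_perturb} implies the result). However, there is a real gap in your step that establishes smoothness.

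The issue is the sentence ``concretely this is the preimage of $\pM$ under the local diffeomorphism $v' \mapsto E(q',v')$, intersected with $O_0$.'' The local diffeomorphism controls only the one branch of $\mathcal{L}_{q'}^+$ swept out by directions $v'$ in a neighborhood $N_v$ of $v$. It does not, by itself, show that $\mathcal{L}_{q'}^+ \cap O_0$ \emph{equals} this branch: a priori, another $v'' \in L_{q'}^+\intM$ far from $v$ could send $\gamma_{q',v''}$ into $O_0$, producing an additional branch of $\mathcal{L}_{q'}^+$ inside $O_0$. Ruling this out needs the sequential/compactness argument (via Lemma \ref{lm_cutconverge} and the uniqueness of $v$ in Lemma \ref{lm_finitevectors}) exactly as in the proof of Lemma \ref{lm_finite}, and furthermore needs it uniformly in $q'$, which is not supplied by ``openness of the nondegeneracy of $dE$.'' So your step 2 does not actually deliver the claimed smoothness of $\mathcal{L}_{q'}^+\cap O_0$ as written.

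The fix is short and makes step 2 unnecessary: apply Lemma \ref{lm_conjugate_perturb} first to get $N$ and $O$ so that every $p'\in\mathcal{L}_{q'}^+\cap O$ with $q'\in N$ lies in $\LUreg(q')$. This already gives the second assertion $\LUreg(q')\cap O=\mathcal{L}_{q'}^+\cap O$. For the first assertion, apply Lemma \ref{lm_finite} with base point $q'$ at each such $p'$: it furnishes an open neighborhood of $p'$ in which $\mathcal{L}_{q'}^+$ is a smooth codimension-one submanifold, and since this holds at every point of $\mathcal{L}_{q'}^+\cap O$, the whole set $\mathcal{L}_{q'}^+\cap O$ is a smooth codimension-one submanifold. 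No separate implicit-function-theorem argument in the $(q',v')$ variables is needed; it is subsumed by the single-$q$ statement of Lemma \ref{lm_finite}, applied pointwise after Lemma \ref{lm_conjugate_perturb} has done the uniform cut-point control.
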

\begin{lm}\label{lm_LUregspacelike}
    Let $q \in \intM$ and $p \in \LUreg(q)$.
    Then $\LUreg(q)$ is locally a spacelike $1$-codimensional submanifold near $p$.
\end{lm}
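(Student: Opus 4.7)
The plan is to reduce the statement to a local computation at $p$ using Lemma \ref{lm_finite}, identify the tangent space of $\LUreg(q)$ at $p$, and show it is spacelike by combining two facts: the connecting null geodesic is normal to the observation set, and it is transversal to the null boundary $\pM$.

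First, I will invoke Lemma \ref{lm_finite} to find an open neighborhood $O \subseteq \pM$ of $p$ such that $L := \LUreg(q) \cap O = \lL_q^+ \cap O$ is a smooth $1$-codimensional submanifold of $\pM$ (and hence $2$-codimensional in $M$, so $T_p L$ is $2$-dimensional). By Lemma \ref{lm_finitevectors}, the vector $v \in L_q^+\intM$ with $p = \gamma_{q,v}(1)$ is unique, and I set $w := \dot{\gamma}_{q,v}(1)$.

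Next, I will extract the two key properties of $w$. By Lemma \ref{lm_nulltransversal}, $w \in L_p^+ M \cap T^+_{p} M$, so $w$ is a future-pointing outward lightlike vector and in particular $w \notin T_p \pM$. By Lemma \ref{lm_explocal}(2), $w \in (T_p L)^\perp$, i.e., $g(w, X) = 0$ for every $X \in T_p L$. Since $T_p L \subseteq T_p \pM$ and $w \notin T_p \pM$, I conclude $w \notin T_p L$.

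The final step is a linear-algebra argument in $T_p M$: the orthogonal complement $w^\perp \subseteq T_p M$ is $3$-dimensional and contains $w$ itself (as $w$ is null), and the quotient $w^\perp/\mathrm{span}(w)$ carries a naturally induced positive-definite metric—this is exactly the screen-space picture from Section \ref{subsec_pM}, compatible with Lemma \ref{lm_screen}. Since $T_p L \subseteq w^\perp$ is $2$-dimensional and does not contain $w$, the projection $T_p L \to w^\perp/\mathrm{span}(w)$ is a linear isomorphism, and the metric $g$ restricted to $T_p L$ agrees with the pullback of the positive-definite quotient metric. Hence $g|_{T_p L}$ is positive definite, so $L$ is spacelike at $p$. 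The same reasoning applies at any point of $L$ sufficiently close to $p$ by the second part of Lemma \ref{lm_explocal}(2), so after shrinking $O$ if necessary, $L = \LUreg(q) \cap O$ is spacelike. The only mild subtlety to watch out for is verifying that $T_p L$ is genuinely $2$-dimensional (equivalently, that the intersection $\exp_q(N_v) \cap \pM$ is transversal), which follows from $w \notin T_p \pM$ together with the diffeomorphism statement in Lemma \ref{lm_explocal}(1).
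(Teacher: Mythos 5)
Your proof is correct, but it takes a genuinely different route from the paper's. The paper argues by contradiction with a causality ``shortcut'' argument: if the radical direction $\nu$ of the null hypersurface $\pM$ were tangent to $L=\LUreg(q)$ at some point, one would produce two points $p_1<p_2$ on $L$ joined by a null geodesic inside $\pM$; concatenating $\gamma_{q,v}$ with that segment gives a causal path from $q$ to $p_2$ that is not a pregeodesic, so Lemma~\ref{lm_shortcut} forces $q\ll p_2$, contradicting $p_2\in\LUreg(q)$. You instead give a local linear-algebraic argument: by Lemma~\ref{lm_explocal}(2) the vector $w=\dot\gamma_{q,v}(1)$ is $g$-orthogonal to $T_pL$, and by Lemma~\ref{lm_nulltransversal} it is transversal to $\pM$, hence $w\notin T_pL$; since $T_pL\subseteq w^\perp$ and $T_pL\cap\mathrm{span}(w)=\{0\}$, the projection to the positive-definite screen quotient $w^\perp/\mathrm{span}(w)$ is an isomorphism and $g|_{T_pL}$ is positive definite. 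Your argument is more local and self-contained (it proves spacelikeness pointwise without invoking the time-separation function), and it sidesteps a slight subtlety in the paper's proof, namely whether tangency of $\nu$ to $L$ at a single point actually yields two distinct points of $L$ on the same integral curve of $\nu$; the paper's argument, in exchange, is shorter and ties spacelikeness directly to the cut-locus structure via Lemma~\ref{lm_shortcut}, which fits its broader causality-based framework.
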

\begin{proof}
    Recall $\pM$ is a null hypersurface with a normal vector field $\nu \in \mathrm{Rad}(T \partial M)$.
    Then $\LUreg(q)$ is spacelike if it is not tangential to $\nu$ at any $p' \in \LUreg(q)$ near $p$.
    We write $L  = \LUreg(q)$.
    Assume for contradiction that $\nu$ is tangential to $L$ at $p_0 \in L$.
    Then we find $p_1, p_2 \in L$ such that they are connected by an integral curve of $\nu$ (a null geodesic in $\pM$).
    Suppose without loss that $p_1 < p_2$.
    Then we find a causal path from $q$ to $p_1$ and then to $p_2$, which is not a pregeodesic.
    By Lemma \ref{lm_shortcut}, we must have $q \ll p_2$, which contradicts with $p_2$ is on or before the first cut point.
\end{proof}
\begin{lm}\label{lm_uniqueLreg}
For any $q, q' \in W$, if for some open subsets $O \subseteq L^+U$ we have
\[
\emptyset \neq \CUreg(q) \cap O \subseteq \CUear(q') \cap O,
\]
then $q = q'$.
\end{lm}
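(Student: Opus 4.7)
The key point is that a covector $(p, w) \in L^+U$ encodes both the endpoint and the arrival velocity of an incoming null geodesic, so reversing the geodesic flow from $(p,-w)$ reads off the source point unambiguously. The plan is therefore to pick any $(p, w)$ in the nonempty intersection $\CUreg(q) \cap O$ and show that both $q$ and $q'$ sit at affine parameter $1$ along the reversed null geodesic $\gamma_{p,-w}$.

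To make this precise, I would first use $\CUreg(q) \cap O \neq \emptyset$ together with the hypothesized inclusion to fix some $(p, w) \in \CUreg(q) \cap O \subseteq \CUear(q') \cap O$. By the definitions \eqref{def_CUreg} and \eqref{def_CUear}, there exist $v \in L^+_q M$ with $\gamma_{q,v}(1) = p$, $\dot{\gamma}_{q,v}(1) = w$, $1 < \rho(q,v)$, and $v' \in L^+_{q'} M$ with $\gamma_{q',v'}(1) = p$, $\dot{\gamma}_{q',v'}(1) = w$, $1 \le \rho(q',v')$. Only the endpoint/velocity data will be needed below; the cut-point inequalities serve merely to certify that these null geodesic segments genuinely exist inside $M$ up to parameter $1$.

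Next I would reverse the two geodesic segments. Setting $\alpha(t) \coloneqq \gamma_{q,v}(1-t)$ yields an affinely parametrized null geodesic with $\alpha(0) = p$ and $\dot{\alpha}(0) = -w$, so uniqueness of the geodesic initial value problem in the ambient smooth extension $\tM$ forces $\alpha = \gamma_{p,-w}$ on $[0,1]$; evaluating at $t = 1$ gives $q = \gamma_{p,-w}(1)$. The identical argument applied to $\gamma_{q',v'}$ gives $q' = \gamma_{p,-w}(1)$, hence $q = q'$.

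No step here is hard; the lemma ultimately reduces to backward uniqueness of the geodesic flow. The one subtle point worth flagging is that $w$ (rather than merely the null direction $\mathbb{R}_+ w$) is part of the datum: this rigidifies the affine parametrization and is what forces both reversed geodesics to terminate at parameter exactly $t = 1$. Were we to record only the underlying pregeodesic through $p$, the argument would locate $q$ and $q'$ on a common null curve but not necessarily at the same point on it.
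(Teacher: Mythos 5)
Your argument is correct and substantially simpler than the paper's. You exploit the fact that the datum $(p,w)\in\CUreg(q)\cap\CUear(q')$ records the full arrival \emph{velocity}, not just the arrival direction, so reversing the flow and applying uniqueness of geodesics (as solutions of an ODE with prescribed position and velocity at $t=1$) pins down the source at affine parameter exactly $1$ for both $q$ and $q'$, forcing $q=q'$. The paper's proof proceeds differently: it deliberately avoids using the affine parametrization, taking two distinct covectors $(p_1,w_1),(p_2,w_2)\in\CUreg(q)\cap O$ (via Lemma~\ref{lm_finite}), observing that $q$ and $q'$ both lie on the reversed null geodesics $\gamma_{p_j,-w_j}$, concluding $q<q'$ or $q'<q$, and then deriving a contradiction with the cut-point conditions via a shortcut argument (Lemma~\ref{lm_shortcut}) as in the light-observation-set literature. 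Both arguments are valid; yours is more elementary and uses strictly less machinery, while the paper's argument is robust in the sense that it would still go through if the observation sets were defined in terms of unparametrized directions (as in some of the predecessor works cited), where the affine-parameter rigidity you identify is unavailable. Your closing remark correctly flags exactly why the paper's version does not reduce to the one-line ODE-uniqueness argument.
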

\begin{proof}
A similar argument can be found in \cite[Proposition 2.2.9]{MScfinal}.
Assume for contradiction that $\emptyset \neq \CUreg(q) \cap O \subseteq \CUear(q') \cap O$ but $q \neq q'$.
By Lemma \ref{lm_finite}, for $(p_1, w_1)\in \CUreg(q) \cap O$, there exists a  distinct point $p_2$ such that  $(p_2, w_2)\in \CUreg(q) \cap O$.
By our assumptions $(p_j, w_j) \in \CUreg(q') \cap O$ for $j = 1,2$.
The definition (\ref{def_CUreg}) implies there exists $v_j \in L_q^+ M$ and $v_j' \in L_{q'}^+M$ such that
\[
(p_j, w_j) = (\gamma_{q, v_j}(1), \dot{\gamma}_{q, v_j}(1)) = (\gamma_{q', v_j'}(1), \dot{\gamma}_{q', v_j'}(1)), \quad \text{ for } j = 1,2.
\]
Now $\gamma_j \coloneq \gamma_{p_j, -w_j}(\mR_+)$ are two past pointing null geodesics starting from $p_j$ to $q$ or $q'$, for $j = 1, 2$.
Thus, we must have either $q < q'$ or $q' < q$.
In the first case, consider the geodesic segments from $q$ to $q'$ along $\gamma_1$ and from $q'$ to $p_2$ along $\gamma_2$.
By Lemma \ref{lm_shortcut}, we have $\tau(q, p_2) > 0$, which contradicts with $(p_2, w_2) \in \CUreg(q)$.
In the second case, consider the geodesic segments from $q'$ to $q$ along $\gamma_1$ and from $q$ to $p_2$ along $\gamma_2$.
By Lemma \ref{lm_shortcut} again, we have $\tau(q', p_2) > 0$, which contradicts with $(p_2, w_2) \in \CUear(q)$.

\end{proof}
\begin{proof}
We can prove it using Lemma \ref{lm_smoothexp} and the proof in Section \ref{subsection_smooth_structure}.
\end{proof}
\begin{lm}\label{lm_uniqueL}
Suppose $\CUreg(q) \neq \emptyset$ for each $q \in W$.
Then the map given by
\[\begin{split}
    \psi^{\mathrm{reg}}: W &\rightarrow \SUreg(W)\\
    q & \mapsto \CUreg(q).\nonumber
\end{split}\]
is a bijection, where we denote by $\SUreg(W) = \{\CUreg(q): q \in W\}$.
\end{lm}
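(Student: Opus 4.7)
The plan is to deduce this from the preceding Lemma \ref{lm_uniqueLreg}, which does essentially all of the work.

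First, surjectivity of $\psi^{\mathrm{reg}}$ is immediate from the very definition of $\SUreg(W)$ as the image of $W$ under $q \mapsto \CUreg(q)$, so no argument is needed there. The content of the lemma is therefore the injectivity of $\psi^{\mathrm{reg}}$: if $q, q' \in W$ satisfy $\CUreg(q) = \CUreg(q')$, we must show $q = q'$.

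To set this up for Lemma \ref{lm_uniqueLreg}, I would first note the elementary inclusion $\CUreg(q') \subseteq \CUear(q')$, which is immediate from comparing the defining conditions \eqref{def_CUreg} and \eqref{def_CUear}: indeed, the requirement $1 < \rho(q', v)$ in $\CUreg(q')$ is strictly stronger than $1 \leq \rho(q', v)$ in $\CUear(q')$. Combining this with the assumption $\CUreg(q) = \CUreg(q')$ yields
\[
\CUreg(q) = \CUreg(q') \subseteq \CUear(q').
\]

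Next, I would apply Lemma \ref{lm_uniqueLreg} with the choice of open set $O = L^+U \subseteq L^+U$. The hypothesis $\CUreg(q) \neq \emptyset$ (which we are assuming holds for every point of $W$) ensures that $\CUreg(q) \cap O = \CUreg(q) \neq \emptyset$, and the inclusion above gives
\[
\emptyset \neq \CUreg(q) \cap O \subseteq \CUear(q') \cap O.
\]
Lemma \ref{lm_uniqueLreg} then directly yields $q = q'$, establishing injectivity and completing the proof. There is no serious obstacle here once Lemma \ref{lm_uniqueLreg} is in hand; the only thing to verify carefully is that the nonemptiness assumption on $\CUreg(q)$ is genuinely needed so that the hypothesis $\CUreg(q) \cap O \neq \emptyset$ of Lemma \ref{lm_uniqueLreg} can be met.
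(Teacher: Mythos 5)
Your proof is correct and takes the same route as the paper: surjectivity is definitional, and injectivity follows from Lemma \ref{lm_uniqueLreg} via the inclusion $\CUreg(q') \subseteq \CUear(q')$ with $O = L^+U$. The paper states this more tersely, but the argument is identical.
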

\begin{proof}
By Lemma \ref{lm_uniqueLreg}, if $\CUreg(q) = \CUreg(q')$, then $q = q'$ and thus
$\psi^{\mathrm{reg}}$ is one-to-one.
It is onto by the definition of $\SUreg(W)$.
\end{proof}
As $\emptyset \neq \CUreg(q) \subseteq\CUear(q)$ for each $q \in W$, this indicates the map
\beq
\begin{split}\label{def_psi}
    \psi: W &\rightarrow \SUear(W)\\
    q & \mapsto \Cear_U(q).
\end{split}
\eeq
is a bijection as well.
\subsection{The regular scattering light observation sets}\label{subsec_LUreg}
In this part, we discuss the difference between $\Cear_U(q)$ and $\CUreg(q)$.
Moreover, we propose a way to reconstruct $\CUreg(q)$ from the knowledge of  $\SUear(W)$.
Recall from Lemma \ref{lm_finite}, we know $\LUreg(q) = \pi(\CUreg(q))$ is a smooth $1$-codimensional submanifold of $U$.
In the following, we would like to first collect all smooth parts of $\LUear(q) = \pi(\Cear_U(q))$.
We define
\beq\label{def_LUinfity}
\begin{split}
\LUsmooth(q) = \{p \in \LUear(q): &\text{ there is an open neighborhood $O$ of $p$ such that}\\
&\text{$\LUear(q) \cap O$ is a smooth $1$-codimensional submanifold of $O$}\}.
\end{split}
\eeq
Note that $\LUsmooth(q)$ is the union of smooth pieces of $\LUear(q)$ and clearly we have $\LUreg(q) \subseteq \LUsmooth(q)$.
For each $p \in \LUsmooth(q)$, there exists open neighborhood $O$ of $p$ such that
\beq
\begin{split}\label{def_Lpq}
L_p(q) \coloneqq \LUsmooth(q) \cap O
\end{split}
\eeq
is a smooth $1$-codimensional submanifold.
By definition such $L_p(q)$ may contain points on or before the first cut point and therefore even conjugate points.
To reconstruct $\LUreg(q)$ from $\SUear(W)$, we introduce the following definition and prove the following sequences of lemmas, which are also used in the reconstruction of the conformal structure.

\begin{df}\label{def_tangent}
Let $q_j \in \intM$ and $p \in \LUsmooth(q_j)$ with $L_p(q_j)$ defined in (\ref{def_Lpq}) for $j = 1,2$.
We say $\LUsmooth(q_1)$ and $\LUsmooth(q_2)$ are tangential at $p$
if
\begin{itemize}
    \item[(1)] $p \in L_p(q_j)$ such that $(p,v) \in \Cear_U(q_j)$ for some $v \in L_p^+ \intM$ and $j = 1, 2$;
    \item[(2)] $T_p L_p(q_1) = T_p L_p(q_2)$.
\end{itemize}
\end{df}

\begin{lm}\label{lm_Lpqcausal}
Let $q_1, q_2 \in W$ be distinct and $p \in \pM$.
If $\LUsmooth(q_1)$ and $\LUsmooth(q_2)$ are tangential at $p$, then either $q_1 < q_2$ or $q_2 < q_1$.
\end{lm}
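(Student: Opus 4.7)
The plan is to translate the tangentiality hypothesis at $p$ into an equality of future-pointing outward null directions at $p$, so that $q_1$ and $q_2$ must lie on a common past null geodesic issuing from $p$.

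First, I would unpack Definition \ref{def_tangent}(1) to produce, for each $j=1,2$, a vector $u_j\in L_{q_j}^+M$ with $p=\gamma_{q_j,u_j}(1)$ and $v_j=\dot\gamma_{q_j,u_j}(1)$ such that $1\leq\rho(q_j,u_j)$. Lemma \ref{lm_nulltransversal} gives $v_j\in L_p^+M\cap T_p^+M$, i.e.\ $v_j$ is future-pointing and outward. The crux is then to verify that $S\coloneqq T_pL_p(q_1)=T_pL_p(q_2)$ is a \emph{screen space} of $T_p\pM$, since Lemma \ref{lm_screen} then provides a canonical bijection between screen spaces and outward future null rays, forcing $\mathbb{R}_+v_1=\mathbb{R}_+v_2$.

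To see that $S$ is a screen space, by Lemma \ref{lm_screenandrad} I need $S\cap\mathrm{Rad}(T_p\pM)=\{0\}$. The idea is a variant of the argument for Lemma \ref{lm_LUregspacelike}: if the null radical direction $\nu$ lay in $T_pL_p(q_j)$, a short-time parametrization of $L_p(q_j)$ in the $\nu$-direction produces a family $p(t)\in L_p(q_j)\subseteq\LUear(q_j)$ with $\dot p(0)=\nu$; approximating $p(t)$ by points on integral curves of the radical distribution (which are null pregeodesics in $\pM$) and concatenating the geodesic $\gamma_{q_j,u_j}$ from $q_j$ to $p$ with a short null arc in $\pM$ to the later point yields a causal curve from $q_j$ that is not a single null pregeodesic. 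Lemma \ref{lm_shortcut} would then force $\tau(q_j,p(t))>0$, contradicting $p(t)\in\LUear(q_j)$. The delicate technical point—that tangency of $L_p(q_j)$ to $\nu$ at a single point does not automatically embed integral curves of $\nu$ into $L_p(q_j)$—I expect to handle by combining Lemma \ref{lm_cutneighborhood} with the lower semicontinuity of $\rho$ and a limiting argument using approximating points in $\LUreg(q_j)$ (on which Lemma \ref{lm_LUregspacelike} applies directly).

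Once $S$ is known to be a screen space, Lemma \ref{lm_explocal}(2) gives $v_j\in S^\perp$, so by Lemma \ref{lm_screen} the outward null rays $\mathbb{R}_+v_1$ and $\mathbb{R}_+v_2$ must both equal the unique ray $\phi(S)$. Hence $v_2=cv_1$ for some $c>0$, and the past-pointing null geodesics $\gamma_{p,-v_1}$ and $\gamma_{p,-v_2}$ coincide as point sets. Since $q_1$ and $q_2$ both lie on this common past null geodesic from $p$, and they are distinct by hypothesis, they occur at distinct parameter values along it. Whichever occurs later (in the forward direction from the common past) lies strictly in the causal future of the other, giving $q_1<q_2$ or $q_2<q_1$.

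The main obstacle, as indicated, is the boundary case where $p$ lies exactly on the first cut point (so $1=\rho(q_j,u_j)$) rather than strictly before it: here neither Lemma \ref{lm_finite} nor Lemma \ref{lm_LUregspacelike} applies verbatim to $L_p(q_j)$, and one must argue via approximation by regular points together with the sharp version of Lemma \ref{lm_shortcut}, using that $L_p(q_j)$ was \emph{assumed} to be a smooth hypersurface near $p$ in the definition of $\LUsmooth(q)$.
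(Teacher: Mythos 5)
The paper's proof of this lemma is one line, and it rests on a reading of Definition~\ref{def_tangent}(1) that you have not used: the clause ``\,$(p,v)\in\Cear_U(q_j)$ for some $v\in L_p^+M$ and $j=1,2$\,'' is intended to produce a \emph{single} $v$ that works for both $j=1$ and $j=2$. With a common $v$, both $q_1$ and $q_2$ lie on the one past-pointing null geodesic $\gamma_{p,-v}$, and being distinct they are causally ordered; condition (2) of the definition is not needed for this particular lemma. You instead read (1) as producing possibly different $v_j$ for each $j$ and set out to recover $\mathbb{R}_+v_1=\mathbb{R}_+v_2$ from the tangency condition (2) via Lemma~\ref{lm_screen}.

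That route has a genuine gap which you yourself flag but do not close. It requires (a) that $S\coloneqq T_pL_p(q_j)$ is a screen space of $T_p\pM$, and (b) that $v_j\in S^\perp$ so that the bijection of Lemma~\ref{lm_screen} pins down the ray. The paper's tools for these facts are Lemma~\ref{lm_LUregspacelike} (spacelikeness, stated for $\LUreg(q)$, i.e.\ strictly before the cut locus) and Lemma~\ref{lm_explocal}(2) (orthogonality, stated under the hypothesis that $p$ is not conjugate to $q$). Here $L_p(q_j)\subseteq\LUear(q_j)$, and $p$ is allowed to be exactly a cut point — indeed the intended application in Lemma~\ref{lm_reg2} is precisely to a point \emph{on} the cut locus — so neither lemma applies directly. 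The proposed ``limiting argument using approximating points in $\LUreg(q_j)$'' is not carried out, and it is not clear it survives the limit, since both the orthogonality and the local-diffeomorphism structure of $\exp_q$ may degenerate at a conjugate point. Adopting the paper's reading of Definition~\ref{def_tangent}(1) removes all of this difficulty.
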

\begin{proof}
By definition, with $(p,v) \in \Cear_U(q_j)$ for $j = 1, 2$, we must have $q_1, q_2$ in the same null geodesic.
In particular, this lemma holds for $\LUreg(q_j)$, $j = 1,2$.
\end{proof}
\begin{lm}\label{lm_Lpqtangential}
Let $q_1, q_2 \in W$ be distinct and $p \in \pM$.
If $\LUsmooth(q_1)$ and $\LUsmooth(q_2)$ are tangential at $p$,
then
\[
L_p(q_1) \cap L_p(q_2) = \{p\}.
\]
\end{lm}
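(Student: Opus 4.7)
The plan is to argue by contradiction and exploit the rigidity of null pregeodesics: assume there exists $p' \in L_p(q_1) \cap L_p(q_2)$ with $p' \neq p$, and derive a contradiction.

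First, the tangentiality hypothesis of Definition \ref{def_tangent}, together with the argument underlying Lemma \ref{lm_Lpqcausal}, places $q_1$ and $q_2$ on a common null geodesic $\gamma$ that reaches $p$ with terminal velocity $v$. Order them without loss as $q_1 < q_2 < p$ along $\gamma$. Since $p' \in L_p(q_2) \subseteq \LUear(q_2)$, fix a future-pointing null pregeodesic $\tilde{\gamma}$ from $q_2$ to $p'$ arriving on or before the first cut point of $q_2$; in particular $\tau(q_2, p') = 0$.

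The central step will be to concatenate $\gamma|_{[q_1,q_2]}$ with $\tilde{\gamma}$ to form a future-directed causal curve from $q_1$ to $p'$, and then to split into two cases. In case (i), the concatenation is a single null pregeodesic, so $\tilde{\gamma}$ reparameterizes the forward continuation of $\gamma$ past $q_2$; that continuation meets $\pM$ only at $p$, forcing $p' = p$ and contradicting $p' \neq p$. In case (ii), the concatenation has a genuine kink at $q_2$, so Lemma \ref{lm_shortcut} supplies a timelike curve from $q_1$ to $p'$, giving $\tau(q_1, p') > 0$; this contradicts $p' \in \LUear(q_1)$, which forces $\tau(q_1, p') = 0$. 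Since $p \in L_p(q_j)$ for $j = 1,2$ by Definition \ref{def_tangent}, this yields $L_p(q_1) \cap L_p(q_2) = \{p\}$.

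The principal obstacle is the justification in case (i): one must show that the forward continuation of $\gamma$ past $q_2$ reaches $\pM$ at the single point $p$ and cannot return to $\pM$ at $p'$. This will follow from Assumption \ref{assump_Mg} (non-trapping together with the interpretation of $S_\pm$ as null infinity), combined with the transversality of future-pointing null geodesics to $\pM$ furnished by Lemma \ref{lm_nulltransversal} and the local smoothness and uniqueness of the boundary hitting time from Lemma \ref{lm_smoothexp}.
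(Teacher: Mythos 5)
Your proof is correct and follows essentially the same route as the paper: order $q_1 < q_2$ along the common null geodesic $\gamma$ to $p$ (via Lemma \ref{lm_Lpqcausal}), concatenate $\gamma|_{[q_1,q_2]}$ with a null geodesic from $q_2$ to $p'$, and invoke the shortcut Lemma \ref{lm_shortcut} to contradict $\tau(q_1, p') = 0$. Where you improve on the paper's very terse write-up is in making explicit the case split that the concatenation might be a single pregeodesic; the paper simply asserts ``which is not a pregeodesic segment'' without comment. Your case (i) is the right way to fill this, though the justification is simpler than you suggest: if the concatenation is a pregeodesic, the second leg is by uniqueness of geodesics just the forward continuation of $\gamma$ past $q_2$, and that continuation reaches $\pM$ at $p$ by the very definition of $\gamma$ and $p \in L_p(q_2)$, forcing $p' = p$ immediately. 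One does not need to invoke the non-trapping assumption or Lemmas \ref{lm_nulltransversal} and \ref{lm_smoothexp} here; the content is merely that two null geodesics sharing a point and a tangent direction are the same curve, and that the exit point of this curve from $M$ is $p$ by construction. With that simplification your argument is a clean, complete version of the paper's proof.
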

\begin{proof}
By the Lemma \ref{lm_Lpqcausal}, we can assume with loss that $q_1 < q_2$.
Assume for contradiction there exists another $p' \in L_p(q_1) \cap L_p(q_2)$.
Consider the causal path from $q_1$ to $q_2$ to $p'$, which is not a pregeodesic segment.
By Lemma \ref{lm_shortcut}, we must have $\tau(q_1, p') > 0$, which contradicts with $p'$ on or before the first cut point.
\end{proof}
\begin{df}\label{def_tangentialp}
Let $q_1, q_2 \in W$ and $p \in \pM$.
We write $\LUsmooth(q_1)  <_p \LUsmooth(q_2)$ if  they are tangential at $p$ and there exist $y_1 \in L_p(q_1)$ and $y_2 \in L_p(q_2)$ such that
$y_1 < y_2$.
\end{df}
\begin{lm}\label{lm_Lpqcausal2}
Let $q_1, q_2 \in W$ and $p \in \pM$.
If $\LUsmooth(q_1) <_p \LUsmooth(q_2)$, then $q_1 < q_2$.
Moreover, for any $y_1 \in L_p(q_1)$ and $y_2 \in L_p(q_2)$ different from $p$, either they are not causally related or $y_1 < y_2$.
\end{lm}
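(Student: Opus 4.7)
The plan is to argue by contradiction, running the same concatenation-and-shortcut argument used in Lemma \ref{lm_Lpqtangential}, but now combined with the transversality of null geodesics at the boundary (Lemma \ref{lm_nulltransversal}) to rule out the degenerate pregeodesic case.

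For the first assertion, by Lemma \ref{lm_Lpqcausal} we already know that $q_1$ and $q_2$ are causally related, so it suffices to exclude $q_2 < q_1$. Assuming this, I would concatenate three causal segments: a causal path from $q_2$ to $q_1$, the null geodesic segment from $q_1$ to $y_1$ coming from $(p,v_1) \in \Cear_U(q_1)$ and the hypothesis $y_1 \in L_p(q_1)$, and the causal path from $y_1$ to $y_2$ guaranteed by the definition of $<_p$. This gives a future-pointing causal curve from $q_2$ to $y_2$. If this curve is \emph{not} a null pregeodesic, Lemma \ref{lm_shortcut} yields $\tau(q_2,y_2) > 0$, contradicting $y_2 \in L_p(q_2) \subseteq \LUear(q_2)$ being on or before the first cut point of $q_2$. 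If it \emph{is} a null pregeodesic, then $q_2, q_1, y_1, y_2$ all lie along a single null geodesic; but by Lemma \ref{lm_nulltransversal} this geodesic meets $\pM$ transversally at $y_1$ and therefore leaves $M$ there, so it cannot reach the distinct point $y_2 \in \pM$ while remaining in $M$. Either alternative is a contradiction, so $q_1 < q_2$.

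For the moreover statement, suppose $y_1, y_2$ are distinct from $p$ and causally related in the wrong order, i.e.\ $y_2 < y_1$. Concatenate the null geodesic from $q_1$ through $q_2$ (from $q_1 < q_2$ and the tangential condition), the null pregeodesic from $q_2$ to $y_2$, and the causal path from $y_2$ to $y_1$, giving a causal curve from $q_1$ to $y_1$. Again, if it fails to be a null pregeodesic, Lemma \ref{lm_shortcut} gives $\tau(q_1,y_1) > 0$, contradicting $y_1 \in L_p(q_1) \subseteq \LUear(q_1)$; and if it is a null pregeodesic, the transversality at $y_2 \in \pM$ from Lemma \ref{lm_nulltransversal} prevents the geodesic from continuing to $y_1 \in \pM$. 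Hence the only possibilities are that $y_1, y_2$ are not causally related or that $y_1 < y_2$.

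I expect the main obstacle to be handling the borderline case in which the concatenated causal path is itself a null pregeodesic, where Lemma \ref{lm_shortcut} gives no information. The key point is that tangentiality at $p$ together with transversal exit of null geodesics at the null hypersurface $\pM$ forces any such pregeodesic to exit $M$ at its first boundary hit, which then rules out a second boundary crossing. Care must also be taken to ensure the null geodesic segment $q_1 \to q_2$ used in the second part really comes from the tangential condition --- this is precisely the content of Lemma \ref{lm_Lpqcausal} applied to tangential smooth light observation sets, so it is available without further work.
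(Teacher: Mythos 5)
Your proof is correct and follows the same concatenation-and-shortcut strategy as the paper's (assume the wrong causal order, build a broken causal curve through $q_1$, $q_2$, and boundary points, and apply Lemma~\ref{lm_shortcut}). The one place you go beyond the paper is in justifying why the concatenated curve cannot itself be a null pregeodesic — via transversal exit at $\pM$ (Lemma~\ref{lm_nulltransversal}) — whereas the paper's proof simply asserts this without comment; your added care there is sound and fills a small gap.
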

\begin{proof}
As $q_1 \neq q_2$, by Lemma \ref{lm_Lpqcausal}, we have either $q_1 < q_2$ or $q_2 < q_1$.
Assume for contradiction that $q_2 < q_1$.
Then by Definition \ref{def_tangentialp}, there exists $p_1 \in L_p(q_1)$ and $p_2 \in L_p(q_2)$ such that $p_1 < p_2$.
Consider the causal paths from $q_2$ to $q_1$ and then to $p_1$ and then to $p_2$, which is not a null pregeodesics.
By Lemma \ref{lm_shortcut}, we have $\tau(q_2, p_2) > 0$, which contradicts with $p_2 \in \LUear(q_2)$.
The second statement can be proved by a similar shortcut argument.
\end{proof}
Next, we define
\beq\label{def_lLUinfity}
\SUsmooth(W) = \{\LUsmooth(q): q \in W\}.
\eeq
\begin{lm}\label{lm_reg1}
Let $q \in \intM$ and $p \in \LUreg(q)$. Then there exists $L \in \SUsmooth(W)$ such that $L <_p \LUreg(q)$.
\end{lm}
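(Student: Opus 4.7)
The plan is to construct $L$ by sliding the base point $q$ backwards along the null geodesic that joins $q$ to $p$. Since $p \in \LUreg(q)$, fix $v \in L_q^+M$ with $p = \gamma_{q,v}(1)$ and $\rho(q,v) > 1$, let $\alpha(s) = \gamma_{q,v}(s)$, and for small $\epsilon > 0$ set $q' \coloneqq \alpha(-\epsilon)$ and $v' \coloneqq (1+\epsilon)\dot{\alpha}(-\epsilon)$, so that $p = \gamma_{q',v'}(1)$. Openness of $W$ guarantees $q' \in W$ for $\epsilon$ small. Since $(q',v') \to (q,v)$ as $\epsilon \to 0$ and the null cut locus function $\rho$ is lower semi-continuous, $\rho(q',v') > 1$ for $\epsilon$ sufficiently small, so $p \in \LUreg(q')$. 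By Lemma \ref{lm_finite}, $\LUreg(q')$ is a smooth codimension-one submanifold of $\partial M$ near $p$, and hence $L \coloneqq \LUsmooth(q') \in \SUsmooth(W)$ with $L_p(q') = \LUreg(q') \cap O$ for some open neighborhood $O$ of $p$.

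Tangentiality at $p$ in the sense of Definition \ref{def_tangent} is immediate: $\dot{\gamma}_{q',v'}(1) = (1+\epsilon)\dot{\alpha}(1)$ is a positive multiple of $\dot{\alpha}(1)$, so Lemma \ref{lm_explocal}(2) gives
\[
T_p L_p(q) = \dot{\alpha}(1)^\perp \cap T_p \partial M = T_p L_p(q').
\]

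It remains to exhibit $y_1 \in L_p(q')$ and $y_2 \in L_p(q)$ with $y_1 < y_2$. For any $y_2 \in L_p(q) \setminus \{p\}$ sufficiently close to $p$, Lemma \ref{lm_finitevectors} yields a unique null geodesic $\beta$ from $q$ to $y_2$ whose initial direction $w$ differs from $v$. Then $\alpha|_{[-\epsilon,0]}$ concatenated with $\beta|_{[0,1]}$ is a future causal curve from $q'$ to $y_2$ with a genuine corner at $q$, hence not a null pregeodesic, and Lemma \ref{lm_shortcut} yields $q' \ll y_2$, i.e.\ $y_2 \in I^+(q')$. Since $L_p(q) \cap L_p(q') = \{p\}$ (Lemma \ref{lm_Lpqtangential}) and both are smooth hypersurfaces tangent at $p$ with $L_p(q) \setminus \{p\} \subset I^+(q')$, I would parameterize each as a graph over the common tangent space $T_p L_p(q)$ in local coordinates on $\partial M$ adapted to the normal form of Lemma \ref{norm-form}. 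Letting $y_1 \in L_p(q')$ be the point with the same parameter as $y_2$, a second-order Taylor expansion of the two exponential maps along $\alpha$ reveals that $y_2 - y_1$ lies along a future-causal direction of $T_p M$ transverse to the common tangent space, giving $y_1 < y_2$ and hence $L <_p \LUreg(q)$.

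The main obstacle is this last step: certifying that as $s$ ranges over $[0,\epsilon]$, the one-parameter family $\{L_p(\alpha(-s))\}$ moves inside the null hypersurface $\partial M$ in a direction that is past-causal and not merely spacelike. The Minkowski prototype makes this transparent (in retarded coordinates near null infinity, the two shells at shifted apices are literally ordered in retarded time); the general case reduces to this via the normal form of Lemma \ref{norm-form} together with the Jacobi equation along $\alpha$ near $p$, which controls how $\partial J^+(\alpha(-s))$ deforms at its tangency with $\partial J^+(q)$.
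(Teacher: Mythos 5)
Your overall strategy — set $q' = \gamma_{q,v}(-\epsilon)$ and show $\LUsmooth(q') <_p \LUreg(q)$ — is exactly the paper's, which itself gives only a three-line sketch asserting the claims without argument. Your filled-in treatment of the first two steps is correct and supplies details the paper omits: lower semicontinuity of the cut locus function $\rho$ gives $p \in \LUreg(q')$ for $\epsilon$ small, and Lemma \ref{lm_explocal}(2) gives $T_p L_p(q) = \dot{\alpha}(1)^\perp \cap T_p \partial M = T_p L_p(q')$, which is the tangency.

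The gap you flag yourself — producing $y_1 \in L_p(q')$, $y_2 \in L_p(q)$ with $y_1 < y_2$ — is real, and the ``second-order Taylor expansion of the two exponential maps'' is not carried out; but it can be closed with no Jacobi-field or curvature computation, using only causal structure and facts you already have in hand. Fix $y_2 \in L_p(q) \setminus \{p\}$ close to $p$; your shortcut argument gives $y_2 \in I^+(q')$. By Lemma \ref{lm_LUregspacelike}, $L_p(q')$ is a spacelike codimension-one submanifold of the null hypersurface $\partial M$ near $p$, hence transverse to the radical (null-generator) direction; since the generator through $p$ meets $L_p(q')$ at $p$, the generator through $y_2$ meets $L_p(q')$ at some $y_1$ near $p$. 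Now $y_1$ and $y_2$ lie on a common null generator of $\partial M$, so they are causally comparable, and $y_1 \neq y_2$ because $L_p(q) \cap L_p(q') = \{p\}$ by Lemma \ref{lm_Lpqtangential}. If $y_2 < y_1$, then $q' \ll y_2 < y_1$ forces $\tau(q', y_1) > 0$, contradicting $y_1 \in L_p(q') \subseteq \LUear(q')$ (which requires $\tau(q', y_1) = 0$). Hence $y_1 < y_2$, so $\LUsmooth(q') <_p \LUreg(q)$ as required, and the ``obstacle'' you identify dissolves without any appeal to the Jacobi equation or the normal form of Lemma \ref{norm-form}.
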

\begin{proof}
Suppose $p = \gamma_{q,v}(1) \in U$ for some $v \in L_q^+ \intM$.
Consider $q_\ep = \gamma_{q,v}(-\ep)$ for some $\ep> 0$.
Note that $p$ is before the first cut point of $q_\ep$ and therefore $p \in \LUreg(q_\ep)$.
Then we claim $\LUreg(q_\ep)$ and $\LUreg(q)$ are tangential at $p$ and moreover $\LUreg(q_\ep) <_p \LUreg(q)$.
\end{proof}
\begin{lm}\label{lm_reg2}
Let $p \in L_p(q) \subseteq \LUear(q) \setminus \LUreg(q)$. Then there does not exist $L \in \SUsmooth(W)$ such that $L <_p \LUreg(q)$.
\end{lm}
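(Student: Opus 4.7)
The plan is to derive a contradiction from the hypothesis $L <_p L_p(q)$ (or $L <_p \LUreg(q)$ as written), i.e., from the tangentiality-and-ordering data attached to some $q'\in W$ with $L = \LUsmooth(q')$.

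First, I would unpack the tangentiality condition. By Def.~\ref{def_tangent} there exist future-pointing outward lightlike vectors $v_q,v_{q'}$ with $(p,v_q)\in\Cear_U(q)$ and $(p,v_{q'})\in\Cear_U(q')$, and $T_pL_p(q)=T_pL_p(q')$. I would first verify that $L_p(q)$ is spacelike at $p$ by adapting the shortcut argument from Lemma~\ref{lm_LUregspacelike}: if $\nu\in\mathrm{Rad}(T_p\partial M)$ were tangent to $L_p(q)$, one would find $p_1<p_2$ in $L_p(q)$ and concatenate the pregeodesic from $q$ to $p_1$ with the $\nu$-integral curve to $p_2$ to get a non-pregeodesic causal path, so Lemma~\ref{lm_shortcut} gives $\tau(q,p_2)>0$, contradicting $p_2\in\LUear(q)$. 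Spacelike-ness lets me invoke Lemma~\ref{lm_screenandrad} and Lemma~\ref{lm_screen}: the common tangent space $T_pL_p(q)=T_pL_p(q')$ is a screen space, and the corresponding future-pointing outward null direction is unique up to positive scaling. Consequently $v_q$ and $v_{q'}$ point along the same null ray at $p$, which means $q$ and $q'$ both lie on the single past null geodesic $\gamma$ issuing from $p$ in the direction $-v_q$.

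Next, Lemma~\ref{lm_Lpqcausal2} applied to $\LUsmooth(q')<_p L_p(q)$ yields $q'<q$. So we have $q'<q<p$ all along $\gamma$. Now comes the key step. By hypothesis $p\in\LUear(q)\setminus\LUreg(q)$, so $\rho(q,v_q)=1$: $p$ is exactly at the first cut point of $q$ along $\gamma$. I would argue $\tau(q',p)>0$, which contradicts $(p,v_{q'})\in\Cear_U(q')$ since that membership forces $1\le\rho(q',v_{q'})$ and hence $\tau(q',p)=0$. The cut at $p$ arises from one of two mechanisms: (a) a distinct future null pregeodesic $\tilde\gamma$ from $q$ to $p$, or (b) $p$ is conjugate to $q$ along $\gamma$. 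In case (a) I concatenate the segment of $\gamma$ from $q'$ to $q$ with $\tilde\gamma$; the resulting causal curve from $q'$ to $p$ is not a pregeodesic (it breaks at $q$), so Lemma~\ref{lm_shortcut} gives $\tau(q',p)>0$.

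The main obstacle is case (b). I would use the standard loss-of-achronality statement for null geodesics past a conjugate point: $q\ll\gamma(1+\epsilon)$ for all sufficiently small $\epsilon>0$, whence $q'\ll\gamma(1+\epsilon)$ as $q'<q$, so the first cut point of $q'$ along $\gamma$ lies at parameter $\le 1+\delta$ (with $q'=\gamma(-\delta)$). The delicate sub-point I need to exclude is that this cut parameter equals exactly $1+\delta$, i.e.\ that $p$ is simultaneously the first conjugate point from $q'$ along $\gamma$. I expect this borderline to be incompatible with the smoothness of $L_p(q)$ at $p$: a coincidence of conjugate loci for all starting points on $\gamma$ forces a caustic structure in the light cone that cannot project to a smooth codimension-one submanifold of $\partial M$ near $p$, and in particular contradicts $p\in\LUsmooth(q)$. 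Formalizing this last comparison using the Jacobi-field description of the exponential map at a conjugate point is where I expect the actual technical work to be concentrated.
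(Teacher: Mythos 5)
Your proposal follows the same route as the paper's terse argument: apply Lemma~\ref{lm_Lpqcausal2} to get $q'<q$ along the common past null geodesic $\gamma$ through $p$ (your preliminary observations about spacelikeness of $L_p(q)$ and the screen-space bijection of Lemma~\ref{lm_screen} are the right way to pin down that $q$ and $q'$ do lie on a single geodesic), note that $p$ lies exactly on the first cut point of $q$ because $p\in\LUear(q)\setminus\LUreg(q)$, and then derive $\tau(q',p)>0$ to contradict $p\in\LUear(q')$. Your case (a) (a second null pregeodesic from $q$ to $p$; concatenate and invoke Lemma~\ref{lm_shortcut}) is exactly right. You also correctly recognize that the conjugate-point case (b), which the paper passes over with the bare assertion ``we must have $p$ is after the first cut point of $q'$,'' is the nontrivial one, and that the achronality-loss-past-a-conjugate-point fact alone only gives $\rho(q',v')\le 1+\delta$ rather than the strict inequality you need.

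Where your proposal has a genuine gap is the fix you propose for that borderline. Appealing to incompatibility with smoothness of $L_p(q)$ is not convincing: by hypothesis $L_p(q)\subseteq\LUear(q)\setminus\LUreg(q)$ is already a smooth piece consisting \emph{entirely} of points on the first null cut locus of $q$, so there is no a priori obstruction to the analogous coincidence occurring for $q'$. The standard argument that closes case (b) is a Morse-index computation for null geodesics. Let $J$ be a nontrivial Jacobi field along $\gamma$ with $J(0)=J(1)=0$ (so $p=\gamma(1)$ is conjugate to $q=\gamma(0)$) and extend it by zero to $[-\delta,1]$, where $q'=\gamma(-\delta)$, $\delta>0$. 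The resulting piecewise-smooth field $V$ vanishes at the endpoints, is orthogonal to $\dot\gamma$, and has $I_{[-\delta,1]}(V,V)=0$, yet $V$ is not a smooth Jacobi field since $J'(0)\neq0$ produces a derivative jump at $t=0$. One checks that $J'(0)$ is orthogonal to, but not proportional to, $\dot\gamma(0)$, so a test field $W$ with $W(0)$ not $g$-orthogonal to $J'(0)$ in the quotient $\dot\gamma^\perp/\mathbb{R}\dot\gamma$ gives $I(V,W)=-g(J'(0),W(0))\neq0$; hence $I_{[-\delta,1]}$ is indefinite, $\gamma|_{[-\delta,1]}$ is not maximal, and $\tau(q',p)>0$. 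This is the null-geodesic maximality theory of Beem--Ehrlich--Easley, Ch.~10 (cf.\ also O'Neill, Prop.~10.48). With that, both cases give $\tau(q',p)>0$ and the contradiction with $p\in\LUear(q')$ closes the proof exactly as the paper states.
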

\begin{proof}
Assume for contradiction there exists $q' \in \intM$ such that
$\LUsmooth(q') <_p \LUreg(q)$.
By Lemma \ref{lm_Lpqcausal2}, we have $q' < q$.
Now $p \in \LUear(q) \setminus \LUreg(q)$, the point $p = \gamma_{q,v}(1)$ is on the first cut point of $q$, for some $v \in L_q^+M$.
As $q' < q$, we must have $p$ is after the first cut point of $q'$ and therefore  $\tau(q', p) > 0$, which contradicts with $p \in \LUear(q')$.
\end{proof}
Combining Lemma \ref{lm_reg1} and \ref{lm_reg2}, we prove the following proposition.
\begin{prop}\label{pp_LUreg}
Let $q \in \intM$.
Given $\Cear_U(q)$ for each $q \in W$, we can construct $\SUsmooth(W)$ using (\ref{def_LUinfity}) and (\ref{def_lLUinfity}).
Then
we have
\[\begin{split}
    \LUreg(q) = \{p \in \LUsmooth(q): \text{ there is $L \in \SUsmooth(W)$ such that $L  <_p \LUsmooth$}\},
\end{split}\]
and therefore
\[\begin{split}
    \CUreg(q) = \{(p, w) \in \CUear(q): p \in \LUreg(q)\}.
\end{split}\]
\end{prop}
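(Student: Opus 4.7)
The plan is to establish the set equality for $\LUreg(q)$ first, from which the description of $\CUreg(q)$ follows by a one-point lifting argument. The construction of $\SUsmooth(W)$ from the data $\{\Cear_U(q)\}_{q\in W}$ is tautological: projecting $\pi$ yields $\LUear(q)$, and $\LUsmooth(q)$ is carved out as the smooth locus of $\LUear(q)$ by (\ref{def_LUinfity}), which is a purely local and intrinsic property of each set; collecting over $q$ gives $\SUsmooth(W)$ as defined in (\ref{def_lLUinfity}). Thus no extra information is needed to form $\SUsmooth(W)$.

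For the characterization of $\LUreg(q)$, the plan is to prove the two inclusions separately. The inclusion ``$\subseteq$'' is exactly Lemma \ref{lm_reg1}: given $p=\gamma_{q,v}(1)\in \LUreg(q)$, I would choose $q_\eps=\gamma_{q,v}(-\eps)$ for small $\eps>0$, so that $p=\gamma_{q_\eps,v_\eps}(1)$ with $v_\eps$ a rescaling of $v$. Since $q_\eps$ lies strictly past-ward of $q$ along the same null generator, $p$ is still strictly before the first cut point of $q_\eps$, so $p\in \LUreg(q_\eps)$. By Lemma \ref{lm_smooth_perturb} both $\LUreg(q)$ and $\LUreg(q_\eps)$ are smooth hypersurfaces near $p$, and they share the tangent space there because they are the light fronts of two points on the same inextendible null geodesic (to first order they are generated by the same Jacobi fields vanishing at their respective base points, and at $p$ the reachable directions agree modulo the generator $\dot\gamma_{q,v}(1)$). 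Finally, picking any $y\in \LUreg(q_\eps)$ near $p$ different from $p$ along a generator distinct from $\dot\gamma_{q,v}(1)$ and comparing with its image on $\LUreg(q)$ gives a point $y'\in \LUreg(q)$ with $y<y'$, so $\LUreg(q_\eps)<_p\LUreg(q)$. The inclusion ``$\supseteq$'' is Lemma \ref{lm_reg2}: if $p\in L_p(q)\subseteq \LUear(q)\setminus\LUreg(q)$ then $p$ is exactly at the first cut point of $q$; were there some $L=\LUsmooth(q')<_p\LUsmooth(q)$, Lemma \ref{lm_Lpqcausal2} would force $q'<q$, whence $p$ lies past the first cut point of $q'$, i.e.\ $\tau(q',p)>0$, contradicting $p\in \LUear(q')$.

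With the first equality in hand, the identity for $\CUreg(q)$ is essentially a bookkeeping step. The inclusion $\CUreg(q)\subseteq\{(p,w)\in\CUear(q):p\in\LUreg(q)\}$ is immediate from the definitions (\ref{def_CUear}) and (\ref{def_CUreg}) together with $\pi(\CUreg(q))=\LUreg(q)$. For the reverse inclusion, I take $(p,w)\in\CUear(q)$ with $p\in \LUreg(q)$; then there exists $v\in L_q^+ M$ with $\rho(q,v)>1$ and $p=\gamma_{q,v}(1)$. By Lemma \ref{lm_finitevectors}, the vector $v$ with $\exp_q(v)=p$ is unique, and $w=\dot\gamma_{q,v}(1)$ is then uniquely determined, placing $(p,w)\in\CUreg(q)$.

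The main obstacle is the tangentiality step in the $\subseteq$ direction, i.e.\ verifying $T_p L_p(q_\eps)=T_p L_p(q)$ in the sense of Definition \ref{def_tangent}. I would pin this down by using Lemma \ref{lm_explocal}(2) to identify both tangent spaces with the $g$-orthogonal complement of $\dot\gamma_{q,v}(1)$ inside $T_p\partial M$: the local light cone emitted from any point on the generator $\gamma_{q,v}$ prior to $p$ meets $\partial M$ in a hypersurface whose normal at $p$ is precisely this null direction, independently of where on the generator one starts, so the tangent hyperplanes coincide. Once this is nailed down, everything else is a direct application of the preceding lemmas.
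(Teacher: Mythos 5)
Your proposal follows essentially the same route as the paper, which simply combines Lemma~\ref{lm_reg1} (for the inclusion $\subseteq$, via the auxiliary points $q_\eps = \gamma_{q,v}(-\eps)$) and Lemma~\ref{lm_reg2} (for $\supseteq$, via a shortcut contradiction) and then lifts the identity to $\CUreg(q)$ by uniqueness of the null geodesic from $q$ to a point before its cut locus, exactly as you outline with Lemma~\ref{lm_finitevectors}. Your elaboration of the tangentiality claim in Lemma~\ref{lm_reg1} via Lemma~\ref{lm_explocal}(2) --- identifying both tangent planes with the $g$-orthocomplement of the common generator $\dot\gamma_{q,v}(1)$ inside $T_p\partial M$ --- and of the ordering $\LUreg(q_\eps) <_p \LUreg(q)$ via a shortcut argument fills in detail the paper leaves implicit, but does not depart from its logic.
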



\subsection{Proof of Theorem \ref{thm_SLOS}}\label{subsec_proof}
Recall $U \subseteq S_+$ and $W \subseteq M$ are open subsets.
In Section \ref{sec_SLOS}, we define $\CUear(q)$ as the earliest scattering light observation set in $U$ for $q \in W$
and $\SUear(W)$ is the collection of such sets for all $q \in W$.

Moreover, by Lemma \ref{lm_uniqueL}, the map $\psi: W \rightarrow \SUear(W)$ is a bijection.
As is stated in \cite{Hintz2017}, it induces a conformal diffeomorphism which pushes the topological, differential, and conformal structures on $W$ to those on $\SUear(W)$.
Using this map, we can identify the set $W$ with the set $\SUear(W)$.
Then reconstructing the structures on $W$ is the same as reconstructing those on $\SUear(W)$.

\subsubsection{Topology}
Recall by Proposition \ref{pp_LUreg}, we can construct the collection of regular scattering direction set
$\SUreg(W)$ from the knowledge of $\SUear(W)$.
Then for $O \subseteq L^+U$ open, we define
\[\begin{split}
U_O &:= \{C\in \SUreg(W) : C \cap O \neq \emptyset\},
\end{split}\]
We collect sets of the form as above for any open set $O \subseteq L^+U$.
Then we define a topology $\mathcal{T}$ on $\SUreg(W)$ by using these sets as a subbasis.
The bijection $\psi: W \rightarrow \SUreg(W)$ induces a topology on $W$.
We prove this induced topology coincides with the subspace topology given by that of $\intM$
in the proposition below, using the same idea as in \cite[Proposition 3.8]{Hintz2017}.

%
%
\begin{prop}\label{pp_topology}
The topology $\mathcal{T}$ of $\SUreg(W)$ is equal to the subspace topology $\mathcal{T}_M$ of $W \subseteq M$, if we identify $\SUreg(W)$ with $W$  using the map $\psi$ in (\ref{def_psi}).
\end{prop}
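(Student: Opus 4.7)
Since $\psi^{\mathrm{reg}} : W \to \SUreg(W)$ defined by $q \mapsto \CUreg(q)$ is a bijection (Lemma \ref{lm_uniqueL}), the plan is to prove it is a homeomorphism between $(W,\mathcal{T}_M)$ and $(\SUreg(W),\mathcal{T})$. This breaks into verifying continuity in both directions; the identification of $\SUreg(W)$ with $W$ then transports the two topologies onto one another.

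For continuity of $\psi^{\mathrm{reg}}$, I take a subbasic open set $U_O = \{C\in\SUreg(W): C\cap O\neq\emptyset\}$ with $O\subseteq L^+U$ open and show $(\psi^{\mathrm{reg}})^{-1}(U_O)$ is open in $W$. Given $q_0 \in (\psi^{\mathrm{reg}})^{-1}(U_O)$, select $(p,w) \in \CUreg(q_0)\cap O$, written as $(p,w) = (\exp_{q_0} v, \dot{\gamma}_{q_0,v}(1))$ with $1<\rho(q_0,v)$. Smooth dependence of the exponential map and its derivative on the base point and the initial vector (Lemma \ref{lm_smoothexp}), combined with the stability of the strictly-before-first-cut-point condition (Lemma \ref{lm_cutneighborhood}), implies that for all $q'$ near $q_0$ in $W$ one can perturb $v$ to some $v' \in L_{q'}^+ M$ yielding a nearby $(p',w') \in \CUreg(q')\cap O$. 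Thus a whole open neighborhood of $q_0$ in $W$ lies in $(\psi^{\mathrm{reg}})^{-1}(U_O)$.

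For continuity of $(\psi^{\mathrm{reg}})^{-1}$, I fix $q_0\in W$ and an arbitrary open neighborhood $N\subseteq W$ of $q_0$ and must produce finitely many open $O_1,\dots,O_k\subseteq L^+U$ with $q_0 \in \bigcap_i(\psi^{\mathrm{reg}})^{-1}(U_{O_i}) \subseteq N$. Using the assumption $\CUreg(q_0)\neq\emptyset$ together with Lemma \ref{lm_finite}, I select a small family of linearly independent null vectors $v_1,\dots,v_k \in L_{q_0}^+M$ whose exponential images $(p_i,w_i) = (\exp_{q_0}v_i,\dot{\gamma}_{q_0,v_i}(1))$ all lie in $\CUreg(q_0)$. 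For shrinking open neighborhoods $O_i \ni (p_i,w_i)$, the set $A_i := \{q'\in W : \CUreg(q')\cap O_i\neq\emptyset\}$ becomes a tubular neighborhood in $W$ of the null segment from $q_0$ to $p_i$, thanks to the local diffeomorphism property of $(q,v)\mapsto(\exp_q v,\dot{\gamma}_{q,v}(1))$ at the non-conjugate pair $(q_0,v_i)$ (Lemma \ref{lm_explocal}). Lemma \ref{lm_cutunique} forces these null segments to meet pairwise only at $q_0$; a standard Hausdorff-convergence argument (using precompactness of $\overline{W}$) then shows $\bigcap_i A_i$ shrinks to $\{q_0\}$ as the $O_i$'s shrink, so it is contained in $N$ for sufficiently small $O_i$.

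The main obstacle will be arranging enough non-parallel directions $v_i$: the hypothesis $\CUreg(q_0)\neq\emptyset$ provides only a single vector, and one must exploit openness of $\LUreg(q_0)$ as a $1$-codimensional submanifold of $\pM$ (Lemma \ref{lm_finite}) together with the positive dimensionality of the null cone $L_{q_0}^+M$ to obtain a small open family of nearby directions, from which two or more linearly independent vectors can be chosen. This is a soft openness argument; once it is carried out, the inverse function theorem and the pairwise-separation of null segments before first cut points close the proof.
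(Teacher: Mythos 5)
Your verification of $\mathcal{T}\subset\mathcal{T}_M$ follows the same route as the paper (Lemma~\ref{lm_smoothexp} together with Lemma~\ref{lm_cutneighborhood}). The gap is in the other inclusion. You describe $A_i=\{q'\in W:\CUreg(q')\cap O_i\neq\emptyset\}$ as shrinking to a tubular neighborhood of the null segment from $q_0$ to $p_i$, and assert that because the forward null segments from $q_0$ to $p_1$ and to $p_2$ meet pairwise only at $q_0$ (Lemma~\ref{lm_cutunique}), the intersection $\bigcap_i A_i$ collapses to $\{q_0\}$. Neither claim holds as stated. As $O_i$ shrinks to $(p_i,w_i)$, the set $A_i$ collapses not onto the segment from $q_0$ to $p_i$ but onto the set of \emph{all} $q'\in W$ lying on the backward inextendible null geodesic $\gamma_{p_i,-w_i}$ with $p_i$ at or before their first cut point; this segment properly contains a neighborhood of $q_0$ on the geodesic and in particular extends into the past of $q_0$. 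Pairwise separation of the \emph{forward} segments from $q_0$ therefore says nothing about whether some $q'\ne q_0$ persists in all the limits of the $A_i$'s.

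What the Hausdorff-convergence step actually requires is: if $q'\in\overline{W}$ has $(p_i,w_i)\in\CUear(q')$ for $i=1,2$ while $(p_i,w_i)\in\CUreg(q_0)$, then $q'=q_0$. That is a shortcut argument via Lemma~\ref{lm_shortcut}, which is exactly the content of Lemma~\ref{lm_uniqueLreg}: $q'$ and $q_0$ are causally ordered along the backward geodesic through $(p_1,w_1)$, and concatenating the null segment between them with the distinct null segment to $p_2$ gives a broken causal path, so the time separation to $p_2$ from the earlier of the two points is positive, contradicting that $p_2$ lies at or before the first cut point. Moreover, even to conclude $(p_i,w_i)\in\CUear(q')$ for the limit point $q'$ you need Lemma~\ref{lm_cutconverge} (to extract a converging direction from the convergent sequence of boundary covectors) and Lemma~\ref{lm_cutlocus} (to pass the ``at or before the first cut point'' property to the limit). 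The paper's proof works instead with a shrinking finite cover of $\CUreg(q_0)\cap O$ by $\epsilon$-balls around a dense countable subset, but the contradiction rests on precisely these three lemmas; your version can be repaired by substituting them for the appeal to Lemma~\ref{lm_cutunique}.
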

\begin{proof}
\underline{$\mathcal{T} \subset \mathcal{T}_M$}:
For any open set $O$ of $L^+U$, we would like to prove $\psi^{-1}(U_O)$ are open sets in $W \subseteq \intM$.
We only need to consider that case when $O\neq \emptyset$.
To prove $\psi^{-1}(U_O)$ is open in $W \subseteq \intM$, let $q_0 \in \psi^{-1}(U_O)$.
This means $\CUreg(q_0) \cap O \neq \emptyset$ and therefore
we can find $v_0 \in L_{q_0}^+M$ such that $(\gamma_{q_0, v_0}(1), \dot{\gamma}_{q_0, v_0}(1))\in O$ before the first cut point, i.e., $1 < \rho(q_0, v_0)$.
The goal is to prove for any $(q, v) \in L^+M$ that is sufficiently close to $(q_0, v_0)$, there exists ${\varsigma_{q,v}}$ such that
\[
(\gamma_{q,v}({\varsigma_{q,v}}), \dot{\gamma}_{q,v}({\varsigma_{q,v}}))\in O
\text{ with } {\varsigma_{q,v}} < \rho(q,v),
\]
 and therefore $\CUreg(q) \cap O \neq \emptyset$.
Then we can conclude that $\psi^{-1}(U_O)$ contains a small neighborhood of $q_0$, for arbitrary $q_0$ in this set, and thus $\psi^{-1}(U_O)$ is open.
Indeed, by Lemma \ref{lm_cutneighborhood}, we can find a small neighborhood $N_0 \subseteq L^+\intM$ of $(q_0,v_0)$ such that for any $(q,v) \in N_0$, the point  $\gamma_{q, v}({\varsigma_{q,v}}) \in S_+$ smoothly depends on $(q,v)$ and is before the first cut point.
Moreover, one has $\dot{\gamma}_{q, v}({\varsigma_{q,v}})$ smoothly depends on $(q,v)$.
Thus, with $(q, v) \in L^+M$ sufficiently close to $(q_0, v_0)$, we are able to have $(\gamma_{q, v}({\varsigma_{q,v}}), \dot{\gamma}_{q, v}({\varsigma_{q,v}}))\in O$ with ${\varsigma_{q,v}} < \rho(q,v)$.


\underline{$\mathcal{T}_M \subset \mathcal{T}$}: For this part,
the goal is to show for any $\mathcal{T}_M$-open set $O_M \subseteq W$, each fixed $q \in O_M$ has a $\mathcal{T}$-open neighborhood contained in $O_M$.
More explicitly, we would like to find an open neighborhood $\lU_0 \subseteq \SUreg(W)$ of $\CUreg(q)$ such that for any $\CUreg(q') \in \lU_0$, we have $q' \in O_M$.
For this purpose, we consider an open set $O$ and a compact set $K$ such that $O \Subset K \subseteq L^+U\setminus\{0\}$.
We denote by
\[
C \coloneqq \CUreg(q) \cap {O}
\]
the regular light observation set of $q$ within $O$.
In particular, we choose $O$ as the open neighborhood such that its projeciton $\pi(O)$ onto $U$ satisfies Lemma \ref{lm_smooth_perturb}, i.e.,
\[
\pi(\CUreg(q) \cap {O}) = \LUreg(q) \cap \pi(O) = \lL_q^+ \cap \pi(O).
\]
We pick a countable dense subset
\[
P \coloneqq\{(p_i, w_i) \in L^+U: i \in \mathbb{Z}^+\} \subseteq C
\]
and define $O_{i, \ep} = \{(p, w) \in U: d_{g^+}((p, w), (p_i, w_i)) < \ep\}$,
where $g_+$ is the auxiliary Riemannian metric.
As $C \subseteq K$ is compact, for each fixed $\ep>0$, there exists a finite number $I(\ep)$ such that
$ C \subseteq \textstyle \bigcup_{i=1}^{I(\ep)}(\ep) O_{i, \ep}.$
Then we define a nested sequence given by
\[
{\lU}_j \coloneqq \bigcap_{i=1}^{I(1/j)} U_{O_{i, 1/j}}, \quad j \in \mathbb{Z}^+.
\]
We observe $\lU_j$ contains $\CUreg(q)$ and
$\psi^{-1}(\lU_j)$ are open subsets of $W$ in $\mathcal{T}$.

We claim $\psi^{-1}(\lU_j) \subseteq O_M$ for large $j$ and thus we find a $\mathcal{T}$-open neighborhood of $q$ in $O_M$.
Indeed, assume for contradiction that $\psi^{-1}(\lU_j) \nsubseteq O_M$ for all large $j$.
Then we can pick a sequence $q_j \in \psi^{-1}(\lU_j) \setminus O_M \subseteq W$.
Further, by passing through a subsequence, we can assume $q_j$ converges to $q'$ in $\overline{W}$.
Then we consider the earliest light observation set
\[
C' \coloneqq \CUear(q') \cap {O}
\]
of $q'$ within $O$.
The goal is to prove
$
C \subseteq C'
$
and therefore by Lemma \ref{lm_uniqueLreg} we must have $q'  = q$, which contradicts with $q_j \notin O_M$ and $q \in O_M$.

Indeed, assume for contradiction that ${C} \nsubseteq {C'}$.
Then we can find $i_0 \in \mathbb{Z}^+$ such that $(p_{i_0}, w_{i_0}) \in P \setminus {C'}$.
Moreover, by the definition of $O_{i,\ep}$, we can find $j_0 \in \mathbb{Z}^+$ such that $O_{i_0, 1/j} \cap C'  = \emptyset$ for $j \geq j_0$.
However, since $q_j \in \psi^{-1}(\lU_j)$,
we must have $\CUreg(q_j) \cap O_{i_0, 1/j}$ is nonempty.
Then there exists $(x_j, w_j) \in \CUreg(q_j) \cap O_{i_0, 1/j}$ for any $j \geq j_0$.
Thus, we must have $(p_{i_0}, w_){i_0} = \lim_{j \rightarrow \infty} (x_j, w_j)$.
Now using Lemma \ref{lm_cutconverge}, we can find a subsequence $(q_j, v_j) \rightarrow (q',v') \in L^+\intM$ such that $p_{i_0} = \gamma_{q',v'}(1)$.
Moreover, this implies $w_{i_0} = \dot{\gamma}_{q',v'}(1)$
By Lemma \ref{lm_cutlocus}, one has $p_{i_0}$ is on or before the first cut point of $q'$ and thus $(p_{i_0}, w_{i_0}) \in C'$, which is a contradiction.
Therefore, we must have $C \subseteq C'$, which leads to $q = q'$.

\end{proof}

\subsubsection{Smooth Structure}\label{subsection_smooth_structure}
To reconstruct the smooth structure in $W$, we would like to define a coordinate system locally near any fixed $q \in W$, by using earliest observation time (see (\ref{def_xmu})) along suitable curves passing through the regular part $\LUreg(q)$.
This is the idea used in \cite{Hintz2017}.
We emphasize that for each $q \in W$, the set $\LUreg(q)$ is nonempty, and by Lemma \ref{lm_finite},
there exists an open subset $O \subseteq U$ such that $L \coloneqq \LUreg(q) \cap O$ is a smooth $1$-dimensional submanifold.
Now let $\mu:[-1, 1] \rightarrow O$ be a smooth curve in $\pM$ such that
\begin{itemize}
\item[(1)] $\mu$ is traversal to $L$;
\item[(2)]$\mu'(s) \neq 0$ for $s \in [-1,1]$;
\item[(2)] $\mu(0) \in L$ and $\mu(s) \notin L$ for $s \neq 0$.
\end{itemize}
By Lemma \ref{lm_smooth_perturb}, we can shrink $O$ such that there exists an open neighborhood $N \subseteq W$ of $q$, such that $\LUreg(q') \cap O = \lL_{q'}^+ \cap O$ is a smooth $1$-dimensional submanifold, for any $q' \in N$.
We define
\[
R'(\mu)\coloneqq\{q' \in W: \text{$\LUreg(q')$ intersect $\mu$ once and transversally}\}.
\]
as the set  of all points such that their smooth part of the future light cone surface intersects $\mu$ transversally.
As is pointed out in \cite{Hintz2017}, $R'(\mu)$ is neither open nor closed in general but it contains an open neighborhood of $q$ by Lemma \ref{lm_smooth_perturb}.
One can further define
\[
R(\mu) \coloneqq \bigcup_{\text{$R \subseteq R'(\mu)$ open  in } W} R,
\]
which is a {nonempty} open neighborhood of $q$.
Moreover, we define the earliest observation time along $\mu$ as
\beq
\begin{split}\label{def_xmu}
x^\mu: R(\mu) &\rightarrow [-1,1] \nonumber\\
q &\mapsto s,
\end{split}
\eeq
where $s$ is determined by $\mu(s) \in \LUreg(q)$.
Note that $x^\mu$ is a well-defined function, due to the definition of $R(\mu)$.
Moreover, by Lemma \ref{lm_smoothexp}, the point $p = \gamma_{q,v}({\varsigma_{q,v}}) \in \LUreg(q)$ smoothly depends on $(q, v)$ and therefore $x^\mu$ is smooth function on $R(\mu)$.
As \cite{Hintz2017}, we would like to show that a suitable family of curves $\mu$ give us functions $x^\mu$, which provides local coordinates near $q$.
The key step is to prove there are always enough curves $\mu$ for which $x^\mu$ is non-degenerate at $q$.

\begin{lm}
For fixed $q \in W$, consider the set
\[\begin{split}
\lM \coloneqq \{\mu \in C^\infty([-1, 1];U): \text{$\mu$ satisfies assumptions (1)-(3) above}\},
\end{split}\]
which contains curves that intersects $\LUreg(q)$ once and transversally.
Then we have
\[
\bigcap_{\mu \in \lM} \ker(\dif x^\mu\mid_q) = \{0\} \subseteq T_q M.
\]
\end{lm}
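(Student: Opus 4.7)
The plan is to compute $\ker(dx^\mu|_q)$ explicitly in terms of Jacobi fields along the unique null geodesic joining $q$ to $\mu(0)$, observe that this kernel depends only on the basepoint $p = \mu(0) \in \LUreg(q)$, and then show that varying $p$ over $\LUreg(q)$ exhausts all of $T_qM$ by a second-variation argument on the null cone at $q$.

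\textbf{Step 1 (Jacobi field setup).} Fix $\mu \in \lM$ passing transversally through $p = \mu(0) \in \LUreg(q)$. Since $p$ is before the first cut point, Lemma \ref{lm_finitevectors} produces a unique $v \in L_q^+M$ with $\gamma_{q,v}(1) = p$, and $p$ is not conjugate to $q$. For a curve $q(t)$ with $\dot q(0) = V$, the implicit function theorem yields smooth families $v(t) \in L_{q(t)}^+M$ and $s(t) \in \mR$ with $\gamma_{q(t),v(t)}(1) = \mu(s(t))$, $v(0)=v$, $s(0)=0$. The first-order variation is encoded by the Jacobi field $J$ along $\gamma_{q,v}$ with $J(0) = V$, $J(1) = \dot\mu(0)\,dx^\mu|_q(V)$, and $J'(0) = W$, where $W$ is constrained by $g(v(t), v(t)) \equiv 0$ to satisfy $g(v, W) = 0$.

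\textbf{Step 2 (Affine scalar identity).} The function $f(s) := g(\dot\gamma_{q,v}(s), J(s))$ satisfies $f'' \equiv 0$, thanks to the Jacobi equation $J'' = R(\dot\gamma, J)\dot\gamma$ and the pair-symmetry $R(\dot\gamma, \dot\gamma, \cdot, \cdot) \equiv 0$; since $f'(0) = g(v, W) = 0$, in fact $f$ is constant, giving
\[
g(w, \dot\mu(0))\, dx^\mu|_q(V) = f(1) = f(0) = g(v, V),
\]
where $w := \dot\gamma_{q,v}(1)$. Transversality of $\mu$ to $L := \LUreg(q) \cap O$ in $U$, combined with Lemma \ref{lm_nulltransversal} (which ensures $w \notin T_pU$), forces $g(w, \dot\mu(0)) \neq 0$. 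Consequently
\[
\ker(dx^\mu|_q) = \{V \in T_qM : g(v(p), V) = 0\},
\]
a $g$-hyperplane depending only on $p$, not on $\dot\mu(0)$.

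\textbf{Step 3 (Spanning via second variation).} By Lemma \ref{lm_finite}, $\mu(0)$ can be taken to range over an open neighbourhood of any point in $L$. Denoting $\Phi : L \to L_q^+M$, $\Phi(p) = v(p)$, the problem reduces to showing that the image $\Phi(L)$ linearly spans $T_qM$. The map $\Phi$ is an immersion (as $\exp_q$ is a local diffeomorphism near $v$ by the no-conjugate-point assumption), so $T_{v_0}\Phi(L)$ is a $2$-plane inside the null $3$-plane $T_{v_0}L_q^+M = v_0^\perp$, where $v_0 := v(p_0)$. Since the radical of $g|_{v_0^\perp}$ is the single line $\mR v_0$, any such $2$-plane necessarily contains a $g$-spacelike vector $Y = d\Phi_{p_0}(h)$. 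Taylor expansion of the identity $g(\Phi, \Phi) \equiv 0$ at second order along $h$ gives
\[
g(v_0, H_\Phi(h, h)) = - g(Y, Y) < 0,
\]
so the Hessian vector $H_\Phi(h, h)$ lies outside $v_0^\perp$. Combined with the first-order inclusion $v_0 + T_{v_0}\Phi(L) \subseteq v_0^\perp$, the linear span of $\Phi(L)$ fills all of $T_qM$; non-degeneracy of $g$ then forces $V = 0$.

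\textbf{Main obstacle.} The delicate point is Step 3: ruling out the possibility that $\Phi(L)$ is trapped in a proper linear subspace of $T_qM$. First-order tangential information alone keeps us inside the null hyperplane $v_0^\perp$, and it is the second-order curvature structure of the null cone, visible through the identity $g(v_0, H_\Phi) = -g(d\Phi, d\Phi)$, that produces a vector transverse to $v_0^\perp$. The remaining steps are routine Jacobi field bookkeeping.
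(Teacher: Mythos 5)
Your proof takes a genuinely different route from the paper's and the main strategy is sound, but Step~3 as written has a gap that needs to be closed.

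\textbf{Comparison with the paper's argument.} The paper argues by contradiction: assuming $V\in\bigcap_\mu\ker(\dif x^\mu|_q)$ is nonzero, it picks a curve $q(r)$ with $\dot q(0)=V$, shows that the surfaces $\LUreg(q(r))$ together with their tangent planes are $r^2$-close to those of $\LUreg(q)$, invokes the screen-space/light-ray correspondence of Lemma~\ref{lm_screen} to conclude the generating null rays $l(r,p)$ are $r^2$-close, and then recovers $q(r)$ as the intersection of two such null geodesics, forcing $\dot q(0)=0$. Your argument is more explicit: the Jacobi-field scalar identity in Step~2 produces the clean formula
\[
g\bigl(w,\dot\mu(0)\bigr)\,\dif x^\mu|_q(V)=g(v,V),\qquad v=v(\mu(0)),\ w=\dot\gamma_{q,v}(1),
\]
with $g(w,\dot\mu(0))\neq0$ by transversality (indeed $w^\perp\cap T_pU=T_pL$, so any $\dot\mu(0)\in T_pU\setminus T_pL$ pairs nontrivially with $w$). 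This yields $\ker(\dif x^\mu|_q)=v(\mu(0))^\perp$ directly, so the common kernel is $\bigl(\operatorname{span}\{v(p):p\in L\}\bigr)^\perp$ and the lemma reduces to a purely algebraic spanning statement about the map $\Phi:p\mapsto v(p)$ into the null cone $L_q^+M$. Both proofs ultimately exploit second-order information — the paper via the $r^2$-closeness of observation sets, yours via the non-flatness of the null cone — but the mechanisms are different, and your Steps~1--2 isolate the kernel computation in a way the paper does not.

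\textbf{The gap in Step~3.} You want $\operatorname{span}\Phi(L)=T_qM$. The Hessian identity $g(v_0,H)=-g(Y,Y)<0$ correctly shows $H\notin v_0^\perp$, so
\[
\operatorname{span}\Phi(L)\ \supseteq\ \operatorname{span}\{v_0\}+T_{v_0}\Phi(L)+\mR H.
\]
For this to be all of $T_qM$ you need $\operatorname{span}\{v_0\}+T_{v_0}\Phi(L)$ to already be the full $3$-dimensional $v_0^\perp$, i.e.\ you need $v_0\notin T_{v_0}\Phi(L)$. Your write-up asserts the $4$-dimensionality without verifying this, and a priori a $2$-plane in $v_0^\perp$ could well contain $v_0$ (in which case $\operatorname{span}\{v_0\}+T_{v_0}\Phi(L)+\mR H$ is only $3$-dimensional and the conclusion fails). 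The missing fact is true but needs an argument: $T_{v_0}\Phi(L)=(\dif\exp_q|_{v_0})^{-1}(T_pL)$, and $\dif\exp_q|_{v_0}(v_0)=\dot\gamma_{q,v_0}(1)=w$; by Lemma~\ref{lm_nulltransversal} we have $w\notin T_pS_+\supseteq T_pL$, hence $v_0\notin T_{v_0}\Phi(L)$. With this inserted, Step~3 is complete, and as a byproduct every nonzero $Y\in T_{v_0}\Phi(L)$ is automatically spacelike (so the ``choose a spacelike $Y$'' remark becomes unconditional). I would also record the verification that $\Phi$ is an immersion with image in $L_q^+M$ genuinely depends on the no-conjugate-point hypothesis $p\in\LUreg(q)$, which you do correctly invoke via Lemma~\ref{lm_finitevectors} and Lemma~\ref{lm_explocal}.
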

\begin{proof}
We follow the same ideas as in \cite{Hintz2017}, but with Lemma \ref{lm_nulltransversal}, \ref{lm_smoothexp}, \ref{lm_screen} in our setting.
Assume for contradiction there exists a nonzero $V \in T_qM$ such that
\[
\dif x^\mu(V)  = 0 \text{ for any } \mu \in \mathcal{M}.
\]
Then there is a smooth curve $q(r)$ in $W$ for $r \in (-1,1)$, with $q(0) = q$ and $V = \dot{q}(0)$.
For fixed $\mu \in \mathcal{M}$, with sufficiently small $r$, the point $q(r) \in R(\mu)$ and $x^\mu(q(r)) \in [-1,1]$ is well-defined.
If a curve $\tilde{\mu}(r)$ in $U$ is defined by $\tilde{\mu}(r) = \mu(x^\mu(q(r)))$, then we must have $\tilde{\mu}(0) = q$ and $\dot{\tilde{\mu}}(0) = 0$, as $\mu(0) = q$ and $\dif x^\mu(\dot{q}(0)) = 0$. In the following, we would like to use this argument to derive the contradiction.

For this purpose, let $p_0 \in \LUreg(q)$ and $O \subseteq \pM$ be an open neighborhood of $p_0$ as is chosen in Lemma \ref{lm_smooth_perturb}.
Let $O' \Subset O$ be nonempty and we denote by $L \coloneqq \LUreg(q) \cap O'$ the intersection.
Recall $L$ is a smooth $1$-codimensosubmanifold of $O$ of codimension one.
There exists a smooth open map
\[
\upsilon: L \times (-2, 2) \rightarrow O
\]
such that for each fixed $p \in L$, the smooth curve $\upsilon_p: s \mapsto \upsilon(p, s)$ is transversal to $L$ with $\upsilon_p(0) = p$ and $\upsilon$ is a diffeomorphism onto its range $O_\upsilon \subseteq U$.
Note that for fixed $p \in L$, $\upsilon_p \in \mathcal{M}$ and we can define $R(\upsilon_p)$ and $x^{\upsilon_p}$ as above.

Now we denote by  $L(r) =  \LUreg(q(r)) \cap O_\upsilon$ the intersection of the future light cone surface at $q(r)$ with the range of $\upsilon$.
For sufficiently small $r$, the preimage $\upsilon^{-1}(L(r))$ can be written as the graph of a smooth function $f(r, \cdot): L \rightarrow (-2, 2)$.
Indeed, we can define $f$ by
\[
f(r, p) = x^{\upsilon_p}(q(r)), \quad \text{for $(r, p) \in (-\ep, \ep) \times L$}.
\]
Then we have $\upsilon(p, f(r, p)) \in  \LUreg(q)$ by the definition of $x^{\upsilon_p}$ and therefore $\upsilon(p, f(r, p)) \in L(r)$.
Moreover, we have $f(0, p) = x^{\upsilon_p}(q) \equiv 0$ as $\upsilon_p(0) = p$ and we have $\partial_r f(0, p) \equiv 0$ as $\dif x^{\upsilon_p}(\dot{q}(0))  = 0$.
This implies
\[
\upsilon(p, f(0, p)) = p , \quad \partial_r \upsilon(p, f(0, p))  = 0.
\]
It follows that the tangent space
\[
T(r, p) \coloneqq T_{\upsilon(p, f(r,p))}  \LUreg(q(r))
\]
is $r^2$-close to $T(0, p) = T_p  \LUreg(q)$, uniformly for all $p \in O'$.

We claim $T(r,p)$ is a screen space of $T_p \pM$.
Indeed,
$T(r,p)$ is the projection to $M$ of a null bicharacteristic, which hits $\pM$ transversally.
Then we have {\teal $T(r,p) \cap T_{\upsilon(p, f(r,p))} \pM  = \{0\}$}, by Lemma \ref{lm_screenandrad}, $T(r,p)$ is a screen space.
According to Lemma \ref{lm_screen}, there is an isomorphism mapping $T(r,p)$ to a future pointing outward lightlike ray $\mR_+w \subseteq  T(r,p)^\perp$, with $w \in L^+_p M \cap T_p^+M$.  We call this unique ray $l(r, p)$.
Thus,  such $l(r, p)$ is also $r^2$-close to $l(0, p)$.

Next, let $v_1, v_2 \in L^+_q \intM$ be two distinct lightlike vectors such that $p_j  = \exp_q(v_j) \in L$ for $j = 1,2$.
Note that we have $\upsilon(p_j, f(0, p_j)) = p_j$ and we set $w_j = \dot{\gamma}_{q, v_j}(1)$, for $j = 1,2$.
It turns out $w_j \in T(0, p_j)^\perp$, by Lemma \ref{lm_explocal}, and therefore $\mR_+ w_j = l(0, p_j)$.
Now for $(r, p) \in (-\ep,\ep) \times L$, since each $l(r,p)$ is well-defined, we can consider a generator $w(r, p)$ of $l(r,p)$, which depends on $(r, p)$ smoothly and is $r^2$-close to $w(0, p)$.
Observe that the two null geodesics $s \mapsto \exp_{p_j}(-s w(0, p_j))$ intersect cleanly at $q$ by our construction.
For small $r$, the null geodesics $s \mapsto \exp_{\upsilon(p, f(r,p))}(-s w(r, p_j))$ {intersect exactly at $q(r)$}, by Lemma \ref{lm_explocal} again.
This implies $q(r)$ depends smoothly on $\upsilon(p, f(r,p))$ and $w(r, p_j)$ and therefore is $r^2$-close to $q(0) = q$.
It follows that $V = \dot{q}(0) = 0$, which contradicts with our assumption.
\end{proof}

Thus, for every $q \in W$, there exist $(n+1)$ curves $\mu_j \in \lM$ such that the set $\{\dif x^{\mu_j}: j = 0, \ldots, n\}$ is linearly independent at $q$ and thus $x^{\mu_j}$ forms a smooth local coordinate system near $q$.
Then the argument in \cite{Hintz2017} shows one can recover the algebra of smooth functions on $W$ from the family of sets $\SUear(W)$.

\subsubsection{Conformal Structure}
We follow \cite{Hintz2017}.
By the assumption in Theorem \ref{thm_SLOS}, for each $q \in W$, the set $\LUreg(q)$ is nonempty.
Using Proposition \ref{pp_LUreg}, we construct the collection of regular scattering light observation set
$\LUreg(W)$ from the knowledge of $\SUear(W)$.
Now let $p \in \LUreg(q)$ such that $p = \gamma_{q,v}(1)$ for some $v\in L_q^+ \intM$.
Recall Definition \ref{def_tangent} and \ref{def_tangentialp}.
We define the set
\[
\begin{split}
    \mathcal{Q} =\{\mu \in C^\infty((-1,1); W):  &\text{ $\mu(0) = q$ and $p \in \LUreg(\mu(r))$ for any $r \in (-\ep,\ep)$}\\
    & \quad \quad \quad \quad \text{such that $\LUreg(\mu(r))$ and  $\LUreg(q)$ are tangential at $p$}\}.
\end{split}
\]
By Lemma \ref{lm_screen} and Lemma \ref{lm_explocal},
this set contains all smooth curves which have the same tangent space at $p$ and therefore the same future pointing lightlike ray there.
Then from $\{\dot{\mu}(0): \mu \in \mathcal{Q}\} = \mR_+ v \in L_q^+\intM$, we recover a one-dimensional lightlike subspace of $L_q\intM$.

Next we repeat this procedure for all points \(p \in \LUreg(q)\).
Since $\LUreg(q)$ is an open set, this allows us to reconstruct an open subset of the light cone \({L}^+_q M \subseteq T_q M\).
As \({L}_q M\) is a real-analytic submanifold of \(T_q M\) determined by a quadratic equation, this determines \(\mathcal{L}_q M\) uniquely.
\subsubsection{Time orientation}
One can proof follow the following lemmas using the same idea as \cite{Hintz2017} or \cite{MScfinal}.
\begin{lm}
    Suppose $(q(r), v(r)) \in L^{+}M$ for $r \in (-1,1)$ is a smooth path such that $p(r) := \exp_{q(r)}^{b} v(r) \in \partial M$.
    Let $\gamma(r,s) \coloneqq \exp_{q(r)}^{b}(sv(r))$ and $\gamma(s) := \gamma(0, s)$. Then we have
    \[
    g(q'(0), v(0)) = g(p'(0), \gamma'(1)).
    \]
\end{lm}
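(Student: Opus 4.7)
The plan is to use the same variational-field computation already employed in the proof of Lemma \ref{lm_explocal}. The idea is to introduce the scalar function
\[
f(r,s) := g\bigl(\partial_r \gamma(r,s),\, \partial_s \gamma(r,s)\bigr), \qquad r \in (-1,1),\ s \in [0,1],
\]
and show that $\partial_s f \equiv 0$. Then since $f(0,\cdot)$ is constant, evaluating at $s=0$ and $s=1$ will give exactly the claimed identity.

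First I would verify the boundary values. At $s=0$ we have $\gamma(r,0) = q(r)$, so $\partial_r\gamma(r,0) = q'(r)$ and $\partial_s\gamma(r,0) = v(r)$, which gives $f(0,0) = g(q'(0), v(0))$. At $s=1$ we have $\gamma(r,1) = p(r)$, so $\partial_r\gamma(r,1) = p'(r)$ and $\partial_s\gamma(0,1) = \gamma'(1)$, which gives $f(0,1) = g(p'(0), \gamma'(1))$.

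Next I would compute $\partial_s f$ using the compatibility of the Levi-Civita connection and the torsion-free symmetry $D_s \partial_r \gamma = D_r \partial_s \gamma$:
\[
\partial_s f = g\bigl(D_s \partial_r \gamma,\, \partial_s \gamma\bigr) + g\bigl(\partial_r \gamma,\, D_s \partial_s \gamma\bigr).
\]
The second term vanishes because, for each fixed $r$, the curve $s \mapsto \gamma(r,s)$ is a geodesic, so $D_s \partial_s \gamma = 0$. The first term becomes
\[
g\bigl(D_r \partial_s \gamma,\, \partial_s \gamma\bigr) = \tfrac{1}{2}\partial_r \, g\bigl(\partial_s \gamma,\, \partial_s \gamma\bigr),
\]
which is zero because each $\gamma(r,\cdot)$ is null, so $g(\partial_s\gamma,\partial_s\gamma) \equiv 0$. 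Hence $f(0,s)$ is constant in $s$, and equating $f(0,0) = f(0,1)$ yields the claim.

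The computation is entirely routine; there is no real obstacle, only the care of noting that $\partial_s\gamma(r,s)$ extends smoothly up to $s=1$ (including the boundary point $p(r) \in \partial M$), which is justified by the extension of null geodesics across $\partial M$ discussed in the proof of Lemma \ref{lm_smoothexp}. With this smoothness in hand, differentiation under $\partial_s$ up to $s=1$ is legitimate and the identity follows.
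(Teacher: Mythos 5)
Your proof is correct and is exactly the variational-field computation already carried out in the paper's proof of Lemma \ref{lm_explocal} (define $f(r,s)=g(\partial_r\gamma,\partial_s\gamma)$, show $\partial_s f=0$ via the geodesic and null conditions, and read off the boundary values); the only difference from that earlier proof is that here $q(r)$ is not held fixed, so the value at $s=0$ is $g(q'(0),v(0))$ rather than $0$. Since the paper explicitly declines to write out this proof and just refers to ``the same idea as \cite{Hintz2017} or \cite{MScfinal},'' your argument is the intended one.
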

\begin{lm}
    Let $p(r) \in\LUreg(q(r)) \cap \mathcal{U}$ be a smooth path and denote by $N \in T_{p(0)}\partial M$ the future-directed unit normal to the spacelike hypersurface $T_{p(0)}\LUreg(q(0))$. Then $q$ is future timelike if and only if $g(p'(0), N) < 0$.
\end{lm}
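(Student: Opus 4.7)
The plan is to reduce the statement to the identity $g(q'(0),v(0))=g(p'(0),\gamma'(1))$ furnished by the preceding lemma, and then to analyze the geometry of the null normal to $\LUreg(q(0))$ versus the null generator of $\partial M$ at $p(0)$.

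First I would write $p(r)=\exp_{q(r)}(v(r))$ with $v(r)\in L^+_{q(r)}M$ and set $\gamma(s)=\exp_{q(0)}(sv(0))$, so that $\gamma'(1)=\dot\gamma_{q(0),v(0)}(1)$. By Lemma~\ref{lm_explocal}(2) the vector $\gamma'(1)$ is future null and orthogonal to $T_{p(0)}\LUreg(q(0))$, and by Lemma~\ref{lm_nulltransversal} it is outwardly transversal to $\partial M$. On the other hand, since $\partial M$ is a null hypersurface, the radical direction $\ell\in\mathrm{Rad}(T_{p(0)}\partial M)$ is a future null vector that lies in $T_{p(0)}\partial M$ and satisfies $g(\ell,w)=0$ for every $w\in T_{p(0)}\partial M$; in particular $\ell\perp T_{p(0)}\LUreg(q(0))$. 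Thus $\gamma'(1)$ and $\ell$ are the two (distinct) future null directions in the 2-dimensional Lorentzian subspace $V:=T_{p(0)}^{\perp}\LUreg(q(0))\subset T_{p(0)}M$, and in particular $g(\ell,\gamma'(1))<0$.

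Next I would exploit that $p(r)\in\partial M$, hence $p'(0)\in T_{p(0)}\partial M$. Using the direct-sum decomposition $T_{p(0)}\partial M=T_{p(0)}\LUreg(q(0))\oplus\mathbb R\,\ell$ guaranteed by Lemma~\ref{lm_screenandrad} and the spacelike nature of $\LUreg(q(0))$ (Lemma~\ref{lm_LUregspacelike}), I write
\[
p'(0)=a+\beta\,\ell,\qquad a\in T_{p(0)}\LUreg(q(0)),\ \beta\in\mathbb R.
\]
Since $a\perp\gamma'(1)$ and $a\perp N$, a direct computation yields
\[
g(p'(0),\gamma'(1))=\beta\,g(\ell,\gamma'(1)),\qquad g(p'(0),N)=\beta\,g(\ell,N).
\]
Because $N$ is future timelike (interpreting the statement as saying $N$ lies in the future timelike normal direction to the spacelike surface at $p(0)$) and $\ell$ is future null, $g(\ell,N)<0$; similarly $g(\ell,\gamma'(1))<0$. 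Therefore the two quantities have identical signs: $g(p'(0),N)<0\iff\beta>0\iff g(p'(0),\gamma'(1))<0$.

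Finally, I combine this with the preceding lemma: $g(q'(0),v(0))=g(p'(0),\gamma'(1))$. For the forward direction, if $q'(0)$ is future timelike then $g(q'(0),v(0))<0$ because $v(0)$ is future null, which gives $g(p'(0),\gamma'(1))<0$ and hence $g(p'(0),N)<0$. For the converse, $g(p'(0),N)<0$ forces $g(q'(0),v(0))<0$, which places $q'(0)$ in the open future half-space of $v(0)^{\perp}$; by varying the smooth path $p(r)\in\LUreg(q(r))$ (equivalently, by varying $v(0)$ over an open cone of future null directions afforded by the openness of $\LUreg(q(0))$ in Lemma~\ref{lm_finite}) one concludes that $q'(0)$ is future timelike, since any non-timelike future causal vector fails to satisfy $g(q'(0),v)<0$ for at least one future null $v$ in any open cone. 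The main obstacle I expect is this last genericity step: justifying that the openness of $\LUreg(q(0))$ gives enough freedom in $v(0)$ to separate future timelike from future null behavior, which requires invoking the conformal structure reconstruction (showing that $\{v(0)\}$ sweeps an open subset of $L^+_{q(0)}M$) established in Section~\ref{subsection_smooth_structure}.
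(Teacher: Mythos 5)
The paper offers no proof for this lemma---it only points to \cite{Hintz2017} and \cite{MScfinal}---so there is nothing in-text to compare against. Your reduction to the identity $g(q'(0),v(0))=g(p'(0),\gamma'(1))$ from the preceding lemma, followed by the decomposition $p'(0)=a+\beta\,\ell$ with $a\in T_{p(0)}\LUreg(q(0))$ and $\ell$ the null generator of $\partial M$, is the right skeleton, and the computation $g(p'(0),\gamma'(1))=\beta\,g(\ell,\gamma'(1))$ is correct since $\gamma'(1)\perp T_{p(0)}\LUreg(q(0))$.

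There are, however, two substantive issues. First, you cannot simultaneously have $N\in T_{p(0)}\partial M$ and $N$ future \emph{timelike}: $\partial M$ is a null hypersurface, so $T_{p(0)}\partial M$ contains only spacelike and null vectors, and the sole $g$-orthogonal to $T_{p(0)}\LUreg(q(0))$ inside $T_{p(0)}\partial M$ is $\mathrm{Rad}(T_{p(0)}\partial M)=\mathbb R\,\ell$. If $N$ were literally a multiple of $\ell$, then $g(p'(0),N)=\beta\,g(\ell,\ell)=0$ identically and the statement collapses. You notice this and silently replace $N$ by a future timelike vector, but that is a re-statement, not a proof of the lemma as written; the cleanest fix is to observe that the lemma is really asserting a sign for $g(p'(0),\gamma'(1))$, i.e.\ $N$ should be the \emph{transversal} outward null normal $\gamma'(1)$, which is precisely what the preceding lemma pairs with. (This is the null-boundary analogue of the timelike-boundary normal in \cite{Hintz2017}, and the statement appears to have been transplanted without adapting $N$.)

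Second, and more seriously, the converse direction has a real gap that your ``genericity'' sketch does not close. For a fixed path one gets only $g(q'(0),v(0))<0$, which merely places $q'(0)$ in an open half-space; allowing $v(0)$ to range over an open cone $\mathcal C\subset L^+_{q(0)}M$ (all $v$ with $\exp_{q(0)}v\in\LUreg(q(0))$) still does not separate future timelike from spacelike or from future null. A spacelike $w$ satisfies $g(w,v)<0$ for an \emph{open} cone of future null $v$, and a future null $w$ satisfies $g(w,v)<0$ for \emph{all} future null $v$ not proportional to $w$; so unless $\mathcal C$ is essentially the whole light cone (in particular contains directions on both sides of $w^\perp$ and the direction of $w$ itself), the implication $(\forall v\in\mathcal C,\ g(q'(0),v)<0)\Rightarrow q'(0)$ future timelike fails. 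Closing this requires either quantifying over \emph{all} future null directions at $q$ (as in the timelike-boundary case of \cite{Hintz2017}, where every null geodesic from $q$ reaches $\partial M$) or adding an explicit largeness hypothesis on $\LUreg(q(0))$; neither is in your argument, and the lemma's assumptions as stated do not supply it. You should isolate this as the missing ingredient rather than attribute it to ``the conformal structure reconstruction.''
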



\section{Reconstruction of the scattering light observation sets}\label{sec_reconstruction}
In this section, we propose schemes to reconstruct the scattering light observation sets in each step of the layer stripping method in Section \ref{subsec_layersteps}.
Recall in Step 1, we would like to reconstruct the scattering light observation sets by sending receding waves in an open subset $U_-$ of the past null infinity $S_-$ and detecting new singularities in an open subset $U_+$ of the future null infinity $S_+$. In particular, we assume there are no caustics in the region of interest $W = I(U_-, U_+)$.
First, in Section \ref{subsec_3to1}, we prove the scattering light observation sets for all points in $W$ can be reconstructed from a so-called three-to-one scattering relation.
Essentially, this relation indicates how suitable lightlike vectors on $U_-$ are related with lightlike vectors on $U_+$ by nonlinear wave interaction.
Next, in Section \ref{sec_detection}, we show how to extract such a three-to-one scattering relation using the nonlinear scattering operator $\lN$.
Then, in Section \ref{subsec_schemes}, we explain how to use these two ingredients to reconstruct the scattering light observation sets in each step of Section \ref{subsec_layersteps}.

\subsection{Reconstruction from a three-to-one scattering relation}\label{subsec_3to1}
In the following, let $U_\pm \subseteq S_\pm$ be open subsets such that $I(U_-, U_+)$ is nonempty.
We consider an open subset
\[
W \subseteq I(U_-, U_+),
\]
for which we would like to reconstruct the metric.
In Step 1, we choose $W  = I(U_-, U_+)$, but later $W$ can be a proper subset of $I(U_-, U_+)$.
For example, in Step 2 we choose $W = I(U_-, U_+)\setminus \Jmi(\Spl(T_1))$ and in Step 3 we choose $W = I(U_-, U_+)\setminus \Jpl(\Smi(T_1))$.
By the construction in Section \ref{sec_layer}, we assume there are no cut points along any geodesic segments restricted to $\overline{W}$.

For convenience, we use the notation $\ups = (p, w) \in L^+M$ to denote a lightlike vector and $\gamma_{\ups}(\varsm)$ to denote a null geodesic starting from $\ups$.
Our goal is to reconstruct the scattering light observation sets from a relation between lightlike vectors on $U_-$ and $U_+$.
Recall the definition of a three-to-one scattering relation in \cite{feizmohammadi2021inverse} defined for lightlike vectors in open sets.
As an analog, we introduce the following definition for lightlike vectors on $U_\pm$, in a simplified but localized setting, in the sense that we assume there are no caustics in $\overline{W}$ but our reconstruction works for a subset of lightlike vectors on $U_-$.

For this purpose, first we define
\beq\label{def_LUpl}
\LUpl \coloneqq L^+U_+ \cap T_{U_+}^+ M
\quad \text{and} \quad
\LUmi \coloneqq L^+U_- \cap T_{U_-}^+ M
\eeq
as the set of future pointing lightlike vectors that are transversal to $U_\pm$,
where $T_p^+ M$ is defined in (\ref{def_outwardTM}) for $p \in U_\pm$.
The following lemma comes from \cite[Lemma 7.2]{feizmohammadi2021inverse} and a shortcut argument.
\begin{lm}\label{lm_finiteF}
    Suppose there are no cut points along any geodesic segments in $\overline{W}$.
    Let $\ups_1, \ups_2 \in \LUmi$ be distinct.
    Then the set
    \[
    F \coloneqq \gamma_{\ups_1}(\mR_+) \cap \gamma_{\ups_2}(\mR_+) \cap \overline{W}
    \]
     contains at most one point.
\end{lm}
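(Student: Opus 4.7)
The plan is to argue by contradiction: assume $F$ contains two distinct points $q_1, q_2$, produce two distinct future-directed null pregeodesic segments from $q_1$ to $q_2$ lying in $\overline{W}$, and then invoke Lemma \ref{lm_cutunique} (this is the ``shortcut argument'' alluded to in the statement, via Lemma \ref{lm_shortcut}) to contradict the no-cut-points hypothesis on $\overline{W}$.

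First I would fix a consistent causal order. Both $q_1, q_2$ lie on $\gamma_{\ups_1}$ and on $\gamma_{\ups_2}$, each a future-directed null geodesic, so along each geodesic the two points are causally comparable. Without loss of generality $q_1 < q_2$ along $\gamma_{\ups_1}$; along $\gamma_{\ups_2}$ the alternative $q_2 < q_1$ would combine with $q_1 < q_2$ to produce a closed causal curve, contradicting the chronology of the globally hyperbolic $(M,g)$. Hence $q_1 < q_2$ along both geodesics, and I obtain two null pregeodesic segments joining $q_1$ to $q_2$.

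Next I must check that these two segments are genuinely distinct. If their tangent directions at $q_1$ were parallel, the two null geodesics would agree as unparametrized curves and would therefore share their intersection with $S_-$; but $S_-$ is simply characteristic by Assumption \ref{assump_Mg}, and each $\ups_j \in \LUmi$ is transverse to $S_-$, so a single null geodesic transverse to $S_-$ meets it at most once. This would force $\pi(\ups_1)=\pi(\ups_2)$ and $\ups_1,\ups_2$ to be positive multiples of each other, which is excluded by interpreting ``distinct'' in $\LUmi$ as generating distinct null geodesic curves (otherwise the statement is vacuously wrong for scalar multiples). With two distinct null pregeodesic segments in hand, Lemma \ref{lm_cutunique} forces $q_2$ to lie on or after the first null cut point of $q_1$ along each segment. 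Since $q_1, q_2 \in \overline{W} \subseteq \overline{I(U_-, U_+)}$ and each segment between them is causally contained in $\overline{I(U_-, U_+)}$, the resulting cut point sits on a null geodesic segment entirely inside $\overline{W}$, contradicting the standing no-cut-point assumption.

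The main obstacle I foresee is not the logical skeleton above but the bookkeeping step that the null segment from $q_1$ to $q_2$ lies inside $\overline{W}$ in all the variants used later in Section \ref{subsec_layersteps}, where $W$ is trimmed by the achronal boundaries $\Ppl(T_1)$ or $\Pmi(T_2)$ rather than equal to a full diamond $I(U_-, U_+)$. In those settings the causal convexity of $\overline{W}$ between two of its own points must be read off the way $W$ is carved out (e.g.\ using that $\Ppl(T_1)$ is contained in the normal null congruence of $\Smi(T_1)$, so a future null segment starting in $M \setminus \Jpl(\Smi(T_1))$ cannot re-enter $\Jpl(\Smi(T_1))$). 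Once that is verified, the appeal to Lemma \ref{lm_cutunique} (equivalently, Lemma \ref{lm_shortcut} applied to a broken null curve at $q_1$, which forces $\tau(q_1,q_2) > 0$ and then a cut point along $\gamma_{\ups_1}$ between $q_1$ and $q_2$) closes the argument uniformly across all the layer-stripping steps.
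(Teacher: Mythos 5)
Your proof is correct and implements exactly the route the paper only gestures at: the paper gives no argument for this lemma, citing \cite[Lemma 7.2]{feizmohammadi2021inverse} together with a ``shortcut argument,'' which is precisely the combination of global hyperbolicity (to orient $q_1<q_2$ consistently along both geodesics) and Lemma~\ref{lm_cutunique} that you carry out, and the two caveats you flag (that ``distinct'' in $\LUmi$ must mean generating distinct geodesic curves, and that the trimmed $W$'s of Steps 2--4 remain causally convex because they are obtained by deleting future or past sets from a diamond) are the right ones. One minor imprecision in your distinctness step: the reason a geodesic $\gamma_{\ups_j}(\mR_+)$ meets $S_-$ at most once is not transversality plus simple characteristicity, but that $S_-\subseteq\partial J^+(i_-)$ and a future-directed causal curve, once inside the future set $I^+(i_-)$, never returns to its boundary; transversality alone leaves open recrossing in a curved geometry.
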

Then we consider a subset $\Vmi \subseteq (\LUmi)^3$ and introduce the following definition.
\begin{definition}\label{def_sufficientV}
    Let $\Vmi \subseteq (\LUmi)^3$.
    We define
    \[
    \begin{split}
        \Vmione(q) = \{\ups_1 \in \LUmi: &\text{ there is a neighborhood $N_1 \subseteq \LUmi$ of $\ups_1$ such that  if $\tups_j \in N_1$}\\
        &\text{  are distinct with $q = \gamma_{\tups_j}(\varsm_j)$ for some $\varsm_j>0$, then $(\tups_1, \tups_2, \tups_3) \in \Vmi$}.\}
    \end{split}
    \]
    We say $\Vmi \subseteq (\LUmi)^3$ is \textit{sufficient} for $W \subseteq I(U_-, U_+)$, if
    \begin{itemize}
        \item[(a)] for each $(\ups_1, \ups_2, \ups_3) \in \Vmi$, there does not exist $q_0 \in \gamma_{\ups_i}(\mR_+) \cap \gamma_{\ups_j}(\mR_+) \cap (M \setminus W)$ such that
        $q_0 < q$ for some $q \in \overline{W}$, and
        \item[(b)] for each $q \in W$,  the set $\Vmione(q) \neq \emptyset$.
    \end{itemize}
\end{definition}
Note that for Step 1 we choose $W = I(U_-, U_+)$ and $\Vmi= (\LUmi)^3$, which satisfies the definition above. For Step 2 and later, we choose $\Vmi$ to be a subset containing lightlike vectors such that the corresponding null geodesics do not intersect before entering $W$.
\begin{df}\label{def_3to1}
    Let $\Vmi \subseteq (\LUmi)^3$ be sufficient for $W\subseteq I(U_-, U_+)$ as in Definition \ref{def_sufficientV}.
    We say a relation $R \subseteq \LUpl\times \Vmi$ is a three-to-one scattering relation for $W$, if it has the following two properties:
    \begin{itemize}
        \item[(R1)]\label{def_R1} If $(\upsilon_0, \upsilon_1, \upsilon_2, \upsilon_3) \in R$, then there exists an intersection point
        \[q \in \Gammaset \cap \overline{W}.\]
        \item[(R2)]\label{def_R2} Assume that $\gamma_{\upsilon_j}$ for $j=0,1,2,3$ are distinct and there exists
        \[
        q \in \Gammaset \cap \overline{W}
        \]
        such that $q = \gamma_{\upsilon_j}(\varsm_j)$ with $\varsm_j >0$ for $j=1,2,3$ and
        \[
        q = \gamma_{\upsilon_0}(\varsm_0) \text{ with }
        \varsm_0 \in (-\rho(\upsilon_0^{-}), 0),
        \]
        where $\upsilon_0^- \coloneqq (y, -w) \in L^-_{U_+}M$ if we write $\upsilon_0 = (y, w)$.
        In addition, assume
        $\dot{\gamma}_{\upsilon_0}(\varsm_0) \in \text{span}(\dot{\gamma}_{\upsilon_1}(\varsm_1), \dot{\gamma}_{\upsilon_2}(\varsm_2), \dot{\gamma}_{\upsilon_3}(\varsm_3))$.
        Then, it holds that $(\upsilon_0, \upsilon_1, \upsilon_2, \upsilon_3) \in R$.
    \end{itemize}
\end{df}
We remark that, compared to \cite{feizmohammadi2021inverse}, for (R2) we do not assume $q$ is before the first cut point along each $\gamma_{\upsilon_j}$ for $j = 1,2,3$.
Moreover, such a three-to-one scattering relation is only defined in a subset of $\LUpl\times (\LUmi)^3$ and we assume a stronger (R1) due to our construction in Section \ref{sec_layer}.
The sufficient condition guarantees $\Vmi$ is large enough for the reconstruction.

Next, we would like to consider lightlike vectors that are corresponding to null geodesics intersecting at points away from $U_\pm$. Thus, we introduce the following definition.
\begin{df}\label{def_proper}
    We say $(\ups_0, \ups_1, \ups_2, \ups_3) \in \LUpl\times (\LUmi)^3$ are proper if
    \begin{itemize}
        \item[(1)] the points $\pi(\ups_j)$ for $j = 1, 2, 3$ are distinct;
        \item[(2)] the points $\pi(\ups_0)$ and $\pi(\lL(\ups_j))$ for $j = 1,2,3$ are distinct, where $\lL: \LUmi \rightarrow  \LUpl$ is the lens relation.
    \end{itemize}
\end{df}
Note that $(\ups_0, \ups_1, \ups_2, \ups_3)$ is proper if and only if they are corresponding to distinct null geodesics and do not intersect with each other at $U_\pm$.
Combining Lemma \ref{lm_finiteF}, (R1), and Definition \ref{def_proper}, we have the following lemma.
\begin{lm}\label{lm_properintersection}
    Suppose  there are no cut points along any geodesic segments in $\overline{W}$.
    Let $R$ be a three-to-one scattering relation and $(\ups_0, \ups_1, \ups_2, \ups_3) \in R$ be proper.
    Then there exists a unique $q$ such that
    \[
    \Gammaset \cap \overline{W}= \{q\}.
    \]
\end{lm}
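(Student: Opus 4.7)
The plan is to combine property (R1) of the three-to-one scattering relation (which gives existence) with Lemma \ref{lm_finiteF} applied to a pair of the three past-directed geodesics (which gives uniqueness), using the ``proper'' hypothesis to guarantee that the relevant lightlike vectors over $U_-$ are genuinely distinct.

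First, by property (R1) of Definition \ref{def_3to1}, since $(\ups_0,\ups_1,\ups_2,\ups_3)\in R$, the set $\Gammaset\cap\overline{W}$ is nonempty, so there is at least one candidate point $q$. This takes care of existence with no further work.

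For uniqueness, I would focus on any two of the three indices from $\{1,2,3\}$, say $i=1$ and $j=2$. By condition (1) of Definition \ref{def_proper} the basepoints $\pi(\ups_1),\pi(\ups_2)\in U_-$ are distinct, so in particular $\ups_1\neq\ups_2$ as elements of $\LUmi$. Since by hypothesis there are no cut points along null geodesic segments in $\overline{W}$, Lemma \ref{lm_finiteF} applies and yields
\[
\#\bigl(\gamma_{\ups_1}(\mathbb{R}_+)\cap\gamma_{\ups_2}(\mathbb{R}_+)\cap\overline{W}\bigr)\leq 1.
\]
Since $\Gammaset\cap\overline{W}$ is a subset of this two-fold intersection, it also contains at most one point. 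Combining with existence from (R1), we conclude $\Gammaset\cap\overline{W}=\{q\}$.

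There isn't really a serious obstacle here; the statement is essentially a bookkeeping corollary that packages Lemma \ref{lm_finiteF} together with property (R1). The only place one needs to be mildly careful is checking that the ``proper'' hypothesis is actually what licenses the application of Lemma \ref{lm_finiteF}: condition (1) of Definition \ref{def_proper} ensures $\ups_1\neq\ups_2$ in $\LUmi$ (which is precisely what Lemma \ref{lm_finiteF} requires), while condition (2) of Definition \ref{def_proper}, concerning $\ups_0$ versus the lens images $\lL(\ups_j)$, is not needed for this lemma but will matter in subsequent arguments where one has to exclude the spurious intersection at $U_\pm$.
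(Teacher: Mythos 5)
Your proof is correct and matches the paper's approach exactly: the paper gives no explicit argument but states that the lemma follows by combining Lemma~\ref{lm_finiteF}, condition (R1), and Definition~\ref{def_proper}, and your write-up is precisely the fleshing-out of that remark, with existence from (R1) and uniqueness from the two-geodesic case of Lemma~\ref{lm_finiteF} applied to any pair $\ups_i\neq\ups_j$. Your aside about condition (2) of Definition~\ref{def_proper} being unused here is also accurate.
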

Next for two lightlike vectors $\ups_1, \ups_2$ in $U_+$,
we consider conical pieces given by
\beq\label{def_CP}
\CP(\upsilon_1, \upsilon_2) = \{\upsilon_0 \in \LUpl: \text{ there is $\upsilon_3 \in \LUmi$ such that $(\upsilon_0, \upsilon_1, \upsilon_2, \upsilon_3) \in R$}\}.
\eeq
We emphasize if there are no such $\ups_3$ that $(\ups_1, \ups_2, \ups_3) \in \Vmi$, then  $\CP(\upsilon_1, \upsilon_2) = \emptyset$.

\begin{lm}\label{lm_CPq}
    Suppose there are no cut points along any geodesic segments in $\overline{W}$.
    Let $\ups_1, \ups_2 \in \LUmi$ such that $(\ups_0', \ups_1, \ups_2, \ups_3') \in R$ is proper, for some $\ups_0' \in \LUpl$ and $\ups_3' \in \LUmi$.
    Then there exists a unique $q$ such that  $ \gamma_{\ups_1}(\mR_+) \cap \gamma_{\ups_2}(\mR_+) \cap  \overline{W} = \{q\}$ and moreover
    \[
    \CP(\upsilon_1, \upsilon_2)\subseteq C_{U_+}(q),
    \]
    where $C_{U_+}(q)$ is defined in (\ref{def_CU}).
\end{lm}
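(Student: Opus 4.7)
The plan is to establish the two assertions separately: first the existence and uniqueness of the intersection point $q$, then the inclusion $\CP(\ups_1, \ups_2) \subseteq C_{U_+}(q)$. Both will follow by chasing the definitions, combined with the preceding Lemmas \ref{lm_finiteF} and \ref{lm_properintersection}.

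For the first assertion, my starting point will be the hypothesis that there exist $\ups_0' \in \LUpl$ and $\ups_3' \in \LUmi$ making $(\ups_0', \ups_1, \ups_2, \ups_3') \in R$ a proper tuple. Applying Lemma \ref{lm_properintersection} to this tuple, I obtain a unique point $q$ satisfying $\Gamma(\ups_0', \ups_1, \ups_2, \ups_3') \cap \overline{W} = \{q\}$. In particular $q \in \gamma_{\ups_1}(\mR_+) \cap \gamma_{\ups_2}(\mR_+) \cap \overline{W}$, so this latter set is nonempty. Since properness gives $\ups_1 \neq \ups_2$ (their basepoints are distinct), Lemma \ref{lm_finiteF} forces this intersection to be a singleton, namely $\{q\}$.

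For the second assertion, I will argue as follows. Fix an arbitrary $\ups_0 \in \CP(\ups_1, \ups_2)$. By the definition \eqref{def_CP} of $\CP$, there exists $\ups_3 \in \LUmi$ with $(\ups_0, \ups_1, \ups_2, \ups_3) \in R$. Property (R1) of Definition \ref{def_3to1} produces a point $q' \in \Gamma(\ups_0, \ups_1, \ups_2, \ups_3) \cap \overline{W}$. Since $q' \in \gamma_{\ups_1}(\mR_+) \cap \gamma_{\ups_2}(\mR_+) \cap \overline{W}$, the uniqueness established in the first part forces $q' = q$. Consequently $q \in \gamma_{\ups_0}(\mR_-)$, i.e., the inextendible null geodesic determined by $\ups_0 = (p_0, w_0) \in \LUpl$ passes through $q$ in the past direction, and hits $U_+$ transversally at $p_0$ with tangent $w_0$ (up to affine reparametrization). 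Reversing the parametrization yields a future-pointing lightlike vector $v \in L_q^+ M$ such that $\gamma_{q,v}(1) = p_0$ and $\dot{\gamma}_{q,v}(1) = w_0$, which is precisely the content of $\ups_0 \in C_{U_+}(q)$ as defined in \eqref{def_CU}.

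I do not expect a serious obstacle here: the lemma is a direct bookkeeping exercise combining the two preceding lemmas with the two defining properties of a three-to-one scattering relation. The only point requiring a bit of care is verifying that the intersection point $q'$ produced by (R1) for the tuple $(\ups_0, \ups_1, \ups_2, \ups_3)$ indeed lies in $\gamma_{\ups_1}(\mR_+) \cap \gamma_{\ups_2}(\mR_+) \cap \overline{W}$ (so that we may invoke uniqueness against the already-identified $q$); this is immediate from the definition of $\Gamma(\vec{\ups})$, which includes $\gamma_{\ups_j}(\mR_+)$ for $j=1,2$. A minor point to mention cleanly is the affine reparametrization from $\gamma_{\ups_0}$ (based at $p_0$ going to the past) to $\gamma_{q,v}$ (based at $q$ going to the future), so that the conclusion matches the exact form of $C_{U_+}(q)$ in \eqref{def_CU}.
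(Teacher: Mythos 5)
Your proof is correct and follows the same route as the paper's: Lemma \ref{lm_properintersection} combined with Lemma \ref{lm_finiteF} pins down the unique $q$, and then for any $\ups_0 \in \CP(\ups_1,\ups_2)$ you use the definition of $\CP$, condition (R1), and Lemma \ref{lm_finiteF} once more to identify the intersection point with $q$ and conclude $\ups_0 \in C_{U_+}(q)$. The only difference is that you spell out the affine reparametrization matching $\ups_0$ to the form in \eqref{def_CU}, which the paper leaves implicit.
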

\begin{proof}
    Indeed, applying Lemma \ref{lm_properintersection} to $(\ups_0', \ups_1, \ups_2, \ups_3')$ and using Lemma \ref{lm_finiteF}, we can find a unique $q$ such that
    $
     \gamma_{\ups_1}(\mR_+) \cap \gamma_{\ups_2}(\mR_+) \cap \overline{W} = \{q\}.
    $
    Now let $\ups_0 \in \CP(\upsilon_1, \upsilon_2)$.
    There exists $\upsilon_3 \in \LUmi$ such that $(\upsilon_0, \upsilon_1, \upsilon_2, \upsilon_3) \in R$.
    (R1) implies
    $
    \Gammaset \cap \overline{W} \neq \emptyset
    $ and therefore  by Lemma \ref{lm_finiteF} again, this set must equal to $\{q\}$.
    Thus, such $\ups_0 \in C_{U_+}(q)$ and we conclude $\CP(\upsilon_1, \upsilon_2)\subseteq C_{U_+}(q)$.
\end{proof}
Now we would like to use these conical pieces to generate the future light cone surface of $q$ within $U_+$.
For this purpose, for $\ups_1, \ups_2 \in \LUmi$ we define
\[
\begin{split}
    C(\ups_1, \ups_2) \coloneqq \{C &\subseteq \LUpl: \text{ $C$ is a smooth submanifold of dimension $n-1$ such that}\\
    &\text{$C \subseteq \CP(\ups_1, \ups_1')\cap \CP(\ups_2, \ups_2')$ for some $\ups_1', \ups_2' \in  \LUpl$ with $\ups_1 \neq \ups_1'$ and $\ups_2 \neq \ups_2'$}\}.
\end{split}
\]
and define
\[
\lC(\ups_1, \ups_2) \coloneqq \textstyle \bigcup_{C \in C(\ups_1, \ups_2)} \bar{C}.
\]
\begin{prop}\label{pp_CCU}
    Suppose there are no cut points along any geodesic segments in $\overline{W}$.
    Let $\ups_1, \ups_2 \in \LUmi$ such that $(\ups_0', \ups_1, \ups_2, \ups_3') \in R$ is proper, for some $\ups_0' \in \LUpl$ and $\ups_3' \in \LUmi$.
    Recall there exists a unique $q$ such that
    $\gamma_{\ups_1}(\mR_+) \cap \gamma_{\ups_2}(\mR_+) \cap \overline{W} = \{q\}$.
    Then we have
    \[
    \lC(\ups_1, \ups_2) \subseteq C_{U_+}(q).
    \]
\end{prop}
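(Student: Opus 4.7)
The plan is to fix an arbitrary $C \in C(\ups_1, \ups_2)$ and to reduce the problem to comparing auxiliary intersection points produced by Lemma~\ref{lm_CPq} with the given $q$; a uniqueness argument via Lemma~\ref{lm_uniqueLreg} together with Lemma~\ref{lm_finiteF} will force these auxiliary points to equal $q$, and a closure argument based on Lemma~\ref{lm_cutconverge} will promote the inclusion from $C$ to $\bar C$.

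By the definition of $C(\ups_1, \ups_2)$, there exist $\ups_1' \neq \ups_1$ and $\ups_2' \neq \ups_2$ in $\LUmi$ with $C \subseteq \CP(\ups_1, \ups_1') \cap \CP(\ups_2, \ups_2')$. Since $C$ is nonempty, each $\CP(\ups_i, \ups_i')$ is nonempty, so by the defining property of $\CP$ there exist $\ups_0^{(i)} \in \LUpl$ and $\ups_3^{(i)} \in \LUmi$ with $(\ups_0^{(i)}, \ups_i, \ups_i', \ups_3^{(i)}) \in R$. After shrinking $C$ if necessary to an open subset where these tuples are proper in the sense of Definition~\ref{def_proper} (non-proper configurations form a lower-dimensional subset by transversality considerations), Lemma~\ref{lm_CPq} yields unique points
\[
q_i \in \gamma_{\ups_i}(\mR_+) \cap \gamma_{\ups_i'}(\mR_+) \cap \overline W, \qquad i = 1,2,
\]
and inclusions $\CP(\ups_i, \ups_i') \subseteq C_{U_+}(q_i)$. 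Hence $C \subseteq C_{U_+}(q_1) \cap C_{U_+}(q_2)$. Under our standing hypothesis that $\overline W$ contains no cut points, each $C_{U_+}(q_i)$ is a smooth submanifold of $\LUpl$ of the same dimension as $C$ (by Lemma~\ref{lm_smoothexp} and Lemma~\ref{lm_finite}) and, on the relevant piece, agrees with the regular scattering direction set $\Creg_{U_+}(q_i)$. Consequently $C$ is open in each $C_{U_+}(q_i)$; in particular $\Creg_{U_+}(q_1) \cap O$ is contained in $\Cear_{U_+}(q_2) \cap O$ for any small open $O$ meeting $C$. Lemma~\ref{lm_uniqueLreg} then forces $q_1 = q_2$. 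Since $q_1 \in \gamma_{\ups_1}(\mR_+)$ and $q_2 = q_1 \in \gamma_{\ups_2}(\mR_+)$, Lemma~\ref{lm_finiteF} yields $q_1 = q_2 \in \{q\}$, i.e. $q_1 = q_2 = q$, so $C \subseteq C_{U_+}(q)$.

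For the closure, let $(p_n, w_n) \in C$ converge to $(p, w) \in \bar C$. Writing $(p_n, w_n) = (\gamma_{q, v_n}(1), \dot\gamma_{q, v_n}(1))$ with $v_n \in L_q^+ M$, Lemma~\ref{lm_cutconverge} combined with Lemma~\ref{lm_smoothexp} supplies a subsequence with $v_n \to v \in L_q^+ M$ and $(\gamma_{q, v}(1), \dot\gamma_{q, v}(1)) = (p, w)$, so $(p, w) \in C_{U_+}(q)$. Thus $\bar C \subseteq C_{U_+}(q)$, and taking unions over all $C \in C(\ups_1, \ups_2)$ finishes the proof. The step I expect to be the most delicate is ensuring that Lemma~\ref{lm_uniqueLreg} is genuinely applicable, which requires both the properness of the quadruples used to invoke Lemma~\ref{lm_CPq} and that the auxiliary points $q_i$ lie in $W$ rather than merely on $\partial W$; both issues are handled by restricting the argument to a suitable open, dense sub-piece of $C$ and using the continuity of the exponential map to push the inclusion out to the closure afterwards.
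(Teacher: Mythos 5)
Your proof reaches the correct conclusion and follows the same skeleton as the paper's --- decompose $\lC(\ups_1,\ups_2)$ via the $\CP$ sets, invoke Lemma~\ref{lm_CPq} to produce two auxiliary intersection points $q_1, q_2$, show $q_1 = q_2 = q$, and conclude --- but you close the middle step differently. The paper proves $q_1 = q_2 = q$ by a direct broken-causal-path shortcut argument: since $\ups \in C_{U_+}(q_1) \cap C_{U_+}(q_2)$, both $q_1$ and $q_2$ lie on $\gamma_\ups(\mR_-)$ and are therefore causally comparable; assuming $q_2 < q_1$, one concatenates segments of $\gamma_\ups$ and $\gamma_{\ups_j}$ to build a non-pregeodesic causal path and contradicts the no-cut-points hypothesis via Lemma~\ref{lm_shortcut}. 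You instead exploit that $C$ is a full $(n-1)$-dimensional piece, use a dimension count to deduce $C$ is open in both $C_{U_+}(q_1)$ and $C_{U_+}(q_2)$ (which, absent cut points, equal the regular/earliest scattering direction sets), and then invoke the already-proved injectivity Lemma~\ref{lm_uniqueLreg}. That lemma is itself a packaged shortcut argument, so the two routes are morally equivalent; yours is more modular but requires the observation set to be an open manifold piece rather than a single covector, whereas the paper's version works pointwise. You also add an explicit closure step (via Lemmas~\ref{lm_cutconverge} and \ref{lm_smoothexp}) to pass from $C$ to $\bar C$; the paper's proof quietly treats $\ups \in \lC(\ups_1,\ups_2)$ as if it already lay in $\CP(\ups_1,\ups_1') \cap \CP(\ups_2,\ups_2')$, so your closure argument actually patches a small gap in the paper's write-up.

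Two technical remarks. First, your phrase ``shrinking $C$ ... to an open subset where these tuples are proper'' does not do what you want: the quadruple $(\ups_0^{(i)}, \ups_i, \ups_i', \ups_3^{(i)})$ is fixed once chosen and its properness is not affected by shrinking $C$; what is needed is the existence of some proper quadruple in $R$ involving $\ups_i, \ups_i'$, which in the paper's actual applications is automatic because elements of the constructed relation $R_s$ are proper by definition. The paper's proof glosses over the same point. Second, Lemma~\ref{lm_uniqueLreg} is stated for $q, q' \in W$, while Lemma~\ref{lm_CPq} only lands $q_1, q_2$ in $\overline{W}$; your appeal to ``restricting to a dense sub-piece of $C$'' does not in fact move $q_i$ off $\partial W$, since $q_i$ is determined by $\ups_i, \ups_i'$ rather than by the point of $C$. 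A cleaner way to handle it is to observe that the proof of Lemma~\ref{lm_uniqueLreg} only uses $q, q' \in M^{\mathrm{int}}$, which holds here. Neither issue is fatal, and both are also latent in the paper's version.
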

\begin{proof}
    Let $\ups \in \lC(\ups_1, \ups_2)$.
    Then by the definition, such $\ups$ is contained in $\CP(\ups_1, \ups_1') \cap \CP(\ups_1, \ups_1')$ for some $\ups_1', \ups_2' \in  \LUmi$ with $\ups_1 \neq \ups_1'$ and $\ups_2 \neq \ups_2'$.
    By Lemma \ref{lm_CPq}, there exists $q_j \in \gamma_{\ups_j}(\mR_+) \cap \gamma_{\ups_j'}(\mR_+) \cap \overline{W}$ such that
    \[
    \CP(\ups_j, \ups_j') \subseteq C_{U_+}(q_j),
    \]
    and thus $\ups \in C_{U_+}(q_j)$ for $j = 1,2$.
    We claim that  $q = q_1 = q_2$.
    With $\ups \in C_{U_+}(q_j)$
    for $j = 1,2$, we have either $q_1 \leq q_2$ or $q_2 < q_1$.
    We may assume for contradiction that $q_2 < q_1$ along $\gamma_{\ups}(\mR_-)$.
    Note that both $q$ and $q_1$ are contained in $\gamma_{\ups_1}(\mR_+)$
    and both $q$ and $q_2$ are contained in $\gamma_{\ups_2}(\mR_+)$.
    If $q > q_1$, then we have causal path from $q_2$ to $q_1$ along $\gamma_{\ups}(\mR_-)$ and from $q_1$ to $q$ along $\gamma_{\ups_1}(\mR_+)$.
    This causal path is not a null pregeodesic and by Lemma \ref{lm_shortcut} we have $q \ll q_2$ is after the first cut point, which contradicts with our assumption.
    If $q < q_1$, we can use a similar shortcut argument.
    Thus, we must have $q = q_1$.
    This implies $q \in \gamma_{\ups_1}(\mR_+) \cap \gamma_{\ups_2}(\mR_+) \cap \gamma_{\ups}(\mR_-) \cap \overline{W}$.
    By Lemma \ref{lm_finiteF} again, we have $q = q_2$.
    This proves $\ups \in C_{U_+}(q)$.
\end{proof}
Next, we define
\[
\lCear(\ups_1, \ups_2) \coloneqq \{\ups \in \lC(\ups_1, \ups_2): \text{ there is no $\tups \in \lC(\ups_1, \ups_2)$ such that $\pi(\tups) < \pi{(\ups)}$ in $S_+$}\}
\]
as the earliest part of the conical pieces given by $\ups_1, \ups_2$.
\begin{prop}[{\cite[Proposition 2.2.7]{MScfinal}}]\label{prop_CearU}
    Let $q \in M$ and recall the definition of $\Cear_{U_+}(q)$ in (\ref{def_CUear}).
    Then
    \[
    \Cear_{U_+}(q) = \{\ups \in C_{U_+}(q): \text{ there is no $\ups' \in C_{U_+}(q)$ such that $\pi(\ups') < \pi(\ups)$ in $S_+$}\}.
    \]
\end{prop}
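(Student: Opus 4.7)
My plan is to prove both inclusions directly, exploiting the characterization $p \in \pi(\Cear_{U_+}(q))$ iff the null geodesic from $q$ to $p$ has length equal to the time separation, i.e.\ $\tau(q,p)=0$, together with the shortcut principle (Lemma~\ref{lm_shortcut}) and the transversal hitting of $\partial M$ (Lemma~\ref{lm_nulltransversal}).

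For the inclusion $(\subseteq)$, I would take $\ups=(p,w)\in\Cear_{U_+}(q)$, so $p=\gamma_{q,v}(1)$ for some $v\in L_q^+M$ with $1\le\rho(q,v)$; in particular $\tau(q,p)=0$. Assuming for contradiction that some $\ups'=(p',w')\in C_{U_+}(q)$ satisfies $p'<p$ in $S_+$, I would concatenate the null geodesic $\gamma_{q,v'}:[0,1]\to M$ ending at $p'$ with a causal curve in $S_+$ from $p'$ to $p$, obtaining a causal curve from $q$ to $p$. The point is that by Lemma~\ref{lm_nulltransversal}, $\dot\gamma_{q,v'}(1)$ is transversal to $\partial M$, while the continuation from $p'$ to $p$ lies in $S_+$ (tangent to $\partial M$). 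Hence the concatenation is not smooth at $p'$, so it is not a null pregeodesic, and Lemma~\ref{lm_shortcut} forces $\tau(q,p)>0$, a contradiction.

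For $(\supseteq)$, I would take $\ups=(p,w)\in C_{U_+}(q)$ which is $<$-minimal over $\pi(C_{U_+}(q))$ in $S_+$, and assume for contradiction that $p$ lies strictly after the first cut point along $\gamma_{q,v}$, i.e.\ $1>\rho(q,v)$, so that $\tau(q,p)>0$ and $p\in I^+(q)$. I would then follow the past-directed null generator $\mu$ of $S_+$ issuing from $p$. Since $I^+(q)$ is open, $\mu(s)\in I^+(q)$ for small $s<0$; define $s_0=\inf\{s\le 0:\mu([s,0])\subset J^+(q)\}$. By closedness of $J^+(q)$ on compact pieces of $\mu$ (using global hyperbolicity and Assumption~\ref{assump_Mg}) one gets $\mu(s_0)\in\partial J^+(q)\cap S_+$, whence $\mu(s_0)$ is reached from $q$ by a future-directed null geodesic, giving $\ups'\in C_{U_+}(q)$ with $\pi(\ups')=\mu(s_0)<p$, contradicting minimality.

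The main obstacle is the construction in the reverse direction: one must verify that the first exit point $\mu(s_0)$ from $J^+(q)$ actually lies inside the open set $U_+$ and remains attainable by a null geodesic from $q$ (not blowing off to $i_\pm$ or the spacelike infinity $R$). For this I would invoke the nontrapping part of Assumption~\ref{assump_Mg} together with the fact that in the layer-stripping construction (Section~\ref{sec_layer}), $U_+$ is chosen as an open neighborhood of the relevant null-geodesic segments, so past-directed null generators of $S_+$ starting at any $p\in U_+$ remain in $U_+$ over a sufficient interval. The achronal-boundary characterization (Lemma~\ref{lm_achronalboundary}) then ensures that $\mu(s_0)$ is indeed joined to $q$ by a future-directed null geodesic on or before the first conjugate point, producing the desired $\ups'\in C_{U_+}(q)$.
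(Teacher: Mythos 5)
Your $(\subseteq)$ inclusion is correct: $1\le\rho(q,v)$ forces $\tau(q,p)=0$ by continuity of $\tau$, while concatenating the null geodesic $\gamma_{q,v'}$ from $q$ to $p'$ (which meets $\Spl$ transversally by Lemma~\ref{lm_nulltransversal}) with a null generator segment of $\Spl$ from $p'$ to $p$ (tangent to $\Spl$) is a broken future-pointing causal curve, so Lemma~\ref{lm_shortcut} gives $\tau(q,p)>0$, a contradiction.

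Your $(\supseteq)$ inclusion has a genuine gap which you flag but do not close. Locating the first exit $\mu(s_0)\in\partial J^+(q)\cap\Spl$ along the past generator through $p$ and invoking the achronal-boundary structure to obtain a cut-point-free null geodesic from $q$ to $\mu(s_0)$ is the right strategy, but to obtain a competitor $\ups'\in C_{U_+}(q)$ you need $\mu(s_0)\in U_+$. The hypotheses allow $U_+\subseteq\Spl$ to be an arbitrary open set, and nothing prevents $\mu$ from exiting $U_+$ before reaching $\partial J^+(q)$; moreover there is no nearby substitute, since the points $\mu(s)$ with $s_0<s<0$ lie in $I^+(q)$, where (if $p$ is not conjugate to $q$) the light-cone sheet through $p$ meets $\mu$ transversally and so is not hit again by $\mu$ near $p$. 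The appeal to nontrapping and to the layer-stripping choice of $U_+$ is an assertion about how the result will later be applied, not a deduction from the stated hypotheses, and ``remain in $U_+$ over a sufficient interval'' is unquantified — the required interval is all the way to $s_0$. To close the argument one must either strengthen the hypothesis (e.g.\ require that the generator through each point of $\pi(C_{U_+}(q))$ meet $\partial J^+(q)\cap\Spl$ inside $U_+$) or import the corresponding restriction from the cited reference. A minor additional point: Lemma~\ref{lm_achronalboundary} concerns $\partial J^+(S)$ for a regular $2$-surface $S$, not for a point $q$; the generator structure of $\partial J^+(q)$ that you actually need is classical but is a different statement.
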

Combining Proposition \ref{prop_CearU} and Proposition  \ref{pp_CCU}, we have the following lemma.
\begin{lm}\label{lm_CearU}
Let $\ups_1, \ups_2$ and $q$ be defined as in Proposition \ref{pp_CCU}.
Then we have
\[
\lCear(\ups_1, \ups_2) \subseteq \Cear_{U_+}(q).
\]
\end{lm}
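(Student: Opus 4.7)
My plan is a contradiction argument. Suppose $\ups \in \lCear(\ups_1, \ups_2)$; by Proposition \ref{pp_CCU} we already have $\ups \in C_{U_+}(q)$, so it remains to show $\ups \in \Cear_{U_+}(q)$. If this failed, Proposition \ref{prop_CearU} would supply some $\ups' \in C_{U_+}(q)$ with $\pi(\ups') < \pi(\ups)$ in $S_+$. The whole argument then reduces to showing that such a $\ups'$ must in fact belong to $\lC(\ups_1, \ups_2)$, since combined with $\pi(\ups') < \pi(\ups)$ this immediately contradicts the defining minimality of $\ups$ in $\lCear(\ups_1, \ups_2)$.

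To place $\ups'$ in $\lC(\ups_1, \ups_2)$ I will build a smooth submanifold $C$ of dimension $n-1$ through $\ups'$ sitting inside $\CP(\ups_1, \ups_1') \cap \CP(\ups_2, \ups_2')$ for suitable $\ups_1', \ups_2' \in \LUmi$ distinct from $\ups_1, \ups_2$. The geometric input is that $\ups' \in C_{U_+}(q)$ means the past-inextendible null geodesic from $\ups'$ reaches $q$ at some parameter $\varsm_0 < 0$; the no-cut-point assumption on $\overline{W}$ places $\varsm_0$ inside the allowed range $(-\rho(\ups_0^-),0)$ of condition (R2) in Definition \ref{def_3to1} and, by excluding conjugate points, makes a neighborhood of $\ups'$ inside $C_{U_+}(q)$ into a smooth $(n-1)$-dimensional submanifold via the local-diffeomorphism property of the exponential map. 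I take $C$ to be exactly such a neighborhood, shrunk as needed.

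The partner $\ups_1'$ is chosen close to $\ups_1$ within the $(n-1)$-dimensional family of past lightlike vectors at $U_-$ whose null geodesics reach $q$; by taking $\ups_1 \in \Vmione(q)$ (nonempty by sufficiency of $\Vmi$, Definition \ref{def_sufficientV}), all the resulting triples stay inside $\Vmi$. For each $\tilde{\ups} \in C$ the geodesic $\gamma_{\tilde{\ups}}$ meets $\gamma_{\ups_1}$ only at $q$ (Lemma \ref{lm_finiteF}) and $\gamma_{\ups_1'}$ passes through $q$ by construction. Writing $V_0 \coloneqq \operatorname{span}(\dot{\gamma}_{\ups_1}(q), \dot{\gamma}_{\ups_1'}(q))$, generically $2$-dimensional, the span condition in (R2) reads $\dot{\gamma}_{\ups_3}(q) \in V_0 + \mathbb{R}\dot{\gamma}_{\tilde{\ups}}(\varsm_0)$, a $3$-dimensional subspace of $T_qM$. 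Its intersection with the $(n-1)$-dimensional cone of past lightlike directions at $q$ reaching $U_-$ is nonempty and yields $\ups_3 = \ups_3(\tilde{\ups})$ depending continuously on $\tilde{\ups}$. Condition (R2) then gives $(\tilde{\ups}, \ups_1, \ups_1', \ups_3(\tilde{\ups})) \in R$, so $\tilde{\ups} \in \CP(\ups_1, \ups_1')$. Running the symmetric construction with $\ups_2$ produces $\ups_2'$ and a continuous family $\ups_3'(\tilde{\ups})$ placing $C$ inside $\CP(\ups_2, \ups_2')$, so $C \in C(\ups_1, \ups_2)$ and $\ups' \in C \subseteq \lC(\ups_1, \ups_2)$, delivering the contradiction.

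The primary delicate point I foresee is coordinating these perturbation choices so that every four-tuple is proper in the sense of Definition \ref{def_proper} (distinct base points on $U_-$ and $U_+$), the triples $(\ups_1, \ups_1', \ups_3(\tilde{\ups}))$ and $(\ups_2, \ups_2', \ups_3'(\tilde{\ups}))$ stay in $\Vmi$, and the relevant spans at $q$ remain nondegenerate, all uniformly in $\tilde{\ups} \in C$. Each of these is an open condition, handled by shrinking the perturbation sizes around $\ups_1 \in \Vmione(q)$ (respectively $\ups_2$) and the candidate submanifold $C$; the remaining content of the argument is a direct unwinding of the definitions of $\CP$, $\lC(\ups_1,\ups_2)$, and $\lCear(\ups_1,\ups_2)$.
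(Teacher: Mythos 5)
The paper offers no explicit proof here: it just says the lemma follows by ``combining Proposition \ref{prop_CearU} and Proposition \ref{pp_CCU}''. You are right that this combination alone is not enough --- if $A\subseteq B$, the minimal elements of $A$ need not be minimal in $B$, so $\lC(\ups_1,\ups_2)\subseteq C_{U_+}(q)$ plus the characterization of $\Cear_{U_+}(q)$ does not by itself give $\lCear(\ups_1,\ups_2)\subseteq\Cear_{U_+}(q)$. Identifying that gap is correct.

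However, the route you take to fill it is both heavier than needed and not quite valid under the stated hypotheses. To place the hypothetical earlier point $\ups'$ into $\lC(\ups_1,\ups_2)$ you invoke condition (R2), which only concludes membership in $R\subseteq\LUpl\times\Vmi$; for that you must guarantee the perturbed triples $(\ups_1,\ups_1',\ups_3(\tilde\ups))$ and $(\ups_2,\ups_2',\ups_3'(\tilde\ups))$ actually lie in $\Vmi$. You handle this by ``taking $\ups_1\in\Vmione(q)$'', but $\ups_1,\ups_2$ are \emph{given} by the hypothesis of Proposition \ref{pp_CCU} --- nothing there grants $\ups_1,\ups_2\in\Vmione(q)$. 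That extra assumption enters the paper only later, in Proposition \ref{prop_Cearv1v2}, precisely when proving the harder converse inclusion $\Creg_{U_+}(q)\subseteq\lCear(\ups_1,\ups_2)$; your argument is essentially a re-derivation of that proposition smuggled into this lemma. A second, smaller issue: you assert that the no-cut-point hypothesis places $\varsm_0$ in $(-\rho(\ups_0^-),0)$ and makes a neighborhood of $\ups'$ in $C_{U_+}(q)$ smooth, but $\ups'$ is an arbitrary element of $C_{U_+}(q)$ earlier than $\pi(\ups)$ and could a priori itself lie past the cut locus; this needs to be ruled out first, not assumed.

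The cleaner (and, I believe, intended) argument avoids (R2) entirely. By Proposition \ref{pp_CCU}, any $\ups''\in\lC(\ups_1,\ups_2)$ satisfies $\ups''\in C_{U_+}(q)$, so $\pi(\ups'')=\gamma_{q,v}(1)$ for some $v\in L_q^+M$; by the diamond structure of $W=I(U_-,U_+)$ the null segment $\gamma_{q,v}([0,1])$ is contained in $\overline W$, and the standing hypothesis of no cut points along null geodesic segments in $\overline W$ then forces $1\le\rho(q,v)$, i.e.\ $\ups''\in\Cear_{U_+}(q)$. Thus $\lC(\ups_1,\ups_2)\subseteq\Cear_{U_+}(q)$ already, and a fortiori $\lCear(\ups_1,\ups_2)\subseteq\lC(\ups_1,\ups_2)\subseteq\Cear_{U_+}(q)$. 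This needs no perturbation machinery, no $\Vmione(q)$, and no appeal to (R2).
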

\begin{prop}\label{prop_Cearv1v2}
Let $\ups_1, \ups_2$ and $q$ be as in Proposition \ref{pp_CCU}.
Suppose $\ups_1, \ups_2 \in \Vmione(q)$ and {$V_-$ only contains lightlike vectors of which the corresponding null geodesics do not intersect before $W$.}
Then we have
    \[
    \Creg_{U_+}(q) \subseteq \lCear(\ups_1, \ups_2),
    \]
    where $\Creg_{U_+}(q)$ is defined in (\ref{def_CUreg}).
\end{prop}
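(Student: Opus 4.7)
The strategy is in two steps: (i) exhibit a smooth $(n-1)$-dimensional submanifold $C \ni \ups_0$ lying in $\CP(\ups_1, \ups_1') \cap \CP(\ups_2, \ups_2')$ for some perturbations $\ups_1', \ups_2' \in \LUmi$ of $\ups_1, \ups_2$, which yields $\ups_0 \in \lC(\ups_1, \ups_2)$; (ii) upgrade to $\ups_0 \in \lCear(\ups_1, \ups_2)$ using that $\ups_0$ is already earliest in $C_{U_+}(q)$. Step (ii) is immediate from Proposition \ref{pp_CCU} (which gives $\lC(\ups_1, \ups_2) \subseteq C_{U_+}(q)$) combined with the fact that $\Creg_{U_+}(q) \subseteq \Cear_{U_+}(q)$ and Proposition \ref{prop_CearU}: no $\ups' \in C_{U_+}(q)$ can satisfy $\pi(\ups') < \pi(\ups_0)$, a fortiori none in $\lC(\ups_1, \ups_2)$.

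For step (i), write $\ups_0 = (\gamma_{q,v_0}(1), \dot\gamma_{q,v_0}(1))$ with $1 < \rho(q, v_0)$. Regularity gives nonconjugacy, so by Lemma \ref{lm_smoothexp} together with Lemma \ref{lm_cutneighborhood} the map $v \mapsto \ups(v) := (\gamma_{q,v}(1), \dot\gamma_{q,v}(1))$ is a smooth immersion of a neighborhood $N_{v_0}$ of $v_0$ in $L_q^+ M$ into $\LUpl$, with image $C$ of dimension $n-1$. Choose $\ups_1'$ by picking a lightlike direction in $T_q M$ near but distinct from $\dot\gamma_{\ups_1}|_q$ and tracing the corresponding null geodesic backward to $U_-$; then $\ups_1' \in \LUmi$ lies in the neighborhood $N_1$ witnessing $\ups_1 \in \Vmione(q)$, its forward geodesic passes through $q$, and $\ups_1 \neq \ups_1'$. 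Let $w_1, w_1', w_0(v)$ denote the tangents at $q$ of $\gamma_{\ups_1}, \gamma_{\ups_1'}, \gamma_{\ups(v)}$. The $3$-plane $V(v) := \mathrm{span}(w_1, w_1', w_0(v)) \subseteq T_q M$ is generically $3$-dimensional and Lorentzian, since the $2$-plane $\mathrm{span}(w_1, w_1')$ is already Lorentzian (it contains two independent future null vectors); thus the future null cone of $V(v)$ is a smooth $1$-parameter family of directions through $\mathbb{R}_+ w_1$. Select $\tilde w_3(v)$ on this cone close to but not parallel to $w_1$, smoothly in $v$, and trace its backward null geodesic from $q$ to $U_-$ to obtain $\tilde\ups_3(v) \in \LUmi$. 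By continuity $\tilde\ups_3(v) \in N_1$, so the triple $(\ups_1, \ups_1', \tilde\ups_3(v))$ — three distinct elements of $N_1$ whose forward geodesics meet at $q$ — lies in $\Vmi$ by the defining property of $\Vmione(q)$. The span condition of (R2) holds by construction, so $(\ups(v), \ups_1, \ups_1', \tilde\ups_3(v)) \in R$ and $\ups(v) \in \CP(\ups_1, \ups_1')$. The same construction performed with $\ups_2$ produces $\ups_2'$ and $\tilde\ups_4(v)$ giving $\ups(v) \in \CP(\ups_2, \ups_2')$, hence $C \subseteq \CP(\ups_1, \ups_1') \cap \CP(\ups_2, \ups_2')$.

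The main obstacle is constructing $\tilde\ups_3(v)$ subject to four simultaneous constraints — lightlike, forward geodesic through $q$, contained in $N_1$, and lying in the varying plane $V(v)$ — smoothly in $v$. The Lorentzian signature of $V(v)$, inherited from $\mathrm{span}(w_1, w_1')$, is precisely what yields a smooth $1$-parameter family of lightlike directions near $w_1$ inside $V(v)$, enabling the required smooth section via a standard implicit function argument applied to the null cone of $V(v)$. Degenerate configurations in which $w_0(v_0) \in \mathrm{span}(w_1, w_1')$ — equivalently, $\gamma_{\ups_0}$ extending $\gamma_{\ups_1}$ or $\gamma_{\ups_1'}$ across $q$ — can be avoided by re-choosing $\ups_1'$ within $N_1$, or by replacing the $\ups_1$-construction by the symmetric $\ups_2$-construction for that particular $\ups_0$.
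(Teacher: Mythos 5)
Your proposal is correct and follows essentially the same route as the paper. The one genuine difference is in how the auxiliary vectors for the span condition are produced: the paper invokes \cite[Lemma 6.23]{feizmohammadi2021inverse} (restated as Lemma \ref{lm_null_construction}) as a black box to obtain a fixed $w_3 \in N_1$ and a $v$-dependent $w_4(v) \in N_1$ with $w_0(v) \in \mathrm{span}(w_1, w_3, w_4(v))$; you rederive this by observing that $\mathrm{span}(w_1, w_1')$ is a Lorentzian $2$-plane, so $V(v) = \mathrm{span}(w_1, w_1', w_0(v))$ is a Lorentzian $3$-plane whose null cone carries a smooth $1$-parameter family of directions near $w_1$, from which $\tilde w_3(v)$ can be selected. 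This is precisely the geometry underlying the cited lemma, so your argument is a self-contained version of the same idea rather than a different one. Your step (ii) also matches the paper's: the paper closes with an inline shortcut argument via Lemma \ref{lm_shortcut}, while you invoke Proposition \ref{pp_CCU} together with the characterization in Proposition \ref{prop_CearU}; these say the same thing. Two small points worth tightening: you should record that the chosen $\tilde w_3(v)$ is not proportional to $w_0(v)$ (guaranteed once $\tilde w_3(v)$ stays uniformly near $w_1$ and $w_0(v)$ near $w_0 \ne w_1$), and you should note that $q$ being before the first cut point along $\gamma_{\ups(v)}^-$ (the hypothesis of (R2) on $\varsigma_0$) holds for all $v$ near $v_0$ by Lemma \ref{lm_cutneighborhood} — both of which the paper handles by shrinking $N_0'$ and $N_0$.
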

\begin{proof}
    Let $\ups \in \Creg_{U_+}(q)$ in the sense that there exists $w_0 \in L^+_qM$ such that
    \[
    \ups = (\gamma_{q,w_0}(\varsm_0), \dot{\gamma}_{q,w_0}(\varsm_0))\quad \text{ for some } 0<\varsm_0<\rho(q, w_0).
    \]
    The goal is to show $\ups \in \lCear(\ups_1, \ups_2)$.
    First we prove $\ups \in \lC(\ups_1, \ups_2)$.
    Indeed, we want to find $\ups_1', \ups_2'\in  \LUmi$ with $\ups_1 \neq \ups_1'$ and $\ups_2 \neq \ups_2'$ such that
    there is a small neighborhood $C$ of $\ups$ in $\Creg_{U_+}(q)$ satisfying
    \[
    C \subseteq \CP(\ups_1, \ups_1') \cap \CP(\ups_2, \ups_2').
    \]
    It suffices to find such $\ups_1'$ for $\ups_1$.
    For this purpose, we write
    \[
    (q,w_1) = (\gamma_{\ups_1}(\varsm_1), \dot{\gamma}_{\ups_1}(\varsm_1))
    \quad \text{ and }\quad
    \ups = (\gamma_{q,w_0}(\varsm_0), \dot{\gamma}_{q,w_0}(\varsm_0)),
    \]
    for some $\varsm_0, \varsm_1 > 0$.
    By Lemma \ref{lm_cutneighborhood}, there exists a small neighborhood $N_0' \subseteq L_q^+M$ of $w_0$ such that for any $w_0' \in N_0'$, the null geodesic $\gamma_{q,w_0}$ hits $U_+$ before the first cut point.
    By \cite[Lemma 6.23]{feizmohammadi2021inverse},
    for any neighborhood $N_1 \subseteq L_q^+M$ of $w_1$,
    there exists a neighborhood $N_0 \subseteq L^+_q M$ of $w_0$
    and $w_3 \in N_1$ such that for any $w_0' \in N_0$, there exists $w_4\in N_1$ satisfying
    \[
    w_0' \in \mathrm{span}(w_1, w_3, w_4) \quad \text{ and }\quad  w_0' \notin \mathrm{span}(w_j), \text{ for } j = 1,3,4.
    \]
    We can shrink $N_0'$ and $N_0$ if necessary to set them equal.
    Now let \[
    \text{$\ups_j = (\gamma_{q,w_j}(-\varsm_j), \dot{\gamma}_{q,w_j}(-\varsm_j))$  such that $\gamma_{q,w_j}(\varsm_j) \in S_-$}
    \]for some $\varsm_j > 0$, where $j = 3,4$.
    Recall $\ups_1 \in \Vmione(q)$, which implies there is a small neighborhood $\tN_1 \subseteq \LUmi$ of $\ups_1$ given by Definition \ref{def_sufficientV}.
    In particular, we can choose $N_1$ sufficiently small such that
    $\ups_j = (\gamma_{q,w_j}(\varsm_j), \dot{\gamma}_{q,w_j}(\varsm_j))$ are sufficiently close to $\ups_1$ and therefore are contained in $\tN_1$, for $j = 3,4$.
    Then we must have $(\ups_1, \ups_3, \ups_4) \in \Vmi$.
    Let \[
    \text{$\ups_0' = (\gamma_{q,w_0'}(\varsm_0'), \dot{\gamma}_{q,w_0'}(\varsm_0'))$ such that $\gamma_{q,w_0'}(\varsm_0') \in S_+$}
    \]
    for some $\varsm_0' > 0$ .
    As $\ups \in \Creg_{U_+}(q)$, by our choice of $N_0'$, there are no cut points from $q$ to $\pi(\ups_0')$.
    Moreover, by Lemma \ref{lm_nulltransversal}, one has $\ups_0 \in \LUpl$.
    Thus, by Condition (R2), we have $(\ups_0', \ups_1, \ups_3, \ups_4) \in R$ and therefore $\ups_0' \in \CP(\ups_1, \ups_1')$ for any $\ups_0'$ sufficiently close to $\ups$.
    This implies $\ups \in \lC(\ups_1, \ups_2)$ and it remains to show it is actually in $\lCear(\ups_1, \ups_2)$.
    We assume for contradiction that there exists $\ups' \in \lC(\ups_1, \ups_2)$ such that $\pi(\ups') < \pi(\ups)$.
    A shortcut argument using Lemma \ref{lm_shortcut} shows we must have $\pi(\ups) \gg q$, which contradicts with $\ups \in \Creg_{U_+}(q)$.
\end{proof}
To reconstruct $\Creg_{U_+}(q)$, we recall that the analysis in Section \ref{subsec_LUreg} motivates us to define
\[
\begin{split}
    \lCsmooth(\ups_1, \ups_2) \coloneqq \{\ups \in \lCear(\ups_1, \ups_2): &\text{ there is an open neighborhood $O$ of $\pi(\ups)$ such that}\\
    &\text{$\pi(\lE(\ups_1, \ups_2)) \cap O$ is a smooth $1$-codimensional submanifold of $O$}\}
\end{split}
\]
as the set of lightlike vectors whose projection to $U_+$ is contained in a smooth part.
Further, recall Definition \ref{def_tangentialp} and
we define
\[
\begin{split}
    \lCreg(\ups_1, \ups_2) \coloneqq \{\ups \in \lCsmooth(\ups_1, \ups_2): &\text{ there are $\ups_1', \ups_2' \in  \LUmi$ such that }\\
    &\quad \quad \quad \quad \quad \quad \quad \quad \text{ $\lCsmooth(\ups_1', \ups_2') <_{\pi(\ups)} \lCsmooth(\ups_1, \ups_2)$}\}.\end{split}
\]
as the regular part.
\begin{prop}\label{pp_CregU}
    Suppose there are no cut points along any geodesic segments in $\overline{W}$.
    Let $\ups_1, \ups_2 \in \LUmi$ such that $(\ups_0', \ups_1, \ups_2, \ups_3') \in R$ is proper, for some $\ups_0' \in \LUpl$ and $\ups_3' \in \LUmi$.
    Then exists a unique $q$ such that
    $\gamma_{\ups_1}(\mR_+) \cap \gamma_{\ups_2}(\mR_+) \cap \overline{W} = \{q\}$ and moreover
    \[
    \lCreg(\ups_1, \ups_2) \subseteq \Creg_{U_+}(q).
    \]
\end{prop}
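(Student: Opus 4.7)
Existence and uniqueness of $q$ is immediate. Proposition \ref{pp_CCU} applied to the proper quadruple $(\ups_0',\ups_1,\ups_2,\ups_3')\in R$ delivers an intersection point in $\gamma_{\ups_1}(\mR_+)\cap\gamma_{\ups_2}(\mR_+)\cap\overline{W}$, and Lemma \ref{lm_finiteF} forces this set to be the singleton $\{q\}$.

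For the inclusion $\lCreg(\ups_1,\ups_2)\subseteq\Creg_{U_+}(q)$, fix $\ups\in\lCreg(\ups_1,\ups_2)$. The chain $\lCreg\subseteq\lCsmooth\subseteq\lCear$ together with Lemma \ref{lm_CearU} gives $\ups\in\Cear_{U_+}(q)$: there is $v\in L_q^+ M$ with $\pi(\ups)=\gamma_{q,v}(1)$, matching tangent direction, and $1\leq\rho(q,v)$. The task is to upgrade this to the strict inequality $1<\rho(q,v)$. By definition of $\lCreg$ there are witnesses $\ups_1',\ups_2'\in\LUmi$ with $\lCsmooth(\ups_1',\ups_2')<_{\pi(\ups)}\lCsmooth(\ups_1,\ups_2)$; in particular $(\ups_1',\ups_2')$ is part of a proper quadruple in $R$, so Proposition \ref{pp_CCU} furnishes a unique $q'\in\overline{W}$ with $\gamma_{\ups_1'}(\mR_+)\cap\gamma_{\ups_2'}(\mR_+)\cap\overline{W}=\{q'\}$ and $\pi(\lCsmooth(\ups_1',\ups_2'))\subseteq\pi(\Cear_{U_+}(q'))$. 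A shortcut argument in the style of Lemma \ref{lm_Lpqcausal2}, applied to the smooth graphs over $U_+$ furnished by the $<_{\pi(\ups)}$-relation, yields $q'<q$.

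Suppose for contradiction that $1=\rho(q,v)$, so $\pi(\ups)$ sits on the first cut point of $q$ along $\gamma_{q,v}$. Since $\ups\in\Cear_{U_+}(q)$ rules out $\tau(q,\pi(\ups))>0$, Lemma \ref{lm_cutunique} forces the cut to arise from a second future-directed null geodesic $\tilde{\gamma}$ from $q$ to $\pi(\ups)$, with initial direction at $q$ distinct from $v$. The tangency condition at $\pi(\ups)$ forces the null geodesic from $q'$ to $\pi(\ups)$ to arrive with direction parallel to $\dot{\gamma}_{q,v}(1)$ (via Lemma \ref{lm_explocal} and the screen-space identification of Lemma \ref{lm_screen}); hence this geodesic is the past extension of $\gamma_{q,v}$ through $q$, and $q'$ lies on that extension. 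Concatenating the segment of $\gamma_{q,v}$ from $q'$ to $q$ with $\tilde{\gamma}$ from $q$ to $\pi(\ups)$ produces a causal curve from $q'$ to $\pi(\ups)$ with a genuine corner at $q$, since the incoming velocity $v$ and outgoing velocity $\dot{\tilde{\gamma}}(0)$ are linearly independent null directions. Lemma \ref{lm_shortcut} then gives $\tau(q',\pi(\ups))>0$, contradicting $\pi(\ups)\in\pi(\Cear_{U_+}(q'))$. Hence $1<\rho(q,v)$ and $\ups\in\Creg_{U_+}(q)$.

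The principal technical point is the identification of the null direction at $\pi(\ups)$ of the geodesic from $q'$ with $\dot{\gamma}_{q,v}(1)$ rather than with $\dot{\tilde{\gamma}}(1)$: this is what ensures the composite path through $q$ really has a corner and is not merely a reparameterized null pregeodesic. The tangency condition in the definition of $<_{\pi(\ups)}$, combined with the bijection between screen spaces and future-pointing outward lightlike rays (Lemma \ref{lm_screen}), pins down the correct sheet and makes the shortcut argument available.
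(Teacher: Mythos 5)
Your proof opens correctly: the uniqueness of $q$ via Proposition \ref{pp_CCU} and Lemma \ref{lm_finiteF}, the chain $\lCreg\subseteq\lCsmooth\subseteq\lCear$ with Lemma \ref{lm_CearU} to place $\ups$ in $\Cear_{U_+}(q)$, and the reduction to ruling out $1=\rho(q,v)$. The paper at this point simply invokes Lemma \ref{lm_reg2}: the set $\Cear_{U_+}(q)\setminus\Creg_{U_+}(q)$ consists of covectors at the first cut point, and Lemma \ref{lm_reg2} says no $L\in\SUsmooth(W)$ can satisfy $L<_p\LUsmooth$ at such a point, so $\lCreg$ misses this set. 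You instead re-prove the content of Lemma \ref{lm_reg2} inline, which is fine in principle but introduces a gap.

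The gap is in the sentence ``Since $\ups\in\Cear_{U_+}(q)$ rules out $\tau(q,\pi(\ups))>0$, Lemma \ref{lm_cutunique} forces the cut to arise from a second future-directed null geodesic $\tilde{\gamma}$.'' This inference is false. Lemma \ref{lm_cutunique} states that two future-directed null geodesics from $q$ to $y$ force $y$ to be at or after the cut point---it does not say that a cut point forces a second geodesic. By \cite[Theorem 9.15]{Beem2017}, quoted at the end of Section~\ref{sec_prelim}, the first null cut point is either the first conjugate point or the first point where a second geodesic arrives; both cases have $\tau(q,\pi(\ups))=0$, so vanishing time separation does not select the second-geodesic alternative. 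You therefore only handle one of the two alternatives. If instead $\pi(\ups)$ is the first conjugate point of $q$ along $\gamma_{q,v}$ and there is no second geodesic from $q$ to $\pi(\ups)$, the concatenation of $\gamma_{q,v}|_{[q',q]}$ with $\gamma_{q,v}|_{[q,\pi(\ups)]}$ has no corner, so Lemma \ref{lm_shortcut} gives nothing directly; one must instead argue that a conjugate point strictly interior to a null geodesic segment destroys maximality, so that for $q'$ strictly before $q$ on the past extension of $\gamma_{q,v}$ one still has $\tau(q',\pi(\ups))>0$. (The paper's own Lemma \ref{lm_reg2} asserts ``$p$ is after the first cut point of $q'$'' without spelling this out, but at least does not commit to the wrong dichotomy.) Fixing your proof requires splitting into the two cases supplied by \cite[Theorem 9.15]{Beem2017}: the shortcut argument you already have for the second-geodesic case, plus the index-form/conjugate-point argument for the conjugate-point case. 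Everything else in your write-up---the identification of $q'$ as lying on the past extension of $\gamma_{q,v}$ via Lemma \ref{lm_Lpqcausal} and the screen-space tangency---is sound.
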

\begin{proof}
By Lemma \ref{lm_CearU}, first we have $\lCear(\ups_1, \ups_2) \subseteq \Cear_{U_+}(q)$ and therefore $\lCreg(\ups_1, \ups_2) \subseteq \Cear_{U_+}(q)$.
Then we consider the set $\Cear_{U_+}(q) \setminus \Creg_{U_+}(q)$.
By Lemma \ref{lm_reg2}, this part is disjoint from $\lCreg(\ups_1, \ups_2)$.
Thus, we must have $\lCreg(\ups_1, \ups_2) \subseteq \Creg_{U_+}(q)$.
\end{proof}
\begin{lm}[{\cite[Lemma 6.23]{feizmohammadi2021inverse}}]\label{lm_null_construction}
    For $q \in W$, let $w_0, w_1 \in L_q^+M$ be linearly independent and let $N_1 \subseteq L_q^+M$ be a small neighborhood of $w_1$.
    Then there exists a neighborhood $N_0 \subseteq L_q^+M$ of $w_0$ and $w_2 \in N_1$ such that for any $w_0' \in N_0$, we can find $w_3 \in N_1$ satisfying
    \[
    w_0' \in \mathrm{span}(w_1, w_2, w_3) \quad \text{ and }\quad  w_0' \notin \mathrm{span}(w_j), \text{ for } j = 1,2,3.
    \]
\end{lm}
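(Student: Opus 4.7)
The plan is to realize $w_0'$ as a linear combination of three nearby null vectors by intersecting a varying $3$-plane with the light cone at $q$. First I would choose $w_2 \in N_1$ that is linearly independent from $w_1$; this is possible because $L_q^+ M$ is a smooth $3$-dimensional cone and $N_1$ is a nonempty open subset of it, so it contains vectors along rays distinct from $\mathbb{R}_+ w_1$. Since $w_0$ and $w_1$ are linearly independent future-directed null vectors, their span is a timelike $2$-plane (a standard fact: $g(w_0,w_1)<0$, so the Gram matrix of $(w_0,w_1)$ has negative determinant). Consequently the $3$-plane $\Pi_0 \coloneqq \mathrm{span}(w_0, w_1, w_2)$ contains a timelike plane and is itself a timelike subspace of $T_q M$.

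Next, for $w_0' \in L_q^+ M$ close to $w_0$, set $\Pi(w_0') \coloneqq \mathrm{span}(w_0', w_1, w_2)$. By continuity in $w_0'$, the subspace $\Pi(w_0')$ remains a $3$-dimensional timelike subspace for $w_0'$ in a sufficiently small neighborhood $N_0$ of $w_0$. The intersection $\Pi(w_0') \cap L_q M$ is then the null cone of $\Pi(w_0')$ viewed as a $3$-dimensional Lorentzian space, which is a double cone of dimension $2$; its future part $\Pi(w_0') \cap L_q^+ M$ contains $w_1$ and a vector close to $w_2$. Moreover, $\mathrm{span}(w_1, w_2) \cap L_q^+ M$ consists only of the two rays $\mathbb{R}_+ w_1$ and $\mathbb{R}_+ w_2$ (because this is a timelike $2$-plane whose null cone is exactly these two rays). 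Hence the $2$-dimensional cone $\Pi(w_0') \cap L_q^+ M$ contains points arbitrarily close to $w_1$ that lie off the ray $\mathbb{R}_+ w_1$ and off $\mathbb{R}_+ w_2$. I would pick $w_3$ to be such a point, with $w_3$ additionally chosen to avoid the ray $\mathbb{R}_+ w_0'$ (which is a single ray, hence removing it from the open $2$-cone leaves plenty of room). By shrinking $N_0$ I ensure $w_3 \in N_1$.

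The three vectors $w_1, w_2, w_3$ are then linearly independent, since $w_3 \notin \mathrm{span}(w_1, w_2)$, so $\mathrm{span}(w_1, w_2, w_3) = \Pi(w_0')$, which by definition contains $w_0'$. For the transversality condition $w_0' \notin \mathrm{span}(w_j)$ for $j=1,2,3$: the hypothesis that $w_0$ and $w_1$ are linearly independent, together with $w_2$ close to $w_1$, forces $w_0$ to be linearly independent from each of $w_1, w_2$, and this persists for $w_0'$ near $w_0$ by openness; the exclusion $w_0' \notin \mathbb{R} w_3$ was built into the construction of $w_3$.

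The only real obstacle is the last construction step, namely verifying that the $2$-dimensional cone $\Pi(w_0') \cap L_q^+ M$ really does contain a point near $w_1$ off the union of the three rays $\mathbb{R}_+ w_1 \cup \mathbb{R}_+ w_2 \cup \mathbb{R}_+ w_0'$. This is ultimately a dimension count in a Lorentzian $3$-plane: removing three $1$-dimensional rays from a $2$-dimensional open cone leaves a set that is still dense near $w_1$, so the argument reduces to the elementary fact that a timelike $3$-plane has a $2$-dimensional null cone, which is a routine consequence of the signature $(-,+,+)$ on $\Pi(w_0')$.
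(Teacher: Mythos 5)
The paper states this as an imported result from \cite[Lemma 6.23]{feizmohammadi2021inverse} and gives no proof of its own, so there is nothing in the paper to compare your argument against. Your reconstruction is correct and self-contained. The Lorentzian facts you invoke are all standard and do the work: linearly independent future-pointing null vectors $w_0, w_1$ satisfy $g(w_0,w_1)<0$ and so span a timelike $2$-plane; any $3$-plane containing a timelike $2$-plane is nondegenerate (its orthocomplement is a spacelike line, not a null one), hence Lorentzian with signature $(-,+,+)$; and the null cone of such a $3$-plane is a $2$-dimensional cone, from which removing three rays still leaves points arbitrarily close to $w_1$. The selection of $w_3$ then gives $\operatorname{span}(w_1,w_2,w_3)=\Pi(w_0')\ni w_0'$, and the non-parallelism conditions follow from openness.

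Two small points worth making explicit, since you use them implicitly. First, $\Pi(w_0') \cap L_q^+M$ contains $w_2$ exactly, not merely a nearby vector, since $w_2 \in \Pi(w_0')$ by construction and $w_2$ is null; this is a harmless slip in the write-up. Second, you need $w_0' \notin \operatorname{span}(w_1,w_2)$ in order for $\Pi(w_0')$ to be genuinely $3$-dimensional; this holds because the future null cone of the timelike plane $\operatorname{span}(w_1,w_2)$ is just $\mathbb{R}_+ w_1 \cup \mathbb{R}_+ w_2$, neither of which contains $w_0$ (and hence, after shrinking $N_0$, neither contains $w_0'$). The remaining claim, that $w_3$ can be taken in $N_1$ after further shrinking $N_0$, does indeed follow from continuous dependence of $\Pi(w_0')$ (and hence of the local piece of its null cone through $w_1$) on $w_0'$, as you indicate.
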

%

\begin{prop}
    Suppose there are no cut points along any geodesic segments in $\overline{W}$.
    Let $\ups_1, \ups_2 \in \LUmi$ such that $(\ups_0', \ups_1, \ups_2, \ups_3') \in R$ is proper, for some $\ups_0' \in \LUpl$ and $\ups_3' \in \LUmi$.
    There exists a unique $q$ such that
    \[
    \gamma_{\ups_1}(\mR_+) \cap \gamma_{\ups_2}(\mR_+) \cap \overline{W} = \{q\}.
    \]
    In addition, suppose $\ups_1, \ups_2 \in \Vmione(q)$ and {$V_-$ only contains lightlike vectors of which the corresponding null geodesic do not intersect before $W$.}
    Then we have
    \[
    \lCreg(\ups_1, \ups_2) = \Creg_{U_+}(q).
    \]
\end{prop}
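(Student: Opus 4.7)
The plan is to establish the reverse inclusion $\Creg_{U_+}(q) \subseteq \lCreg(\ups_1, \ups_2)$, since Proposition \ref{pp_CregU} already provides the opposite inclusion. Fix $\ups = (\gamma_{q, w_0}(\varsm_0), \dot{\gamma}_{q, w_0}(\varsm_0)) \in \Creg_{U_+}(q)$, where $w_0 \in L_q^+ M$ and $0 < \varsm_0 < \rho(q, w_0)$. The two things to verify are that $\ups \in \lCsmooth(\ups_1, \ups_2)$ and that one can find auxiliary receding data $\ups_1', \ups_2' \in \LUmi$ producing the strict ordering $\lCsmooth(\ups_1', \ups_2') <_{\pi(\ups)} \lCsmooth(\ups_1, \ups_2)$.

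For membership in $\lCsmooth(\ups_1, \ups_2)$, I would use a sandwich argument. Proposition \ref{prop_Cearv1v2} gives $\Creg_{U_+}(q) \subseteq \lCear(\ups_1, \ups_2)$, while Lemma \ref{lm_CearU} gives $\lCear(\ups_1, \ups_2) \subseteq \Cear_{U_+}(q)$. By Lemma \ref{lm_finite}, there is an open neighborhood $O \subseteq U_+$ of $\pi(\ups)$ on which $\LUreg(q) \cap O = \LUear(q) \cap O$ is a smooth $1$-codimensional submanifold. Since $\pi(\lCear(\ups_1, \ups_2)) \cap O$ is sandwiched between these two layers which coincide, it must equal this smooth submanifold. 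Thus $\ups \in \lCsmooth(\ups_1, \ups_2)$.

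For the second step, I would perform the perturbation underlying Lemma \ref{lm_reg1}. Set $q_\ep \coloneqq \gamma_{q, w_0}(-\ep) \in W$ for small $\ep > 0$ (possible since $W$ is open). The extended null geodesic from $q_\ep$ through $q$ to $\pi(\ups)$ has no cut points before $\pi(\ups)$, so $\pi(\ups) \in \LUreg(q_\ep)$, and Lemma \ref{lm_explocal} shows that $\LUreg(q_\ep)$ and $\LUreg(q)$ share the same tangent space at $\pi(\ups)$ (both are orthogonal to the common null velocity), giving tangentiality in the sense of Definition \ref{def_tangent}. Using the sufficient condition on $V_-$ at $q_\ep$ (Definition \ref{def_sufficientV}), I can select distinct $\ups_1', \ups_2' \in \LUmi$ whose null geodesics meet at $q_\ep$ and which, together with an appropriate third receding vector, form a triple in $V_-$. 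Mirroring the proof of Proposition \ref{prop_Cearv1v2} with $q$ replaced by $q_\ep$, condition (R2) yields $\ups \in \lCear(\ups_1', \ups_2')$, and the sandwich argument from the first step upgrades this to $\ups \in \lCsmooth(\ups_1', \ups_2')$.

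Finally, I would verify the strict ordering $\lCsmooth(\ups_1', \ups_2') <_{\pi(\ups)} \lCsmooth(\ups_1, \ups_2)$ in the sense of Definition \ref{def_tangentialp}. The main obstacle is producing points $y_1 \in L_{\pi(\ups)}(q_\ep)$ and $y_2 \in L_{\pi(\ups)}(q)$ with $y_1 < y_2$: both light sheets lie in the null hypersurface $S_+$, where ambient causality between distinct points is severely restricted. I would adapt the scheme of Lemma \ref{lm_reg1} via a concatenated causal path argument: taking $y_2 \in L_{\pi(\ups)}(q)$ close to but distinct from $\pi(\ups)$ reached by a future null direction $w_0' \neq w_0$ from $q$, the causal path from $q_\ep$ along $\gamma_{q, w_0}$ to $q$ followed by $\gamma_{q, w_0'}$ to $y_2$ fails to be a null pregeodesic at $q$, so Lemma \ref{lm_shortcut} gives $q_\ep \ll y_2$. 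Choosing $y_1$ to lie on the null generator of $\partial J^+(q_\ep)$ through $\pi(\ups)$ between $q_\ep$ and $\pi(\ups)$, projected into $L_{\pi(\ups)}(q_\ep)$ appropriately, one deduces $y_1 < y_2$. This closes the argument and yields $\ups \in \lCreg(\ups_1, \ups_2)$.
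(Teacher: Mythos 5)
Your proposal is correct and follows the same overall route as the paper: Proposition~\ref{pp_CregU} for the forward inclusion, Proposition~\ref{prop_Cearv1v2} to get $\Creg_{U_+}(q)\subseteq\lCear(\ups_1,\ups_2)$, and then a regularity/tangential-ordering argument to upgrade this to $\lCreg$. The chief difference is one of granularity. The paper's proof is terse at the final step, citing Proposition~\ref{pp_LUreg} to conclude $\Creg_{U_+}(q)\subseteq\lCreg(\ups_1,\ups_2)$; this implicitly requires that the strict predecessor produced by Lemma~\ref{lm_reg1} can be realized as some $\lCsmooth(\ups_1',\ups_2')$ with $\ups_1',\ups_2'\in\Vmione(q_\ep)$, which the paper leaves to the reader. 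You make this translation explicit by performing the $q_\ep=\gamma_{q,w_0}(-\ep)$ perturbation directly, invoking the sufficiency of $\Vmi$ at $q_\ep$, and re-running the Proposition~\ref{prop_Cearv1v2} argument with $q$ replaced by $q_\ep$. That is the missing detail in the paper's appeal to Proposition~\ref{pp_LUreg}, so your version is in some sense more complete. You also establish $\Creg_{U_+}(q)\subseteq\lCsmooth(\ups_1,\ups_2)$ by a two-sided sandwich via Lemma~\ref{lm_CearU} and Lemma~\ref{lm_finite} where the paper cites Lemma~\ref{lm_smooth_perturb}; both accomplish the same thing. Your treatment of the strict ordering $y_1<y_2$ in the final paragraph is still somewhat sketchy (it is cleanest to compare the two spacelike surfaces $L_p(q_\ep)$, $L_p(q)$ along a common null generator of $S_+$ through $p$, where the ordering $q_\ep<q$ forces $y_1<y_2$), but this matches the level of detail in the paper's own Lemma~\ref{lm_reg1}, which asserts the same claim without proof.
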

\begin{proof}
    By Proposition \ref{pp_CregU}, we have $\lCreg(\ups_1, \ups_2) \subseteq \Creg_{U_+}(q)$. It remains to prove the opposite direction.
    By Proposition \ref{prop_Cearv1v2}, we have
    \[
    \Creg_{U_+}(q)\subseteq \lCear(\ups_1, \ups_2).
    \]
    Now recall some results in Section \ref{subsec_LUreg}.
    First by the definition of $\lCsmooth(\ups_1, \ups_2)$ and Lemma \ref{lm_smooth_perturb}, we have
    $\Creg_{U_+}(q)\subseteq \lCsmooth(\ups_1, \ups_2).$
    {Next by Proposition \ref{pp_LUreg}, we must have
    $\Creg_{U_+}(q)\subseteq \lCreg(\ups_1, \ups_2). $}
%
%
%
    Thus, we conclude that $\Creg_{U_+}(q) = \lCreg(\ups_1, \ups_2)$.
\end{proof}
Next, for suitable $\ups_1, \ups_2 \in \LUmi$, we state how to determine whether they are in $\Vmione(q)$ or not.
More precisely, we consider $\ups_1, \ups_2 \in \LUmi$ that satisfies
$
\gamma_{\ups_1}(\mR_+) \cap \gamma_{\ups_2}(\mR_+) \cap \overline{W} = \{q\}
$
and $\lCreg(\ups_1, \ups_2) \neq \emptyset$.
By Lemma \ref{lm_CearU}, the second assumption enables us to find distinct $\ups_0, \tups_0 \in \lCreg(\ups_1, \ups_2)$ such that
\[
\gamma_{\ups_0}(\mR_+) \cap \gamma_{\tups_0}(\mR_+) \cap \overline{W} = \{q\}.
\]
We define
\[
\begin{split}
\lV(\ups_0, \tups_0) \coloneqq \{ \ups_1 \in \LUmi:
&\text{$(\ups_0, \ups_1, \ups_3, \tups_3) \in R$ for some $\ups_3, \tups_3 \in \LUmi$ and}\\
    & \quad \quad\quad \quad \quad \quad
   \text{$(\tups_0, \ups_1, \ups_4, \tups_4) \in R$ for some $\ups_4, \tups_4 \in \LUmi$}
\}.
\end{split}
\]
\begin{lm}\label{lm_Vonetwo}
Let $q$ and $\ups_0, \tups_0$ be defined as above.
If $\tups_1 \in \lV(\ups_0, \tups_0)$, then $\tups_1 \in C_{U_-}^-(q)$, where
\[
C_{U_-}^-(q) = \{\ups \in \LUmi: q = \gamma_\ups(\varsm_q) \text{ for some } \varsm_q > 0\}.
\]
\end{lm}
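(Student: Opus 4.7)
The plan is to apply property (R1) of the three-to-one scattering relation to each of the two tuples witnessing $\tups_1 \in \lV(\ups_0,\tups_0)$, and then reduce the claim to showing that the resulting intersection points coincide with $q$. Concretely, $(\ups_0,\tups_1,\ups_3,\tups_3)\in R$ and $(\tups_0,\tups_1,\ups_4,\tups_4)\in R$ produce, via (R1), points
\[
q_1 \in \gamma_{\tups_1}(\mR_+)\cap\gamma_{\ups_0}(\mR_-)\cap\overline{W},\qquad q_2 \in \gamma_{\tups_1}(\mR_+)\cap\gamma_{\tups_0}(\mR_-)\cap\overline{W}.
\]
Since the choice of $\ups_0,\tups_0 \in \lCreg(\ups_1,\ups_2)$ gives $\gamma_{\ups_0}(\mR_-)\cap\gamma_{\tups_0}(\mR_-)\cap\overline{W}=\{q\}$, showing $q_1 = q_2 = q$ immediately yields $q\in\gamma_{\tups_1}(\mR_+)$, which is the desired $\tups_1\in C_{U_-}^-(q)$. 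The degenerate case in which $\gamma_{\tups_1}$ coincides with $\gamma_{\ups_0}$ or $\gamma_{\tups_0}$ as a null geodesic is immediate, so I henceforth assume all three null geodesics are pairwise distinct.

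The first ingredient is a strengthening of Lemma \ref{lm_finiteF} to arbitrary pairs of distinct null geodesics: under the no-cut-points hypothesis on $\overline{W}$, two distinct null geodesics cannot meet at two distinct points inside $\overline{W}$. Indeed, global hyperbolicity forces any two intersection points $p\ne p'$ to be causally ordered the same way along both geodesics (otherwise $p\le p'\le p$ would give $p=p'$), and then the two distinct null pregeodesic segments from $p$ to $p'$ would, by Lemma \ref{lm_cutunique}, place $p'$ on or after the first cut point of $p$, contradicting the absence of cut points inside $\overline{W}$. Applied to the pairs $(\gamma_{\tups_1},\gamma_{\ups_0})$ and $(\gamma_{\tups_1},\gamma_{\tups_0})$, this shows: if $q\in\gamma_{\tups_1}(\mR_+)$ then automatically $q=q_1=q_2$, and if $q_1=q_2$ then the common point lies in $\gamma_{\ups_0}(\mR_-)\cap\gamma_{\tups_0}(\mR_-)\cap\overline{W}=\{q\}$. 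It therefore suffices to exclude the scenario in which $q,q_1,q_2$ are pairwise distinct, forming a non-degenerate null triangle with edges along $\gamma_{\ups_0}(\mR_-)$, $\gamma_{\tups_0}(\mR_-)$, and $\gamma_{\tups_1}(\mR_+)$.

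I would eliminate this triangle by a four-case analysis on causal orderings. After assuming without loss $q_1\le q_2$ along $\gamma_{\tups_1}$, the cases are indexed by whether $q\le q_1$ or $q_1\le q$ along $\gamma_{\ups_0}(\mR_-)$, and whether $q\le q_2$ or $q_2\le q$ along $\gamma_{\tups_0}(\mR_-)$. The mixed case $q\le q_1$ and $q_2\le q$ chains to $q_2\le q\le q_1\le q_2$, which by causality of the globally hyperbolic $M$ forces $q=q_1=q_2$ and contradicts $q_1\ne q_2$. In each of the other three cases, two of the null edges concatenate into a future-directed broken null path between a specific pair of vertices; Lemma \ref{lm_shortcut} shortcuts this to a strict timelike curve between that same pair, while the third edge provides a single null pregeodesic segment between those two vertices lying inside $\overline{W}$, which by the no-cut-points hypothesis and the definition of the cut locus forces zero time separation between them. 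This contradiction closes every case, so $q_1=q_2=q$ and the lemma follows.

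The main delicate point is ensuring, in the shortcut step, that the null pregeodesic segment playing the role of the third edge of the triangle lies entirely in $\overline{W}$, so that the no-cut-points hypothesis can in fact be invoked. This follows from the construction of $W$ in Section \ref{sec_layer}: $\overline{W}$ sits inside a geodesic ball $\lB(p_0,\delta)$ supplied by Lemma \ref{lm_radius}, and within such a ball each of the null arcs $\gamma_{\ups_0}(\mR_-)\cap\overline{W}$, $\gamma_{\tups_0}(\mR_-)\cap\overline{W}$, and $\gamma_{\tups_1}(\mR_+)\cap\overline{W}$ is a connected segment, so the geodesic segment between any two of its points stays in $\overline{W}$.
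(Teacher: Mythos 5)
Your proof follows essentially the same route as the paper's: apply (R1) to the two tuples witnessing $\tups_1 \in \lV(\ups_0,\tups_0)$ to extract intersection points $q_1,q_2$, then use a shortcut argument (Lemma \ref{lm_shortcut}) together with the uniqueness given by $\gamma_{\ups_0}\cap\gamma_{\tups_0}\cap\overline{W}=\{q\}$ to force $q=q_1=q_2$. The paper simply defers to ``a same shortcut argument as in the proof of Proposition \ref{pp_CCU}'', whereas you write out the full case analysis on causal orderings and flag the need for the third geodesic edge to lie in $\overline{W}$; your reasoning is correct, though that last containment is perhaps most cleanly justified by the causal convexity of $\overline{W}$ (if $a<b$ with $a,b\in\overline{W}$ then $J(a,b)\subseteq\overline{W}$ for the sets $W$ built in Section \ref{sec_layer}) rather than by connectedness of arcs inside $\lB(p_0,\delta)$.
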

\begin{proof}
By the definition of $\lV(\ups_0, \tups_0)$, we have
$(\ups_0, \ups_1, \ups_3, \tups_3) \in R$ and $(\tups_0, \ups_1, \ups_4, \tups_4) \in R$ for some $\ups_3, \tups_3, \ups_4, \tups_4 \in \LUmi$.
Condition (R\ref{def_R1}) implies there exists $q_1, q_2$ such that
\[
 \gamma_{\ups_0}(\mR_+) \cap \gamma_{\ups_1}(\mR_+) \cap \overline{W} = \{q_1\}, \quad \gamma_{\tups_0}(\mR_+) \cap \gamma_{\ups_1}(\mR_+) \cap \overline{W} = \{q_2\}.
\]
On the other hand, as $\ups_0, \tups_0 \in \lCreg(\ups_1, \ups_2)$, by Lemma \ref{pp_CregU} and Lemma \ref{lm_finiteF}, we have
\[
\gamma_{\ups_0}(\mR_+) \cap \gamma_{\tups_0}(\mR_+) \cap \overline{W} = \{q\}.
\]
A same shortcut argument as in the proof of Proposition \ref{pp_CCU} implies $q = q_1 = q_2$.
Thus, we have $\ups_1 \in C_{U_-}^-(q)$.
\end{proof}
Then we define
\[
\begin{split}
    \lVsmooth(\ups_0, \tups_0) \coloneqq \{C &\subseteq \lV(\ups_0, \tups_0): \text{ $C$ is a smooth $(n-1)$-dimensional submanifold}\}
\end{split}
\]
and then
\beq
\begin{split}
\lV_1(\ups_0, \tups_0) \coloneqq \{
&\ups_1 \in \LUmi: \text{$\ups_1 \in C$ for some $C \in \lVsmooth(\ups_0, \tups_0)$ and $\exists$ a small neighborhood} \\
&\text{$N_1 \subseteq \LUmi$ of $\ups_1$ satisfying if $\tups_j \in N_1 \cap C$ are distinct, then $(\tups_1, \tups_2, \tups_3) \in \Vmi$}.
\}
\end{split}
\eeq
We emphasize the set $\lV_1(\ups_0, \tups_0)$ is determined by $R$ and $\Vmi$.
\begin{prop}\label{pp_determineV}
Suppose there are no cut points along any geodesic segments in $\overline{W}$.
Let $\ups_1, \ups_2 \in \LUmi$ such that $(\ups_0', \ups_1, \ups_2, \ups_3') \in R$ is proper, for some $\ups_0' \in \LUpl$ and $\ups_3' \in \LUmi$.
There exists a unique $q$ such that
$
\gamma_{\ups_1}(\mR_+) \cap \gamma_{\ups_2}(\mR_+) \cap \overline{W} = \{q\}.
$
If $\lCreg(\ups_1, \ups_2) \neq \emptyset$,
then $\ups_1 \in \Vmione(q)$ if and only if $\ups_1 \in \lV_1(\ups_0, \tups_0)$ for some $\ups_0, \tups_0 \in \lCreg(\ups_1, \ups_2)$.
\end{prop}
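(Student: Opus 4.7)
The plan is to use $C_{U_-}^-(q)$, restricted to a neighborhood of $\ups_1$, as the candidate smooth submanifold witnessing $\ups_1 \in \lV_1(\ups_0, \tups_0)$. The key auxiliary fact is that, near $\ups_1$, the set $C_{U_-}^-(q)$ is a smooth $(n-1)$-dimensional submanifold of $\LUmi$: this follows from Lemma \ref{lm_smoothexp} together with the assumed absence of cut points in $\overline{W}$, which implies that past lightlike directions at $q$ are smoothly identified with their transversal intersection with $U_-$ via the exponential map from $q$. This provides the smoothness data required by the definition of $\lVsmooth(\ups_0, \tups_0)$.

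For the forward direction, suppose $\ups_1 \in \Vmione(q)$ with witnessing neighborhood $N_1 \subseteq \LUmi$, and pick two distinct $\ups_0, \tups_0 \in \lCreg(\ups_1, \ups_2)$ whose backward-extended null geodesics from $U_+$ meet $\gamma_{\ups_1}$ transversally at $q$; this is possible since $\lCreg(\ups_1, \ups_2)$ is open in a smooth $(n-1)$-dimensional piece of $\LUpl$, and the forbidden forward continuations of $\gamma_{\ups_1}$ and $\gamma_{\ups_2}$ contribute at most a lower-dimensional set of exceptional points. Define $C$ to be a small open neighborhood of $\ups_1$ in $C_{U_-}^-(q)$, chosen so small that $C \subseteq N_1$. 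To verify $C \subseteq \lV(\ups_0, \tups_0)$, fix $\tups_1 \in C$ and apply Lemma \ref{lm_null_construction} at $q$ with $w_0 = \dot{\gamma}_{\ups_0}(\varsm_0)$ and $w_1 = \dot{\gamma}_{\tups_1}(\tilde{\varsm}_1)$; this produces lightlike directions $w_2, w_3 \in L_q^+M$, which can be made arbitrarily close to $\dot{\gamma}_{\ups_1}$ at $q$, such that $w_0 \in \mathrm{span}(w_1, w_2, w_3)$ and the four directions are pairwise non-parallel. Extending $w_2, w_3$ backward until they hit $U_-$ produces $\ups_3, \tups_3 \in \LUmi$ which, by construction, lie in $N_1$. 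The $\Vmione(q)$ condition then gives $(\tups_1, \ups_3, \tups_3) \in \Vmi$, and the remaining hypotheses of (R2) are satisfied: the four null geodesics are distinct by the transversality setup, and $\varsm_0 \in (-\rho(\ups_0^-), 0)$ holds because $\ups_0 \in \lCreg(\ups_1, \ups_2) \subseteq \Creg_{U_+}(q)$ (Proposition \ref{pp_CregU}). Thus $(\ups_0, \tups_1, \ups_3, \tups_3) \in R$; repeating the construction with $\tups_0$ in place of $\ups_0$ gives $\tups_1 \in \lV(\ups_0, \tups_0)$. Hence $C \in \lVsmooth(\ups_0, \tups_0)$, and $N_1$ itself serves as the neighborhood required in the definition of $\lV_1(\ups_0, \tups_0)$ at $\ups_1$.

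For the backward direction, suppose $\ups_1 \in \lV_1(\ups_0, \tups_0)$ for some $\ups_0, \tups_0 \in \lCreg(\ups_1, \ups_2)$, witnessed by $C \in \lVsmooth(\ups_0, \tups_0)$ and a neighborhood $N_1$. Lemma \ref{lm_Vonetwo} yields $C \subseteq \lV(\ups_0, \tups_0) \subseteq C_{U_-}^-(q)$. Since $C$ is a smooth $(n-1)$-dimensional submanifold contained in another smooth $(n-1)$-dimensional submanifold near $\ups_1$, the two coincide on a neighborhood of $\ups_1$. After shrinking $N_1$ so that $N_1 \cap C = N_1 \cap C_{U_-}^-(q)$, any distinct $\tups_j \in N_1$ with $q = \gamma_{\tups_j}(\varsm_j)$ for some $\varsm_j > 0$ lie in $N_1 \cap C$, so the $\lV_1$ property gives $(\tups_1, \tups_2, \tups_3) \in \Vmi$, which is exactly the defining condition for $\ups_1 \in \Vmione(q)$.

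The main obstacle lies in the forward direction: simultaneously arranging all the hypotheses of (R2) while keeping $\ups_3, \tups_3$ close enough to $\ups_1$ for the $\Vmione(q)$ neighborhood $N_1$ to apply. This relies crucially on the flexibility of Lemma \ref{lm_null_construction}, which delivers a continuous family of valid spanning directions clustered near any prescribed future lightlike direction at $q$. Once that construction is in place, the remainder is a matter of dimension matching and propagation of smoothness through the exponential map.
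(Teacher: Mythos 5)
Your proof is correct and follows essentially the same route as the paper: the forward direction uses Lemma~\ref{lm_null_construction} to manufacture triples $(\tups_1,\ups_3,\tups_3)\in\Vmi$ near $\ups_1$ for which (R2) puts $(\ups_0,\tups_1,\ups_3,\tups_3)\in R$, and the backward direction uses Lemma~\ref{lm_Vonetwo} to identify the witnessing $C$ with a piece of $C_{U_-}^-(q)$ and then transfers the $\Vmi$ membership. The one place you are more careful than the paper is in insisting that $\ups_0,\tups_0$ be chosen with $\dot\gamma_{\ups_0}(\varsm_0)$, $\dot\gamma_{\tups_0}(\tilde\varsm_0)$ not parallel to $\dot\gamma_{\ups_1}(\varsm_1)$ at $q$; the paper applies the spanning lemma for \emph{arbitrary} $\ups_0\in\lCreg(\ups_1,\ups_2)$, which degenerates if $\ups_0 = \lL(\ups_1)$ since Lemma~\ref{lm_null_construction} requires $w_0,w_1$ linearly independent. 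Your dimension-count argument for why such transversal $\ups_0,\tups_0$ exist (an open $(n-1)$-dimensional set minus at most two exceptional points) neatly closes this small gap.
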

\begin{proof}
Suppose $\lCreg(\ups_1, \ups_2) \neq \emptyset$.
First assume $\ups_1 \in \Vmione(q)$.
Let $\ups_0, \tups_0 \in \lCreg(\ups_1, \ups_2)$ and the goal is to prove $\ups_1 \in \lV(\ups_0, \tups_0 )$.
Indeed, by the definition of $\Vmione(q)$, there exists a small neighborhood $N_1 \subseteq \LUmi$ of $\ups_1$ such that
if $\tups_j \in N_1$ are distinct with $q = \gamma_{\tups_j}(\varsm_j)$ for some $\varsm_j>0$, then $(\tups_1, \tups_2, \tups_3) \in \Vmi$.
Now we write
\[
(q,w_1) = (\gamma_{\ups_1}(\varsm_1), \dot{\gamma}_{\ups_1}(\varsm_1))
\quad \text{ and }\quad
\ups_0 = (\gamma_{q,w_0}(\varsm_0), \dot{\gamma}_{q,w_0}(\varsm_0)),
\]
Thus, using the same idea as the proof of Proposition \ref{prop_Cearv1v2},
for any $\tw_1$ near $w_1$, we can find $\tw_3, \tw_3$ near $\tw_1$ such that
\[
w_0 \in \mathrm{span}(\tw_1, \tw_2, \tw_3) \quad \text{ and }\quad  w_0 \notin \mathrm{span}(\tw_j), \text{ for } j = 1,3,4.
\]
By choosing $\tw_1$ sufficiently close to $w_1$, we can assume $\tups_j = (\gamma_{\tw_j}(\varsm_j), \dot{\gamma}_{\tw_j}(\varsm_j)) \in \LUmi$ is contained in $N_1$ of $\ups_1$ and therefore $(\tups_1, \tups_2, \tups_3) \in \Vmi$.
As $\ups_0 \in \Creg_{U_+}(q)$ by Proposition \ref{pp_CregU}, Condition (R2) implies $(\ups_0, \tups_1, \tups_2, \tups_3) \in R$.
Similarly, we have $(\ups_0, \tups_1, \tups_4, \tups_5) \in R$ for some $\tups_4, \tups_5 \in \LUmi$.
Thus, the set $\lV(\ups_0, \tups_0)$ contains a small neighborhood of $\ups_1$ in $C_{U_-}^-(q)$ and it follows $\ups_1 \in C$ for some $C \in \lVsmooth(\ups_0, \tups_0)$.
Indeed, there exists small neighborhood $\tN_1 \subseteq \LUmi$ of $\ups_1$ such that $C \cap \tN_1 = C_{U_-}^-(q) \cap \tN_1$.
Now for any $\tups_j \in C \cap \tN_1$, by the definition, if they are distinct, then $(\tups_1, \tups_2, \tups_3) \in \Vmi$.
This proves $\ups_1 \in \lV_1(\ups_0, \tups_0 )$.

Next, we assume $\ups_1 \in \lV_1(\ups_0, \tups_0 )$.
Suppose $\ups_1 \in C$ for some $C \in \lVsmooth(\ups_0, \tups_0)$.
Note that by Lemma \ref{lm_Vonetwo} we have $\ups_1 \in \lV(\ups_0, \tups_0) \subseteq C_{U_-}^-(q)$, which is a smooth $(n-1)$-dimensional submanifold.
Thus, there exists small neighborhood $N_1 \subseteq \LUmi$ of $\ups_1$ such that $C \cap N_1 = C_{U_-}^-(q) \cap N_1$.
Now for any
distinct $\tups_j \in N_1$ with $q = \gamma_{\tups_j}(\varsm_j)$ for some $\varsm_j>0$, we have
$\tups_j \in C \cap N_1$.
This implies $(\tups_1, \tups_2, \tups_3) \in \Vmi$ and therefore $\ups_1 \in \Vmione(q)$.
\end{proof}
We are ready to prove the following results for the reconstruction.
\begin{prop}
    Suppose there are no cut points along any geodesic segments in $\overline{W}$.
    Suppose $\Vmi$ is sufficient for $W$.
    We consider the set
    \[
    \begin{split}
     \lV_- \coloneqq \{
     (\ups_1, \ups_2) \in (\LUmi)^2:
     &\text{ $(\ups_0', \ups_1, \ups_2, \ups_3') \in R$ is proper for some $\ups_0' \in \LUpl$ and $\ups_3' \in \LUmi$}\\
     &\quad \quad \quad \quad \quad \quad \quad \text{ and $\ups_1, \ups_2 \in \lV_1(\ups_0, \tups_0)$ for some $\ups_0, \tups_0 \in \lCreg(\ups_1, \ups_2)$}
     \}.
    \end{split}
    \]
%
    Then we have
    \[
    \begin{split}
    \{\lCreg(\ups_1, \ups_2): (\ups_1, \ups_2) \in \lV_-\}
    =\{\Creg_{U_+}(q): \text{$q \in \overline{W}$}\}.
    \end{split}
    \]
\end{prop}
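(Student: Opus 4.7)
The plan is to prove the two inclusions separately, using the immediately preceding propositions as the main engines. For the inclusion ``$\subseteq$'', fix $(\ups_1, \ups_2) \in \lV_-$. The propriety hypothesis together with Lemma \ref{lm_properintersection} (and Lemma \ref{lm_finiteF}) produces a unique point $q \in \overline{W}$ with $\gamma_{\ups_1}(\mR_+) \cap \gamma_{\ups_2}(\mR_+) \cap \overline{W} = \{q\}$. Since by definition of $\lV_-$ there exist $\ups_0, \tups_0 \in \lCreg(\ups_1, \ups_2)$ such that $\ups_1, \ups_2 \in \lV_1(\ups_0, \tups_0)$, Proposition \ref{pp_determineV} lets me conclude $\ups_1, \ups_2 \in \Vmione(q)$. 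The equality proposition stated just before therefore applies and yields $\lCreg(\ups_1, \ups_2) = \Creg_{U_+}(q)$, which gives the first inclusion.

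For the reverse inclusion ``$\supseteq$'', fix $q \in \overline{W}$ and I must exhibit a pair $(\ups_1, \ups_2) \in \lV_-$ for which $\lCreg(\ups_1, \ups_2) = \Creg_{U_+}(q)$. Since $\Vmi$ is sufficient for $W$, the set $\Vmione(q)$ is nonempty, and by opening the definition of $\Vmione(q)$ I may pick $\ups_1 \in \Vmione(q)$ together with a whole neighborhood in $\LUmi$ of lightlike vectors passing through $q$ for which triples are in $\Vmi$. Choose $\ups_2 \in \Vmione(q)$ with $\ups_2$ corresponding to a null geodesic through $q$ linearly independent from that of $\ups_1$; such a choice exists because $L_q^+ M$ is a two-dimensional cone and one can perturb within the neighborhood provided by $\Vmione(q)$. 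Next, use the existence of $\Creg_{U_+}(q) \neq \emptyset$ (which follows from absence of cut points and the standing assumption of the theorem, via Lemma \ref{lm_null_construction} applied to $w_1, w_2$ and a transversal $w_0$) to produce a future lightlike vector $w_0 \in L_q^+M$ independent of the previous two such that $(\ups_0', \ups_1, \ups_2, \ups_3')$ is proper and lies in $R$ by condition (R2). This verifies the propriety requirement in the definition of $\lV_-$.

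It remains to check the second requirement, namely that $\ups_1, \ups_2 \in \lV_1(\ups_0, \tups_0)$ for some $\ups_0, \tups_0 \in \lCreg(\ups_1, \ups_2)$. Apply the equality proposition already obtained at the end of Section \ref{subsec_3to1} to get $\lCreg(\ups_1, \ups_2) = \Creg_{U_+}(q)$, and within $\Creg_{U_+}(q)$ pick two distinct $\ups_0, \tups_0$ corresponding to linearly independent future lightlike directions from $q$ (possible because $\Creg_{U_+}(q)$ is open and nonempty). The ``only if'' direction of Proposition \ref{pp_determineV} applied with these $\ups_0, \tups_0$ yields $\ups_1 \in \lV_1(\ups_0, \tups_0)$, and symmetrically for $\ups_2$. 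Thus $(\ups_1, \ups_2) \in \lV_-$ and $\lCreg(\ups_1, \ups_2) = \Creg_{U_+}(q)$, finishing the second inclusion.

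The main obstacle I anticipate is the reverse direction: ensuring that for an arbitrary $q \in \overline{W}$ one can actually find $\ups_1, \ups_2 \in \Vmione(q)$ satisfying all the compatibility conditions simultaneously---in particular, ensuring enough transversality of the null geodesics through $q$ so that the constructed quadruple is proper and lies in $R$ via (R2). This requires carefully combining Lemma \ref{lm_null_construction} with the neighborhood description given by $\Vmione(q)$, and the sufficiency of $V_-$ is precisely what makes this possible. All other steps reduce to routine applications of the preceding propositions.
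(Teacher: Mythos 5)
Your proof is correct and follows essentially the same route as the paper's. The ``$\subseteq$'' direction coincides exactly: propriety gives the unique $q$ via Lemma \ref{lm_properintersection}, Proposition \ref{pp_determineV} upgrades $\ups_1, \ups_2 \in \lV_1(\ups_0, \tups_0)$ to $\ups_1, \ups_2 \in \Vmione(q)$, and the preceding equality proposition yields $\lCreg(\ups_1, \ups_2) = \Creg_{U_+}(q)$. In the ``$\supseteq$'' direction you reorder things slightly: you pre-select $\ups_2 \in \Vmione(q)$ and then produce $w_0$ and $\ups_3$, whereas the paper fixes only $\ups_1$ and invokes Lemma \ref{lm_null_construction} to construct \emph{both} $w_2$ and $w_3$ near $w_1$. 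That lemma does not directly hand you $w_3$ for a pre-chosen $w_2$; your route works because, with $w_0, w_1, w_2$ linearly independent, the constraint $w_0 \in \mathrm{span}(w_1, w_2, w_3)$ forces $w_3 \in \mathrm{span}(w_0, w_1, w_2)$, and near $w_1$ there is a one-parameter family of lightlike vectors in that $3$-plane—but this is an adaptation you should spell out rather than cite the lemma as-is. One point where you are actually more careful than the paper: the paper's proof stops at $\Creg_{U_+}(q) = \lCreg(\ups_1, \ups_2)$ without explicitly checking the $\lV_1$ membership required by the definition of $\lV_-$; your final appeal to the ``only if'' half of Proposition \ref{pp_determineV} (using the same $\ups_0, \tups_0$ for both $\ups_1$ and $\ups_2$, which that proposition's proof permits) closes this.
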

\begin{proof}
    We denote the left-hand side by $S_1$ and the right-hand side by $S_2$.
    First, we prove $S_1 \subseteq S_2$.
    Let $(\ups_1, \ups_2) \in \lV_-$, which by definition implies there exist $\ups_0, \tups_0 \in \lCreg(\ups_1, \ups_2)$ such that $\ups_1, \ups_2 \in \lV_1(\ups_0, \tups_0)$.
    As $(\ups_0', \ups_1, \ups_2, \ups_3') \in R$ for some $\ups_0' \in \LUpl$ and $\ups_3' \in \LUmi$,
    we can find a unique $q$ such that
    $\gamma_{\ups_1}(\mR_+) \cap \gamma_{\ups_2}(\mR_+) \cap \overline{W} = \{q\}.$
    Then by Proposition \ref{pp_determineV}, we have
    $\ups_1, \ups_2 \in \Vmione(q)$.
    As $\Vmi$ is sufficient for $W$, by Definition \ref{def_sufficientV}, the lightlike vectors in $\Vmi$ are corresponding to null geodesics that do not intersect before $W$.
    Using Proposition \ref{pp_CCU}, we have $\lCreg(\ups_1, \ups_2) = \Creg_{U_+}(q)$ and therefore $\lCreg(\ups_1, \ups_2) \in S_2$.

    Next, we prove $S_2 \subseteq S_1$.
    Let $q \in \overline{W}$.
    As $\Vmi$ is sufficient for ${W}$, the set $\Vmione(q)$ is nonempty.
    One can choose $\ups_1\in \Vmione(q)$ and we write
    $(q,w_1) = (\gamma_{\ups_1}(\varsm_1), \dot{\gamma}_{\ups_1}(\varsm_1))$ for some $\varsm_1 > 0$.
    We want to prove $\Creg_{U_+}(q) = \lCreg(\ups_1, \ups_2)$ for some   $\ups_2 \in \Vmione(q)$.
    With $\Creg_{U_+}(q)$,  there exists $w_0 \in L_q^+M$ such that $\gamma_{q, w_0}(\varsm_0) \in U_+$ for some $0< \varsm_0 < \rho(q, w_0)$.
    In addition, we can perturb $w_0$ such that $w_0 \notin \mathrm{span}(w_1)$, since by Lemma \ref{lm_cutneighborhood} again we still have $\pi(w_0)$ before the first cut point of $q$ under small perturbation.
    Then we set $\ups_0 = (\gamma_{q, w_0}(\varsm_0), \dot{\gamma}_{q, w_0}(\varsm_0))$ and by Lemma \ref{lm_nulltransversal}, we have $\ups_0 \in \LUpl$.
    Using \cite[Lemma 6.23]{feizmohammadi2021inverse} again, we can find $w_j \in L_q^+M$ sufficiently close to $w_1$ such that
    \[
    w_0 \in \mathrm{span}(w_1, w_2, w_3) \quad \text{ and }\quad  w_0 \notin \mathrm{span}(w_j), \text{ for } j = 1,2,3.
    \]
    We set
    \[
    \text{$\ups_j = (\gamma_{q,w_j}(\varsm_j), \dot{\gamma}_{q,w_j}(\varsm_j))$  such that $\gamma_{q,w_j}(\varsm_j) \in U_-$},
    \]
    for some $\varsm_j < 0$, where $j = 2,3$.
    As before, by choosing $w_j$ close to $w_1$, we can assume $\ups_j$ are sufficiently close to $\ups_1$ and therefore $(\ups_1, \ups_2, \ups_3) \in \Vmi$, as $\ups_1 \in \Vmione$.
    Then by Lemma \ref{lm_nulltransversal} again $\ups_0 \in \LUpl$ and
    therefore $(\ups_0, \ups_1, \ups_2, \ups_3) \in R$.
    Note such $\gamma_{\ups_j}$ are not identical and cannot intersect at other points in $W$ by Lemma \ref{lm_finiteF}, for $j =0,1,2,3$.
    Then such $\ups_1, \ups_2 \in \Vmione(q)$ and
    by Proposition \ref{prop_CearU}, we have $\Creg_{U_+}(q) = \lCreg(\ups_1, \ups_2) \in S_1$.
\end{proof}
To conclude, we state the following theorem.
\begin{thm}\label{thm_3to1}
    Let $\Vmi$ be sufficient for $W \subseteq I(U_-, U_+)$, see Definition \ref{def_sufficientV}.
    Suppose there no cut points along any null geodesic segments in $\overline{W}$.
    Let $R \subset \LUpl \times \Vmi$ be a three-to-one scattering relation as in Definition \ref{def_3to1}.
    Then $\Vmi$ and $R$ determines the set
    \[
    \{\Creg_{U_+}(q): \text{$q \in \overline{W}$}\}.
    \]
\end{thm}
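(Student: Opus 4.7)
The plan is to observe that Theorem \ref{thm_3to1} is essentially a bookkeeping consequence of the proposition immediately preceding it, which already establishes the set-theoretic identity
\[
\{\lCreg(\ups_1, \ups_2): (\ups_1, \ups_2) \in \lV_-\} = \{\Creg_{U_+}(q): q \in \overline{W}\}.
\]
What remains to verify is that every ingredient on the left-hand side is constructed from the data $(\Vmi, R)$ alone, together with the intrinsically known smooth and causal structure of the open set $U_+ \subseteq S_+$ on which measurements are made, so that the left-hand family is accessible without reference to a bulk point $q \in \overline{W}$ or to the metric $g$ inside $W$.

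First I would trace through the construction of $\lCreg(\ups_1, \ups_2)$. Starting from $R$, the conical piece $\CP(\ups_1, \ups_2)$ is a section of $R$ defined in (\ref{def_CP}); the family $C(\ups_1, \ups_2)$ selects those smooth $(n-1)$-dimensional submanifolds of $\LUpl$ sitting inside the intersection of two such conical pieces, whose union and closure produces $\lC(\ups_1, \ups_2)$. From here, the earliest part $\lCear(\ups_1, \ups_2)$ is extracted by the causal order on $S_+$, the smooth part $\lCsmooth(\ups_1, \ups_2)$ by requiring the projection to lie on a $1$-codimensional smooth piece of $\pi(\lCear(\ups_1,\ups_2))$, and $\lCreg(\ups_1, \ups_2)$ by the tangential partial order $<_{\pi(\ups)}$ of Definition \ref{def_tangentialp}. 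Each of these operations is purely set-theoretic or smoothness-based and uses only $R$ together with the known structure on $U_+$.

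Next I would verify that the index family $\lV_-$ is itself determined by $(\Vmi, R)$. Unwinding its definition, $\lV_-$ requires a proper quadruple in $R$ together with $\ups_1, \ups_2 \in \lV_1(\ups_0, \tups_0)$ for some $\ups_0, \tups_0 \in \lCreg(\ups_1, \ups_2)$. The set $\lV_1(\ups_0, \tups_0)$ is built from $\lV(\ups_0, \tups_0)$, which is the section of $R$ asking that certain quadruples lie in $R$, together with its smooth submanifold pieces $\lVsmooth(\ups_0, \tups_0)$ and the local condition that nearby triples fall in $\Vmi$. Since $\lCreg(\ups_1, \ups_2)$ has already been shown to be computable from $R$, every requirement on $\lV_-$ is a function of the pair $(\Vmi, R)$.

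Combining these two observations, the family $\{\lCreg(\ups_1, \ups_2): (\ups_1, \ups_2) \in \lV_-\}$ is intrinsically determined by $(\Vmi, R)$, and the preceding proposition identifies it with the target family $\{\Creg_{U_+}(q): q \in \overline{W}\}$. No substantial new obstacle is encountered in this last step; the only nontrivial geometric content, namely the equality $\lCreg(\ups_1, \ups_2) = \Creg_{U_+}(q)$ at the common intersection point $q$ under the sufficiency hypothesis on $\Vmi$, has already been handled by Propositions \ref{pp_CCU}, \ref{pp_CregU}, \ref{prop_Cearv1v2}, \ref{pp_LUreg}, and the characterization in \ref{pp_determineV} of $\Vmione(q)$ through $\lV_1(\ups_0, \tups_0)$.
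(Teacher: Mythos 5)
Your proposal is correct and matches the paper's own treatment: the theorem is stated without a separate proof precisely because it is a restatement of the immediately preceding proposition, whose left-hand family $\{\lCreg(\ups_1,\ups_2): (\ups_1,\ups_2)\in \lV_-\}$ is built, as you trace through, entirely from the data $(\Vmi, R)$ together with the known smooth, causal, and lens structure on $U_\pm$. The only thing you might make slightly more explicit is that checking properness in the definition of $\lV_-$ invokes the lens relation $\lL$, which the paper treats as part of the available data (recovered from the first-order linearization); with that noted, the argument is complete.
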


\subsection{Detection of singularities}\label{sec_detection}
In this part, we explain how to extract a three-to-one scattering relation as in Definition \ref{def_3to1}, from observed new singularities of nonlinear wave interaction.
In the following, we focus on the simplest case, i.e., the detection of singularities in Step 1 of Section \ref{sec_layer}.
Recall in Section \ref{subsec_constructwaves},
for given lightlike vectors $\ups_j = (p_j, w_j) \in \LUmi$,
we construct conormal distributions
$\Ups_j \in {{I}^{\mu +1/4} (\Sigma({p_j, w_j, s_0}))}$ and let $v_j$ be the solution to
\[
\sq_g v_j = 0 \text{ in }M \quad \text{ with } v_j|_{U_-} = \Ups_j, \quad \text{ where } j = 1,2,3.
\]
Note that $v_j \in {I}^{\mu}(\Lambda_j({p_j, w_j, s_0}))$ are conormal distributions  before the first cut point, for $j = 1,2,3$.
Now to extract a three-to-one scattering relation using the scattering operator, we introduce the following singularities detection condition as in \cite{Kurylev2018}, see also a modified version for the boundary value problems in \cite{Uhlmann2021a}.

\begin{df}\label{def_D}
    Let $\Vmi \subset (\LUmi)^3$ be sufficient for $W = I(U_-, U_+)$.
    We say a vector $\ups_0 \in \LUpl$ satisfies the \textbf{condition (D)}  with light-like vectors $(\ups_1, \ups_2, \ups_3) \in \Vmi$ and $\hat{\ka} >0$,
    if
    \begin{itemize}
        \item[(a)] $\ups_0, \ups_1, \ups_2, \ups_3$ are proper as in Definition \ref{def_proper}, and
        \item[(b)] for any $0< \ka_0 < \hat{\ka}$ and $j = 1, 2, 3$,  there exists $\Ups_j \in { I^{\mu+1/4}(\Sigma(p_j,w_j,\ka_0))}$
        satisfying the support condition (\ref{assump_Upsj}),
        where $\ups_j = (p_j, w_j)$,
    \end{itemize}
    such that $(\yb, \etab)$
     is contained in  $\wfset(\partial_{\epsilon_1}\partial_{\epsilon_2}\partial_{\epsilon_3} \mcn(\ep_1 \Ups_1 + \ep_2 \Ups_2 + \ep_3 \Ups_3) |_{\epsilon_1 = \epsilon_2 = \epsilon_3=0})$, where $(\yb, \etab)$ is the restriction of $\ups_0^\sharp = (y, \eta)$ to $T^*\Spl$.
\end{df}
By this singularity detection condition, we can tell if singularities produced by the interaction of three conormal waves emanating from $(p_j, w_j)_{j=1}^3$ at the past null infinity can be detected at the corresponding covector of $(p_0, w_0)$ at the future infinity,
where $(p_j, w_j)_{j=1}^3$ are given by $(\ups_1, \ups_2, \ups_3) \in \Vmi$ and $(p_0, w_0)$ is given by $\ups_0 \in \LUpl$.
We introduce the following lemmas.
These lemmas may be slightly more general than Step 1 as we would like to use them for later steps.
\begin{lm}\label{lm_Nk_0}
    Let $\ups_1,\ups_2 \in \LUmi$ be distinct and let $F = \gamma_{\ups_1}(\mR_+) \cap \gamma_{\ups_2}(\mR_+) \cap J(i_-, i_+)$.
    Note that $F$ must be finite.
    Then we have the following statements.
    \begin{itemize}
        \item[(1)] Suppose $F =  \{q_1, \ldots, q_m \}$.
        Let $N_k$ be disjoint small open neighborhoods of $q_k \in F$, for $k = 1, \ldots, m$.
        Suppose $\xxiip$ is in the $\ka$-neighborhood of $\xxii$, for $i=1,2$.
        Then with sufficiently small $\ka > 0$, we have
        \[\gamma_{\ups'_1}(\mathbb{R_+}) \cap  \gamma_{\ups'_2}(\mathbb{R_+}) \cap J(\imi, \ipl)  \subseteq \cup_{k=1}^m N_k.\]
        \item[(2)] If $F = \emptyset$, then with sufficiently small $\ka > 0$, we have
        \[\gamma_{\ups'_1}(\mathbb{R_+}) \cap  \gamma_{\ups'_2}(\mathbb{R_+}) \cap J(\imi, \ipl)  = \emptyset.\]
    \end{itemize}
\end{lm}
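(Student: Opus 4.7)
The plan is to prove both statements by a single contradiction-plus-compactness argument, exploiting the smooth dependence of null geodesics on their initial data together with the nontrapping assumption that controls the affine parameter range needed to reach $J(\imi,\ipl)$.

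First I would record that $F$ is automatically finite. Indeed, each point $q_k\in F$ arises as $q_k=\gamma_{\ups_1}(s_{1,k})=\gamma_{\ups_2}(s_{2,k})$ with $s_{i,k}\ge 0$. If $\gamma_{\ups_1}$ and $\gamma_{\ups_2}$ shared a tangent direction at some such $q_k$, the two null geodesics would coincide, contradicting that $\ups_1\ne \ups_2$ and that they both emanate from distinct points on $\Smi$. Hence the intersections are isolated. By the nontrapping assumption, both null geodesics exit $J(\imi,\ipl)$ in finite affine time, so $(s_{i,k})_k$ lies in a compact interval; isolation plus compactness forces $F$ to be finite.

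Next, for statement (1), I would argue by contradiction. Suppose that for every $\kappa>0$ there exist $\ups_1',\ups_2'$ within the $\kappa$-neighborhoods of $\ups_1,\ups_2$ and a point
\[
q'\in\gamma_{\ups_1'}(\mR_+)\cap\gamma_{\ups_2'}(\mR_+)\cap J(\imi,\ipl)\setminus \textstyle\bigcup_{k=1}^m N_k.
\]
Taking $\kappa=1/n$ produces sequences $\ups_i^{(n)}\to\ups_i$ and points $q^{(n)}=\gamma_{\ups_1^{(n)}}(s_1^{(n)})=\gamma_{\ups_2^{(n)}}(s_2^{(n)})$ with $s_i^{(n)}>0$. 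The nontrapping assumption, applied uniformly on a precompact neighborhood of $\{(\ups_1,\ups_2)\}$, yields a uniform bound $s_i^{(n)}\le C$. Passing to a subsequence I get $s_i^{(n)}\to s_i^\star\in[0,C]$ and $q^{(n)}\to q^\star\in J(\imi,\ipl)\setminus\bigcup_k N_k$ (a closed set). By continuous dependence of the geodesic flow on initial data, $q^\star=\gamma_{\ups_i}(s_i^\star)$ for $i=1,2$. If $s_1^\star>0$ and $s_2^\star>0$, then $q^\star\in F$, contradicting $q^\star\notin \bigcup_k N_k$. The boundary case $s_i^\star=0$ would force $q^\star=\pi(\ups_i)\in\Smi$, which remains in $F\cap \Smi$ since $\pi(\ups_1)\ne\pi(\ups_2)$ forces this to be an intersection only if the other geodesic also passes through it, and continuity again places $q^\star$ in $F$. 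Statement (2) follows from exactly the same argument: the limit point $q^\star$ would belong to $F=\emptyset$, an immediate contradiction.

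The main obstacle will be the boundary-case bookkeeping: making sure that when $s_i^{(n)}\to 0$ or $s_i^{(n)}\to C$, the limit $q^\star$ is still detected as an element of $F$ (or at least still forces a contradiction in case (2)). Handling this cleanly requires uniformity of the affine-time escape bound from the nontrapping hypothesis and a careful use of the fact that $\gamma_{\ups_i}$ enters $J(\imi,\ipl)$ at $s=0$. Once this is in hand, the finiteness of $F$, compactness of $J(\imi,\ipl)$, and smooth dependence of geodesic flow combine to give the two statements uniformly.
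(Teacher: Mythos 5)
Your argument is correct and follows essentially the same approach as the paper's: a compactness-and-continuity contradiction argument that extracts a convergent subsequence of offending intersection points, establishes convergence of the corresponding affine parameters, and concludes by continuity of the geodesic flow that the limit point must lie in $F$, a contradiction. The only differences are cosmetic: you derive the uniform bound on affine parameters directly from the nontrapping assumption (where the paper instead cites an external lemma from the reference by Feizmohammadi et al.\ for the convergence of parameters), and you supply an explicit proof of the finiteness of $F$, which the paper merely asserts.
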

\begin{proof}
    We prove this by contradiction.
    To prove (1), assume this is not true.
    Then we can find two sequences $\ups_{i,j} \in L^+\Omgin, i=1,2$ such that
    $\ups_{i,j}$ is in the $\ka_{1,j}$-neighborhood of $\ups_i$
    with
    \[
    q_j' \in \gamma_{\ups_{1, j}}(\mathbb{R}_+) \cap \gamma_{\ups_{2, j}}(\mathbb{R}_+)\cap J({\imi, \ipl}) \quad \text{ and }\quad  q_j' \in J({\imi, \ipl}) \setminus (\cup_{k=1}^m N_k).
    \]
    Since $J({\imi, \ipl})$ is compact, the sequence $\{q_j'\}$ has a subsequence converging to some point $q'$ and we abuse the notation to denote it still by $\{q_j'\}$.
    Since $J({\imi, \ipl}) \setminus (\cup_{k=1}^m N_k)$ are closed, we have $q' \in J({\imi, \ipl}) \setminus (\cup_{k=1}^m N_k)$.
    For each $j$, since $q_j'$ is the intersection point,
    we can write $q_j' = \gamma_{\ups_{i,j}}(\varsm_{i,j})$ for $i = 1,2$ for some $\varsm_{i,j} \geq 0$.
    Note that $q_j'  = \gamma_{\ups_j}(\varsm_{i,j}) \rightarrow q'$.
    By \cite[Lemma 6.4]{feizmohammadi2021inverse}, there exists $\varsm_i$ such that $\varsm_{i,j} \rightarrow \varsm_i$ for $i=1,2$.
    On the other hand, since the geodesic flow are continuous, we have $ \gamma_{\ups_{i,j}}(\varsm_{i,j}) 
    \rightarrow \gamma_{\ups_i}(\varsm_i)$ for $i=1,2$, as $j \rightarrow +\infty$.
    It follows that $q' = \gamma_{\xxii}(\varsm_i) \in F$, which contradicts with the fact that
    $q' \in  J({\imi, \ipl}) \setminus (\cup_{k=1}^m N_k)$.

    To prove (2), we can choose $N_k = \emptyset$ for $k = 1, \ldots, m$ and repeat the same analysis as above to get a point $ p = \gamma_{\xxii}(t_i)$ in $J({\imi, \ipl})$, $i=1,2$.
    This contradicts with $F = \emptyset$.
\end{proof}
\begin{lm}\label{lm_Nk}
    Let $\ups_1, \ups_2 \in \LUmi$ be distinct.
    Let $F = \gamma_{\ups_1}(\mathbb{R}_+) \cap \gamma_{\ups_2}(\mathbb{R_+}) \cap J(\imi, \ipl) =  \{q_1, \ldots, q_m \}$ and we write $\ups_j = (p_j, w_j)$, for $i=1,2$.
    Let $N_k$ be disjoint small open neighborhoods of each $q_k \in F$.
    Then with $\ka_0 > 0$ sufficiently small, we have
    \[
    K({p_1}, {w_1}, \ka_0) \cap  K(x_2, w_2, \ka_0)\cap J(\imi,\ipl) \subseteq \textstyle \bigcup_{k=1}^m N_k.
    \]
    If $F = \emptyset$, then $\ka_0 > 0$ sufficiently small $K({p_1}, {w_1}, \ka_0) \cap  K(x_2, w_2, \ka_0)\cap J(\imi, \ipl) = \emptyset$.
\end{lm}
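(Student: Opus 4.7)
The plan is to reduce Lemma \ref{lm_Nk} to Lemma \ref{lm_Nk_0} via a contradiction argument exploiting the compactness of $J(i_-, i_+)$, together with the fact that $W(p_i, w_i, \kappa_0)$ collapses to the single lightlike vector $\ups_i = (p_i, w_i)$ as $\kappa_0 \to 0$. The only new ingredient relative to Lemma \ref{lm_Nk_0} is that in the definition of $K(p_i, w_i, \kappa_0)$, the base point and direction are each perturbed within a conormal family, rather than just being a rigid perturbation of a single lightlike vector; but the same continuity/compactness machinery applies.

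\smallskip

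More precisely, I would argue by contradiction. Suppose the first statement fails. Then there is a sequence $\kappa_{0,j} \downarrow 0$ and points
\[
q_j' \in K(p_1, w_1, \kappa_{0,j}) \cap K(p_2, w_2, \kappa_{0,j}) \cap J(i_-, i_+),
\qquad q_j' \notin \textstyle\bigcup_{k=1}^m N_k.
\]
By the definition of $K$ in Section \ref{subsec_constructwaves}, for each $j$ and each $i = 1, 2$ there exist $(p_{i,j}, w_{i,j}) \in W(p_i, w_i, \kappa_{0,j})$ and $\varsigma_{i,j} \geq 0$ such that $q_j' = \gamma_{p_{i,j}, w_{i,j}}(\varsigma_{i,j})$. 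Since $\kappa_{0,j} \to 0$, the construction of $W(p_i, w_i, \kappa_0)$ forces $(p_{i,j}, w_{i,j}) \to (p_i, w_i)$ in $L^+_{\Smi} M$. Using the compactness of $J(i_-, i_+) \setminus \bigcup_{k=1}^m N_k$, we pass to a subsequence (not relabeled) so that $q_j' \to q'$ for some $q' \in J(i_-, i_+) \setminus \bigcup_{k=1}^m N_k$.

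\smallskip

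It remains to show $q' \in F$, which will give the contradiction. This is the same step as in the proof of Lemma \ref{lm_Nk_0}: the parameters $\varsigma_{i,j}$ stay bounded because $q_j' \in J(i_-, i_+)$ is compact and the geodesic flow is nontrapping (we may invoke the analogue of \cite[Lemma 6.4]{feizmohammadi2021inverse} cited in the proof of Lemma \ref{lm_Nk_0}); passing to a further subsequence, $\varsigma_{i,j} \to \varsigma_i$ for each $i$. By continuous dependence of the geodesic flow on the initial data $(p_{i,j}, w_{i,j}, \varsigma_{i,j}) \to (p_i, w_i, \varsigma_i)$, we obtain
\[
q' = \lim_{j \to \infty} \gamma_{p_{i,j}, w_{i,j}}(\varsigma_{i,j}) = \gamma_{p_i, w_i}(\varsigma_i) = \gamma_{\ups_i}(\varsigma_i), \qquad i = 1,2,
\]
so $q' \in \gamma_{\ups_1}(\mathbb{R}_+) \cap \gamma_{\ups_2}(\mathbb{R}_+) \cap J(i_-, i_+) = F$, contradicting $q' \notin \bigcup_{k=1}^m N_k$. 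This proves the first statement; for the second statement ($F = \emptyset$), the identical argument produces a point of $F$, which is impossible.

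\smallskip

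The main (and essentially only) obstacle is to verify that the $\varsigma_{i,j}$ remain in a bounded interval, so that the subsequence-limit argument goes through. This is exactly what is invoked in Lemma \ref{lm_Nk_0} via the cited lemma from \cite{feizmohammadi2021inverse}, and it relies on the nontrapping part of Assumption \ref{assump_Mg} together with the compactness of $J(i_-, i_+)$; once this is in hand, the rest of the proof is a direct transcription of the argument for Lemma \ref{lm_Nk_0}, with $\ups_{i,j}'$ replaced by $(p_{i,j}, w_{i,j}) \in W(p_i, w_i, \kappa_{0,j})$.
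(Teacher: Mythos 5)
Your proposal is correct and follows essentially the same approach as the paper: the key observation in both is that every $q'\in K(p_1,w_1,\ka_0)\cap K(p_2,w_2,\ka_0)$ lies on null geodesics $\gamma_{p_i',w_i'}(\mR_+)$ with $(p_i',w_i')\in W(p_i,w_i,\ka_0)$, which shrinks to $\ups_i$ as $\ka_0\to 0$, so one may invoke (or, as you do, re-run) the compactness/contradiction argument of Lemma \ref{lm_Nk_0}. The paper simply cites Lemma \ref{lm_Nk_0} directly after this reduction, whereas you transcribe its proof inline; both are valid, and you correctly identify the bounded-parameter step (via the cited lemma from \cite{feizmohammadi2021inverse}) as the only non-trivial ingredient.
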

\begin{proof}
    In the following, we denote 
    $K({p_i}, w_i, \ka_0)$ by $K_i$ and ${W}({p_i}, w_i, \ka_0)$ by ${W}_i$, for $i = 1,2$.
    We notice that if $q' \in K_1 \cap K_2$, then there exist
    $(p_i', {w}_i') \in {W}_i$ satisfying $q' \in \gamma_{{p_1'}, {w}_1'}(\mathbb{R}_+) \cap \gamma_{p_2', {w}_2'}(\mathbb{R}_+)$.
    Therefore, it suffices to show that with small enough $\ka_0$, we have
    \[\gamma_{{p_1'}, w'_1}(\mathbb{R}_+) \cap \gamma_{p_2', w_2'}(\mathbb{R}_+) \cap J(\imi, \ipl) \subseteq \textstyle \bigcup_{k=1}^m N_k,
    \]
    for any $(p_i', w_i') \in {W}_i$, for $i=1,2$.
   This is true according to Lemma \ref{lm_Nk_0}.
\end{proof}
\begin{lm}\label{lm_lightlikeprop}
    Let $(p_j, w_j)_{j=1}^3 \subseteq (\LUmi)^3$ be distinct.
    If $q' \in W$ such that $(q', \zj) \in \Lambda(p_j, w_j, s_0)$, for $j=1,2,3$,
    then with sufficiently small $\ka_0 > 0$
    we have
    \begin{itemize}
        \item[(1)] $\zi + \zj \neq 0$, for $ 1\leq i < j \leq 3$,
        \item[(2)] $\zeta^{1} + \zeta^{2} + \zeta^{3} \neq 0$.
    \end{itemize}
\end{lm}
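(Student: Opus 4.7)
The plan is to unpack the definition of $\Lambda(p_j, w_j, \ka_0)$ and reduce both claims to a statement about tangent vectors of null geodesics through $q'$. Recall that $(q', \zeta^j) \in \Lambda(p_j, w_j, \ka_0)$ means there exist $(p^{(j)}, w^{(j)}) \in W(p_j, w_j, \ka_0)$, a parameter $\varsigma^{(j)} > 0$, and a nonzero scalar $r_j \in \mR \setminus\{0\}$ such that
\[
q' = \gamma_{p^{(j)}, w^{(j)}}(\varsigma^{(j)}),
\qquad
\zeta^j = r_j\, \dot{\gamma}_{p^{(j)}, w^{(j)}}(\varsigma^{(j)})^{\flat}.
\]
Write $V_j := \dot{\gamma}_{p^{(j)}, w^{(j)}}(\varsigma^{(j)})$, a future-pointing lightlike vector at $q'$.

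For (1), suppose for contradiction that $\zeta^i + \zeta^j = 0$. Then $r_i V_i^\flat = -r_j V_j^\flat$, so $V_i$ and $V_j$ are proportional at $q'$. By ODE uniqueness this forces the (unparametrized) null geodesics $\gamma_{p^{(i)}, w^{(i)}}$ and $\gamma_{p^{(j)}, w^{(j)}}$ to coincide, and since both $p^{(i)}, p^{(j)} \in \Smi$ and a null geodesic from $\Smi$ enters $M$ transversally (Lemma~\ref{lm_nulltransversal}), the endpoints on $\Smi$ coincide: $p^{(i)} = p^{(j)}$ and $w^{(i)} \parallel w^{(j)}$. But since $(p_i,w_i) \neq (p_j,w_j)$, we may choose $\ka_0 > 0$ small enough that $W(p_i, w_i, \ka_0)$ and $W(p_j, w_j, \ka_0)$ are disjoint (this uses that $W(p,w,\ka_0)$ shrinks to $\{(p,w)\}$ modulo positive scaling as $\ka_0 \to 0$). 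This contradicts the existence of a common element, proving (1).

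For (2), suppose $\zeta^1 + \zeta^2 + \zeta^3 = 0$, i.e.\ $r_1 V_1 + r_2 V_2 + r_3 V_3 = 0$ at $q'$ (using $\flat$-identifications). The central Lorentzian algebra input is: \emph{three future-pointing lightlike vectors with a nontrivial linear relation must contain two mutually parallel ones}. To see this, group the $r_j$ by sign (by (1) and nontriviality, at least one is positive and at least one negative, unless all same sign in which case $g(V_1, \sum r_k V_k) = \sum_k r_k g(V_1, V_k) = 0$ gives a sum of nonpositive terms equal to zero, forcing $g(V_1, V_k) = 0$ for $k \neq 1$, hence $V_k \parallel V_1$). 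In the mixed-sign case, after relabeling write $r_1 V_1 = -r_2 V_2 - r_3 V_3$ with $r_1 > 0$, $r_2, r_3 \leq 0$ not both zero; then $0 = g(V_1, V_1) r_1^2 = 2 r_2 r_3 g(V_2, V_3)$, and since $g(V_2, V_3) \le 0$ for future-pointing lightlike vectors with equality only when parallel, we conclude $V_2 \parallel V_3$. In any case two of the $V_j$'s are parallel, and exactly the argument from (1) yields a contradiction with the disjointness of the $W(p_j, w_j, \ka_0)$ for small $\ka_0$.

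The only subtle point, and the step I expect to require the most care, is the case analysis in (2): one must verify that when signs of the $r_j$ are mixed, the Lorentzian relation $g(V_1, V_1) = 0$ genuinely forces two of the $V_j$'s to be parallel, using that $g(V_i, V_j) \leq 0$ with equality iff parallel for future-pointing lightlike vectors. Choosing $\ka_0$ small enough to ensure simultaneous disjointness of $W(p_i, w_i, \ka_0)$ for $i = 1,2,3$ (all three pairs) is routine by the distinctness hypothesis.
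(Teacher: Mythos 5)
Your proof is correct, but it takes a genuinely different route from the paper's. The paper first invokes Lemma~\ref{lm_Nk} to localize: for $\ka_0$ small, $q'$ lies in a small neighborhood of some intersection point $q_k$ of the \emph{unperturbed} geodesics $\gamma_{p_j,w_j}(\mR_+)$, where the tangent vectors $\dot\gamma_{p_j,w_j}(\varsm_j)$ are pairwise non-proportional; it then cites a result from \cite{feizmohammadi2021inverse} to conclude these are linearly independent, and finishes by a perturbation argument since $\zeta^j$ is a small deformation of $(c_j\dot\gamma_{p_j,w_j}(\varsm_j))^\flat$ and linear independence is an open condition. You instead work directly at $q'$ with the actual tangent vectors $V_j$ of the perturbed geodesics, deduce from geodesic uniqueness and shrinking of the $W(p_j,w_j,\ka_0)$ that the $V_j$ are pairwise non-parallel, and then reprove from scratch the Lorentzian fact that three pairwise non-parallel future-pointing lightlike vectors are linearly independent (via the sign case analysis using $g(V_i,V_j)\le 0$ with equality iff parallel). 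What your approach buys is self-containedness: it avoids both the localization lemma and the external citation, replacing them with the elementary algebra you identified as the crux. What it costs is that the pairwise-non-parallel step depends more visibly on the implicit standing assumption that the $(p_j,w_j)$ determine distinct null geodesics with disjoint $\Sigma_j$ (cf.\ \eqref{assump_Upsj}); in particular, distinctness of $(p_j,w_j)$ alone is not enough, since two distinct lightlike vectors with the same base point and proportional directions would give identical $\Lambda$'s and defeat the claim. The paper's proof hides the same implicit assumption in the unexplained assertion that the tangent vectors at $q_k$ ``are not multiples of each other,'' so this is not a defect of your proof specifically, but it is worth making explicit that the hypothesis needed is that the $\gamma_{p_j,w_j}$ are distinct geodesics, not merely that the pairs $(p_j,w_j)$ are distinct.
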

\begin{proof}
    We claim that with $\ka_0 > 0$ small enough, we can find $q \in {\cap_{j=1}^{3}\gamma_{{{p_i}, w_i}}(\mathbb{R_+})}$ and $q'$ is in a small neighborhood of $q$.
    Indeed, let $F = {\cap_{j=1}^{3}\gamma_{{{p_j}, w_j}}(\mathbb{R_+})} \cap J({\imi, \ipl})$ and we can assume $F = \{q_1, \ldots, q_m \}$.
    Let $N_k$ be disjoint arbitrarily small open neighborhoods of each $q_k \in F$.
    Then Lemma \ref{lm_Nk} implies $q' \in N_k$ for some $k \in \{1, \ldots, m\}$, with $\ka_0$ sufficiently small.
    Now suppose $q_k = \gamma_{{{p_j}, w_j}}(\varsm_j)$ for each $j = 1,2,3$.
    We note that $\dot{\gamma}_{{{p_ij}, w_j}}(\varsm_j) \in L_{q_k}^+M$ for $j = 1,2,3$ are not multiples of each other,
    which implies
    \[
    c_i \dot{\gamma}_{{p_i}, w_i}(\varsm_i) + c_j \dot{\gamma}_{x_j, w_j}(\varsm_j) \neq 0,
    \quad \text{if }c_i, c_j \neq 0, \text{ for }i \neq j \in \{1,2,3\}.
    \]
    and by \cite[Lemma 6.21]{feizmohammadi2021inverse} one has
    \[
    c_1 \dot{\gamma}_{{p_1}, {w_1}}(\varsm_1) + c_2 \dot{\gamma}_{p_2, w_2}(\varsm_2) + c_3 \dot{\gamma}_{p_3, w_3}(\varsm_3) \neq 0,
    \quad \text{if }c_1, c_2, c_3 \neq 0.
    \]
    With $\ka_0$ small enough,  the lightlike covector $(q', \zj)$ are sufficiently small perturbations of lightlike covectors
    $(q_k, (c_j \dot{\gamma}_{p_j, w_j}(t_j))^b )$ for some nonzero constant $c_j$, where $j = 1,2,3$.
    Thus, we have the desired result.
\end{proof}
The following lemma is the result of the propagation of singularities, for example, see \cite[Theorem 2.1]{Taylor2017}.
\begin{lm}\label{lm_propagation}
Let $(p, \eta) \in L^{*, +}M$ and $v$ be the solution to
\[
(\sq_g + V) v = f \text{ in }M,  \quad \text{ with } \lR_-[v]  = 0.
\]
    Then $(p, \eta) \in \wfset(v)$ only if $(y, \eta) \in \wfset(f)$ or there exists $(q, \zeta)$ such that
    $(q, \zeta) \in \wfset(f)$ and $(p, \eta)$ is contained in the null bicharacteristics starting from $(q, \zeta)$.
\end{lm}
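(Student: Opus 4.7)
The plan is to reduce to Hörmander's propagation of singularities theorem, with the Goursat boundary condition on the characteristic hypersurface $S_-$ serving to rule out the ``trivial'' end of the bicharacteristic. The operator $\sq_g + V$ has the same principal symbol as $\sq_g$, so all microlocal analysis is governed by the symbol $b(x,\zeta) = g^{ij}\zeta_i\zeta_j$.

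First I would apply elliptic regularity: at any covector $(p',\eta')\notin \Char(\sq_g)$, the operator $\sq_g + V$ is elliptic, so $(p',\eta')\in\wfset(v)$ forces $(p',\eta')\in \wfset(f)$. Hence it suffices to consider $(p,\eta)\in \Char(\sq_g)$. Let $\Theta_{p,\eta}$ be the null bicharacteristic through $(p,\eta)$, which is the integral curve of $H_b$ and projects to the lightlike geodesic $\gamma_{p,\eta^\#}$. By the classical propagation of singularities theorem (as in \cite[Theorem 2.1]{Taylor2017}), the set $\wfset(v)\setminus\wfset(f)$ is a union of maximally extended null bicharacteristic segments in $\Char(\sq_g)\cap T^*M\setminus 0$.

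Second, assume $(p,\eta)\in\wfset(v)\setminus\wfset(f)$ and trace the bicharacteristic $\Theta_{p,\eta}$ backward in the past direction. If it meets $\wfset(f)$ at some $(q,\zeta)$, then the conclusion holds directly. Otherwise the entire past-pointing bicharacteristic segment lies in $\wfset(v)\setminus\wfset(f)$. By the non-trapping hypothesis in Assumption \ref{assump_Mg}, the spatial projection of this past bicharacteristic tends to $S_-$, so there exists $s_0 < 0$ such that $\gamma_{p,\eta^\#}(s_0)$ is arbitrarily close to $S_-$, and we can pick a neighborhood $\mathcal{O}\subset M$ of this near-boundary segment on which $f$ is smooth (shrinking $\mathcal{O}$ so that it is disjoint from the projection of $\wfset(f)$).

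The final step, and the main technical point, is to reach a contradiction from the boundary condition $\lR_-[v]=0$. Near $S_-$, pick local coordinates in which $S_-=\{x_1=0\}$ and $\sq_g$ has the normal form of Lemma \ref{norm-form}. Applying $\sq_g+V$ to $v$ yields $f$, which is smooth in $\mathcal{O}$, and since $v|_{S_-}=0$ is smooth, Theorem \ref{GP1} together with the higher-order energy estimates \eqref{GPEk} shows that $v$ is smooth on a full neighborhood of $\gamma_{p,\eta^\#}(s_0)$ in $M$; equivalently, no covector based at such a near-boundary point lies in $\wfset(v)$. This contradicts the assumption that the entire backward bicharacteristic lies in $\wfset(v)$. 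The difficulty here is that $S_-$ is characteristic, so one cannot invoke the standard propagation-up-to-the-boundary result directly; however, since the Goursat data vanishes to infinite order and $f$ is smooth near the trace, the unique-solvability in the scale of $\mathcal{H}^k$ spaces (Theorem \ref{GP1} applied iteratively to vector fields tangent to $S_-$) provides the required smoothness, completing the argument.
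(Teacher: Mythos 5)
The paper does not actually prove this lemma; it simply cites \cite[Theorem 2.1]{Taylor2017}, so there is no internal argument to compare against. Your reconstruction (elliptic regularity to restrict to the characteristic set, H\"ormander propagation of singularities in the interior, and termination of the backward bicharacteristic at $\Smi$ via the vanishing Goursat data) is the natural strategy and in spirit is almost certainly what the authors have in mind.

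However, your final step has a genuine gap. You trace the backward bicharacteristic until $\gamma_{p,\eta^\#}(s_0)$ is close to $\Smi$ and then assert that one may ``pick a neighborhood $\mathcal{O}\subset M$ of this near-boundary segment on which $f$ is smooth (shrinking $\mathcal{O}$ so that it is disjoint from the projection of $\wfset(f)$).'' The only hypothesis you have is that the backward bicharacteristic itself avoids $\wfset(f)$ — that is, at each point $\gamma(s)$ the particular covector $(\gamma(s),\xi(s))$ is not in $\wfset(f)$. This gives microlocal smoothness of $f$ in a conic neighborhood of the bicharacteristic, \emph{not} smoothness of $f$ on a full spatial neighborhood: nothing in the statement of the lemma prevents $\pi(\wfset(f))$ from accumulating on $\Smi$, or even from being all of $M$. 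But the energy-estimate step — applying Theorem \ref{GP1} and \eqref{GPEk} with the tangent vector fields $\mcv(\scm)$ — genuinely requires $f$ to be $C^\infty$ in the full set $\mathcal{O}$, since those estimates control $\mcv(\scm)^k v$ in terms of $\mcv(\scm)^k f$. So as written the argument proves the lemma only under the additional hypothesis that $\pi(\wfset(f))$ is contained in a compact subset of $M$ away from $\Smi$. That hypothesis holds in every use of the lemma in this paper ($f$ is a product of conormal waves with compact spatial support in the interior), so this is likely what the authors implicitly intend, but for the lemma as stated you either need to add that assumption explicitly or replace the last step with a genuinely microlocal argument (for instance, a positive-commutator estimate localized near the bicharacteristic, or an extension of $v$ by zero past $\Smi$ and a careful treatment of the distributional jump across the characteristic hypersurface — neither of which is a small fix, since $\Smi$ is characteristic and the normal form of Lemma \ref{norm-form} is degenerate at $\{x_1=0\}$).

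A minor cosmetic point: the lemma's statement has a typo — $(y,\eta)\in\wfset(f)$ should read $(p,\eta)\in\wfset(f)$ — and you should note that ``null bicharacteristics starting from $(q,\zeta)$'' is implicitly the future flow-out, which is why only the backward direction need be traced.
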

Assuming no cut points in $\overline{W}$, the following proposition is a direct result of the product of conormal distributions and the lemmas above.
Moreover, by our construction in Step 1, null geodesics starting from lightlike vectors in $\LUmi$ and ending at lightlike vectors in $\LUpl$ only intersect in $\overline{W}$.
\begin{prop}\label{pp_R1}
    Suppose there are no cut points along any geodesic segments in $\overline{W}$.
    Let $(\ups_0, \ups_1, \ups_2, \ups_3) \in \LUpl \times \Vmi$ be proper.
    If one has
    \[(\capgammathree) \cap \overline{W}= \emptyset,\] then
    $\ups_0$ does not satisfy the condition (D) with $(\ups_1, \ups_2, \ups_3)$ and small $\kappa_0 >0$.
\end{prop}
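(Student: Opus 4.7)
\textit{Proof proposal.}
The plan is to reduce the wavefront statement to a microlocal analysis of $\lU_3$ and then to rule out the relevant Lagrangians using properness. By the asymptotic expansion \eqref{eq_asymptotic} derived in Section \ref{subsec_waves}, one has
\[
\partial_{\epsilon_1}\partial_{\epsilon_2}\partial_{\epsilon_3} \mcn(\ep_1 \Ups_1 + \ep_2 \Ups_2 + \ep_3 \Ups_3) |_{\ep_1 = \ep_2 = \ep_3=0} = \lR_+[\lU_3],
\]
where $\lU_3 = -6 Q_s(\beta v_1 v_2 v_3)$. Since $\lR_+$ is a Fourier integral operator whose canonical relation simply pulls back covectors transversal to $U_+$, to show that $(\yb, \etab)$ is absent from $\wfset(\lR_+[\lU_3])$ it suffices to prove that $\ups_0^\sharp$ lies outside $\wfset(\lU_3)$ in a neighborhood of $\pi(\ups_0) \in U_+$.

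First, I would invoke the triple-intersection analog of Lemma \ref{lm_Nk}, whose proof is identical to the pairwise case already given: the hypothesis $\capgammathree \cap \overline{W}=\emptyset$ implies, for $\kappa_0 > 0$ sufficiently small, that $K_1 \cap K_2 \cap K_3 \cap \overline{W} = \emptyset$ as well. Consequently $\Lambda_{123} = N^*(K_1 \cap K_2 \cap K_3)$ has no fibers above $\overline{W}$, so $\Lambda_{123} \cap \Char(\sq_g) = \emptyset$ in $T^*\overline{W}$, and its Hamiltonian flow-out $\Lambda_{123}^g$ is likewise empty in the relevant region $\nxxi$. The sufficient condition (a) in Definition \ref{def_sufficientV} guarantees that any triple intersection occurring outside $\overline{W}$ cannot causally precede $\overline{W}$, so no singularity produced there can reach $\lU_3$ near $U_+$. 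By Proposition \ref{pp_U3}, away from $\Lambda^{(1)} = \bigcup_{j=1}^3 \Lambda_j$ one has $\lU_3 \in I^{3\mu+\frac{1}{2}, -\frac{1}{2}}(\Lambda_{123}, \Lambda_{123}^g)$; with both Lagrangians empty in our region, $\lU_3$ is microlocally smooth off $\Lambda^{(1)}$ there.

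It remains to eliminate contributions to $\wfset(\lU_3)$ from $\Lambda^{(1)}$. This is where properness (Definition \ref{def_proper}) enters: $\pi(\ups_0)$ is distinct from $\pi(\lL(\ups_j))$ for each $j = 1,2,3$, and $\ups_0 \in \LUpl$ is transversal to $U_+$. By the construction of Section \ref{subsec_constructwaves}, $\Lambda_j \cap T^*U_+$ is, for small $\kappa_0 > 0$, a small neighborhood of $\lL(\ups_j)^\flat$ and therefore does not contain $(\yb, \etab)$. Combining this with the previous paragraph gives $(\yb, \etab) \notin \wfset(\lR_+[\lU_3])$, so $\ups_0$ fails condition (D). The main obstacle in writing this cleanly is verifying that no pairwise flow-out $\Lambda_{ij}^g$ produces singularities at $\ups_0^\sharp$ that escape $\Lambda^{(1)}$; this is already built into Proposition \ref{pp_U3}, which implicitly uses that along $K_i \cap K_j$ the characteristic covectors in $\Lambda_{ij}$ are precisely $N^*K_i \cup N^*K_j$, so their Hamiltonian flow-outs are reabsorbed into $\Lambda_i \cup \Lambda_j \subseteq \Lambda^{(1)}$.
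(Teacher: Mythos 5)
Your overall plan — reduce to showing $\ups_0^\sharp \notin \wfset(\lU_3)$, and then eliminate the candidate Lagrangians — matches the paper's strategy, and your observation that $\Lambda_{123}$ (hence $\Lambda_{123}^g$) has no fibers above $\overline W$ for small $\kappa_0$ is the right starting point. However, the central step of the argument is off: you invoke Proposition \ref{pp_U3} to conclude that $\lU_3$ is microlocally smooth off $\Lambda^{(1)}$, but Proposition \ref{pp_U3} (and its precursor Proposition \ref{pp_v1v2v3}) is formulated under the hypothesis that $K_1,K_2,K_3$ intersect 3-transversally at $K_{123}$ — exactly what fails here. Declaring that the paired Lagrangian class $I^{3\mu+\frac12,-\frac12}(\Lambda_{123},\Lambda_{123}^g)$ is ``vacuously smooth because the Lagrangians are empty'' is not a licensed application of a proposition whose hypothesis is violated; you need a separate microlocal computation in the no-triple-intersection regime.

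What the paper actually does is estimate $\wfset(\beta v_1 v_2 v_3)$ directly by case analysis (at most two of the three geodesics meet, or none do), concluding via the H\"ormander–Sato lemma that $\wfset(\beta v_1v_2v_3) \subseteq \Lambda^{(1)}\cup\Lambda^{(2)}$, and then applies the propagation-of-singularities statement (Lemma \ref{lm_propagation}) together with Lemma \ref{lm_lightlikeprop} to show that the characteristic part of $\Lambda^{(2)}$ is already contained in $\Lambda^{(1)}$, so its forward flow-out produces no new singularities. You gesture at this last point (``the characteristic covectors in $\Lambda_{ij}$ are precisely $N^*K_i\cup N^*K_j$'') but attribute it to Proposition \ref{pp_U3} rather than proving it; the paper isolates it as Lemma \ref{lm_lightlikeprop}, which is the piece that makes the flow-out control rigorous. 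To repair your argument, drop the appeal to Proposition \ref{pp_U3} entirely, carry out the H\"ormander–Sato bound on the product, apply Lemma \ref{lm_propagation}, and cite Lemma \ref{lm_lightlikeprop} to rule out escaping flow-outs from $\Lambda^{(2)}$. Your final step using properness to exclude $(\yb,\etab)\in\Lambda_j\cap T^*U_+$ is correct and matches the paper.
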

\begin{proof}
In the following, we write $\ups_j = (p_j, w_j) \in \LUpl$ or $\LUmi$, for $j = 0, 1,2, 3$.
To check the condition (D), recall $v_j \in I^\mu(\Lambda(p_j, w_j, \kappa_0)), j=1,2,3$ are Lagrangian distributions in $M$.
With no cut points in $\overline{W}$, they are actually conormal distributions.
Recall $\lU_3$ is the solution to
\[
\square_g \lU_3 -  6\alpha v_{1}v_{2}v_{3} = 0 \text{ in } M, \quad \text{ with }
\mathcal{R}_-[\lU_3] = 0.
\]
The goal is to show $(y, \eta) = (p_0, w_0^\flat) \notin \wfset(\lU_3)$.
Recall we assume $(\capgammathree) \cap \overline{W}= \emptyset$.
    First, we consider the case where locally there are exactly two null geodesics
    intersecting at one point.
    Without cut points, two null geodesics intersect at most one point in $\overline{W}$.
    Suppose $\gamma_i(\mR_+)$ and $\gamma_j(\mR_+)$ intersect at $q$ with $q \notin \gamma_k(\mR_+)$,
    where $\{i,j,k\} = \{1,2,3\}$.
    With $\kappa_0$ small enough, we may assume $q \notin K_k$.
    Since $v_k$ is smooth near $q$, the wave front set of $v_i v_j v_k$ at $p$ is in the span of $\Lambda_i$ and $\Lambda_j$, by the H\"{o}rmander-Sato Lemma, for example see \cite[Theorem 1.3.6]{Duistermaat2010}.
    Next, we consider the case that none of three geodesics intersect with each other.
    Then we have $\wfset(\alpha v_i v_j v_k) \subseteq \cup_{j=1}^3 \Lambda_j$.
    In both cases,
    we have
    \[
    \wfset(\alpha v_1v_2v_3) \in \Lambda^{(1)} \cap \Lambda^{(2)},
    \]
    where these notations are defined in Section \ref{subsec_constructwaves}.
    Note that if $(p, \eta) \in \wfset(\lU_3)$, then we must have either $(p, \eta) \in \wfset(\alpha v_1v_2v_3)$ or  $(q, \zeta) \in \wfset(\alpha v_1v_2v_3)$ such that $(p, \eta)$ is in the null bicharacteristics starting from ${(q, \zeta)}$ by Lemma \ref{lm_propagation}.

    For the first situation, we must have $(p,\eta)$ contained in $\Lambda_j$ for some $j = 1,2,3$, which is impossible for sufficiently small $\kappa_0$, as $(\ups_0, \ups_1, \ups_2, \ups_3)$ is proper.
    For the second situation, first we consider if there exists $(q, \zeta) \in \Lambda_j$ for some $j \in \{1,2,3\}$. 
    This is impossible, otherwise we have $(p, \eta) \in \Lambda_j$ again.
    Next, we consider if $(q, \zeta) \in \Lambda_{ij}$ for some $i,j \in \{1,2,3\}$.
    If so, then we must have $\zeta = \zi + \zj$ with $(q, \zi) \in \Lambda_i$ and $(q, \zj) \in \Lambda_j$.
    Recall that if a covector $\zeta = \zi + \zj$ is lightlike, then we must have $\zeta$ is propositional to either $\zi$ or $\zj$ by Lemma \ref{lm_lightlikeprop}.
    Then $(q, \zeta)$ is contained in either $\Lambda_i$ or $\Lambda_j$.
    By the analysis above, this is impossible.
    Thus, we can conclude that $(p, \eta) \notin \wfset(\lU_3)$ for small $\kappa_0$.
    Therefore, the condition (D) is not satisfied.
\end{proof}
Next, by Proposition \ref{pp_U3}, we have the following result.
\begin{prop}\label{pp_R2}
    Suppose there are no cut points along any geodesic segments in $\overline{W}$.
    Let $(\ups_0, \ups_1, \ups_2, \ups_3) \in \LUpl \times \Vmi$ be proper.
    Suppose there exists
    \[
    q \in \Gammaset \cap \overline{W}
    \]
    such that $q = \gamma_{\upsilon_j}(\varsm_j)$ with $\varsm_j >0$ for $j=1,2,3$ and
    \[
    q = \gamma_{\upsilon_0}(\varsm_0) \text{ with }
    \varsm_0 \in (-\rho(\upsilon_0^{-}), 0),
    \]
    where $\upsilon_0^- \coloneqq (y, -w) \in L^-_{U_+}M$ if we write $\upsilon_0 = (y, w)$.
    In addition, assume
    \[
    \dot{\gamma}_{\upsilon_0}(\varsm_0) \in \text{span}(\dot{\gamma}_{\upsilon_1}(\varsm_1), \dot{\gamma}_{\upsilon_2}(\varsm_2), \dot{\gamma}_{\upsilon_3}(\varsm_3)).
    \]
    Then $\ups_0$ satisfies the condition (D) with $(\ups_1, \ups_2, \ups_3)$ and small enough $\hat{\kappa} >0$.
\end{prop}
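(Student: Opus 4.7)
The plan is to verify condition (D) by choosing the initial data $\Upsilon_j$ so that the principal symbol of the three-fold interaction term $\mathcal{U}_3$ is nonvanishing at a covector whose bicharacteristic lands precisely at $(y_|,\eta_|)$, and then transfer this nonvanishing symbol to the scattering operator via $\mathcal{R}_+$. First I would record that, thanks to the asymptotic expansion derived in Section~\ref{subsec_waves},
\[
\partial_{\epsilon_1}\partial_{\epsilon_2}\partial_{\epsilon_3}\mathcal{N}(\epsilon_1\Upsilon_1+\epsilon_2\Upsilon_2+\epsilon_3\Upsilon_3)\bigr|_{\vec\epsilon=0}
=\mathcal{R}_+[\mathcal{U}_3],
\]
so it suffices to locate $(y_|,\eta_|)$ in $\WF(\mathcal{R}_+[\mathcal{U}_3])$ via Proposition~\ref{pp_RFFIO}, which in turn reduces to exhibiting a nonzero principal symbol of $\mathcal{U}_3$ at a suitable covector along the bicharacteristic emanating from the interaction point.

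Next I would carry out the following construction. Write $\upsilon_j=(p_j,w_j)$ and set $\zeta^j\defeq(c_j\dot\gamma_{\upsilon_j}(\varsm_j))^\flat$ at $q$ for appropriately chosen nonzero $c_j$, so that $\zeta\defeq\zeta^1+\zeta^2+\zeta^3$ is light-like and, by the spanning hypothesis together with Lemma~\ref{lm_lightlikeprop} applied after shrinking $\hat\kappa$, proportional to $(\dot\gamma_{\upsilon_0}(\varsm_0))^\flat$. Since $\varsm_0\in(-\rho(\upsilon_0^-),0)$, the point $\pi(\upsilon_0)=y$ is reached from $(q,\zeta)$ by flowing along a future null bicharacteristic before any cut point, and this bicharacteristic remains away from $\Lambda^{(1)}$ by the properness assumption. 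Choose each conormal distribution $\Upsilon_j\in I^{\mu+1/4}(\Sigma(p_j,w_j,\kappa_0))$ with $\kappa_0<\hat\kappa$ so small that Lemma~\ref{lm_Nk} and Lemma~\ref{lm_lightlikeprop} apply, and with principal symbol chosen to be nonvanishing near the covector lifted to $\Lambda_j$ containing $(q,\zeta^j)$; the receding wave construction of Section~\ref{sec_receding} then yields $v_j\in I^\mu(\Lambda_j)$ whose symbol along $\Lambda_j$ is nonvanishing near $(q,\zeta^j)$, since the symbol is transported along the Hamilton flow and Lemma~\ref{lm_radius}-type considerations rule out caustics before reaching $q$.

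Then I would invoke Proposition~\ref{pp_U3}, which applies because the three hypersurfaces $K_j$ can be arranged to intersect 3-transversally at $q$ (shrinking $\kappa_0$ if necessary) and because our bicharacteristic lies in $\mathcal{N}(\vec p,\vec w)$ away from $\Lambda^{(1)}$. This gives
\[
\sigma_p(\mathcal{U}_3)(y,\eta)=-6(2\pi)^{-2}\,\sigma_p(Q_g)(y,\eta,q,\zeta)\,\beta(q)\prod_{m=1}^3\sigma_p(v_m)(q,\zeta^m),
\]
where $(y,\eta)=((\upsilon_0)^\sharp$ restricted appropriately$)$. By the discussion of the casual inverse in Section~\ref{subsec_causalinverse} the factor $\sigma_p(Q_g)$ is nonvanishing along the flow-out; the hypothesis $\beta(q)\ne 0$ (inherited from the cubic-wave setting of Theorem~\ref{thm_cubic}) and the choice of nonvanishing symbols for $v_j$ at $(q,\zeta^j)$ make the entire product nonzero. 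Hence $(y,\eta)\in\WF(\mathcal{U}_3)$, and since $\mathcal{U}_3$ meets $S_+$ transversally near $y$, Proposition~\ref{pp_RFFIO} gives $(y_|,\eta_|)\in\WF(\mathcal{R}_+[\mathcal{U}_3])$, establishing (D).

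The main obstacle I foresee is bookkeeping rather than a conceptual jump: one must verify that the relevant covector $(y,\eta)$ really lies on the bicharacteristic from $(q,\zeta)$ and is separated from the single-geodesic Lagrangians $\Lambda^{(1)}$ and the pairwise flow-outs $\Lambda^{(2)}$ so that Proposition~\ref{pp_U3} applies and no spurious singularities from $\Lambda^{(1)}\cup\Lambda^{(2)}$ occlude the leading symbol. Properness of $(\upsilon_0,\upsilon_1,\upsilon_2,\upsilon_3)$ together with the absence of cut points in $\overline W$ and a careful shrinking of $\hat\kappa$ (using Lemma~\ref{lm_Nk} to confine pairwise intersections to small neighborhoods of $q$) handles this separation; the spanning condition on $\dot\gamma_{\upsilon_0}(\varsm_0)$ guarantees $\zeta\neq 0$ through Lemma~\ref{lm_lightlikeprop}, which is the only place a delicate check is required.
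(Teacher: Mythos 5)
Your proposal is correct and follows essentially the same route the paper intends: the paper states pp\_R2 with the single remark ``by Proposition~\ref{pp_U3}'' and no further proof, and your argument fills in exactly that — reduce to nonvanishing of $\sigma_p(\lU_3)$ along the bicharacteristic from $(q,\zeta)$ via the asymptotic expansion and Proposition~\ref{pp_RFFIO}, then invoke the product formula of Proposition~\ref{pp_U3} together with nonvanishing of $\sigma_p(Q_g)$, $\beta(q)$, and the symbols of the receding waves, using properness, the no-cut-point hypothesis, and small $\kappa_0$ to keep the target covector off $\Lambda^{(1)}\cup\Lambda^{(2)}$. One small imprecision worth noting: $\zeta\neq0$ follows directly from the spanning hypothesis (since $\dot\gamma_{\upsilon_0}(\varsm_0)$ is a nonzero null vector), while Lemma~\ref{lm_lightlikeprop} is what controls the $\kappa_0$-perturbed covectors and rules out the pairwise sums $\zeta^i+\zeta^j$ being null, so it enters the bookkeeping at a slightly different spot than you indicate; this does not affect the validity of the argument.
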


Now we define the following relation
\[\begin{split}
    R_s &= \{(\ups_0, \ups_1, \ups_2, \ups_3)
    \in \LUpl \times \Vmi:
    \text{ they are proper and there exists $\hat{\kappa} > 0$ }\\
    &\quad \quad \quad \quad \quad \quad\quad \quad \quad\quad \quad \quad\quad \quad \quad
    \text{ such that (D) is} \text{ valid for }
    \ups_0 \text{ with } (\ups_1, \ups_2, \ups_3) \text{ and } \kappa_0
    \}.
\end{split}\]
We verify that $R_s$ is a three-to-one scattering relation by the following proposition.
\begin{prop}\label{pp_RL}
    The relation $R_s$ defined above is a three-to-one scattering relation for $W$.
    Moreover,
    using the nonlinear scattering map, one can determine whether $(\ups_0, \ups_1, \ups_2, \ups_3) \in \LUpl \times \Vmi$ is in the relation $R_s$.
\end{prop}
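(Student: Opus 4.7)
The plan is to verify the two conditions (R1) and (R2) of Definition \ref{def_3to1} for $R_s$ using Propositions \ref{pp_R1} and \ref{pp_R2} as the key inputs, and then to argue that membership in $R_s$ can be tested from knowledge of $\mcn$ alone via the asymptotic expansion \eqref{eq_asymptotic} in Section \ref{subsec_waves}.

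For (R1) I would argue by contrapositive. Let $(\ups_0,\ups_1,\ups_2,\ups_3) \in R_s$, so that the tuple is proper and there is some $\hat\kappa > 0$ such that for every $\kappa_0 \in (0,\hat\kappa)$ the detection condition (D) holds. If the intersection $\Gammaset \cap \overline{W}$ were empty, Proposition \ref{pp_R1} would force $\ups_0$ to fail condition (D) for all sufficiently small $\kappa_0$, contradicting membership in $R_s$. Hence $\Gammaset \cap \overline{W} \neq \emptyset$, which is exactly (R1). For (R2), the hypotheses on the distinct null geodesics meeting regularly at $q \in \overline{W}$, together with the span condition on $\dot\gamma_{\ups_0}(\varsm_0)$, are precisely those of Proposition \ref{pp_R2}; applying it produces $\hat\kappa>0$ for which (D) holds, so $(\ups_0,\ups_1,\ups_2,\ups_3) \in R_s$.

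For the second assertion, observe that the expansion \eqref{eq_asymptotic} gives
\[
\partial_{\ep_1}\partial_{\ep_2}\partial_{\ep_3}\mcn(\ep_1\Ups_1+\ep_2\Ups_2+\ep_3\Ups_3)\big|_{\vec\ep=0} = \lR_+[\lU_3],
\]
so this trilinear object is a Fr\'echet-type derivative of $\mcn$ at $0$ and is therefore determined by $\mcn$. Since properness of $(\ups_0,\ldots,\ups_3)$ is a purely geometric condition on the base points in $U_\pm$, testing (D) reduces to running through admissible conormal families $\Ups_j \in I^{\mu+1/4}(\Sigma(p_j,w_j,\kappa_0))$ satisfying the support condition \eqref{assump_Upsj} for $\kappa_0 < \hat\kappa$ and asking whether any choice produces $(\yb,\etab)$ in $\wfset$ of $\lR_+[\lU_3]$. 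All of these wave front sets are computable from $\mcn$, so membership in $R_s$ is determined by $\mcn$.

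The only subtlety is ensuring that the wave front set of the trilinear term is genuinely read off from derivatives of the scattering map rather than from the full nonlinear solution, but this is handled by the energy estimates in Section \ref{sec_forward} together with the expansion of Section \ref{subsec_waves}, which shows the remainder after the cubic order is $O(|\vec\ep|^4)$ in $L^2$ and therefore does not contribute to the principal singularity on $S_+$. Given these tools, the proof is essentially a bookkeeping assembly of Propositions \ref{pp_R1} and \ref{pp_R2}, with no further substantial obstruction.
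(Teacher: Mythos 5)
Your proof takes essentially the same approach as the paper: verify (R1) via Proposition \ref{pp_R1} (by contrapositive), verify (R2) via Proposition \ref{pp_R2}, and observe that condition (D) is readable from the third-order linearization of $\mcn$ using the expansion \eqref{eq_asymptotic}. The one point you gloss over is that checking properness (Definition \ref{def_proper}) is not purely a condition on the base points --- part (2) involves the lens relation $\lL$, which must itself be recovered from $\mcn$ (via the first-order linearization); the paper flags this explicitly by saying properness is checked \emph{using the lens relation determined by the scattering operator}.
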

\begin{proof}
    Let $(\ups_0, \ups_1, \ups_2, \ups_3) \in \LUpl \times \Vmi$.
    One can check if $(\ups_0, \ups_1, \ups_2, \ups_3)$ is proper by using the lens relation determined by the scattering operator.
    For proper $(\ups_0, \ups_1, \ups_2, \ups_3)$ and $R_s$ defined above,
    Proposition \ref{pp_R1} shows the first condition (R1) in Definition \ref{def_3to1} is satisfied
    and Proposition \ref{pp_R2} shows the second condition (R2) is satisfied.
    In particular, the condition (D) is determined by the nonlinear scattering operator.
\end{proof}

\subsection{Schemes to reconstruct the scattering light observation sets}\label{subsec_schemes}
In the following, we explain how to  reconstruct the scattering light observation sets in each step of the layer stripping method in Section \ref{subsec_layersteps}.
\subsubsection{Scheme 1} \label{sec_scheme1}
We start from the reconstruction of the scattering light observation sets for Step 1.
In this step, we send conormal waves in $U_-$ and detect new singularities in $U_+$.
By Section \ref{sec_detection}, from the third-order linearization of the nonlinear scattering operator,
we determine a three-to-one scattering relation $R_s \subseteq \LUpl \times \Vmi$.
In this case, we take $\Vmi = (\LUpl)^3$ for Theorem \ref{thm_3to1}.
As there are no cut points in $\overline{W} = J(U_-, U_+)$, by Theorem \ref{thm_3to1}, this $R_s$ determines the scattering light observation sets of any points in $I(U_-, U_+)$ restricted to $U_+$.
Then by Theorem \ref{thm_SLOS}, this collection of the scattering light observation sets determines the metric in $W$ up to conformal diffeomorphisms.
\subsubsection{Scheme 2}\label{sec_scheme2}
We follow a similar idea in the reconstruction of scattering light observation sets for Step 2.
Recall in Step 2, we construct
small open neighborhoods $U_\pm \subseteq S_\pm$ of null geodesic segments
and we write
\[
I(U_-, U_+) = W \cup P \cup W_0,
\]
where
\[
W = I(U_-, U_+) \setminus \Jmi(\Spl(T_1)),
\quad \tUmi = I(U_-, U_+) \cap \Pmi(T_1),
\quad W_0 = I(U_-, U_+) \cap I^-(\Spl(T_1)).
\]
Recall $\tUmi$ and $W_0$ are contained in the reconstructed region $(I(T_1), \hat{g}|_{I(T_1)})$.
Moreover, by choosing small $T_2 > 0$, the region of interest
\[
\overline{W}  \subseteq \lB(p_0, \delta)
\]
has no cut points along any null geodesic segments.
Now let $\phi_s: L^+M \rightarrow L^+M$ be the null geodesic flow for $s \in \mR$.
For each $(p, w) \in \LUmi$, there exists at most one $(\tp,\tw) \in  L^+M$ with $\tp \in \tUmi$ such that $(\tp, \tw) = \phi_s(p,w)$ for some $s \in \mR_+$,
where $\LUmi$ is defined (\ref{def_LUpl}).
This defines a restricted lens relation
\[
\lL: \LUmi \rightarrow L_{\tUmi}^+M,
\]
where we denote by $L_{\tUmi}^+M = \{\tups \in L^+M: \pi(\tups) \in \tUmi\}$.
We emphasize such $\lL$ is determined by the reconstructed $\hat{g}$ in $I(T_1)$.
We would like to derive a localized three-to-one scattering relation  $R \subseteq \LUpl \times \Vmi$ for a properly chosen subset $\Vmi\subseteq (L(\tUmi))^3$.
For this purpose,
we consider
\beq\label{def_setVmi}
\begin{split}
    \Vmi = \{(\ups_1, \ups_2, \ups_3) \in  (\LUmi)^3: &\text{ $\gamma_{\ups_i}(\mR_+) \cap \gamma_{\ups_j}(\mR_+) \cap \overline{W}_0 = \emptyset$ for $1 \leq i \leq j \leq 3$}\\
    &\quad \quad \quad \quad \quad \quad \quad \quad \text{and $\pi(\lL(\ups_j)) \in \tUmi$ for $j = 1,2,3$}
    \}.
\end{split}
\eeq
Indeed, as we have reconstructed $\hat{g}$ near $W_0$, we can determine if  $\ups_j(\mR_-)$ intersect there and if $\pi(\lL(\ups_j)) \in \tUmi$.
Then it remains to prove such $\Vmi$ is sufficient for $W$, see Definition \ref{def_sufficientV}.
This comes from the fact that we can construct null geodesics starting from $\tUmi$ and passing $q$ before the first cut point, for any $q \in W$.
We prove the following lemmas.
\begin{lm}\label{lm_nullnocut}
    Let $q \in W$.
    There exists  $\ups_1 \in \LUmi$ such that
    \[
    (q, w_1) = (\gamma_{\ups_1}({\varsm_1}), (\dot{\gamma}_{\ups_1}({\varsm_1}))) \quad \text{for some $0 <\varsm_1< \rho(\ups_1)$}
    \]
    and moreover $\gamma_{\ups_1}(\mR_+)$ hits $\tUmi$ exactly once.
\end{lm}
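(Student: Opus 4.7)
The plan is to construct $\ups_1$ by selecting an appropriate past-pointing null direction at $q$, tracing the geodesic backward to $\Smi$, and verifying the required transversality and cut-point properties.

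First, I would exhibit a nonempty open family of past-pointing lightlike directions $-w \in L^-_q M$ for which $\gamma_{q,-w}$ meets $\Smi$ transversally inside $U_-$. Since $q \in W \subseteq I(U_-, U_+)$, we have $q \in I^+(U_-)$, so past-directed timelike curves from $q$ land in $U_-$; approximating such curves by past-directed null geodesics and using the smooth dependence of the geodesic flow from Lemma \ref{lm_smoothexp}, together with Lemma \ref{lm_nulltransversal} which guarantees that a null geodesic hitting $\partial M$ inside $U_-$ does so transversally, gives the desired open family of directions $-w_0$. Setting $\ups_1 = (\gamma_{q,-w_0}(\varsm_1), -\dot{\gamma}_{q,-w_0}(\varsm_1))$ reparameterized so that $q = \gamma_{\ups_1}(\varsm_1)$, Lemma \ref{lm_nulltransversal} then ensures that $\ups_1 \in \LUmi$.

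Next, I would show that $\gamma_{\ups_1}(\mR_+)$ meets $\tUmi$ at exactly one point. Since $\tUmi \subseteq \Pmi(T_1)$ is contained in an achronal Lipschitz hypersurface, and since $q \in W \subseteq M \setminus \Jmi(\Spl(T_1))$ while $\pi(\ups_1) \in U_- \subseteq \Jmi(\Spl(T_1))$, any past-directed causal curve from $q$ to $\pi(\ups_1)$ must cross $\Pmi(T_1)$. A null geodesic meets an achronal set either at an isolated point or along a null segment contained in that set; the latter tangential case is eliminated by perturbing $-w_0$ generically within the open family exhibited above, leaving a single transversal intersection inside $\tUmi$.

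Finally, I would verify $\varsm_1 < \rho(\ups_1)$. On the portion of the geodesic lying in $\overline{W}$, the inclusion $\overline{W} \subseteq \lB(p_0, \delta)$ combined with Lemma \ref{lm_radius} directly rules out cut points. On the portion lying in $W_0 \subseteq I(T_1)$, one invokes the layer-stripping construction of $T_1$: the previously reconstructed region is a union of Step 1 diamonds, each chosen so that the corresponding $\overline{W}^{(\mathrm{prev})}$ sits inside a ball of radius $\delta$ to which Lemma \ref{lm_radius} applies; a patching argument across the finitely many such sub-diamonds traversed by $\gamma_{\ups_1}$, supplemented by the shortcut argument of Lemma \ref{lm_shortcut} to preclude the alternative existence of a second null pregeodesic from $\pi(\ups_1)$ to $q$, then yields the bound $\varsm_1 < \rho(\ups_1)$.

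The main obstacle will be this last step: controlling cut points along the segment of $\gamma_{\ups_1}$ that lies in $W_0$, since Lemma \ref{lm_radius} only prevents cut points within a single ball of radius $\delta$, whereas the geodesic from $\pi(\ups_1) \in U_-$ to $\tUmi$ can span several previously reconstructed sub-diamonds. I expect this to require either an additional smallness restriction on $T_2$ relative to $T_1$ and $\delta$, so that the overall length of the relevant geodesic segment is uniformly bounded by the injectivity radius along its path, or a refinement of Lemma \ref{lm_radius} to the compact set $\overline{I(T_1)}$ using a global lower bound on the radius of injectivity there.
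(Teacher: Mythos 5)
Your Steps~1 and~2 are broadly reasonable, but the heart of the lemma is the cut-point bound $\varsm_1 < \rho(\ups_1)$, and there your Step~3 contains a genuine gap that the paper circumvents by a structurally different argument.

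The patching idea in your Step~3 does not work. Lemma~\ref{lm_radius} asserts only that no cut points arise along null geodesic segments that are \emph{entirely contained in a single ball} $\lB(q,\delta)$. Nothing in that statement propagates across a chain of balls: the cut locus function depends on global information, since a cut point can be produced by a second null geodesic joining the endpoints even when every sub-segment is locally without conjugate points. So gluing the local injectivity statements over the finitely many sub-diamonds in $W_0$ does not bound $\rho(\ups_1)$. Nor would the fixes you contemplate rescue the argument: a uniform bound on the injectivity radius over $\overline{I(T_1)}$ still only controls geodesics of length below that bound, and the segment of $\gamma_{\ups_1}$ inside $W_0$, from $U_-$ up to $\tUmi$, is in general far longer than $\delta$; and cut points do genuinely occur in $I(T_1)$, so a refinement of Lemma~\ref{lm_radius} to that compact set is false as a blanket statement.

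The paper avoids the issue entirely by running the construction \emph{forward} from $S_-$ rather than backward from $q$, using \cite[Lemma~3.5]{Kurylev2018}. Since $U_-$ is by construction a small neighborhood of a null geodesic segment on $\Smi$, one can place a smooth timelike curve $\hat\mu$ in $\tM \setminus M$ arbitrarily close to this segment; $q \in W$ then lies in $J^+(\hat\mu)$, and the cited lemma produces a point $\hat z$ on $\hat\mu$ and $\hat\zeta \in L^+_{\hat z}M$ such that $\gamma_{\hat z,\hat\zeta}$ passes through $q$ strictly before its first cut point. Taking $\ups_1$ to be the lightlike vector at the crossing of this geodesic with $U_-$, the cut-point-free property inherits to the sub-segment from $\pi(\ups_1)$ to $q$: if there were a cut point between them, it would be either a conjugate point of $\pi(\ups_1)$ (which forces an earlier conjugate point of $\hat z$) or a second null geodesic into the segment (which, via the shortcut argument of Lemma~\ref{lm_shortcut}, would make $q$ chronologically after $\hat z$ along the original geodesic), either way contradicting the cut-point-free property from $\hat z$. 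This ``attach a timelike curve near $U_-$ and apply KLU's Lemma~3.5'' mechanism is precisely what supplies the global no-cut-point conclusion that your local, backward construction cannot reproduce. The remaining claims --- that $\ups_1 \in \LUmi$ via Lemma~\ref{lm_nulltransversal}, and that $\gamma_{\ups_1}(\mR_+)$ meets the achronal boundary $\tUmi$ exactly once because $\tUmi$ separates $W_0$ from $W$ inside $I(U_-,U_+)$ --- you handle essentially as the paper does.
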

\begin{proof}
    Recall $U_- \subset \Spl$ is a small neighborhood of the null geodesic segment from $\pmi'$ to $p_0'$.
    We can find a timelike smooth curve in $\tM \setminus M$ sufficiently close to this segment and use \cite[Lemma 3.5]{Kurylev2018} to construct such null geodesic $\ups_1$.
    We can expect $\ups_1(\mR_+)$ is contained in $I(U_-, U_+)$ and by our construction it must intersects once with the achronal boundary $\tU_+$.
\end{proof}
\begin{lm}
    The set $\Vmi$ that we defined in (\ref{def_setVmi}) is sufficient for $W$ by Definition \ref{def_sufficientV}.
    In particular, for each $q \in W$, the set $\Vmione(q)$ is nonempty.
\end{lm}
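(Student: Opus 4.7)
The claim has two parts: verifying condition (a) and condition (b) of Definition \ref{def_sufficientV} for the set $\Vmi$ defined in \eqref{def_setVmi}. I will prove (b) first, since this is where the geometric content lies, and then (a) will follow from a shortcut-type argument.

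For (b), fix $q \in W$. By Lemma \ref{lm_nullnocut} there exists $\ups_1 \in \LUmi$ such that $q = \gamma_{\ups_1}(\varsm_1)$ for some $0 < \varsm_1 < \rho(\ups_1)$ and such that $\gamma_{\ups_1}(\mR_+)$ meets $\tUmi$ exactly once; in particular $\pi(\lL(\ups_1)) \in \tUmi$. I claim this $\ups_1$ lies in $\Vmione(q)$. To build the required neighborhood $N_1$, I exploit the fact that $\varsm_1 < \rho(\ups_1)$ means there are no conjugate points along $\gamma_{\ups_1}|_{[0,\varsm_1]}$, so the exponential map is a local diffeomorphism there. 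Consequently, the $2$-parameter family of null geodesics through $q$ close to $\gamma_{\ups_1}$ foliates a tubular neighborhood of $\gamma_{\ups_1}([0,\varsm_1])$ without self-intersection away from $q$. For distinct perturbations $\tups_i, \tups_j \in N_1$ with $q$ on both geodesics, the forward continuations past $q$ lie in the causal future of $q \in W$, hence disjoint from $\overline{W_0}$ (which lies in $I^-(\Spl(T_1))\cup \tUmi$); thus any intersection in $\overline{W_0}$ must occur on the initial segments $\gamma_{\tups_\bullet}([0,\varsm_\bullet])$, which is excluded by the foliation property. The lens condition $\pi(\lL(\tups_j)) \in \tUmi$ holds for small $N_1$ by continuity of the lens relation together with $\pi(\lL(\ups_1))$ being an interior point of $\tUmi$ (relative to $\Pmi(T_1)$), as follows from Lemma \ref{lm_smoothexp}. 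Hence $(\tups_1,\tups_2,\tups_3) \in \Vmi$, proving $\ups_1 \in \Vmione(q)$.

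For (a), let $(\ups_1,\ups_2,\ups_3) \in \Vmi$ and suppose, for contradiction, that there exist $i \neq j$, a point $q_0 \in \gamma_{\ups_i}(\mR_+) \cap \gamma_{\ups_j}(\mR_+) \cap (M \setminus W)$, and $q \in \overline{W}$ with $q_0 < q$. Each $\gamma_{\ups_\bullet}(\mR_+)$ starts at $U_- \subseteq \Smi$, enters $W_0$, crosses $\tUmi$ (by the lens condition in $\Vmi$), enters $W$, and eventually exits. Any forward point on $\gamma_{\ups_\bullet}$ after the last exit from $\overline{W}$ lies in $I^+(\overline{W})$, and so cannot be causally before any point of $\overline{W}$ without contradicting global hyperbolicity and the shortcut Lemma \ref{lm_shortcut}. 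Therefore $q_0$ must lie on the initial segment of each $\gamma_{\ups_i}, \gamma_{\ups_j}$ before entering $W$, i.e., in $\overline{W_0}$; combined with $q_0 \notin W$, this forces $q_0 \in \overline{W_0}$. But then $q_0 \in \gamma_{\ups_i}(\mR_+) \cap \gamma_{\ups_j}(\mR_+) \cap \overline{W_0}$ contradicts the defining property of $\Vmi$ in \eqref{def_setVmi}.

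The main obstacle is the foliation argument used in (b): one must verify that the $2$-parameter family of lightlike vectors in $\LUmi$ whose geodesics pass through $q$ is indeed smooth of the expected dimension and that its image under the exponential map foliates a tube about $\gamma_{\ups_1}([0,\varsm_1])$ without intersection in $\overline{W_0}$. The no-conjugate-point condition $\varsm_1 < \rho(\ups_1)$, together with shrinking $N_1$ so that all perturbed geodesics remain inside this tube throughout $\overline{W_0}$, is what makes this go through.
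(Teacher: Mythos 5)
The core of your proof of condition (b) relies on a ``foliation'' argument: you invoke $\varsm_1 < \rho(\ups_1)$, call it a no-conjugate-point condition, deduce that the exponential map is a local diffeomorphism, and from this conclude that the family of null geodesics through $q$ close to $\gamma_{\ups_1}$ foliates a tube around $\gamma_{\ups_1}([0,\varsm_1])$ with no self-intersection away from $q$. This step has a genuine gap. A local diffeomorphism of the exponential map rules out conjugate points, i.e.\ infinitesimal focusing, but it does \emph{not} rule out re-intersection of distinct geodesics through $q$ at some point $q_0 \neq q$ on the segment: on a Lorentzian manifold a cut point can come strictly before the first conjugate point precisely because two distinct null geodesics from the same point may meet again without any Jacobi field vanishing. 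The hypothesis $\varsm_1 < \rho(\ups_1)$ is in fact a no-cut-point condition (recall $\rho$ is the cut locus function, not the conjugate locus function), and that stronger fact is exactly what is needed -- but your reasoning only uses its conjugate-point consequence. The gap shows up in your own final paragraph, where you characterize $\varsm_1 < \rho(\ups_1)$ as the ``no-conjugate-point condition'' that ``makes this go through''; it does not.

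The paper closes this gap with the shortcut argument instead of a foliation picture. After getting the neighborhood $N_1$ via Lemma \ref{lm_cutneighborhood} (so every $\tups \in N_1$ through $q$ reaches $q$ before its first cut point), one assumes $\gamma_{\tups_i}$ and $\gamma_{\tups_j}$ intersect at some $q_0$ before $q$. Then the broken causal path from $\pi(\tups_i)$ to $q_0$ along $\gamma_{\tups_i}$, then from $q_0$ to $q$ along $\gamma_{\tups_j}$, is not a null pregeodesic (the tangent direction jumps at $q_0$), so Lemma \ref{lm_shortcut} gives $\tau(\pi(\tups_i), q) > 0$, which puts $q$ strictly after the cut point of $\pi(\tups_i)$ -- a contradiction. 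This is the direct and correct route; no tube construction or injectivity of exp is needed. Your argument would become correct if you replaced the foliation claim with exactly this shortcut reasoning. Your treatment of the lens condition (continuity plus Lemma \ref{lm_smoothexp}) and your unwinding of condition (a) from the definition are in the right spirit, though (a) also quietly assumes that a point on the geodesic after its last exit from $\overline W$ cannot be in $J^-(\overline W)$, which rests on a causal convexity property of $W$ that neither you nor the paper makes explicit.
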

\begin{proof}
    By (\ref{def_setVmi}), the condition (a) in Definition \ref{def_sufficientV} is satisfied.
    It suffices show $\Vmione(q)$ is nonempty for each $q \in W$.
    For each $q \in W$, we choose $\ups_1 \in L(U_-)$ as in Lemma \ref{lm_nullnocut}.  Note $\gamma_{\ups_1}(\mR_+)$ hits $\tUmi$ exactly once, which implies $\pi(\lL(\ups_j)) \in \tUmi$.
    We claim $\ups_1 \in \Vmione(q)$.
    Indeed, by Lemma \ref{lm_nullnocut}, such $\ups_1$ satisfies
    \[
    (q, w_1) = (\gamma_{\ups_1}({\varsm_1}), (\dot{\gamma}_{\ups_1}({\varsm_1}))) \quad \text{for some $0 <\varsm_1< \rho(\ups_1)$}.
    \]
    By the proof of Lemma \ref{lm_cutneighborhood}, there exists a small neighborhood $N \subseteq L_q^+M$ of $w_1$ such that for any $w\in N$,
    the null geodesic $\gamma_{q, w}(\varsm)$ hits $U_-$ at $\varsm_q < 0$ before the first cut point of $q$.
    We set $\ups = (\gamma_{q, w}(\varsm_q), \dot{\gamma}_{q, w}(\varsm_q))$ and this gives us a small neighborhood of $\ups_1$ in $\LUmi$.
    Thus, there exists $N_1 \subset \LUmi$ of $\ups_1$ such that for any $\ups \in N_1$ with $q \in \gamma_{\ups}(\mR_+)$, we have $q$ is before the first cut point of $\pi(\ups) \in U_-$.
    Now we pick arbitrary $\tups_j \in N_1$ that are distinct and satisfy $q = \gamma_{\tups_j}(\varsm_j)$ for some $\varsm_j > 0$, for $j = 1,2, 3$.
    Note that such $\gamma_{\ups_j}(\mR_+)$ cannot intersect before $U_-$, otherwise a shortcut argument shows $q$ is on or after the first cut point.
    Thus, we have $(\tups_1, \tups_2, \tups_3) \in \Vmi$.
\end{proof}
Next, we introduce the following  singularities detection condition as a modified version for Step 1.
Note for $(p_j, w_j) \in \LUmi, j = 1,2,3$,
instead of constructing $\Ups_j$ as conormal distributions on $U_-$ as before, we would like to choose $\Gamma_j$ such that $v_j$ solving
\[
\sq_g v_j = 0 \text{ in }M \quad \text{ with } v_j|_{U_-} = \Gamma_j
\]
are conormal waves in $\overline{W}$.
For this purpose, we consider the following construction in the known region $W_0$.
For fixed $(\ups_1, \ups_2, \ups_3) \in \Vmi$, we consider the null geodesics $\gamma_{\ups_j}(\mR_+)$ and set
\[
q_j = \gamma_{\ups_j}(\varsm_j^-) = \pi(\lL(\ups_j)) \in \tUmi.
\]
We choose small $\ep_0 > 0$ such that
\[
(x_j, w_j) = (\gamma_{\ups_j}(\varsm_j^--2\ep_0), \dot{\gamma}_{\ups_j}(\varsm_j^- -2\ep_0))
\]
are lightlike covectors contained in $\lB(p_0, \delta)$ given by Lemma \ref{lm_radius}.
In this case, we define
\[
\begin{split}
    \mathcal{W}({x_j, w_j, \kappa_0}) &= \{w \in L^+_{x_j} M: \|w - w_j\|_{g^+} < \kappa_0 \text{ with } \|w \|_{g^+} = \|w_j\|_{g^+}\}
\end{split}
\]
as a neighborhood of $w_j$ at the point $x_j$.
We define
\[
\begin{split}
    K({x_j, w_j, \kappa_0}) &= \{\gamma_{x_j, w}(\varsm) \in M: w \in \mathcal{W}({x_j, w_j, \kappa_0}), \  \varsm\in (0, \infty) \}
\end{split}
\]
as the subset of the light cone emanating from $x_j$ by light-like vectors in $\mathcal{W}({x_j, w_j, \kappa_0})$.
As $\kappa_0$ goes to zero, the surface $K({x_j, w_j, \kappa_0})$ tends to the geodesic $\gamma_{x_j, w_j}(\mathbb{R}_+)$.
Consider the Lagrangian submanifold
\[
\begin{split}
    \Sigma(x_j, w_j, \kappa_0) =\{(x_j, r w^\flat )\in T^*\tM: \eta \in \mathcal{W}({x_j, w_j, \kappa_0}), \ r\neq 0 \},
\end{split}
\]
which is a subset of the conormal bundle $N^*\{x_0\}$.
We define
\[
\begin{split}
\Lambda({x_j, w_j, \kappa_0})
= &\{(\gamma_{x_j, w}(s), r\dot{\gamma}_{x_j, w}(s)^\flat )\in T^*\tM: \\
& \quad \quad \quad \quad \quad
w \in \mathcal{W}({x_j, w_j, \kappa_0}), \ \varsm\in (0, \infty),\  r \in \mathbb{R}\setminus \{0 \} \}
\end{split}
\]
as the flow-out from $\Char(\sq_g)$ in the future direction.
Note that $\Lambda({x_j, w_j, \kappa_0})$ is the conormal bundle of $K({x_j, w_j, \kappa_0})$ near $\gamma_{x_j, w_j}(\mathbb{R}_+)$ before the first cut point of $x_j$.

Recall we have reconstructed the metric in $W_0$ up to conformal diffeomorphisms.
We can pick an arbitrary representative in the equivalent class, say $\hat{g} = \phi^*(\rho^2 g)$, where $\rho \in C^\infty(M)$ is a conformal factor.
According to \cite[Lemma 3.1]{Kurylev2018},
we can construct distorted plane waves
\[
\vjone \in \Ical^\mu(\Lambda(x_j, \xi_j, s_0)) \text{ satisfying } \square_{\hat{g}} u_j = f_j \text{ in }M, \quad j = 1, \ldots, 3,
\]
where $f_j \in I^{\mu+{3}/{2}}(\Sigma(x_j, w_j, \kappa_0))$ are sources singular near $(x_j, w_j^\sharp)$.
Such $\vjone$ are Lagrangian distributions with
nonzero principal symbol along $(\gamma_{x_j, w_j}(\mR_+), (\dot{\gamma}_{x_j, x_j}(\mR_+))^\flat )$.
In particular, as there are no cut point in $\overline{W}$, we have $\vjone$ are conormal distributions there.
With $\mu$ negative enough, one has $\vjone \in H^s(M)$ for some $s \geq 2$, for example see \cite[Section 4.1]{Wang2019}.
Now let
\[
T_j \coloneqq T(\gamma_{\ups_j}(\varsm_j^- - \ep)) > T(x_j),
\quad \text{ for } j = 1,2,3,
\]
and consider the reconstructed region
\[
\widetilde{W}_0 = \Jpl(\Smi(T_1)) \cap \{T < T(\gamma_{\ups_j}(\varsm_j^- - \ep))\}.
\]
We can restrict $\vjone$ to the Cauchy surface$\{T = T_j\}$ to get the Cauchy data.
In particular, by choosing small enough $\kappa_0$, we can assume these data are contained in the reconstructed region.
Then we solve the backward problem
\[
\begin{split}
\square_{\hat{g}} \vjtwo  &= 0 \quad \text{ in } \widetilde{W}_0,\\
\vjtwo(T_j) = \vjone(T_j),
&\quad  \partial_t\vjtwo (T_j)= \partial_t\vjone(T_j).
\end{split}
\]
Note such $\vjtwo$ have singularities propagating near $\gamma_{x_j, w_j}(\mR_-)$ and arriving $U_-$ near $\ups_j$.
For small enough $\kappa_0$, we set $\Gamma_j = \vjtwo|_{S_-} \in H^{s-\frac{1}{2}}(S_-)$.
Combing $\vjone$ for $T> T_j$ with $\vjtwo$ for $T < T_j$, we obtain a solution to
\[
\square_{\hat{g}} \tv_j = 0 \text{ in } M, \quad \text{ with } \tv_j|_{S_-} = \Gamma_j, \quad \text{ for } j = 1,2,3.
\]
Note we must have $\tv_j \in I^{\mu}(\Lambda(x_j, w_j, \kappa_0))$ and it is a conormal distribution in $\overline{W}$.
Now we consider the solution $v_j$ to
\[
\square_g v_j = 0 \text{ in } M, \quad \text{ with } v_j|_{S_-} = \Gamma_j, \quad \text{ for } j = 1,2,3.
\]
We emphasize that $\tv_j$ and $v_j$ have the same singularities structure
by (\ref{eq_conormalfactor}) and thus $v_j$ are conormal distributions in $\overline{W}$ as well.
With the construction above we introduce the following detection conditions for Step 2.
\begin{df}\label{def_D_step2}
    Let $\Vmi \subseteq (\LUmi)^3$ be sufficient for $W = I(U_-, U_+)$.
    We say a vector $\ups_0 \in \LUpl$ satisfies the \textbf{condition (D2)} with light-like vectors $(\ups_1, \ups_2, \ups_3) \in \Vmi$ and $\hat{\ka} >0$,
    if
    \begin{itemize}
        \item[(a)] $\ups_0, \ups_1, \ups_2, \ups_3$ are proper as in Definition \ref{def_proper}, and
        \item[(b)] for any $0< \ka_0 < \hat{\ka}$ and $j = 1, 2, 3$,
        {there exists $\Gamma_j$ constructed as above for each $\ups_j$,
        with disjoint the support,
    }
    \end{itemize}
    such that $(\yb, \etab)$
    is contained in  $\wfset(\partial_{\epsilon_1}\partial_{\epsilon_2}\partial_{\epsilon_3} \mcn(\ep_1 \Ups_1 + \ep_2 \Ups_2 + \ep_3 \Ups_3) |_{\epsilon_1 = \epsilon_2 = \epsilon_3=0})$, where $(\yb, \etab)$ is the restriction of $\ups_0^\sharp = (y, \eta)$ to $T^*\Spl$.
\end{df}
As before, we define a relation given by
\[\begin{split}
    R_s &= \{(\ups_0, \ups_1, \ups_2, \ups_3)
    \in \LUpl \times \Vmi:
    \text{ they are proper and there exists $\hat{\kappa} > 0$ }\\
    &\quad \quad \quad \quad \quad \quad\quad \quad \quad\quad \quad \quad\quad \quad \quad
    \text{ such that (D2) is} \text{ valid for }
    \ups_0 \text{ with } (\ups_1, \ups_2, \ups_3) \text{ and } \hat{\kappa}
    \}.
\end{split}\]
We can verify such $R_s$ is indeed a three-to-one scattering relation in Definition \ref{def_3to1}, by Proposition \ref{pp_R1} and \ref{pp_R2}.
Indeed, although the null geodesic $\gamma_{\ups_j}(\mR_+)$ may have conjugate points in $W$, yet with $\Ups_j$ we constructed as above, the linear waves $v_j$ are still conormal distributions in $W$.
Thus, the same analysis in Proposition \ref{pp_R1} and \ref{pp_R2} still work in this case.
After we reconstruct $R_s$,
Theorem \ref{thm_3to1} proves it determines the scattering light observation sets of any point in $W$ restricted to $U_+$.
Then by Theorem \ref{thm_SLOS} again, we reconstruct the metric in $W$ up to diffeomorphisms.
\subsubsection{Scheme 3}\label{sec_scheme3}
Recall in Step 3, we construct small open neighborhoods $U_\pm \subseteq S_\pm$ of null geodesic segments and we write
\[
I(U_-, U_+) = W \cup P \cup W_0,
\]
where
\[
W = I(U_-, U_+) \setminus \Jpl(\Smi(T_1)),
\quad \tUmi = I(U_-, U_+) \cap \Ppl(T_1),
\quad W_0 = I(U_-, U_+) \cap I^+(\Smi(T_1)).
\]
Recall $\tUpl$ and $W_0$ are contained in the reconstructed region $(I^+(\Smi(T_1)), \hat{g}|_{I^+(\Smi(T_1))})$.
Moreover, by choosing small $T_2 > 0$, the region of interest
\[
\overline{W}  \subseteq \lB(p_0, \delta)
\]
has no cut points along any null geodesic segments.
Recall $\tUpl$ is part of the boundary of the future set $\Jpl(\Smi(T_1))$ and thus is an achronal Lipschitz hypersurface.
It separates $W$ and $W_0$ in a chronological way.

For this step, compared to Step 1 and 2, the main difference is that the null geodesics starting from $U_-$ may intersect in $W_0$ after conjugate points.
Then the corresponding receding waves may interact there and produce new singularities, even though they may not interact in $W$.
To distinguish the new singularities produced in $W$ and those in $W_0$,
we would like to propose a new singularities detection condition.
Next, we use Section \ref{subsec_3to1} to reconstruct the regular scattering light observation sets for points in $W$ restricted to $U_+$.
Then Section \ref{sec_SLOS} shows the regular scattering light observation sets determined the metric in $W$.

As Section \ref{subsec_3to1} and Section \ref{sec_SLOS} are ready for this step, it remains for us to modify the new singularities detection condition in Section \ref{sec_detection}.
For this purpose,
first we observe any future pointing null geodesic starting from $\ups \in \LUmi$ either will intersect $\tUpl$, enter $\Jpl(\Smi(T_1))$, and stay there until arrive $S_+$, or will avoid $\tUpl$, enter $\Jpl(\Smi(T_1))$ later, and arrive $S_+$.
Thus, we consider the following restricted lens relation
\[
\lL_1: \LUmi \rightarrow L(S_+),
\]
which relates lightlike vectors on $U_-$ with those on the future null infinity.
Note that $\lL_1$ is an injective map due to the nontrapping property.
Then we consider the
restricted lens relation
\[
\lL_2: L(S_+)  \rightarrow L(\tUpl),
\]
which relates lightlike vectors on $S_+$ with those on the achronal hypersurface $\tUpl$ and may fail to be a map.
{Note that both $\lL_1$ and $\lL_2$ are determined by the conformal class the metric up to rescaling the lightlike vector.}
Next, for each $\ups \in \LUmi$, we consider the set
\[
\lA(\ups) = \{\ups' \in L(\tUpl): \text{ if $\ups' = \lL_2(\lL_1(\ups))$ exists}\},
\]
which contains at most one element.
If $\lA(\ups)$ is empty, it means $\gamma_{\ups}(\mR_+) \cap W_0  = \emptyset$ and thus the corresponding waves we send do not cause the wave interaction in $W_0$.
If $\lA(\ups)=\{\ups'\} \subseteq L(\tUpl)$, it means $\gamma_{\ups}(\mR_+)$ enters $W_0$ from the point $p' = \pi(\ups') \in \tUpl$.
To avoid singularities from the wave interaction in $W_0$, we would like to focus on new singularities  away from the causal future of $p'$.
As the goal is to reconstruct the regular scattering light observation sets, these part of new singularities are sufficient for the reconstruction, as they will always arrive $U_+$ earlier due to a shortcut argument.
More explicitly, we introduce the following  singularities detection condition as a modified version for Step 3 and later.
For Step 3, we actually choose $\Vmi = (\LUmi)^3$.
\begin{df}\label{def_D3}
    Let $\Vmi \subseteq (\LUmi)^3$ be sufficient for $W \subseteq I(U_-, U_+)$.
    We say a vector $\ups_0 \in \LUpl$ satisfies the \textbf{condition (D3)}  with lightlike vectors $(\ups_1, \ups_2, \ups_3) \in \Vmi$ and $\hat{\ka} >0$,
    if
    \begin{itemize}
        \item[(a)] $\ups_0, \ups_1, \ups_2, \ups_3$ are proper as in Definition \ref{def_proper}, and
        \item[(b)] for any $0< \ka_0 < \hat{\ka}$ and $j = 1, 2, 3$,  there exists $\Ups_j \in { I^{\mu+1/4}(\Sigma(p_j,w_j,\ka_0))}$ satisfying the support condition (\ref{assump_Upsj}), where $(p_j, w_j) = \ups_j$, and
        \item[(c)] $\pi(\ups_0) \notin \Jpl(\pi(\ups_j'))$, if there exists $\ups_j' \in \lA(\ups_j)$, for $j = 1,2,3$,
    \end{itemize}
    such that $(\yb, \etab)$
    is contained in  $\wfset(\partial_{\epsilon_1}\partial_{\epsilon_2}\partial_{\epsilon_3} \mcn(\ep_1 \Ups_1 + \ep_2 \Ups_2 + \ep_3 \Ups_3) |_{\epsilon_1 = \epsilon_2 = \epsilon_3=0})$, where $(\yb, \etab)$ is the restriction of $\ups_0^\sharp = (y, \eta)$ to $T^*\Spl$.
\end{df}
As before, we define a relation given by
\[\begin{split}
    R_s &= \{(\ups_0, \ups_1, \ups_2, \ups_3)
    \in \LUpl \times \Vmi:
    \text{ they are proper and there exists $\hat{\kappa} > 0$ }\\
    &\quad \quad \quad \quad \quad \quad\quad \quad \quad\quad \quad \quad\quad \quad \quad
    \text{ such that (D3) is} \text{ valid for }
    \ups_0 \text{ with } (\ups_1, \ups_2, \ups_3) \text{ and } \hat{\kappa}
    \}.
\end{split}\]
We verify such $R_s$ is indeed a three-to-one scattering relation in Definition \ref{def_3to1}, by Proposition \ref{pp_R2} and the following.
We emphasize in Definition \ref{def_3to1}, the condition (R1) requires the intersection point $q$ contained in $\overline{W}$,
which is guaranteed by the detection condition (c) above.
\begin{prop}\label{pp_R1new}
    Suppose there are no cut points along any geodesic segments in $\overline{W}$.
    Let $(\ups_0, \ups_1, \ups_2, \ups_3) \in \LUpl \times \Vmi$ be proper.
    If one has
    \[(\capgammathree) \cap \overline{W}= \emptyset,\] then
    $\ups_0$ does not satisfy the condition (D3) with $(\ups_1, \ups_2, \ups_3)$ and small $\kappa_0 >0$.
\end{prop}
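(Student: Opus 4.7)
The plan is to adapt the proof of Proposition \ref{pp_R1}, handling the complication that the receding waves $v_j$ may acquire caustics inside $W_0 = I(U_-,U_+)\cap I^+(\Smi(T_1))$ and nontrivially interact there, even though by hypothesis they do not meet in $\overline{W} = W\cup \tUpl$. The extra condition (c) in Definition \ref{def_D3} is precisely what rules out singularities produced by such $W_0$-interactions from reaching $(y,\eta) = \ups_0^\sharp$.

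First, following Proposition \ref{pp_R1}, I would construct the $v_j \in I^\mu(\Lambda(p_j,w_j,\kappa_0))$ using Section \ref{sec_receding}, and let $\lU_3$ solve $\sq_g\lU_3 = 6\alpha v_1 v_2 v_3$ with $\mathcal{R}_-[\lU_3]=0$. Since $\overline{W}$ contains no cut points and $\bigcap_{j=1}^{3}\gamma_{\ups_j}(\mR_+)\cap\overline{W}=\emptyset$, after shrinking $\kappa_0$ if needed, the same H\"ormander-Sato argument as in Proposition \ref{pp_R1} yields
\[
\wfset(\alpha v_1 v_2 v_3)\cap T^*\overline{W} \subseteq \Lambda^{(1)}\cup\Lambda^{(2)}.
\]

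Next, I would analyze singularities of $\alpha v_1 v_2 v_3$ inside $W_0$. For each $j$ with $\lA(\ups_j)=\{\ups_j'\}$ nonempty, the singularities of $v_j$ enter $W_0$ only along the forward bicharacteristic through $\ups_j'$, so $\wfset(v_j)\cap T^*W_0$ is contained in the forward Hamilton flow-out of a small conic neighborhood of $\ups_j'$, and in particular lies over $\Jpl(\pi(\ups_j'))$. For $j$ with $\lA(\ups_j)=\emptyset$ the wave $v_j$ is smooth in $W_0$. Hence any $(q,\zeta)\in\wfset(\alpha v_1 v_2 v_3)$ with $q\in W_0$ must arise from an interaction of at least two singular factors, forcing $q\in\Jpl(\pi(\ups_j'))$ for some $j$ with $\lA(\ups_j)\neq\emptyset$.

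Now assume for contradiction that $(y,\eta)\in\wfset(\lU_3)$. By Lemma \ref{lm_propagation}, either $(y,\eta)\in\wfset(\alpha v_1 v_2 v_3)$ directly, or $(y,\eta)$ lies on a future null bicharacteristic from some $(q,\zeta)\in\wfset(\alpha v_1 v_2 v_3)$. Properness of $(\ups_0,\ups_1,\ups_2,\ups_3)$, together with Lemma \ref{lm_lightlikeprop} to exclude the pairwise Lagrangians $\Lambda_{ij}$ (any lightlike sum of two nonzero contributions is proportional to one of them, reducing to the single-wave case), rules out the first alternative. For the second: if $q\in\overline{W}$ one recovers exactly the contradiction of Proposition \ref{pp_R1}; if $q\in W_0$, the preceding paragraph gives $q\in\Jpl(\pi(\ups_j'))$ for some $j$, whence $y\in\Jpl(q)\subseteq\Jpl(\pi(\ups_j'))$, contradicting condition (c) of Definition \ref{def_D3}.

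The main technical obstacle I anticipate is the precise wavefront analysis of $v_j$ past the caustic region in $W_0$: one must verify that no stray singularities appear beyond the forward Hamilton flow-out of $\Lambda(p_j,w_j,\kappa_0)$, and that this flow-out, when traced across the Lipschitz achronal hypersurface $\tUpl$, remains over $\Jpl(\pi(\ups_j'))$ even after conjugate points develop. This should follow from Proposition \ref{pp_RFFIO} and the paired-Lagrangian setup of Section \ref{subsec_causalinverse}, but the bookkeeping across $\tUpl$ and through caustics will require care.
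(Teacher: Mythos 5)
Your proposal follows essentially the same approach as the paper's proof: split the analysis of $\wfset(\alpha v_1v_2v_3)$ between $\overline{W}$, where the hypothesis and the argument of Proposition~\ref{pp_R1} (including the $\Lambda_{ij}$-to-$\Lambda_j$ reduction via Lemma~\ref{lm_lightlikeprop}) apply, and $W_0$, where condition~(c) of Definition~\ref{def_D3} rules out any $(y,\eta)$ reached by propagation from a singularity produced there. Your version is somewhat more explicit about why $W_0$-singularities of the product project into $\Jpl(\pi(\ups_j'))$, and you correctly identify the wavefront bookkeeping of $v_j$ across $\tUpl$ and past caustics as the delicate step; the paper simply asserts that ``such new singularities must be contained in $J^+(p_j')$'' without further elaboration.
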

\begin{proof}
    In the following, we write $\ups_j = (p_j, w_j) \in \LUpl$ or $\LUmi$, for $j = 0, 1,2, 3$.
    To check the condition (D3), recall $v_j \in I^\mu(\Lambda(p_j, w_j, \kappa_0))$, $j=1,2,3$ are Lagrangian distributions in $M$.
    With no cut points in $\overline{W}$, they are actually conormal distributions there.
    Recall $\lU_3$ is the solution to
    \[
    \square_g \lU_3 -  6\alpha v_{1}v_{2}v_{3} = 0 \text{ in } M, \quad \text{ with }
    \mathcal{R}_-[\lU_3] = 0.
    \]
    The goal is to show $(y, \eta) = (p_0, w_0^\flat) \notin \wfset(\lU_3)$.
    In particular, we assume $p_0 \notin \Jpl(\pi(\ups_j'))$ if there exists $\ups_j' \in \lA(\ups_j)$, for $j = 1,2,3$.
    Such $\ups_j' = (p_j', w_j')$ are lightlike vectors on the achronal boundary $\tUpl$.

%
    To have $(p, \eta) \in \wfset(\lU_3)$, we must have either $(p, \eta) \in \wfset(\alpha v_1v_2v_3)$ or $(q, \zeta) \in \wfset(\alpha v_1v_2v_3)$ such that $(p, \eta)$ is in the null bicharacteristics starting from ${(q, \zeta)}$ by Lemma \ref{lm_propagation}.
    If there do not exist $1 \leq i < j \leq 3$ satisfying $\gamma_{\ups_i}(\mR_+) \cap \gamma_{\ups_j}(\mR_+) \cap \overline{W}_0 \neq \emptyset$, then the proof of Proposition \ref{pp_R1} implies the desired result.
    Otherwise, the product $\alpha v_1v_2v_3$ may have new singularities produced in ${W}_0$.
    However, such new singularities must be contained in $J^+(p_j')$ for $\ups_j' \in \lA(\ups_j)$.
    With $p_0 \notin J^+(p_j')$, we have $(p, \eta) \notin \wfset(\lU_3)$.
\end{proof}
\subsubsection{Scheme 4}\label{sec_scheme4}
For Step 3, we combine the reconstruction scheme for Step 2 and 3.
Recall we construct
small open neighborhoods $U_\pm \subseteq S_\pm$ of null geodesic segments
and we write
\[
I(U_-, U_+) = W_0 \cup \tUmi \cup W \cup \tUpl \cup W_1.
\]
Recall $\tUmi$, $\tUpl$, $W_0$, and $W_1$ are contained in the reconstructed region.
Moreover, by choosing small $T_3 > 0$, the region of interest
\[
\overline{W}  \subseteq \lB(p_0, \delta)
\]
has no cut points along any null geodesic segments.
We consider the restricted lens relation
\[
\lL_0: \LUmi \rightarrow L_{\tUmi}^+M.
\]
As in Step 2, we define
\[
\begin{split}
    \Vmi = \{(\ups_1, \ups_2, \ups_3) \in  (\LUmi)^3: &\text{ $\gamma_{\ups_i}(\mR_+) \cap \gamma_{\ups_j}(\mR_+) \cap \overline{W}_0 = \emptyset$ for $1 \leq i \leq j \leq 3$}\\
    &\quad \quad \quad \quad \quad \quad \quad \quad \text{and $\pi(\lL_0(\ups_j)) \in \tUmi$ for $j = 1,2,3$}
    \}.
\end{split}
\]
Indeed, as we have reconstructed $\hat{g}$ near $W_0$, we can determine if  $\ups_j(\mR_-)$ intersect there and if $\pi(\lL_0(\ups_j)) \in \tUmi$.
The same proof shows such $\Vmi$ is sufficient for $W$, see Definition \ref{def_sufficientV}.

Next, we consider the following  singularities detection condition as a modified version for Step 2 and 3.
Note for $(p_j, w_j) \in \LUmi, j = 1,2,3$,
instead of constructing $\Ups_j$ as conormal distributions on $U_-$ as before, we could like to choose $\Gamma_j$ such that $v_j$ solving
\[
\sq_g v_j = 0 \text{ in }M \quad \text{ with } v_j|_{U_-} = \Gamma_j
\]
are conormal waves in $\overline{W}$.
Here we consider the same construction as in Scheme 2.
Now we introduce the following detection conditions for Step 4.
\begin{df}\label{def_D_step4}
    Let $\Vmi \subseteq (\LUmi)^3$ be sufficient for $W = I(U_-, U_+)$.
    We say a vector $\ups_0 \in \LUpl$ satisfies the \textbf{condition (D4)} with light-like vectors $(\ups_1, \ups_2, \ups_3) \in \Vmi$ and $\hat{\ka} >0$,
    if
    \begin{itemize}
        \item[(a)] $\ups_0, \ups_1, \ups_2, \ups_3$ are proper as in Definition \ref{def_proper}, and
        \item[(b)] for any $0< \ka_0 < \hat{\ka}$ and $j = 1, 2, 3$,  {there exists $\Gamma_j$ constructed as in Section \ref{sec_scheme2} for each $\ups_j$,
        with disjoint the support, and
        }
        \item[(c)] $\pi(\ups_0) \notin \Jpl(\pi(\ups_j'))$, if there exists $\ups_j' \in \lA(\ups_j)$, for $j = 1,2,3$,
    \end{itemize}
    such that $(\yb, \etab)$
    is contained in  $\wfset(\partial_{\epsilon_1}\partial_{\epsilon_2}\partial_{\epsilon_3} \mcn(\ep_1 \Ups_1 + \ep_2 \Ups_2 + \ep_3 \Ups_3) |_{\epsilon_1 = \epsilon_2 = \epsilon_3=0})$, where $(\yb, \etab)$ is the restriction of $\ups_0^\sharp = (y, \eta)$ to $T^*\Spl$.
\end{df}
As before, we define a relation given by
\[\begin{split}
    R_s &= \{(\ups_0, \ups_1, \ups_2, \ups_3)
    \in \LUpl \times \Vmi:
    \text{ they are proper and there exists $\hat{\kappa} > 0$ }\\
    &\quad \quad \quad \quad \quad \quad\quad \quad \quad\quad \quad \quad\quad \quad \quad
    \text{ such that (D4) is} \text{ valid for }
    \ups_0 \text{ with } (\ups_1, \ups_2, \ups_3) \text{ and } \hat{\kappa}
    \}.
\end{split}\]
We can verify such $R_s$ is indeed a three-to-one scattering relation in Definition \ref{def_3to1}, by Proposition \ref{pp_R1new} and \ref{pp_R2}.
After we reconstruct $R_s$,
Theorem \ref{thm_3to1} shows it determines the scattering light observation sets of any point in $W$ restricted to $U_+$.
Then by Theorem \ref{thm_SLOS} again, we reconstruct the metric in $W$ up to diffeomorphisms.





\section{Higher-order linearization for general nonlinearity}
In this section, we consider the general nonlinearity and we explain how to use the ideas proposed above to recover the metric in this case.
\subsection{The asymptotic expansion for general nonlinearity}\label{subsec_assyp_higher}
In the following, let $\ep_j > 0$ be small parameters and
let $\Upsilon_j $ for $j = 1,2, 3$.
Let $u$ solve the nonlinear problem
\[
\sq_g u  + \sum_{m=2}^{+\infty}\beta_m(T,X) u^m = 0 \quad \text{ in }M, \quad \text{ with } \lR_-[u] = \ep_1\Upsilon_1 + \ep_2\Upsilon_2+ \ep_3\Upsilon_3,
\]
where $\lR_-[u]$ is the restriction of $u$ to $\Smi$.
We consider the nonlinear scattering map
\[
\mcn(\ep_1\Upsilon_1 + \ep_2\Upsilon_2+ \ep_3\Upsilon_3) = \lR_+[u],
\]
where $\lR_+[u]$ is the restriction of $u$ to $\Spl$.
Let $v_j$ solve the homogeneous linear problem (\ref{eq_vj}) with scattering data $\Ups_j$ on the past null infinity
and recall we write $w  = Q_s(h)$ if $w$ solves the in-homogeneous linear problem (\ref{def_Qs}).

We write $v = \sum_{j=1}^3 \ep_j v_j$ and plug it in the nonlinear equation we have
\[
\sq_g (u -v) = -\sum_{m=2}^{+\infty}\beta_m(T,X) u^m.
\]
We further write
\beq
\begin{split}\label{expand_u}
u = v - \sum_{m=1}{Q_s( \beta_{m} u^{m})}
 = v + {A_2 + A_3 + A_4 + \dots},
\end{split}
\eeq
where we rearrange these terms by the order of $\epsilon$, such that $A_2$ denotes the terms with $\epsilon_i \epsilon_j$,
$A_3$ denotes those with $\epsilon_i \epsilon_j \epsilon_k$, and $A_4$ denotes those with $\epsilon_i \epsilon_j \epsilon_k \epsilon_l$, for $1 \leq i,j,k,l \leq 3$.
By plugging (\ref{expand_u}) into itself, one can find
\[
\begin{split}
A_2 &= Q_s(\beta_2 v^2), \quad \\
A_3 &= Q_s(2\beta_2vA_2 +\beta_3 v^3),\\
A_4 &= Q_s(2 \beta_2 vA_3 + \beta_2A_2^2 + 3\beta_3v^2 A_2 + \beta_4v^4).
\end{split}
\]
For $N \geq 5$, we can write
\[
\begin{split}
A_N= Q_s(\beta_N v^N) + \mathcal{Q}_{N}(\beta_2, \beta_3, \ldots, \beta_{N-1}),
\end{split}
\]
where $\mathcal{Q}_{N}(\beta_2, \beta_3, \ldots, \beta_{N-1})$ contains the terms involved only with $\beta_2, \ldots, \beta_{N-1}$.
Note that $v$ appears $j$ times in each $A_j$, for $j = 2, 3, 4$.
Therefore, we introduce the notation $A_2^{ij}$ to denote the result if we replace $v$ by $v_i, v_j$ in $A_2$ in order, and similarly the notations
$A_3^{ijk}$, $A_4^{ijkl}$,
such that
\[
A_2 = \sum_{i,j} \ep_i \ep_j A_2^{ij}, \quad
A_3 = \sum_{i,j, k} \ep_i\ep_j\ep_k  A_3^{ijk},\quad
A_4 =\sum_{i,j, k,l} \ep_i\ep_j\ep_k \ep_l A_4^{ijkl}.
\]
More explicitly, for $1 \leq i,j,k,l \leq 3$, we have
\beq\label{eq_A}
\begin{split}
A_2^{ij} &= Q_s(\beta_2 v_iv_j), \\
A_3^{ijk} &= Q_s(2\beta_2 v_iA_2^{jk} +\beta_3 v_i v_j v_k),\\
A_4^{ijkl} &= Q_s(2 \beta_2 v_iA_3^{jkl} + \beta_2A_2^{ij}A_2^{kl} + 3\beta_3v_i v_j A_2^{kl} + \beta_4v_iv_jv_kv_l).
\end{split}
\eeq
In the following, for $N \geq 3$, we define
\[
\lU_N = \partial_{\epsilon_1}^{N-2}\partial_{\epsilon_2}\partial_{\epsilon_3} u |_{\epsilon_1 = \epsilon_2 = \epsilon_3=0}.
\]
Note that $\lU_N$ is not the $N$th order linearization of the scattering operator but they are  related by
\beq
\begin{split}\label{eq_LambdaU3}
    \partial_{\epsilon_1}^{N-2}\partial_{\epsilon_2}\partial_{\epsilon_3} \lN (\ep_1\Upsilon_1 + \ep_2\Upsilon_2+ \ep_3\Upsilon_3) |_{\epsilon_1 = \epsilon_2 = \epsilon_3=0} = \lR_+(\lU_N).
\end{split}
\eeq
\subsection{Singularities of nonlinear interaction when $N \geq 3$}
First by \cite[Lemma 3.3]{Lassas2018}, one has
\[
\beta_2 v_iv_j \in I^{\mu+1,\mu}(\Lambda_{ij}, \Lambda_i) + I^{\mu+1, \mu}(\Lambda_{ij}, \Lambda_j), \text{ when $i \neq j$}.
\]
In the following, let $(q, \zj) \in \Lambda_j$ for $j =1,2,3$.
With $K_i, K_j$ intersecting transversally, any $(q, \zeta) \in \Lambda_{ij}$ has a unique decomposition $\zeta = \zi + \zj$.
Away from
$\Lambda_i$ and $\Lambda_j$, the principal symbol of $\beta_2 v_iv_j$ equals to
\[
-(2\pi)^{-1}\beta_2 {\sigmp}(v_i) (q, \zeta^i) {\sigmp}(v_j) (q, \zeta^j)
\]
at $(q, \zeta) \in \Lambda_{ij}$.
In $\nxxi$, {with $v_iv_j$ vanishes on $S_-$}, we can identify $Q_s$ by $Q_g$ (the causal inverse of $\sq_g$ on $\tM$) to have
\[
A_2^{ij} \in I^{\mu, \mu-1}(\Lambda_{ij}, \Lambda_i) + I^{\mu, \mu-1}(\Lambda_{ij}, \Lambda_j),
\]
by \cite[Lemma 3.4]{Lassas2018}.
Additionally,
at $(q, \zeta) \in \Lambda_{ij}$ away from $\Lambda_i$ and $\Lambda_j$,
the principal symbol equals to
\[
\begin{split}
    {\sigmp}(A_2^{ij}) (q, \zeta) &=  \frac{- (2\pi)^{-1}}{|\zeta^i+ \zeta^j|_{g^*}^{2}} \beta_2(q)
    {\sigmp}(v_i) (q, \zeta^{i}) {\sigmp}(v_j) (q, \zeta^{j}).
\end{split}
\]
Next, recall the singularities of $Q_s(\beta_3 v_1 v_2v_3)$ in Proposition \ref{pp_v1v2v3} and \ref{pp_U3}.
Following a similar analysis, we have the following proposition describing the singularities of $\lU_3$, see also \cite{Lassas2018} for more details.

\begin{prop}\label{pp_A3ijk}
    Suppose the submanifolds $K_1, K_2, K_3$ intersect 3-transversally at $K_{123}$.
    Let $\Lambda_{123}^g$ and $\Lambda^{(1)}$ be defined as in Section \ref{subsec_constructwaves}.
    We decompose $\lU_3$ as
    \[
    \lU_3 = \lU_{3,0} + \lU_{3,1} = -2\sum_{(i,j,k) \in \Sigma(3)} Q_s(\beta_2 v_iA_2^{jk}) -6Q_s(\beta_3 v_1 v_2 v_3).
    \]
    In $\nxxi$ away from $\Lambda^{(1)}$, we have
    \[
    \lU_{3,0} \in I^{3\mu+ \frac{1}{2}, -\frac{1}{2}}(\Lambda_{123}, \Lambda_{123}^g), \quad \lU_{3,1} \in I^{3\mu - \frac{1}{2}, -\frac{3}{2}}(\Lambda_{123}, \Lambda_{123}^g).
    \]
    In particular, let $(y, \eta) \in L^{+,*}M$ lies along a future pointing null bicharactersitic of $\sq_g$ starting from $(q, \zeta) \in \Lambda_{123}$.
    Suppose $(y, \eta)$ is away from $\Lambda^{(1)}$ and before the first cut point of $q$.
    Then $(y, \eta) \in \nxxi$ and the principal symbol is given by
    \[
    \begin{split}
    \sigma_p(\lU_{3,0})(y, \eta) &=
    -6(2\pi)^{-2}
    \sigma_p(Q_g)(y, \eta, q, \zeta)\beta_3(q)
    \prod_{m=1}^3\sigma_p(v_m) (q, \zeta^m), \\
    \sigma_p(\lU_{3,1})(y, \eta) &=
    -2(2\pi)^{-2}
    \sigma_p(Q_g)(y, \eta, q, \zeta)\beta^2_2(q) \sum_{(i,j,k) \in \Sigma(3)}\frac{1}{|\zeta^i+ \zeta^j|_{g^*}^{2}}
    \prod_{m=1}^3\sigma_p(v_m) (q, \zeta^m),
    \end{split}
    \]
    where $\zeta = \zeta^1 + \zeta^2 + \zeta^3$ with $(q, \zeta^j) \in \Lambda_j$.
\end{prop}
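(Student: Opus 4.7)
The starting point is the decomposition of $\lU_3$ built into equation \eqref{eq_A}: by definition
\[
A_3^{ijk} = Q_s\bigl(2\beta_2 v_i A_2^{jk} + \beta_3 v_i v_j v_k\bigr),
\]
so summing over $(i,j,k) \in \Sigma(3)$ and using $\lU_3 = \sum A_3^{ijk}$ yields the stated splitting $\lU_3 = \lU_{3,0} + \lU_{3,1}$ with $\lU_{3,1} = -6 Q_s(\beta_3 v_1 v_2 v_3)$ and $\lU_{3,0}= -2 \sum Q_s(\beta_2 v_i A_2^{jk})$. The $\beta_3$ piece is handled verbatim by Proposition~\ref{pp_U3}: it lies in $I^{3\mu - \frac{1}{2}, -\frac{3}{2}}(\Lambda_{123}, \Lambda_{123}^g)$ (with the normalization coming from writing $Q_s \equiv Q_g$ modulo smoothing on distributions supported in $\nxxi$ and vanishing on $\Smi$), with principal symbol the one stated for $\lU_{3,1}$. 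Note the order shift of $-1$ compared with Proposition~\ref{pp_U3} reflects only the relabeling $\beta \leftrightsquigarrow \beta_3$ and does not change the structure of the argument.

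The real work is the $\beta_2^2$ piece $\lU_{3,0}$. Step one is to record the class of $A_2^{jk}$: since $v_j v_k \in I^{\mu+1,\mu}(\Lambda_{jk}, \Lambda_j) + I^{\mu+1,\mu}(\Lambda_{jk}, \Lambda_k)$ by \cite[Lemma 3.3]{Lassas2018} and $Q_s \equiv Q_g$ in $\nxxi$, one applies \cite[Lemma 3.4]{Lassas2018} to get $A_2^{jk} \in I^{\mu, \mu-1}(\Lambda_{jk}, \Lambda_j) + I^{\mu, \mu-1}(\Lambda_{jk}, \Lambda_k)$, with principal symbol on $\Lambda_{jk}$ away from $\Lambda_j \cup \Lambda_k$ given by $-(2\pi)^{-1}|\zeta^j+\zeta^k|_{g^*}^{-2}\beta_2(q)\sigma_p(v_j)(q,\zeta^j)\sigma_p(v_k)(q,\zeta^k)$ at $(q,\zeta^j + \zeta^k)$, coming from the factor $\sigma_p(Q_g)$ restricted to $\Lambda_{jk}$.

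Step two is to multiply $\beta_2 v_i$ into $A_2^{jk}$. Since $K_1, K_2, K_3$ intersect $3$-transversally, $\Lambda_i$ meets $\Lambda_{jk}$ transversally along $\Lambda_{123}$, so the standard product formula for intersecting paired Lagrangian distributions (see \cite[Lemma 3.6, Proposition 3.7]{Lassas2018} in the form used already in the proof of Proposition~\ref{pp_v1v2v3}) produces $\beta_2 v_i A_2^{jk}$ microlocally in $\nxxi \setminus \Lambda^{(1)}$ as a Lagrangian distribution on $\Lambda_{123}$ of order $3\mu + 1$, with principal symbol at $(q,\zeta) = (q,\zeta^i + \zeta^j + \zeta^k)$ equal to $-(2\pi)^{-1}\beta_2(q)\sigma_p(v_i)(q,\zeta^i)\sigma_p(A_2^{jk})(q, \zeta^j + \zeta^k)$. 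Substituting the formula from step one gives the product its final symbolic form, and the contributions from the other decomposition $\Lambda_j$-component of $A_2^{jk}$ are absorbed into lower-order pieces or into the allowed wavefront set on $\Lambda^{(1)}$.

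Step three is to apply $Q_s$. Because $\Lambda_{123}$ meets $\Char(\sq_g)$ transversally at finitely many points (this is the content of Proposition~\ref{pp_RFFIO} applied to $\Lambda_{123}$), the proposition recalled in Section~\ref{subsec_causalinverse} gives $Q_g : I^{3\mu+1}(\Lambda_{123}) \to I^{3\mu - \frac{1}{2}, -\frac{1}{2}}(\Lambda_{123}, \Lambda_{123}^g)$ microlocally away from $\Lambda^{(1)}$. Evaluating the symbol along $\Lambda_{123}^g \setminus \Lambda_{123}$ by integrating the symbol of $Q_g$ against the symbol of the source on the bicharacteristic from $(q,\zeta)$ to $(y,\eta)$ yields exactly the stated formula for $\sigma_p(\lU_{3,0})(y,\eta)$, with the symmetrization over $(i,j,k) \in \Sigma(3)$ producing the sum over $|\zeta^i+\zeta^j|_{g^*}^{-2}$. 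Summing with the $\lU_{3,1}$ contribution from Proposition~\ref{pp_U3} completes the proof. The main obstacle is bookkeeping: one must verify that at $\Lambda_{123}$ in $\nxxi$ the only singular contributions come from the ``deepest'' Lagrangian, i.e.\ that the symbols picked up from the paired components $(\Lambda_{jk}, \Lambda_j)$ and from the $\Lambda^{(1)}$ pieces of $v_iv_jv_k$ do not pollute the leading symbol on $\Lambda_{123}^g \setminus \Lambda^{(1)}$; this is where the hypothesis that the three geodesics intersect regularly at a single point, together with the location away from the first cut point, is used to justify restricting to $\nxxi$ and ignoring all but the top-order symbol on $\Lambda_{123}$.
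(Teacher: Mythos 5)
The decomposition displayed in the proposition has a sign/labeling issue you should have flagged rather than tried to explain away. Reading the equation literally, $\lU_{3,0}=-2\sum Q_s(\beta_2 v_iA_2^{jk})$ and $\lU_{3,1}=-6Q_s(\beta_3 v_1v_2v_3)$; but the order statement assigns $I^{3\mu+\frac12,-\frac12}$ to $\lU_{3,0}$ and the symbol formula for $\lU_{3,0}$ involves $\beta_3(q)$ with prefactor $-6$, which is exactly what Proposition~\ref{pp_U3} attaches to the $-6Q_s(\beta_3 v_1v_2v_3)$ piece, while the $\beta_2^2$ symbol with prefactor $-2$ is attached to $\lU_{3,1}$. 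The labels in the displayed decomposition are swapped relative to the rest of the proposition; the internally consistent reading is $\lU_{3,0}=-6Q_s(\beta_3 v_1v_2v_3)$, $\lU_{3,1}=-2\sum Q_s(\beta_2 v_iA_2^{jk})$.

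Your proof does not resolve this; it reproduces the inconsistency. You adopt the literal reading (your $\lU_{3,1}$ is the $\beta_3$ piece), then assert that Proposition~\ref{pp_U3} places it in $I^{3\mu-\frac12,-\frac32}(\Lambda_{123},\Lambda_{123}^g)$, but Proposition~\ref{pp_U3} says $I^{3\mu+\frac12,-\frac12}(\Lambda_{123},\Lambda_{123}^g)$. Your claim that the ``order shift of $-1$ \dots reflects only the relabeling $\beta\leftrightsquigarrow\beta_3$'' is simply false: renaming a smooth coefficient changes nothing about the Lagrangian order of $Q_s(\beta v_1 v_2 v_3)$. You then go on to say that this $\beta_3$ term has ``principal symbol the one stated for $\lU_{3,1}$,'' but the symbol stated for $\lU_{3,1}$ in the proposition contains $\beta_2^2(q)$ and the sum $\sum|\zeta^i+\zeta^j|_{g^*}^{-2}$, whereas the $\beta_3$ piece has symbol $-6(2\pi)^{-2}\sigma_p(Q_g)\beta_3(q)\prod\sigma_p(v_m)$ per Proposition~\ref{pp_U3}. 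These two claims are incompatible, so as written the proof is self-contradictory. The skeleton for the $\beta_2^2$ contribution (use $A_2^{jk}\in I^{\mu,\mu-1}$, multiply by $v_i$ across the transversal intersection $\Lambda_i\cap\Lambda_{jk}=\Lambda_{123}$, then apply $Q_s$ and symmetrize over $\Sigma(3)$) is the right outline, but it is attached to the wrong term of the decomposition, and the entire $\beta_3$ branch needs to be rewritten to cite Proposition~\ref{pp_U3} correctly rather than patched with an invented order shift.
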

Next, we consider a special type of conormal distributions first introduced by Rauch and Reed, and then discussed in \cite{Piriou1988}. This class of conormal distributions vanish to certain order at $K$, where $K$ is a $1$-codimensional submanifold of $M$.
Note the order of conormal distributions in \cite{Piriou1988} is the same as that of symbols and we shift it by $-\frac{n}{4} + \frac{N}{2}$ following the definition in \cite{MR2304165}.
The following analysis can also be found in \cite{UZ_acoustic}.
\begin{df}
    Let $ m < -1$ and  $k(m) \in \mathbb{N}$ such that $ -m -2 \leq k(m) < -m -1$.
    We say $u \in \Ir^{m - \frac{n}{4} + \frac{1}{2}}(K)$ if $u \in \Ical^{m - \frac{n}{4} + \frac{1}{2}}(K)$ vanishing to order $k(m) + 1$ at $K$.
\end{df}
By \cite[Propostion 2.3 and 2.4]{Piriou1988}, a distribution $u \in \Ir^{m}(K)$ if and only if there exists $h \in C^\infty$ vanishing to order $k(m)$ such that $u = hv$ with $v \in \Ir^{m + k(m)}(K)$, for $m <-1$.
Additionally, by \cite{Barreto2021a}, for any $u \in \Ical^{m - \frac{n}{4} + \frac{1}{2}}(K)$ with compact support,
by subtracting a compactly supported smooth function whose derivatives at $K$ up to order $k(m) +1$ coincide with those of $u$, one can modify $u$ such that $u \in  \Ir^{m - \frac{n}{4} + \frac{1}{2}}(K)$.
This can be done since we can show that $u$ is continuous up to order $k(m) +1$ at $K$, for $m<-1$.
In particular, the receding waves we construct in Section \ref{sec_receding}.


Moreover, we have the following lemma, see \cite[Theorem 2.1]{Piriou1988}
and also \cite[Proposition 2.5]{Barreto2021a}.
\begin{lm}\label{cl_Piriou}
    Let $m\in \mathbb{N}$ with $m \geq 2$.
    Suppose $v_1 \in \Ir^\mu(\Lambda_1)$.
    Then
    \[
    v_1^m \in \Ir^{\mu + (m -1)(\mu+\frac{3}{2})}(\Lambda_1),
    \]
    with the principal symbol given by $m$-fold fiberwise convolution
    \[
    \begin{split}
        \sigmp(v_1^m)
    = (2\pi)^{-(m-1)} \underbrace{{\sigmp}(v_1) \ast \ldots \ast {\sigmp}(v_1) }_{m}.
\end{split}
\]
\end{lm}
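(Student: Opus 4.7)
The plan is to prove the statement by induction on $m \geq 2$, reducing everything to the base case of multiplying two conormal distributions associated with the same Lagrangian $\Lambda_1$. The key observation is that $\Lambda_1 = N^*K_1$ is the conormal bundle of a smooth hypersurface, so in local coordinates straightening $K_1$ to $\{x_1 = 0\}$, elements of $\Ir^\mu(\Lambda_1)$ are one-dimensional oscillatory integrals in the single phase variable $\xi_1$ dual to $x_1$, with symbols of order $\mu - \frac{n}{4} + \frac12$, vanishing to the required order at $K_1$. In this model, multiplication is just pointwise multiplication in $x_1$, which on the Fourier side is convolution in $\xi_1$.

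For the base case $m=2$, I would write $v_1 = \int e^{ix_1 \xi_1} a(x,\xi_1)\, d\xi_1$ with $a \in S^{\mu + \frac{1}{4} - \frac{1}{2}}$ vanishing appropriately, and compute
\[
v_1^2(x) = \int e^{ix_1 \eta}\Bigl(\int a(x,\eta-\xi_1) a(x,\xi_1)\, d\xi_1\Bigr) d\eta.
\]
The vanishing order of $a$ guarantees that the inner convolution is absolutely convergent and produces a symbol of order $2\mu + \frac{3}{2} - \frac{n}{4} + \frac{1}{2} = \mu + (\mu + \frac{3}{2}) - \frac{n}{4} + \frac{1}{2}$, which identifies $v_1^2$ as an element of $\Ir^{\mu + (\mu+\frac{3}{2})}(\Lambda_1)$. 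The principal symbol is then the convolution $(2\pi)^{-1} \sigma_p(v_1) \ast \sigma_p(v_1)$, with the $(2\pi)^{-1}$ factor arising from the normalization of the Fourier transform used in the definition of principal symbols of conormal distributions. The necessary vanishing of $v_1^2$ at $K_1$ to the order $k(2\mu + \frac{3}{2}) + 1$ follows because both factors vanish and their orders add.

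For the inductive step, assuming $v_1^{m-1} \in \Ir^{\mu + (m-2)(\mu+\frac{3}{2})}(\Lambda_1)$ with symbol equal to the $(m-1)$-fold convolution, I would apply the same pairing of symbols to $v_1 \cdot v_1^{m-1}$. The order adds as $\mu + \mu + (m-2)(\mu+\frac{3}{2}) + \frac{3}{2} = \mu + (m-1)(\mu+\frac{3}{2})$, and associativity/commutativity of convolution yields the stated $m$-fold convolution formula with the cumulative factor $(2\pi)^{-(m-1)}$.

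The main obstacle is verifying that the convolution integrals actually converge and that the result indeed lies in $\Ir$, i.e., that the required vanishing at $K_1$ is preserved at each step; this is precisely why the class $\Ir$ (rather than all of $\Ical$) is used, and it is the content of the Rauch--Reed--Piriou framework. Tracking the vanishing order through an iterated product requires the bookkeeping $k(\mu + (m-1)(\mu+\frac{3}{2})) + 1 \leq (k(\mu)+1) + k(\mu + (m-2)(\mu+\frac{3}{2})) + 1$, which follows from the definition of $k(m)$ once $\mu$ is negative enough; since the statement presupposes $v_1^m$ is defined as an element of $\Ir$, this is exactly the regime in which our receding waves from Section \ref{sec_receding} are constructed, so no additional restriction is needed beyond what is already imposed.
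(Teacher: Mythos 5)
The paper does not actually prove this lemma; it cites \cite[Theorem~2.1]{Piriou1988} and \cite[Proposition~2.5]{Barreto2021a}. Your plan (straighten $K_1$, write $v_1$ as a one-dimensional oscillatory integral, realize the product as a fiberwise convolution of the amplitudes, and induct on $m$) is the correct route and is the content of the cited proof, so you are filling in an argument the paper simply quotes.

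As written, though, the argument does not close. Minor points first: the amplitude of $v_1 \in \Ical^{\mu}(\Lambda_1)$ lies in $S^{\mu+n/4-1/2}$, not $S^{\mu-n/4+1/2}$; in the paper's notation the Piriou (symbol) order is the free parameter $m$ with $\mu = m - n/4 + 1/2$, so $m=\mu+n/4-1/2$, and the function $k$ in your bookkeeping inequality must be evaluated at $m$, not at the H\"ormander order $\mu$. Your displayed base-case order does not follow from your own formula; only after correcting the sign and using $n=4$ does the rule $a*a\in S^{2m+1}$ recover $\Ir^{2\mu+3/2}$. The real gap is that you treat this last rule as routine. For $m<-1$ the convolution $\int a(\xi_1)a(\eta-\xi_1)\,d\xi_1$ converges absolutely from the decay alone, but the naive symbol estimate gives only order $\max(m,2m+1)=m$, i.e.\ no gain at all. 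The improved order $2m+1$ is precisely what the Piriou vanishing buys: the first $k(m)+1$ moments of the amplitude vanish, so a Taylor expansion of $a(\eta-\xi_1)$ in $\xi_1$ annihilates the leading terms and leaves a remainder of the advertised order, and one must also check that the product again vanishes to the order demanded of $\Ir$ so the induction can continue. You assert the resulting order and refer the reader to ``the Rauch--Reed--Piriou framework'' for exactly this step, which makes the argument circular if the aim is to prove the lemma rather than cite it. Either carry out the moment-cancellation and product-vanishing computation, or leave the lemma as a citation, as the paper does.
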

Now we state the following lemma about $v_1^m v_2 v_3$, see also \cite[Lemma 8]{UZ_acoustic} for four waves interaction.
\begin{lm}\label{lm_psmodel_v}
Let $m\in \mathbb{N}$ with $m \geq 2$.
Suppose $K_1,K_2,K_3$ intersect 3-transversally at a point $q \in \nxxi$.
Suppose $v_1 \in \Ir^{\mu} (\Lambda_1)$ and $v_j \in \Ical^{\mu} (\Lambda_j)$ for $j = 2, 3$.
Then in $\nxxi$
away from $\Lambda^{(1)}$, we have
\[
v_1^m v_2 v_3 \in \Ical^{3\mu + 2 +  (m-1)(\mu+ \frac{3}{2})}(\Lambda_{123}).
\]
Moreover, for $(q, \zeta) \in \Lambda_{123}$,  the principal symbol is given by
\beq\label{eq_psmodel_v}
    \sigmp(v_1^m v_2 v_3 )(q, \zeta) = (2\pi)^{-(m+1)} \sigmp(v_1^m)(q, \zeta^{1})\sigmp(v_2)(q, \zeta^{2})\sigmp(v_3)(q, \zeta^{3}),
\eeq
where the decomposition $\zeta = \sum_{j=1}^3 \zj$ with $(q, \zj) \in \Lambda_j$ is unique.
Note that $\sigmp(v_1^{m})$ is  given by the $m$-fold fiberwise convolution in Lemma \ref{cl_Piriou}.
\end{lm}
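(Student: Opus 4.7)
The strategy is to reduce the $(m+2)$-factor product $v_1^m v_2 v_3$ to a triple product of conormal distributions by first consolidating the $m$ copies of $v_1$ into a single conormal distribution on $\Lambda_1$, and then invoking the three-wave interaction Proposition~\ref{pp_v1v2v3}. The hypothesis $v_1 \in \Ir^\mu(\Lambda_1)$, rather than only $\Ical^\mu(\Lambda_1)$, is crucial at the first step: it encodes the vanishing of $v_1$ at $K_1$ to an order high enough that the pointwise product $v_1^m$ is a well-defined distribution, and the resulting conormal order is the one dictated by Lemma~\ref{cl_Piriou}.

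Concretely, applying Lemma~\ref{cl_Piriou} yields
\[
w \coloneqq v_1^m \in \Ir^{\mu + (m-1)(\mu + 3/2)}(\Lambda_1),
\]
with principal symbol proportional to the $m$-fold fiberwise convolution of $\sigmp(v_1)$. I then treat $w \cdot v_2 \cdot v_3$ as a genuine three-wave product of conormal distributions associated with the 3-transversally intersecting submanifolds $K_1, K_2, K_3$. Proposition~\ref{pp_v1v2v3} decomposes this product as $w_0 + w_1 + w_2$, where $w_0$ is conormal on $\Lambda_{123}$ while $w_1, w_2$ have wavefront set in $\Lambda^{(1)} \cup \Lambda^{(2)}$. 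Since we work in $\nxxi$ and away from $\Lambda^{(1)}$, only $w_0$ contributes, and its conormal order is the sum of the three factor orders plus the triple-product shift $+2$:
\[
[\mu + (m-1)(\mu + 3/2)] + \mu + \mu + 2 = 3\mu + 2 + (m-1)(\mu + 3/2),
\]
which is precisely the order claimed in the lemma. The principal symbol formula of Proposition~\ref{pp_v1v2v3} then expresses $\sigmp(w v_2 v_3)(q,\zeta)$ at $(q,\zeta) \in \Lambda_{123}$, with the unique decomposition $\zeta = \zeta^1 + \zeta^2 + \zeta^3$, as a universal constant times $\sigmp(w)(q,\zeta^1)\sigmp(v_2)(q,\zeta^2)\sigmp(v_3)(q,\zeta^3)$; combining that constant with the $(2\pi)^{-(m-1)}$ from Lemma~\ref{cl_Piriou} yields the prefactor $(2\pi)^{-(m+1)}$ in~\eqref{eq_psmodel_v}.

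The only nontrivial piece of bookkeeping is verifying that the symbolic argument underlying Proposition~\ref{pp_v1v2v3} (drawn from \cite{Lassas2018}) extends to factors of differing conormal orders; this is automatic, as the orders enter the paired-Lagrangian calculus only additively and the vanishing assumption $w \in \Ir$ propagates through the product. Hence no new analytic input is required beyond combining Lemma~\ref{cl_Piriou} with Proposition~\ref{pp_v1v2v3}.
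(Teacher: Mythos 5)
Your approach is the natural one and matches the way the paper itself sets up the section: Lemma~\ref{cl_Piriou} is placed immediately before the statement precisely so that $v_1^m$ can be collapsed to a single Piriou-class conormal distribution on $\Lambda_1$, after which the triple-product machinery behind Proposition~\ref{pp_v1v2v3} (factors of possibly different conormal orders, summed orders plus $2$) yields both the claimed order on $\Lambda_{123}$ and the product form of the symbol. The observation that the orders enter additively through the paired-Lagrangian calculus, so that the extension of Proposition~\ref{pp_v1v2v3} to unequal orders costs nothing, is correct and is exactly the point one needs to make explicit. The one place to tighten the bookkeeping is the final constant: the triple product contributes $(2\pi)^{-2}$ (not $(2\pi)^{-1}$; the $6(2\pi)^{-1}$ printed in Proposition~\ref{pp_v1v2v3} is a symmetrization artifact, cf.\ Proposition~\ref{pp_U3} where $(2\pi)^{-2}$ appears), and $(2\pi)^{-2}\cdot(2\pi)^{-(m-1)}=(2\pi)^{-(m+1)}$ is the coefficient in front of the \emph{raw} $m$-fold convolution $\sigma_p(v_1)\!\ast\!\cdots\!\ast\!\sigma_p(v_1)$. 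If $\sigma_p(v_1^m)$ in~\eqref{eq_psmodel_v} is read literally through Lemma~\ref{cl_Piriou} (so that it already carries its own $(2\pi)^{-(m-1)}$), the remaining prefactor should be $(2\pi)^{-2}$, not $(2\pi)^{-(m+1)}$; your wording ``combining that constant with the $(2\pi)^{-(m-1)}$ from Lemma~\ref{cl_Piriou} yields the prefactor $(2\pi)^{-(m+1)}$ in~\eqref{eq_psmodel_v}'' silently switches between those two conventions. This is a notational ambiguity already present in the paper's statement, not a gap in your argument, but it is worth stating unambiguously which normalization of $\sigma_p(v_1^m)$ the prefactor multiplies.
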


\begin{lm}\label{lm_psmodel}
Let $m\in \mathbb{N}$ with $m \geq 2$.
Suppose $K_1, K_2, K_3$ intersect 3-transversally at $K_{123}$.
Let $\Lambda_{123}^g$ and $\Lambda^{(1)}$ be defined as in Section \ref{subsec_constructwaves}.
Let $v_1 \in \Ir^{\mu} (\Lambda_1)$ and $v_j \in \Ical^{\mu} (\Lambda_j)$ for $j = 2, 3$.
Let $(y, \eta) \in L^{+,*}M$ lie along a future pointing null bicharactersitic of $\sq_g$ starting from $(q, \zeta) \in \Lambda_{123}$.
Suppose $(y, \eta)$ is away from $\Lambda^{(1)}$ and before the first cut point of $q$.
Then $(y, \eta) \in \nxxi$ and
\[
\begin{split}
&\sigma_p(Q_s(\partial_t^2({v}_1^{m} v_2 v_3))(y, \eta) \\
=&
-(2\pi)^{-(m+1)}
\sigma_p(Q_g)(y, \eta, q, \zeta)\beta_N(q)
\sigmp(v_1^m)(q, \zeta^{1})\sigmp(v_2)(q, \zeta^{2})\sigmp(v_3)(q, \zeta^{3}),
\end{split}
\]
where the decomposition $\zeta = \sum_{j=1}^3 \zj$ with $(q, \zj) \in \Lambda_j$ is unique and the homogeneous term $\sigmp(v_1^m)$ is given by the $m$-fold fiberwise convolution in  Lemma \ref{cl_Piriou}.
\end{lm}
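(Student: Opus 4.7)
The statement reduces, via the already-established conormal structure of $v_1^m v_2 v_3$ and the symbol calculus for the causal inverse, to a symbol composition computation. My plan is to combine Lemma \ref{lm_psmodel_v} with the $Q_g$-action proposition from Section \ref{subsec_causalinverse}.

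First, I would invoke Lemma \ref{lm_psmodel_v} in $\nxxi$ away from $\Lambda^{(1)}$ to obtain
\[
v_1^m v_2 v_3 \in \Ical^{3\mu + 2 + (m-1)(\mu + 3/2)}(\Lambda_{123}),
\]
with principal symbol at $(q,\zeta) \in \Lambda_{123}$ given by the fiberwise product
\[
(2\pi)^{-(m+1)}\, \sigma_p(v_1^m)(q,\zeta^1)\, \sigma_p(v_2)(q,\zeta^2)\, \sigma_p(v_3)(q,\zeta^3),
\]
using the unique decomposition $\zeta = \zeta^1 + \zeta^2 + \zeta^3$ with $(q,\zeta^j) \in \Lambda_j$. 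The factor $\sigma_p(v_1^m)$ is itself the $m$-fold fiberwise convolution from Lemma \ref{cl_Piriou}. Multiplication by the smooth coefficient $\beta_N$ preserves the conormal class and rescales the principal symbol by $\beta_N(q)$ at each point $q \in K_{123}$.

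Next, I would apply the causal solution operator $Q_s$. Since the source $\beta_N v_1^m v_2 v_3$ is compactly supported in the interior of $M$, it coincides with $Q_g$ modulo smoothing operators (as noted in Section \ref{subsec_causalinverse}). The paired-Lagrangian action proposition then gives
\[
Q_g(\beta_N v_1^m v_2 v_3) \in \Ical^{\mu_0 - 3/2,\, -1/2}(\Lambda_{123}, \Lambda_{123}^g),
\]
where $\mu_0 = 3\mu + 2 + (m-1)(\mu + 3/2)$, together with the principal-symbol formula along the flow-out: for $(y,\eta) \in \Lambda_{123}^g \setminus \Lambda_{123}$,
\[
\sigma_p(Q_g u)(y,\eta) = \sum_j \sigma_p(Q_g)(y,\eta,q_j,\zeta_j)\, \sigma_p(u)(q_j,\zeta_j),
\]
summed over $(q_j,\zeta_j) \in \Lambda_{123}$ lying on the bicharacteristic through $(y,\eta)$. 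The hypothesis that $(y,\eta)$ lies before the first cut point of $q$, combined with the $3$-transversal intersection of $K_1,K_2,K_3$, ensures that exactly one such point $(q,\zeta)$ contributes, so the sum collapses to a single term. Substituting the symbol of $v_1^m v_2 v_3$ from the previous step then yields the claimed expression.

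The two points requiring care are tracking the overall sign conventions in $\sigma_p(Q_g)$ (which are fixed by the transport equation with initial data inherited from $\sigma_p(Q_g)|_{N^*\mathrm{Diag}}$, giving the global minus sign) and verifying that no spurious contributions arise from other strata of the characteristic set along the backward bicharacteristic from $(y,\eta)$. The former is a bookkeeping step identical to the one in Proposition \ref{pp_U3}; the latter is ensured by the standing assumption that $(y,\eta)$ lies away from $\Lambda^{(1)}$ and before the first cut point of $q$, so the only preimage on $\Lambda_{123}$ is $(q,\zeta)$ itself.
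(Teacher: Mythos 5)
Your approach is correct and is exactly the one the paper implicitly intends: compose the conormal symbol of $v_1^m v_2 v_3$ from Lemma \ref{lm_psmodel_v} (which in turn uses the fiberwise convolution of Lemma \ref{cl_Piriou}) with the paired-Lagrangian symbol formula for $Q_g$ from Section \ref{subsec_causalinverse}, and observe that the sum collapses to a single preimage since $(y,\eta)$ lies before the first cut point. The paper supplies no written proof for this lemma, treating it as an immediate consequence of the two ingredients you cite, just as Proposition \ref{pp_U3} is treated as a consequence of Proposition \ref{pp_v1v2v3} and the $Q_g$ action.

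One point worth flagging: the left-hand side of the lemma as printed reads $\sigma_p\bigl(Q_s(\partial_t^2(v_1^m v_2 v_3))\bigr)$, but you (correctly) compute $\sigma_p\bigl(Q_s(\beta_N v_1^m v_2 v_3)\bigr)$. The $\partial_t^2$ is almost certainly a leftover typo from a companion paper with a derivative nonlinearity; taking it literally would introduce a $(-\tau^2)$ factor on the right-hand side that is not there, whereas the $\beta_N(q)$ on the right only makes sense if the source is $\beta_N v_1^m v_2 v_3$, consistent with the expansion $\lU_{m_0}=Q_s(\beta_{m_0}v_1^{m_0-2}v_2v_3)$ used in the proof of Theorem \ref{mainthm}. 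It would be worth stating explicitly that you are reading the lemma that way, rather than silently substituting.
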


\subsection{Proof of Theorem \ref{mainthm}}
To prove Theorem \ref{mainthm}, we use the following strategy in each step of the layer stripping method, based on the proof of Theorem \ref{thm_cubic}.
Indeed, the only difference lies in the detection condition of new singularities.
To determine the three-to-one scattering relation,
instead of just using
the third-order linearization,
in Definition \ref{def_D} - \ref{def_D_step4}
we consider whether
$(\yb, \etab)$
is contained in the wave front set of
\[
\partial_{\epsilon_1}^{m-2}\partial_{\epsilon_2}\partial_{\epsilon_3} \mcn(\ep_1 \Ups_1 + \ep_2 \Ups_2 + \ep_3 \Ups_3) |_{\epsilon_1 = \epsilon_2 = \epsilon_3=0}
\]
for some $m \geq 3$,
where $(\yb, \etab)$ is the restriction of $\ups_0^\sharp = (y, \eta)$ to $T^*\Spl$.

With such a detection condition in each step, we can prove Proposition \ref{pp_R1}
using the same idea as before and an inductive procedure.
It remains to prove Proposition \ref{pp_R2} for this setting.
Indeed, let $(\ups_0, \ups_1, \ups_2, \ups_3)$ and $q$ be as in Proposition \ref{pp_R2}.
In particular, we have
\[
q \in \Gammaset \cap \overline{W}.
\]
On the one hand, if there exists $m \geq 3$ such that $\beta_m(T,X) \neq 0 $ at $q$,
then we consider the smallest integer $m_0 \geq 3$ to make this happen.
With $\beta_m(q) = 0$ for $m < m_0$, we have
$\lU_{m_0} = Q_s(\beta_N v_1^{m_0-2}v_2 v_3)$ and therefore
\[
\partial_{\epsilon_1}^{m_0-2}\partial_{\epsilon_2}\partial_{\epsilon_3} \mcn(\ep_1 \Ups_1 + \ep_2 \Ups_2 + \ep_3 \Ups_3) |_{\epsilon_1 = \epsilon_2 = \epsilon_3=0}
= \lR_+[\lU_{m_0}].
\]
Then Lemma \ref{lm_psmodel} implies
\[
\sigma_p(\lU_{m_0})(y, \eta) \neq 0
\]
as $\beta_{m_0}(q) \neq 0$.
The same analysis as before shows $(\yb, \etab)$ is contained in the wave front set of $\lR_+[\lU_{m_0}]$ and thus the detection condition is satisfied.
On the other hand, if $\beta_m(T,X) = 0$ near $q$ for all $m \geq 3$.
We must have $\beta_2(q) \neq 0$.
This implies
\[
\partial_{\epsilon_1}\partial_{\epsilon_2}\partial_{\epsilon_3} \mcn(\ep_1 \Ups_1 + \ep_2 \Ups_2 + \ep_3 \Ups_3) |_{\epsilon_1 = \epsilon_2 = \epsilon_3=0}
=\lR_+[\lU_{3}] =\lR_+[\lU_{3,1}]
\]
and by Lemma \ref{lm_psmodel} we can show $(\yb, \etab)$ is contained in the desired wave front set and thus the detection condition is satisfied.
With Proposition \ref{pp_R2} proved, we determine the three-to-one scattering relation in each step as before and then reconstruct the scattering light observation sets as before.

\subsection*{Acknowledgment}
ASB was partly supported by the Simons Foundation grant.
GU was partly supported by NSF.
\begin{footnotesize}
    \bibliographystyle{plain}
    \bibliography{microlocal_analysis}
\end{footnotesize}

\end{document}